\theoremstyle{plain}
\theoremstyle{plain}
\newtheorem{thm}{Theorem}[section]
\newtheorem{prop}[thm]{Proposition}
\newtheorem{lem}[thm]{Lemma}
\newtheorem{cor}[thm]{Corollary}
\newtheorem{sublem}[thm]{Sublemma}
\newtheorem{defn}[thm]{Definition}
\newtheorem{defs}[thm]{Definitions}
\newtheorem{example}[thm]{Example}
\newtheorem{rem}[thm]{Remark}
\theoremstyle{remark}
\numberwithin{equation}{section}
\newcommand{\set}[1]{\left\{#1\right\}}
\newcommand{\minus}{\smallsetminus}
\newcommand{\tree}{\mathsf{T}}
\newcommand{\F}{\mathsf{F}}
\newcommand{\VP}{\mathsf{v}^{\parent}}
\newcommand{\VC}{\mathsf{v}^{\child}}
\newcommand{\vc}{\mathsf{v}^{\child}}
\newcommand{\vp}{\mathsf{v}^{\parent}}
\newcommand{\bC}{\mathbb{C}}
\newcommand{\bN}{\mathbb{N}}
\newcommand{\bZ}{\mathbb{Z}}
\newcommand{\bdr}{\partial}
\newcommand{\dist}[2]{\textup{dist}\,(#1,#2)}
\DeclareMathOperator{\Ret}{Ret}
\newcommand{\child}{\textup{\textsf{c}}}
\newcommand{\parent}{\textup{\textsf{p}}}
\newcommand{\Kjulia}{\CMcal{K}}
\newcommand{\julia}{\CMcal{J}}
\newcommand{\esc}{\textup{esc}}
\DeclareMathOperator{\crit}{\textup{crit}}
\newcommand{\MR}[2]{\href{http://www.ams.org/mathscinet-getitem?mr=MR#1}{MR{#2}}}
\long\def\symbolfootnote[#1]#2{\begingroup\def\thefootnote{\fnsymbol{footnote}}\footnote[#1]{#2}\endgroup}
\begin{document}

\author{Nathaniel D. Emerson}

\title{On Yoccoz Return Functions}

\maketitle

\begin{center}

{Department of Mathematics\\ University of Southern California\\ Los Angeles, California 90089}


E-mail: \href{mailto:nemerson@usc.edu}{\texttt{nemerson@usc.edu}}

\end{center}

\begin{abstract}
We study the dynamics of complex polynomials. We obtain results on Poincar{\'e} return maps defined on certain neighborhoods of a
point with bounded orbit under a polynomial.  We introduce a generalization of the Yoccoz tau-function, the \emph{Yoccoz return
function}, which codes the returns of a critical point with bounded orbit of any complex polynomial with a disconnect Julia set.
We give necessary conditions on Yoccoz return functions, which allow for the recursive definition of an abstract tau-function.
These conditions are also sufficient for polynomials that have a disconnected Julia set and exactly one critical point with
bounded orbit.
\end{abstract}

\noindent \symbolfootnote[0]{\textit{Subject Classification.} Primary 37F10, 37F50, 37E25.}

\noindent \symbolfootnote[0]{\textit{Key Words and Phrases.} Julia set, tree with dynamics, Yoccoz tau-function.}

\tableofcontents

\section{Introduction}
Consider the dynamical system of a complex polynomial $f: \bC \to \bC$ of degree at least 2 (see \cite{CG} for example).  A point
is called \emph{persistent} if it has bounded orbit under $f$. Otherwise we say the point \emph{escapes (to infinity)}.  We
define $\Kjulia_f$, the \emph{filled Julia set of $f$}, as the set of all points that are persistent under $f$. The \emph{Julia
set of $f$}, $\julia_f$,  is the boundary of the filled Julia set.  A key question in determining the structure of the Julia set
of $f$ is the dynamics of the critical points of $f$. For example, the Julia of $f$ set is connected if and only if every
critical point of $f$ is persistent by a classical result of Fatou and Julia.

A polynomial with a unique critical point is called \emph{uni-critical}. We call a polynomial \emph{uni-persistent} if it has
exactly one persistent critical point (of any multiplicity). A uni-critical polynomial is uni-persistent if and only if its Julia
set is connected.  Some examples of a uni-persistent polynomial are a quadratic polynomial with a connected Julia set, or a cubic
polynomial with one critical point escaping and the other persistent. The dynamics of uni-critical polynomials have been widely
studied in these cases. The combinatorics of a uni-persistent polynomial are similar to the combinatorics of uni-critical
polynomial with a connected Julia set.

In this paper, we consider the dynamics of a persistent critical point of a polynomial. We are particularly interested in
uni-persistent polynomials with disconnected Julia sets. We code the dynamics of a polynomial using the combinatorial system of a
tree with dynamics \cite{E03}. We obtain results on various Poincar{\'e} return maps defined on certain neighborhoods of a
persistent point. For a uni-persistent polynomial with exactly one escaping critical point, the \emph{Yoccoz $\tau$-function}
\cite{Hubbard_Loc_Con} is a concise system for coding the returns of the persistent critical point. We introduce a generalization
of this function,  the \emph{Yoccoz return function} (Definition \ref{defn: tau}), which codes the returns of a persistent
critical point of any complex polynomial with a disconnected Julia set. We translate our results for return maps into necessary
conditions on the Yoccoz return function of a persistent critical point of a polynomial with a disconnected Julia set. These
conditions are recursively verifiable, so we can used them to define Yoccoz return functions abstractly. The conditions are
sufficient for a map on the integers to be realized as the Yoccoz return function of a uni-persistent polynomial with a
disconnected Julia set.  The following results are our main theorems. Let $\bN$ denote the non-negative integers.

\begin{thm} \label{main thm}
If $\tau: \bZ \to \bZ$ is the Yoccoz return function of a persistent critical point of a polynomial with a disconnected Julia
set, then there is an $H \in \bZ^+$ and an $E \subset \bN$ with $0 \in E$ such that the following condition hold for each $l\in
\bZ$:
\begin{enumerate}

    \item $\tau(l)= l-H$ if $l \leq H$, and $- H <\tau (l) < l $ if $l >H$;
    \item $\tau(l) = \tau^R(l-1)+1$ for some $R = R(l)  \geq 1$;
    \item if $\tau(l) = \tau^R(l-1)+1$ for some $R \geq 2$, then either
        \begin{enumerate}
            \item $\tau(\tau^{R-1}(l-1) + 1) \leq \tau^R(l-1)$,
            \item $\tau^{R-1}(l-1) \in E$.
        \end{enumerate}
\end{enumerate}
Moreover if the polynomial is uni-persistent, then $E$ is finite.

\end{thm}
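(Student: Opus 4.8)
The plan is to identify the set $E$ constructed in the proof of conditions (1)--(3) and to bound it using the fact that $f$ has only finitely many critical points. Recall that, in the conventions of that proof, $l \in E$ records exactly the levels at which the vertex $v_l$ of the tree with dynamics that contains the persistent critical point $\omega$ also contains some critical point of $f$ distinct from $\omega$; equivalently, the levels at which the Poincar\'e return map governing the recursion for $\tau$ near level $l$ acquires local degree strictly larger than the local degree $\deg_\omega f$ of $f$ at $\omega$. Clause (3)(b) is precisely the alternative one must invoke when such an extra critical point contaminates that return map and destroys the monotonicity (3)(a), and $0 \in E$ is forced by the behavior at the top of the tree. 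In the general (not uni-persistent) case $E$ may genuinely be infinite --- e.g.\ when two persistent critical points lie in a common component of $\Kjulia_f$, so that $v_l$ always contains the second one --- so the content of the moreover-statement is that uni-persistence rules this out.

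First I would record the Green's-function geometry. Since $\omega$ is persistent, $G_f(\omega)=0$, and the vertices $v_l$ containing $\omega$ are nested components of sublevel sets $\{\,z : G_f(z)\le\rho_l\,\}$ whose thresholds satisfy $\rho_l \downarrow 0$ as $l\to+\infty$, the geometric decay being forced by the functional equation $G_f\circ f=(\deg f)\,G_f$. If $f$ is uni-persistent, then $\omega$ is the unique persistent critical point, so every other critical point $c_1,\dots,c_r$ (with $r\le\deg f-1$) escapes and hence $G_f(c_i)>0$. Therefore $c_i\in v_l$ is possible only for the finitely many $l$ with $\rho_l\ge G_f(c_i)$, and putting $L:=\max_{1\le i\le r}\max\{\,l : \rho_l\ge G_f(c_i)\,\}$ gives a finite level beyond which $v_l$ contains no critical point other than $\omega$.

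Then I would conclude: for $l>L$ the return map attached to level $l$ is unbranched apart from the unavoidable ramification at $\omega$, hence has local degree exactly $\deg_\omega f$, which is precisely the regime in which the proof of condition (3) shows alternative (3)(a) holds; thus no level exceeding $L$ is ever placed into $E$, and $E\subseteq\{0,1,\dots,L\}$ is finite.

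The step I expect to be the real obstacle is verifying that the single threshold $L$ controls \emph{all} the return maps relevant to the recursion at a given level, not merely the first-return map: a return map is an iterate $f^{k}$ whose intermediate images $f^{j}(v_l)$ can descend to very shallow levels (down to $\tau(l)$, which may be as small as $-H$), and a priori a shallow intermediate vertex could harbor an escaping $c_i$. One must show this introduces no new exceptional levels --- because those intermediate vertices lie on the forward orbit of the critical vertex, an escaping $c_i$ appearing there would force the orbit of $\omega$ (which stays in $\Kjulia_f$) to meet the escaping orbit of $c_i$, or else is already recorded at some level $\le L$ through the recursive expression $\tau(l)=\tau^{R}(l-1)+1$ of condition (2). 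Making this precise, together with the compositional degree formula, is the crux; the remainder is routine bookkeeping with $G_f$.
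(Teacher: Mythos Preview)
Your proposal has a genuine gap, and it is exactly the one you flag yourself without resolving. Your set $E$ --- the levels $l$ at which the puzzle piece $v_l$ containing $\omega$ also contains another critical point of $f$ --- is \emph{not} the set that makes condition (3) hold, and the hand-wave at the end does not repair this.

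In the paper's argument, condition (3) comes from the Main Lemma: if $R(l)\ge 2$ then the vertex $\mathsf{c}_m$ with $m=\tau^{R-1}(l-1)$ must be a $\boldsymbol{\mathsf{c}}$-\emph{portal}, meaning $\mathsf{c}_m$ has a child off the critical end whose first return time to $\boldsymbol{\mathsf{c}}$ equals that of $\mathsf{c}_m$. Alternative (3)(a) is precisely the statement that $m$ is a $\tau$-portal (equivalently $N^1_{\boldsymbol{\mathsf{c}}}(\mathsf{c}_{m+1})>N^1_{\boldsymbol{\mathsf{c}}}(\mathsf{c}_m)$); the exceptional set $E$ must therefore contain every level $m$ at which $\mathsf{c}_m$ is a $\boldsymbol{\mathsf{c}}$-portal but not a $\tau$-portal. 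The paper classifies such portals into three types. Type~I are the $\tau$-portals and are handled by (3)(a). Type~III portals occur only where there is an escape or split at $\mathsf{c}_m$, i.e.\ where $\deg\mathsf{c}_m>\max_{\mathsf{c}_m^{\child}}\deg\mathsf{c}_m^{\child}$; these are finite in number and correspond roughly to your set. But there are also Type~II portals: levels $m$ with $\deg\mathsf{c}_m=\deg\mathsf{c}_{m+1}$ (so \emph{no} extra critical point sits in $v_m$ beyond what is already in $v_{m+1}$), yet some intermediate iterate $\F^n(\mathsf{c}_m)$ with $1\le n<N^1_{\boldsymbol{\mathsf{c}}}(\mathsf{c}_m)$ lands on a critical vertex off $\boldsymbol{\mathsf{c}}$. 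Such $m$ can lie far above your threshold $L$, so your $E$ omits them and fails (3)(b).

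The paper's finiteness argument in the uni-persistent case does not bound the levels $m$ directly via Green's function; instead it observes that a uni-persistent tree with dynamics has only finitely many critical vertices off the critical end (since $\boldsymbol{\mathsf{c}}$ is the unique critical end, every other branch of the critical subtree terminates), and then argues that the set of $m$ whose pre-return orbit meets one of these finitely many vertices is finite. Your Green's-function estimate establishes the first of these facts in analytic language, but the second step --- converting finitely many off-end critical vertices into finitely many Type~II portal levels --- is the portal analysis, and your proposed resolution (``the orbit of $\omega$ would meet the escaping orbit of $c_i$'') does not do this: the relevant critical vertex is a puzzle piece through which $f^n(\omega)$ passes, and an escaping critical point can sit inside that piece without any intersection of orbits.
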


We prove the converse of the above theorem for uni-critical polynomials. For a map $\tau : \bZ \to \bZ$, it is straightforward to
compute $\esc (\tau) \in \bZ^+$  (Definition \ref{defn: esc(m)}).

\begin{thm} \label{main thm converse}
Suppose $\tau: \bZ \to \bZ$ satisfies Conditions 1--3 above for some $H \in \bZ^+$ and some finite set $E\subset \bN$ with $0 \in
E$. For any integers $C \geq \esc(\tau)$ and $D \geq 2$, there is a uni-persistent polynomial $f$ of degree $C+D$ with a
disconnected Julia set such that
\begin{enumerate}
    \item the multiplicity the persistent critical point of $f$ is $D-1$;
    \item the Yoccoz return function of the persistent critical point of $f$ is $\tau$.
\end{enumerate}

\end{thm}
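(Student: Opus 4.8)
The plan is to convert the combinatorial data $(\tau,H,E)$ into a tree with dynamics, realize that tree with dynamics by a polynomial of the prescribed degree, and then check that the realized polynomial has exactly the advertised dynamical features. The degree split $C+D$ is dictated by a Riemann--Hurwitz count: the persistent critical point is to have local degree $D$ (multiplicity $D-1$), which leaves total critical multiplicity $C$ for the escaping critical points; the hypothesis $C\ge\esc(\tau)$ says precisely that there is enough escaping critical mass to support the branching that $\tau$ forces, with $\esc(\tau)$ the minimal such amount (computable from $\tau$ by Definition \ref{defn: esc(m)}), and any surplus $C-\esc(\tau)$ is absorbed by simple critical points dropped into the basin of infinity, where they do not affect the combinatorics.

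First I would build the tree with dynamics $(\tree,\F)$ from $\tau$. The levels of $\tree$ model the nested puzzle pieces about the persistent critical point, with a distinguished central vertex $v_l$ at each level $l$. Condition 1 pins down the bottom of the tree: for $l\le H$ the combinatorics are those of the root, and for $l>H$ the inequality $-H<\tau(l)<l$ records a genuine return of the central piece into a strictly lower level; in particular $\tree$ branches, so the realized polynomial will have disconnected Julia set, consistent with $C\ge 1$. Condition 2, $\tau(l)=\tau^{R}(l-1)+1$ with $R=R(l)\ge 1$, is read as the rule defining the edge-map of $\F$ on the parent edge of $v_l$: the return map attached to level $l-1$, iterated $R$ times, carries that edge as prescribed. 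Condition 3 is what makes the recursion grounded: when $R\ge 2$, either an earlier non-central return already occurs inside --- case (a) --- or the intermediate vertex $\tau^{R-1}(l-1)$ carries an escaping branch, recorded by $\tau^{R-1}(l-1)\in E$ in case (b). I would assign local degree $D$ to every central vertex and distribute the remaining critical multiplicity among escaping vertices along the branches dictated by $E$; since $E$ is finite only finitely many levels carry such branches, so the assignment is consistent.

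Next I would invoke the realization machinery for trees with dynamics (building on \cite{E03}, as developed in the body of the paper): an admissible tree with dynamics is induced by a polynomial, and one may prescribe both the global degree and the local degree at the central vertex. Applied to $(\tree,\F)$ this yields a polynomial $f$ of degree $C+D$ with a persistent critical point of multiplicity $D-1$ and disconnected Julia set. The bottom-of-tree normalization of Condition 1 together with the finiteness of $E$ forces every non-central critical point of $f$ to escape, so $f$ is uni-persistent, in agreement with the ``moreover'' clause of Theorem \ref{main thm}. It then remains to check $\tau_f=\tau$: by Definition \ref{defn: tau} the Yoccoz return function of $f$ is read off from the puzzle-piece nest about its persistent critical point and the returns of that point, and by construction this nest is the one encoded by $(\tree,\F)$ with return data built from $\tau$; a short induction on $l$, using Conditions 2 and 3 to exclude any return not already accounted for, gives $\tau_f=\tau$ on all of $\bZ$.

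The main obstacle is the realization step itself: producing an honest holomorphic polynomial from the combinatorial tree with dynamics. This requires either a quasiconformal surgery that glues model return maps along $\tree$ and straightens the result via the measurable Riemann mapping theorem, or a pullback/limiting construction presenting $f$ as a limit of polynomial-like maps; in either case one must verify that the prescribed local degrees are globally compatible (this is exactly where $C\ge\esc(\tau)$ enters) and that the limiting map has Julia-set combinatorics agreeing on the nose with those of $\tree$, so that no extra critical point becomes persistent and, crucially, no unwanted return appears. Confirming $\tau_f=\tau$ rather than merely ``a function compatible with $\tau$'' is the delicate point, and it is Condition 3 that excludes the spurious combinatorial possibilities.
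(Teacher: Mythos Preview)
Your two-step strategy --- build a tree with dynamics from $(\tau,H,E)$, then realize it by a polynomial --- is exactly the paper's approach. But you have inverted which step is hard. The passage from tree with dynamics to polynomial is a one-line citation to DeMarco--McMullen \cite{DM-MC-pp}: every abstract tree with dynamics is realized by a polynomial of the right degree, full stop. There is no quasiconformal surgery or limiting argument to carry out here, and no separate verification that $\tau_f=\tau$ is needed afterward, because the Yoccoz return function is read directly off the tree. All of the content lies in constructing the tree so that its critical end has return function $\tau$, and your sketch of that construction has a real gap.

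Specifically, the central vertices $\mathsf{c}_l$ do \emph{not} all carry degree $D$; they carry a non-increasing sequence $D_l$ that starts at $D+C\ge D+\esc(\tau)$ for $l\le 0$ and drops by at least $\esc(m)$ at each $m\in E$, stabilizing at $D$ (this is the paper's notion of a $\tau$-admissible sequence). The escapes happen \emph{on} the critical branch, not on side branches. More importantly, the tree is built level by level via an Extension Lemma: given the tree to level $L$, one must produce a child $\mathsf{c}_{L+1}$ of $\mathsf{c}_L$ with $\Ret_{\boldsymbol{\mathsf{c}}}(\mathsf{c}_{L+1})=\mathsf{c}_{\tau(L+1)}$, and this is where Conditions~2--3 and the definition of $\esc(m)$ are genuinely used. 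When $R(L+1)\ge 3$ one must route $\mathsf{c}_{L+1}$ through the $\boldsymbol{\mathsf{c}}$-portal at $\mathsf{c}_m$ with $m=\tau^{R-1}(L)$; if that portal is \emph{compound} (has at least two qualifying non-critical children) a counting argument via the local cover property succeeds, but if it is \emph{simple} it can be obstructed by an intermediate vertex $\mathsf{c}_k$ with $k=\tau^{R-2}(L)$ (compare Lemma~\ref{lem: Returns to simple portals}). The case split in Definition~\ref{defn: esc(m)} --- in particular $\esc(m)=2$ precisely when such an obstruction is present --- is engineered so that, under the hypothesis $C\ge\esc(\tau)$, the degree drop at level $m$ is large enough to force the portal to be compound whenever it needs to be. None of this portal analysis appears in your outline, and without it the extension step for $R\ge 3$ does not go through.
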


J.-C. Yoccoz introduced $\tau$-functions to study the dynamics of quadratic polynomials with connected Julia sets.  Some of his
results were published using the combinatorial system of tableaux \cite{Hubbard_Loc_Con}, which was developed by B.~Branner and
J.~Hubbard \cite[Prop.\ 4.1]{BH92}. For a uni-persistent polynomial with no more than one escaping critical point, the systems
are equivalent: a $\tau$-function defines a unique tableau and conversely a tableau defines a $\tau$-function \cite[Rem.\
9.3]{Hubbard_Loc_Con}. Branner and Hubbard gave 3 axioms for tableaux, which they claimed were necessary and sufficient
conditions for a tableau to be realizable as the tableau of a uni-persistent polynomial with exactly one escaping critical point
\cite[Prop.\ 12.8]{BH92}. In the notation of Theorem \ref{main thm}, their result is the case when $H=1$ and $E=\set{0}$. It was
later found that an additional axiom was needed in cubic case, see \S\ref{subsect: bi-critical returns}. We clarify which
polynomials require the fourth tableau axiom and which do not (Proposition \ref{prop: tab axioms}).  L.~DeMarco and A.~Schiff
\cite{DM-S-pp} translated the 4 tableaux axioms for cubic polynomials into the language of $\tau$-functions and independently
proved the equivalent of Theorems \ref{main thm} and \ref{main thm converse} for cubic polynomials.

Our main tool in this paper is the combinatorial system of a tree with dynamics (Definition \ref{defn: Tree Axioms}).  A tree
with dynamics was first used to study polynomials by R.~P{\'e}rez-Marco in an unpublished work \cite{PM_DCS}. Substantive results
using it were first obtained by the author \cite{E03}. A tree with dynamics shares properties of a tableau, but contains more
information.  The technical heart of this paper is Lemma \ref{main lem}, which is short and natural using trees with dynamics. It
is not clear how to express this lemma in terms of tableaux.

We prove our main theorems by considering the tree with dynamics associated to a polynomial with a disconnected Julia set
(\S\ref{subsect: Poly TwD}). A tree with dynamics encodes the key features of the dynamics of a polynomial. In particular, a
persistent critical point is encoded as a critical end of the tree (Definition \ref{defn: end of T}).  We consider various
Poincar{\'e} return maps defined on a tree with dynamics. Associated to each of these return maps is a set of vertices of the
tree, which we call \emph{portals} (Definition \ref{defn: X-portal}). Portals have the property that generally a vertex must be
iterated to a portal before it can return (Lemma \ref{main lem}). This property leads to necessary conditions on the first return
times of a critical end (Theorems \ref{thm: N1 cond} and Corollary \ref{cor: N1 cond for bi-critical}).  We translate these
conditions into the language of Yoccoz return functions, which give a more concise presentation of the combinatorics (Theorem
\ref{thm: tau cond}).  Thereby we prove Theorem \ref{main thm}. In order to show that the conditions in Theorem \ref{main thm
converse} are sufficient, we construct a tree with dynamics that realizes a given $\tau$ (Proposition \ref{prop: tau TwD}).
Similar conditions hold for polynomials with connected Julia sets and we note some results for connected Julia sets in various
places in this paper (Remarks \ref{rem: T1-T4 only => connected}, \ref{rem: Ret maps of connected} and \ref{rem: Main Lem for
connected}).

The remainder of this paper is organized as follows. The necessary technical background for this paper is given in Section
\ref{sect: TwD}. We outline the construction of a tree with dynamics of a polynomial (\S\ref{subsect: Poly TwD}).  We then give
axioms for abstract trees with dynamics and derive some basic properties of them (\S\ref{subsect: Abstract TwD}). In
\S\ref{subsect: 1st ret maps}, we consider various first return maps in a tree with dynamics. Section \ref{sect: portals} is the
heart of this paper. We define portals, prove our main lemma (Lemma \ref{main lem}), and state a version of Theorem \ref{main
thm} in terms of return maps (Theorem \ref{thm: N1 cond}). In \S\ref{subsect: bi-critical returns}, we study return maps and
classify portals for uni-persistent polynomials with exactly one escaping critical point. We classify portals and give a
corollary of Theorem \ref{main thm} for such polynomials (Corollary \ref{cor: N1 cond for bi-critical}). In \S\ref{subsect:
uni-persistent returns}, we consider general uni-persistent polynomials. We classify portals and prove a version of Theorem
\ref{main thm} for return maps (Theorem \ref{thm: N1 cond}). We consider Yoccoz return functions in \S\ref{sect: Yoccoz Ret}. We
prove some results about these functions, including Theorem \ref{main thm}, in \S\ref{subsect: properties of tau}. The conditions
in Theorem \ref{main thm} can be used to recursively define a Yoccoz return function.  We give some results about recursively
defining such a function, as well as some examples of such a definition in \S\ref{subsect: recursive defs of tau}. Finally in
Section \ref{sect: Realization}, we show Theorem \ref{main thm converse} can be realized by a polynomial.  We show this result by
constructing a tree with dynamics that realizes a specified $\tau$-function. The main steps of the proof are presented in
\S\ref{subsect: TwD from tau}, while the technical details are left until \S\ref{subsect: Proof of ext lem}.

\section{Trees with Dynamics}\label{sect: TwD}

This section contains the necessary background material for this paper, and is divided into 3 parts. First we briefly outline the
dynamic decomposition of the plane (\S\ref{subsect: Poly TwD}), which is used to define the tree with dynamics of a polynomial
with a disconnected Julia set. We then give axioms for an abstract tree with dynamics and recall some elementary properties
(\S\ref{subsect: Abstract TwD}). Finally we consider Poincar{\'e} return maps defined on a tree with dynamics (\S\ref{subsect:
1st ret maps}.

\subsection{The Tree with Dynamics of a Polynomial} \label{subsect: Poly TwD}

We define an \emph{annulus} as subset of the plane that is conformally equivalent to a set of the form $\set{z \in \bC: \ 0 \leq
r_1 < |z| < r_2 \leq \infty}$.  We say $S \subset \bC$ is \emph{nested} inside an annulus $A  $, if $S $ is contained in the
bounded components of $\bC \minus A$. For an annulus $A$, we define the \emph{filled-in annulus}:
\[
   P(A) = A \cup \set{\text{bounded components of } \bC \minus A}.
\]
Observe that $P(A)$ is an open topological disk.

Following Branner and Hubbard \cite{BH92}, we outline the dynamic decomposition of the plane. Fix a polynomial $f$ of degree
$d\geq 2$ with disconnected Julia set. Let $g$ denote Green's Function of $f$. The functional equation $g(f) = d \cdot g$ is
satisfied by $f$ and $g$. We use $g$ to define the dynamic decomposition of the basin of attraction of infinity for $f$.

An \emph{equipotential} is a level set of $g$: $\set{z \in \bC: \ g(z) = \text{const.}
>0}$.  By the functional equation, $f$ sends equipotentials to equipotentials. The
critical points of $g$ are the critical points of $f$ and the iterated pre-images of critical points of $f$. We distinguish all
equipotentials whose grand orbit contains a critical point of $f$. There are countably many such equipotentials, say
$\set{E_l}_{l \in \bZ}$. Index them so that $g|E_l < g|E_{l-1}$, $E_l$ is a Jordan curve for $l \leq 0 $, and $E_1$ is not a
Jordan curve (so it contains a subset homeomorphic to a figure-8). Let $H$ be the number of orbits of $\set{E_l}_{l \in \bZ}$
under $f$. If  $f$ has $e$ distinct critical points that escape to infinity, then $H \leq e$. It is possible that $H < e$, if $f$
has two escaping critical points $c$ and $c'$ such that $g(c) = d^n g(c')$ for some $n \in \bZ$. It follows that $f(E_l) =
E_{l-H}$ for any $l$ from the functional equation and the indexing of $E_l$.

Define $ U_l = \set{z: \ g|E_{l} > g(z) > g|E_{l+1}}$. For $l \leq 0$, $U_l$ is a single annulus.  For all $l$, $U_l$ is the
disjoint union of finitely many annuli $A_{l,i}$. We call each of the $A_{l,i}$ an \emph{annulus of $f$} at level $l$.  A
filled-in annulus of $f$, $P(A_{l,i})$, is a \emph{puzzle piece} of $f$ at level $l$ \cite{Branner}.  For any $A_{l,i}$, we have
$f(A_{l,i}) = A_{l-H,j}$ for some $j$. A sequence $(A_l)_{l \in \bZ}$ of annuli of $f$ is called \emph{nested}, if $A_l $ is at
level $l$ and $A_{l+1}$ is nested inside $A_l $ for all $l$. If $(A_l)$ is a nested sequence of annuli for a disconnected Julia
set, then $\bigcap_{l=0}^{\infty} P(A_l)$ is a component of $\Kjulia_f$. Thus there is a one-to-one correspondence between nested
sequence of annuli of $f$ and connected components of $\Kjulia_f$. We define the \emph{nest} of $z_0 \in \Kjulia_f$ as the nested
sequence of annuli of $f$, $(A_l)_{l\in \bN}$, such that $z_0$ is nested inside $A_l$ for all $l$. The \emph{extended nest} is
the analogous sequence with $l \in \bZ$.

The annuli of $f$ have a natural tree structure that is preserved by $f$.  We use them to define the tree with dynamics of $f$
(Definition \ref{defn: Tree Axioms}). We associate each annulus of $f$ to a vertex of the tree.  Let $A$ and $A'$ be annuli of
$f$ associated to vertices $\mathsf{v}$, $\mathsf{v}'$. Define an edge between $\mathsf{v}$ and $\mathsf{v}'$, if $A$ is nested
inside $A'$ and $\partial A \cap
\partial A' \neq \emptyset$.  In this case, we say $\mathsf{v}'$ is the \emph{parent} of
$\mathsf{v}$ (see Definition \ref{defn: tree}).  It can be shown that $f|A$ is a proper map, so it has a well-defined degree. We
define $\deg \mathsf{v}$ as the degree of $f|A$.

\begin{figure}[hbt]
    \begin{center}

    \includegraphics{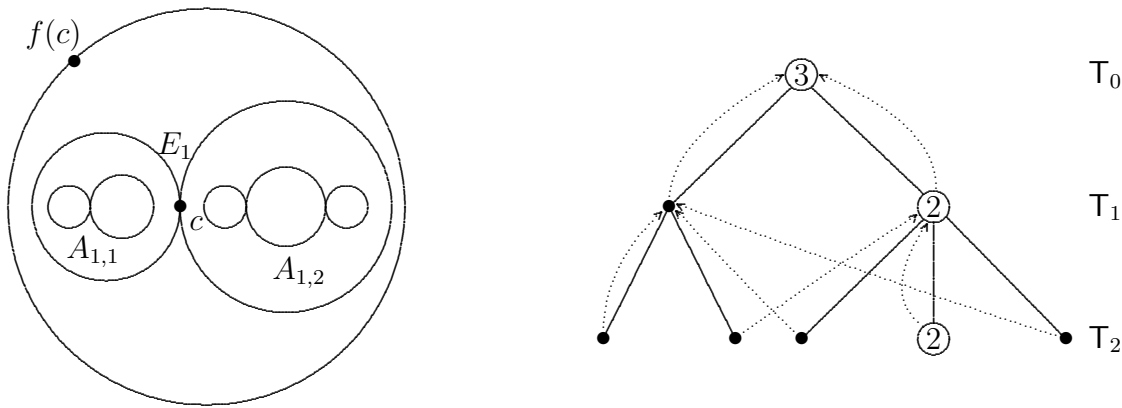}

    \caption{Equipotentials of a polynomial and the corresponding tree with dynamics.}

    \end{center}
\end{figure}

Since the tree with dynamics of $f$ is defined using $g$ and the functional equation, it is invariant under quasiconformal
homeomorphisms.

\subsection{Abstract Trees with Dynamics} \label{subsect: Abstract TwD}

We now give axioms for an abstract tree with dynamics. The tree with dynamics of any polynomial with a disconnected Julia set
satisfies these axioms \cite{E03}. We define a \emph{tree} as a countable connected graph with every circuit trivial. We say two
vertices of a graph are \emph{adjacent} if there is an edge between them. We only consider trees with a particular type of order
on their vertices.

\begin{defn} \label{defn: tree}
A \emph{genealogical tree} is a tree $\tree$ such that each vertex $\mathsf{v} \in \tree$ is associated with a unique adjacent
vertex $\vp$, the \emph{parent} of $\mathsf{v}$.  Every vertex adjacent to $\mathsf{v}$, except $\vp$, is called a \emph{child}
of $\mathsf{v}$ and denoted by $ \vc$.
\end{defn}

In this paper, by ``tree'' we mean genealogical tree.  We use the symbol $\tree$ to represent both the tree and its vertex set;
the edge set is left implicit. We use \textsf{sans serif} symbols for trees and objects associated with trees.  Our convention in
drawing trees is that a parent is above its children (see Fig.\ \ref{fig: TwD}). So $\VP$ is above $\mathsf{v}$ and any $\VC$ is
below $\mathsf{v}$.  When it is necessary to distinguish between children of $\mathsf{v}$ we use the notation
$\mathsf{v}^{\child_i}$. We say $\mathsf{v}$ is an \emph{ancestor} of $\mathsf{v}'$ if there are vertices $\mathsf{v}_0, \dots,
\mathsf{v}_n$ such that $\mathsf{v} = \mathsf{v}_0$, $\mathsf{v}' = \mathsf{v}_n$, and $\mathsf{v}_{i-1} =
\mathsf{v}_i^{\parent}$ for $i = 1, \dots, n$.  We say $\mathsf{v}''$ is a \emph{descendant} of $\mathsf{v}$ if $\mathsf{v}$ is
an ancestor of $\mathsf{v}''$.

\begin{defn} \label{defn: Tree Axioms}
We consider genealogical trees $\tree$ that satisfy the following axioms:

\begin{enumerate} [\indent(T1)]

    \item Each vertex has a unique parent.

    \item Each vertex has at least one, but only finitely many children.

   \item There is a distinguished vertex $\mathsf{v}_{0}$, the \emph{root} of $\tree$, with more than one child.

   \item There is a set of vertices $\set{\mathsf{v}_{-l}}_{l=1}^{\infty}$, such that $\set{\mathsf{v}_{-l}^{\child}} =
       \set{\mathsf{v}_{-l+1}}$ for all $l \in \bZ^+$.

   \end{enumerate}

\end{defn}

It follows that $\tree$ is locally finite, and has no leaves. The purpose of the set $\set{\mathsf{v}_{-l}}_{l=1}^{\infty} $ is
purely technical; it insures that all iterates of the dynamics (Definition \ref{defn: Dyn - abstract}) are defined.  The
important part of the tree is the subtree $\tree^* = \tree \minus \set{\mathsf{v}_{-l}}_{l=1}^{\infty}$.

\begin{defn}
Let $\tree$ be a genealogical tree.  We partition $\tree$ into \emph{levels} by defining $\tree_0 = \set{\mathsf{v}_{0}}$, and
recursively defining $\tree_l$ so that if $\mathsf{v} \in \tree_l$, then $\mathsf{v}^{\parent} \in \tree_{l-1}$ for any $l \in
\bZ$.
\end{defn}

We consider all infinite paths in the tree that move from parent to child.

\begin{defn} \label{defn: end of T}
Let $\tree$ be a tree.  An \emph{end} of ${\tree}$ is a sequence $\boldsymbol{\mathsf{x}}=(\mathsf{x}_l)_{l \in \bN}$, where
$\mathsf{x}_l \in \tree_l$ and $\mathsf{x}_{l-1} = \mathsf{x}_l^{\parent}$ for all $l$. An \emph{extended end} is the analogous
double sequence  $\boldsymbol{\mathsf{x}}=(\mathsf{x}_l)_{l \in \bZ}$.
\end{defn}

An end of $\tree$ corresponds to a nested sequence of annuli of a polynomial, which in turn corresponds to a connected component
of the filled Julia set of the polynomial. A natural metric for the extended ends of $\tree$ is a Gromov metric:
\[
    \dist{\boldsymbol{\mathsf{x}}}{\boldsymbol{\mathsf{y}}} = \gamma^{-L}, \quad
    L = \max \set{l \in \bZ: \ \mathsf{x}_l = \mathsf{y}_l},
\]
for some $\gamma > 1$.  Any two such metrics are equivalent.  We can extend such a metric to vertices of $\tree$ by taking the
minimum over all ends that contain the vertices. With respect to any of these metrics, the boundary of $\tree$ is the set of ends
union one point (corresponding to $\lim_{l \to - \infty} \mathsf{v}_l$). The boundary always has the topology of a Cantor set
union one isolated point.

The dynamics that we consider is a map on at tree that preserve the genealogical structure.

\begin{defn}\label{defn: Dyn - abstract}
Let $\tree$ be a tree.  A map $\F: \tree \to \tree$ \emph{preserves children} if for all $\mathsf{v}\in \tree$ the image of a
child of $\mathsf{v}$ is a child of $\F(\mathsf{v}) $.  Symbolically $\F(\vc) = \F(\mathsf{v})^{\child}$.

\end{defn}

A children-preserving map induces a well-defined map on the set of ends of the tree. Additionally such a map is continuous with
respect to any Gromov metric.  It is easy to check that if $\F: \tree \to \tree$ is a children-preserving map, then there exists
$H \in \bZ$ such that $\F(\tree_l) = \tree_{l-H}$ for all $l \in \bZ$.

\begin{rem} \label{rem: T1-T4 only => connected}
Everything in \S\ref{subsect: Abstract TwD} up to this point is true for a tree with dynamics of a polynomial with a connected
Julia set \cite{E08}. However in the tree with dynamics of a polynomial with a connected Julia set, there are finitely many
vertices where the degree is not defined.
\end{rem}

One can prove non-trivial results about polynomial dynamics by just considering a tree with a children-preserving map (see
\cite{E08}).  However we can prove more by keeping track of what we might call the ``polynomial-like'' structure of the dynamical
decomposition of the plane. A polynomial restricted to one of its annuli is a proper map (so it is polynomial-like
\cite{DH-Poly_like} on the filled annulus), therefore this restriction has a well-defined degree. We give axioms for an abstract
version of this degree.

\begin{defn} \label{defn: TwD}
A \emph{tree with dynamics $(\tree,\F)$} is a genealogical tree $\tree$, a children-preserving map $\F: \tree \to \tree$, and a
degree function $\deg : \tree \to \mathbb{Z}^+$, which satisfy the following axioms:
\begin{enumerate}[\indent(D1)]
  \item \label{eq: Monotone} (\emph{Monotonicity.})  For any $\mathsf{v} \in \tree$, we have
  \[
    \deg{\mathsf{v}} - 1 \geq \sum_{\set{\VC}} (\deg{\VC }   -1).
  \]

  \item \label{eq: LC} (\emph{Local Cover Property.})  For any $\mathsf{v} \in \tree$ and for each child
      $\F(\mathsf{v})^{\child_0}$ of $\F(\mathsf{v})$, we have
  \[
     \sum_{\set{\mathsf{v}^{\child}: \ \F(\mathsf{v}^{\child}) =  \F(\mathsf{v})^{\child_0}}} \deg{\mathsf{v}^{\child}} =
     \deg{\mathsf{v}}.
  \]

    \item \label{eq: deg extends} There exists $L\in \bZ$ such that for all $l \geq L$ if $\mathsf{v} \in \tree_l$, then
        $\deg \mathsf{v} = \deg_\mathsf{v} \F$, where $\deg_\mathsf{v} \F$ is the topological degree of $\F$ at $\mathsf{v}$
    \cite[Def.\ 4.3--4.5]{E03}.

    \item \label{eq: deg of root} For any $l < 0$, $ \deg \mathsf{v}_0 =  \deg \mathsf{v}_{l} $.
\end{enumerate}
\end{defn}

\begin{rem}
 It can be shown that the last 2 axioms are follow from the first 2.
\end{rem}

A more complete discussion of the axioms can be found in \cite{E03}. The first axiom says that $\mathsf{v}$ has no more critical
points inside it than its children. The second axiom says that $\F$ is locally a branched cover, and should be thought of as a
combinatorial Riemann-Hurwitz formula (for domains). Although technically a tree with dynamics is a triple $(\tree, \F, \deg)$,
for simplicity we usually denote it by the pair $(\tree, \F)$.

If $ \deg \mathsf{v}_0 = d$, then we say that $(\tree, \F)$ is a tree with dynamics \emph{of degree} $d$.  We say that
$\mathsf{v} \in \tree$ is \emph{critical} if $\deg \mathsf{v} > 0$.

Throughout this paper, $(\tree, \F)$ will denote tree with dynamics with $d = \deg \mathsf{v}_0 $ and $\F(\tree_l) = \tree_{l-H}$
for all $l\in \bZ$.

\begin{figure}[hbt] \label{fig: TwD}
    \begin{center}

    \includegraphics{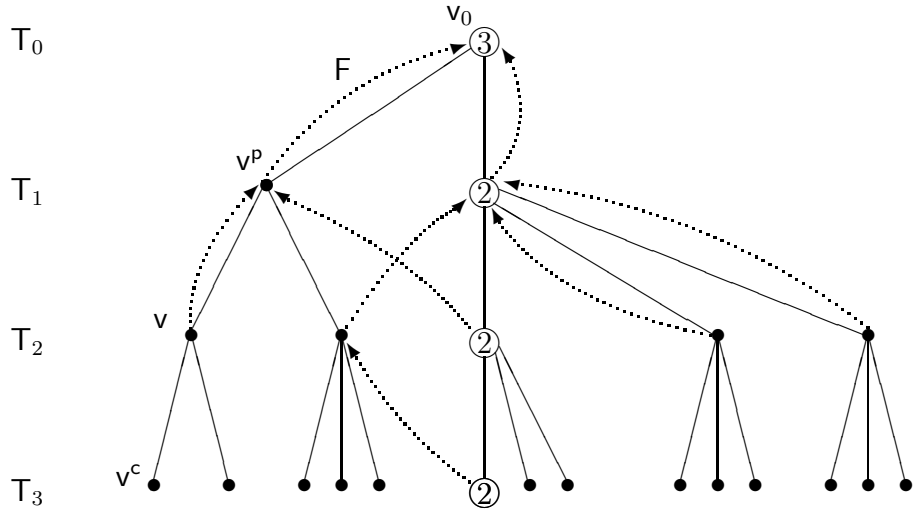}

    \caption{A tree with dynamics $(\tree, \F)$ of degree 3 with $H=1$. The critical vertices are marked with
    their degree, and the symbol $\bullet$ indicates a non-critical vertices. For clarity most of
    the dynamics from level 3 are not shown.}
    \end{center}
\end{figure}

We note some elementary properties of a tree with dynamics.

\begin{lem}\label{lem: TwD}
If $(\tree, \F)$ is a tree with dynamics, then the following hold.
\begin{enumerate}



    \item The degree function is \emph{monotone}:  if $\mathsf{v}'$ is a descendant of $\mathsf{v}$ for some $ \mathsf{v},
        \mathsf{v}' \in \tree$, then $\deg \mathsf{v}' \leq \deg \mathsf{v}$.\label{sublem: deg monotone}

      \item For any child $\mathsf{v}_0^{\child}$ of  $\mathsf{v}_0$, $\deg \mathsf{v}_0^{\child} < \deg \mathsf{v}_0
          $.\label{sublem: deg children of v_0}

    \item If $\mathsf{v} \in \tree$ and $\deg \mathsf{v}= \deg \mathsf{v}^{\child_0}$ for some child of $\mathsf{v}$, then
        every other child of $\mathsf{v}$ is non-critical. \label{sublem: deg v = deg vc => all other children n.c.}

\end{enumerate}

\end{lem}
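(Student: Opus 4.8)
The plan is to prove the three statements of Lemma \ref{lem: TwD} in order, using only the axioms (T1)--(T4) and (D1)--(D4), together with the standing convention that $\F(\tree_l) = \tree_{l-H}$.

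For part \ref{sublem: deg monotone}, it suffices by induction along the genealogical chain to prove the statement when $\mathsf{v}'$ is a child of $\mathsf{v}$, i.e.\ that $\deg \mathsf{v}^{\child} \leq \deg \mathsf{v}$ for every child. This is immediate from Monotonicity (D\ref{eq: Monotone}): since every $\deg \mathsf{v}^{\child} \geq 1$, each summand $(\deg \mathsf{v}^{\child} - 1)$ is non-negative, so a single term is bounded by the whole sum, giving $\deg \mathsf{v}^{\child} - 1 \leq \deg \mathsf{v} - 1$. Then the general case follows by composing inequalities down the chain $\mathsf{v} = \mathsf{v}_0, \mathsf{v}_1, \dots, \mathsf{v}_n = \mathsf{v}'$.

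For part \ref{sublem: deg children of v_0}, the point is that strictness at the root comes from (T3): the root has \emph{more than one} child. Fix a child $\mathsf{v}_0^{\child}$; by (T2) there is at least one other child $\mathsf{w}$, and by axiom (D1) applied at $\mathsf{v}_0$ we get $\deg \mathsf{v}_0 - 1 \geq (\deg \mathsf{v}_0^{\child} - 1) + (\deg \mathsf{w} - 1) \geq (\deg \mathsf{v}_0^{\child} - 1)$, where the last step uses $\deg \mathsf{w} \geq 1$. If equality held throughout we would need $\deg \mathsf{w} = 1$ \emph{and} all remaining children (if any) also of degree $1$; but then $\deg \mathsf{v}_0 - 1 = \deg \mathsf{v}_0^{\child} - 1$ forces $\deg \mathsf{v}_0 = \deg \mathsf{v}_0^{\child}$ — wait, that is actually consistent with (D1). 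So strictness must instead be extracted differently: I would argue that a tree with dynamics of degree $d$ has all of $\set{\mathsf{v}_{-l}}$ and the root of degree $d$ by (D4), and that $\F$ maps each child of $\mathsf{v}_0$ into the chain $\set{\mathsf{v}_{-l}}_{l \geq 0}$ (since $\F(\tree_1) = \tree_{1-H}$ and $\tree_{1-H}$ lies in the spine for $H \geq 1$). Using the Local Cover Property (D\ref{eq: LC}) at $\mathsf{v}_{-l}$ for suitable $l$: the images of the children of $\mathsf{v}_0$ covering a given child of $\F(\mathsf{v}_0)$ have degrees summing to $\deg \mathsf{v}_0 = d$; since there are at least two children of $\mathsf{v}_0$, either they map to distinct targets (then each has degree $< d$ by monotonicity combined with the fact that the target chain-vertex has degree $d$ and its unique child also degree $d$, forcing the fiber-sum to split) or to the same target (then the fiber sum of at least two positive degrees equals $d$, so each is $< d$). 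Either way $\deg \mathsf{v}_0^{\child} < d$. I will need to state this clean dichotomy carefully; I expect this to be the main obstacle, since it is the one place where (D\ref{eq: LC}) and the spine structure genuinely interact.

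For part \ref{sublem: deg v = deg vc => all other children n.c.}, suppose $\deg \mathsf{v} = \deg \mathsf{v}^{\child_0}$. Apply (D\ref{eq: Monotone}) at $\mathsf{v}$: $\deg \mathsf{v} - 1 \geq \sum_{\set{\VC}} (\deg \VC - 1) \geq (\deg \mathsf{v}^{\child_0} - 1) + \sum_{\VC \neq \mathsf{v}^{\child_0}} (\deg \VC - 1) = (\deg \mathsf{v} - 1) + \sum_{\VC \neq \mathsf{v}^{\child_0}} (\deg \VC - 1)$. Hence $\sum_{\VC \neq \mathsf{v}^{\child_0}} (\deg \VC - 1) \leq 0$; since each term is $\geq 0$, every such term vanishes, i.e.\ $\deg \VC = 1$ for all other children. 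That completes the lemma.
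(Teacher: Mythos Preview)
The paper does not actually supply a proof of this lemma; it states the three claims and moves on, treating them as routine consequences of the axioms. So there is nothing to compare against directly, and the question is only whether your argument is correct.

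Parts \ref{sublem: deg monotone} and \ref{sublem: deg v = deg vc => all other children n.c.} are correct and clean: both follow immediately from (D\ref{eq: Monotone}) together with the non-negativity of each summand $\deg \VC - 1$, exactly as you wrote.

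For part \ref{sublem: deg children of v_0} you correctly diagnose that (D\ref{eq: Monotone}) and (T3) alone are not enough (the configuration $\deg \mathsf{v}_0 = d$, one child of degree $d$, other children of degree $1$ satisfies (D\ref{eq: Monotone}) with equality). Your recovery via (D\ref{eq: LC}) is the right idea, but the ``dichotomy'' you set up is superfluous and the exposition meanders. The point is simply this: $\F(\mathsf{v}_0) = \mathsf{v}_{-H}$ lies on the spine $\set{\mathsf{v}_{-l}}_{l \geq 0}$, and by (T4) its \emph{only} child is $\mathsf{v}_{1-H}$ (here $1-H \leq 0$ since $H \geq 1$). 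Hence every child of $\mathsf{v}_0$ is mapped by $\F$ to this single target, and (D\ref{eq: LC}) gives
\[
    \sum_{\set{\mathsf{v}_0^{\child}}} \deg \mathsf{v}_0^{\child} \;=\; \deg \mathsf{v}_0 \;=\; d.
\]
Since $\mathsf{v}_0$ has at least two children by (T3) and each has degree at least $1$, any individual child satisfies $\deg \mathsf{v}_0^{\child} \leq d - 1 < d$. There is no ``distinct targets'' case to consider at all. With that simplification your argument is complete and correct.
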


\begin{lem}\label{lem: F^N a D-fold cover}
Let $\mathsf{v} \in \tree$ and let $\mathsf{w}= \F^N(\mathsf{v})$ for some $N \in \bZ^+$.  The map $\F^N:
\set{\mathsf{v}^{\child}} \to \set{\mathsf{w}^{\child}}$ is a $D$-fold cover where $D = \prod_{n=0}^{N-1}\deg \F^n(\mathsf{v})$.
That is, if $\mathsf{w}^{\child_0}$ is a child of $\mathsf{w}$, then
\[
    \sum_{\set{\vc: \ \F^N(\vc)= \mathsf{w}^{\child_0}  }} \deg \vc = D.
\]

\begin{proof}
The $N=1$ case is the local cover property (D2). The general case follows by induction on $N$.
\end{proof}
\end{lem}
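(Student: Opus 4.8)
The plan is to induct on $N$. The base case $N=1$ needs only a change of notation: with $\mathsf{w}=\F(\mathsf{v})$ and $D=\deg\mathsf{v}=\prod_{n=0}^{0}\deg\F^{n}(\mathsf{v})$, the asserted identity $\sum_{\set{\vc\,:\,\F(\vc)=\mathsf{w}^{\child_0}}}\deg\vc=\deg\mathsf{v}$, for each child $\mathsf{w}^{\child_0}$ of $\mathsf{w}$, is precisely the Local Cover Property (D2) applied at $\mathsf{v}$. (In particular (D2) already shows $\F$ maps $\set{\mathsf{v}^{\child}}$ onto $\set{\F(\mathsf{v})^{\child}}$, since each fiber has positive total degree; iterating, $\F^{N}$ maps $\set{\mathsf{v}^{\child}}$ onto $\set{\mathsf{w}^{\child}}$.)

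For the inductive step I would factor $\F^{N+1}=\F\circ\F^{N}$, put $\mathsf{z}:=\F^{N}(\mathsf{v})$ so that $\mathsf{w}:=\F^{N+1}(\mathsf{v})=\F(\mathsf{z})$, and split the fiber of $\F^{N+1}$ over a child $\mathsf{w}^{\child_0}$ of $\mathsf{w}$ according to the intermediate vertex. Since $\F$ preserves children, a child $\vc$ of $\mathsf{v}$ has $\F^{N+1}(\vc)=\mathsf{w}^{\child_0}$ if and only if $\F^{N}(\vc)$ is a child $\mathsf{z}^{\child}$ of $\mathsf{z}$ with $\F(\mathsf{z}^{\child})=\mathsf{w}^{\child_0}$; grouping accordingly,
\[
    \sum_{\set{\vc\,:\,\F^{N+1}(\vc)=\mathsf{w}^{\child_0}}}\deg\vc
    \;=\;
    \sum_{\set{\mathsf{z}^{\child}\,:\,\F(\mathsf{z}^{\child})=\mathsf{w}^{\child_0}}}\;\;\sum_{\set{\vc\,:\,\F^{N}(\vc)=\mathsf{z}^{\child}}}\deg\vc .
\]
The inner sums are evaluated by the inductive hypothesis — the $D$-fold cover property of $\F^{N}:\set{\mathsf{v}^{\child}}\to\set{\mathsf{z}^{\child}}$ — and the resulting outer sum is then evaluated by (D2) at $\mathsf{z}$; carrying out the arithmetic and using $\deg\F^{n}(\mathsf{z})=\deg\F^{n+1}(\mathsf{v})$ should return $\deg\mathsf{z}\cdot\prod_{n=0}^{N-1}\deg\F^{n}(\mathsf{v})=\prod_{n=0}^{N}\deg\F^{n}(\mathsf{v})=D$.

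The step that has to be handled with care — and the reason the product defining $D$ is exactly $\prod_{n=0}^{N-1}\deg\F^{n}(\mathsf{v})$ — is the matching of multiplicities across the two layers of the composition. What is genuinely carried through the iterate is the product of the local degrees picked up along the orbit $\vc,\F(\vc),\dots,\F^{N-1}(\vc)$, so that unfolding one further application of $\F$ multiplies it by exactly one new factor, $\deg\bigl(\F^{N}(\vc)\bigr)$. Because this factor depends only on the intermediate vertex $\mathsf{z}^{\child}=\F^{N}(\vc)$, it pulls out of the inner sum and turns the outer sum into a genuine degree-weighted fiber sum of $\F$ at $\mathsf{z}$ — the quantity that (D2) evaluates — rather than a bare count of intermediate vertices. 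Pinning down this multiplicativity along orbits, which is essentially the chain rule for the topological degree and is consistent with axioms (D2)--(D3), is the one real (if routine) obstacle; with it in hand, the induction closes at once.
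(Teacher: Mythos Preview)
Your approach --- induction on $N$ with base case (D2) --- is exactly the paper's, which says nothing more. You are right that the only real content is the multiplicativity you flag in your last paragraph: as literally written, substituting the inductive hypothesis into the inner sums of your display leaves the outer sum equal to $D_N$ times a \emph{bare count} of intermediate children, to which (D2) does not directly apply; reading the $\deg\vc$ in the lemma's sum as the local degree of $\F^N$ at $\vc$ (i.e.\ the orbit product $\prod_{n=0}^{N-1}\deg\F^n(\vc)$, which is what ``$D$-fold cover'' means and which agrees with $\deg\vc$ in every application the paper makes, since the intermediate vertices are non-critical there) is precisely the fix that makes both the statement and the induction close.
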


\begin{defs}
Let $\mathsf{v} \in \tree $. We say that there is a \emph{split} at $\mathsf{v}$ if $\mathsf{v}$ has two critical children.  We
say that there is an \emph{escape} at $\mathsf{v}$ if $\deg{\mathsf{v}} - 1 > \sum_{\set{\VC}} (\deg{\VC }   -1)$.
\end{defs}

A simple manipulation of (D\ref{eq: Monotone}) gives the following result.

\begin{lem} \label{lem: esc or split iff deg drops}
There is an escape or a split at $\mathsf{v} \in \tree $ if and only if $ \deg{\mathsf{v}} > \max_{\set{\vc}} \deg \mathsf{v} $.
\end{lem}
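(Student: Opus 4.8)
The plan is to read off the equivalence directly from the Monotonicity axiom (D\ref{eq: Monotone}), exactly as the phrase ``a simple manipulation of (D1)'' anticipates. Write $d = \deg \mathsf{v}$, let $\vc_1, \dots, \vc_k$ be the children of $\mathsf{v}$ (so $k \geq 1$ by (T2)), put $d_i = \deg \vc_i$, and set $m = \max_i d_i$; fix an index $j$ with $d_j = m$. Subtracting $m - 1$ from both sides of (D\ref{eq: Monotone}) rewrites it as
\[
    d - m \;\geq\; \sum_{i \neq j} (d_i - 1) \;\geq\; 0,
\]
the last inequality because every $d_i \geq 1$. Thus $d \geq m$ always holds, and the content of the lemma is the characterization of when this inequality is strict.

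First I would show that an escape or a split at $\mathsf{v}$ forces $d > m$. If there is an escape at $\mathsf{v}$, then (D\ref{eq: Monotone}) is strict, so $d - m = (d - 1) - (m - 1) > \sum_i (d_i - 1) - (m - 1) = \sum_{i \neq j}(d_i - 1) \geq 0$, hence $d > m$. If there is a split at $\mathsf{v}$, then two children have degree at least $2$; in particular $m \geq 2$, and at least one of those two critical children, say $\vc_a$, is distinct from $\vc_j$, so $\sum_{i \neq j}(d_i - 1) \geq d_a - 1 \geq 1$, and again $d - m \geq 1$.

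For the converse I would argue by contrapositive: assuming there is neither an escape nor a split at $\mathsf{v}$, I show $d = m$. ``No escape'' makes (D\ref{eq: Monotone}) an equality, $d - 1 = \sum_i (d_i - 1)$. ``No split'' means at most one child of $\mathsf{v}$ is critical. If no child is critical, then every $d_i = 1$, so $d = 1 = m$. If exactly one child $\vc_a$ is critical, then $d_i = 1$ for all $i \neq a$, so the equality gives $d - 1 = d_a - 1$ while $m = d_a$, whence $d = m$. Either way $d \not> m$, completing the equivalence. I do not expect a genuine obstacle here; the only care required is in the bookkeeping around which child attains $m$ and in the degenerate cases (a vertex with a single child, or with no critical child), where the definitions of ``escape'' and ``split'' must be applied verbatim rather than by intuition.
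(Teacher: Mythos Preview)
Your proof is correct and is precisely the kind of argument the paper has in mind: the paper offers no detailed proof here, only the remark that the lemma follows from ``a simple manipulation of (D\ref{eq: Monotone}),'' and your subtraction of $m-1$ from both sides of that inequality, followed by case analysis on escape versus split and the contrapositive for the converse, is exactly such a manipulation.
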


\begin{cor}\label{cor: finitely many splits or escapes}
A tree with dynamics has only finitely many escapes or splits.
\begin{proof}
Let $m_l = \max \set{\deg \mathsf{v}: \ \mathsf{v} \in \tree_l}$. Then $(m_l)$ is a sequence of positive integers. It follows
(D\ref{eq: Monotone}) that it is non-increasing. Thus it can only decrease a finite number of times.
\end{proof}
\end{cor}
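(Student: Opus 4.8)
The plan is to introduce a non-negative, integer-valued potential on the levels of $\tree$ that never increases and whose total decrease controls the number of escapes and splits. For each $l \in \bZ$ set
\[
    \Phi_l = \sum_{\mathsf{v} \in \tree_l} \binom{\deg \mathsf{v}}{2}.
\]
This is a finite sum: $\tree_0 = \set{\mathsf{v}_0}$ and, by (T2), $\tree_l$ is finite for every $l \geq 0$, while for $l < 0$ the set $\tree_l$ is a single vertex of degree $d$ by (D\ref{eq: deg of root}). Since each vertex of $\tree_{l+1}$ has a unique parent in $\tree_l$ by (T1), we may regroup the sum defining $\Phi_{l+1}$ by parent and write
\[
    \Phi_l - \Phi_{l+1} = \sum_{\mathsf{v} \in \tree_l} \left( \binom{\deg \mathsf{v}}{2} - \sum_{\set{\vc}} \binom{\deg \vc}{2} \right).
\]

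First I would establish the pointwise inequality: for every $\mathsf{v} \in \tree$ the inner difference $\binom{\deg \mathsf{v}}{2} - \sum_{\set{\vc}} \binom{\deg \vc}{2}$ is at least $1$ when there is an escape or a split at $\mathsf{v}$, and at least $0$ in all cases. The tool is the super-additivity identity $\binom{a+b}{2} = \binom{a}{2} + \binom{b}{2} + ab$ for non-negative integers $a, b$. Writing $a_i = \deg \mathsf{v}^{\child_i} - 1 \geq 0$ and $n = \deg \mathsf{v} - 1$, iterating this identity gives $\sum_i \binom{a_i + 1}{2} \leq \binom{(\sum_i a_i) + 1}{2} \leq \binom{n+1}{2} = \binom{\deg \mathsf{v}}{2}$, where the first step is an equality only if at most one $a_i$ is positive, and the second uses (D\ref{eq: Monotone}) together with the monotonicity of $m \mapsto \binom{m+1}{2}$. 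Equality holds throughout exactly when a single child has degree $\deg \mathsf{v}$ and every other child has degree $1$, that is, when $\deg \mathsf{v} = \max_{\set{\vc}} \deg \vc$; by Lemma \ref{lem: esc or split iff deg drops} this is precisely the case in which there is no escape and no split at $\mathsf{v}$. Since all quantities are integers, in the remaining case the difference is at least $1$, as claimed.

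Summing the pointwise inequality over $\mathsf{v} \in \tree_l$ and comparing with the regrouped expression above yields
\[
    \Phi_l - \Phi_{l+1} \geq \# \set{\mathsf{v} \in \tree_l : \text{there is an escape or a split at } \mathsf{v}}.
\]
Hence $(\Phi_l)$ is a non-increasing sequence of non-negative integers, so it converges; moreover for $l \geq 1$ the vertex $\mathsf{v}_{-l}$ has the single child $\mathsf{v}_{-l+1}$, of the same degree $d$ (by (T4) and (D\ref{eq: deg of root})), so no escape or split occurs there and all escapes and splits lie at levels $l \geq 0$. Telescoping over $l \geq 0$ then bounds the total number of vertices at which an escape or a split occurs by $\Phi_0 - \lim_{l \to \infty} \Phi_l \leq \Phi_0 = \binom{d}{2} < \infty$, which is the assertion. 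The only step that needs care is matching the equality case of the super-additivity chain with the ``degree drop'' criterion of Lemma \ref{lem: esc or split iff deg drops}; once that is in hand, the telescoping is pure bookkeeping. (One could instead bound escapes using the potential $\sum_{\mathsf{v} \in \tree_l}(\deg \mathsf{v} - 1)$ and splits using $\sum_{\mathsf{v} \in \tree_l} \max(\deg \mathsf{v} - 2, 0)$ separately, but the single potential $\Phi_l$ subsumes both.)
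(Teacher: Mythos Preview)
Your proof is correct. It differs substantially from the paper's argument, and in fact your approach is more complete. The paper tracks the level-wise maximum $m_l = \max_{\mathsf{v} \in \tree_l} \deg \mathsf{v}$, observes it is a non-increasing sequence of positive integers, and concludes it can drop only finitely many times. But the step from ``$m_l$ eventually stabilizes'' to ``there are only finitely many escapes or splits'' is left implicit, and it is not immediate: an escape or split at a vertex whose degree is below $m_l$ need not force $m_l > m_{l+1}$, so some further argument (for instance an induction on the stabilized value of $m_l$) is needed to finish. Your potential $\Phi_l = \sum_{\mathsf{v} \in \tree_l} \binom{\deg \mathsf{v}}{2}$ sidesteps this entirely: because $\binom{\cdot}{2}$ is strictly convex on the integers, the super-additivity identity makes every escape or split contribute at least $1$ to the drop $\Phi_l - \Phi_{l+1}$, and telescoping gives the explicit bound $\binom{d}{2}$ on the total number of escapes and splits. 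So what the paper's approach buys is brevity (one line), at the cost of leaving a genuine gap for the reader; what your approach buys is a self-contained proof together with a quantitative bound.
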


\begin{defs}
We define the degree of an (extended) end $\boldsymbol{\mathsf{x}}$ by $\deg \boldsymbol{\mathsf{x}} = \lim_{l \to \infty} \deg
\mathsf{x}_l$. If $\deg {\boldsymbol{\mathsf{x}}}
> 1$, then $\boldsymbol{\mathsf{x}}$ is called a \emph{critical end}.
\end{defs}

Immediately from (D\ref{eq: Monotone}) we see that the sequence $(\deg  \mathsf{x}_l)$ is non-increasing.

The number of critical ends of a tree with dynamics categorizes it in a manner analogous to our categorization of polynomials. We
call a tree with dynamics \emph{escaping} if it has no critical ends, \emph{uni-persistent} if it has exactly one critical end,
and \emph{multi-persistent} if it has at least two critical ends. The properties of a polynomial Julia set depend mainly on the
dynamics of the  critical points of the polynomial. So the key question about a tree with dynamics is what are the dynamics of
its critical ends?

\subsection{First Return Maps} \label{subsect: 1st ret maps}

We will study a tree with dynamics using Poincar{\'e} return maps. That is, a map which takes a point to its first iterate which
lies in some specified set. There are several sets which are obvious candidates to use as a target for a return map.  The set of
all critical vertices for instance (see \cite[Def.\ 5.5]{E03}). To begin with we do not choose a specific return map, but work
with the return map to an arbitrary subset of a tree with dynamics. Our definitions and basic results, in particular our Main
Lemma (Lemma \ref{main lem}), apply to any subset of a tree that contains all its ancestors. From \S\ref{subsect: bi-critical
returns} on, we only consider the Poincar{\'e} return map of a critical end.

We define the first return map to an arbitrary subset of a tree with dynamics.

\begin{defs} \label{defs: 1st Ret time and Map}
Let $(\tree, \F)$ be a tree with dynamics and let ${\mathsf{X}} \subset \tree$. We define the \emph{first return time of
$\mathsf{v} \in \tree$ to $\mathsf{X}$} by
\[
    N^1_{\mathsf{X}}(\mathsf{v}) = \min \set{n \geq 1: \ \F^n(\mathsf{v}) \in \mathsf{X}}
\]
provided $\F^n(\mathsf{v}) \in \mathsf{X} $ for some $ n \geq1$. We define the \emph{first return map to }$\mathsf{X}$ by
\[
    \Ret_{\mathsf{X}}(\mathsf{v})= \F^{N^1_{\mathsf{X}}(\mathsf{v})}(\mathsf{v}).
\]
For $R \geq 2$, we define the $R^{\text{th}}$ \emph{return time} of $\mathsf{v}$ to $\mathsf{X}$ by iterating the first return
map:
\[
    N^R_{\mathsf{X}}(\mathsf{v}) =
    N^1_{\mathsf{X}}(\Ret^{R-1}_{\mathsf{X}}(\mathsf{v})) .
\]
\end{defs}

If we iterate a vertex fewer times than its first return time, then the first return time is an additive function.

\begin{lem} \label{lem: ret times add}
Let $\mathsf{v} \in \tree$ with $N^1_{\mathsf{X}}(\mathsf{v})$ defined. If $1 \leq n < N^1_{\mathsf{X}}(\mathsf{v})$, then
\[
    N^1_{\mathsf{X}}(\mathsf{v}) = n + N^1_{\mathsf{X}}(\F^n(\mathsf{v})).
\]
\begin{proof}
Since $\F^{N^1_{\mathsf{X}}(\mathsf{v})}(\mathsf{v}) = \F^{N^1_{\mathsf{X}}(\mathsf{v})-n}(\F^n(\mathsf{v}))$, we have $
N^1_{\mathsf{X}}(\mathsf{v})-n = N^1_{\mathsf{X}}(\F^n(\mathsf{v})) $.
\end{proof}
\end{lem}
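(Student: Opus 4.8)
The statement to be proved asserts that the first return time to $\mathsf{X}$ is additive under iteration so long as we stop short of the first return: if $1 \leq n < N^1_{\mathsf{X}}(\mathsf{v})$, then $N^1_{\mathsf{X}}(\mathsf{v}) = n + N^1_{\mathsf{X}}(\F^n(\mathsf{v}))$. My plan is to argue directly from the definition of first return time as a minimum, without any appeal to the tree structure or the dynamics axioms — the only input needed is that $\F^n$ composes with $\F^m$ to give $\F^{n+m}$.

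First I would establish that $N^1_{\mathsf{X}}(\F^n(\mathsf{v}))$ is defined (so that the right-hand side makes sense). Since $n < N^1_{\mathsf{X}}(\mathsf{v})$, the iterate $\F^{N^1_{\mathsf{X}}(\mathsf{v})}(\mathsf{v}) = \F^{N^1_{\mathsf{X}}(\mathsf{v}) - n}\bigl(\F^n(\mathsf{v})\bigr)$ lies in $\mathsf{X}$ with exponent $N^1_{\mathsf{X}}(\mathsf{v}) - n \geq 1$, so $\F^n(\mathsf{v})$ does return to $\mathsf{X}$, and $N^1_{\mathsf{X}}(\F^n(\mathsf{v})) \leq N^1_{\mathsf{X}}(\mathsf{v}) - n$. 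Next I would prove the reverse inequality: suppose for contradiction that $N^1_{\mathsf{X}}(\F^n(\mathsf{v})) = k$ with $1 \leq k < N^1_{\mathsf{X}}(\mathsf{v}) - n$. Then $\F^{n+k}(\mathsf{v}) = \F^k(\F^n(\mathsf{v})) \in \mathsf{X}$ with $1 \leq n+k < N^1_{\mathsf{X}}(\mathsf{v})$, contradicting the minimality of $N^1_{\mathsf{X}}(\mathsf{v})$. Combining the two inequalities gives $N^1_{\mathsf{X}}(\F^n(\mathsf{v})) = N^1_{\mathsf{X}}(\mathsf{v}) - n$, which rearranges to the claimed identity. (This is essentially the one-line argument already sketched in the excerpt's proof, spelled out.)

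There is no real obstacle here: the lemma is a formal bookkeeping fact about return times to an arbitrary set, and the proof is a two-sided comparison of minima. The only thing one must be slightly careful about is the hypothesis $n \geq 1$ versus $n \geq 0$ — with $n = 0$ the statement is trivially true as well, but the range $1 \leq n < N^1_{\mathsf{X}}(\mathsf{v})$ is the one that gets used — and the implicit assumption that $N^1_{\mathsf{X}}(\mathsf{v})$ is defined, which is stated. No use of (D1)--(D4), the genealogical structure, or children-preservation is required.
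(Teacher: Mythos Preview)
Your proof is correct and is exactly the paper's argument spelled out in full: the paper's one-line proof asserts $N^1_{\mathsf{X}}(\mathsf{v})-n = N^1_{\mathsf{X}}(\F^n(\mathsf{v}))$ from the composition identity, and you have simply made both the $\leq$ and $\geq$ directions explicit. There is nothing to add.
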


It is possible that some vertex never returns to a given set $\mathsf{X}$. We give a condition on $\mathsf{X}$ which insures that
the there is a return of any vertex to $\mathsf{X}$.

\begin{defn}
We say that ${\mathsf{X}} \subset \tree$ is \emph{ancestral} if for any $\mathsf{x} \in \mathsf{X}$, all ancestors of
$\mathsf{x}$ are elements of $\mathsf{X}$.
\end{defn}

It follows from the definition of an ancestor that a set $\mathsf{X}$ is ancestral if and only if $\mathsf{x}^{\parent} \in
\mathsf{X}$ whenever $\mathsf{x} \in \mathsf{X}$. An ancestral set is a subtree of $\tree$ possibly having leaves. Ancestral sets
are closed in $\overline{\tree} = \tree \cup \bdr \tree$. We leave the proofs of these facts as an exercise.

\begin{lem} \label{lem: ancestral => ret well defn}
If ${\mathsf{X}} \subset \tree$ is {ancestral}, then $N^1_{\mathsf{X}}(\mathsf{v})$ and $  \Ret_{\mathsf{X}}(\mathsf{v})$ are
well defined for any $\mathsf{v} \in \tree$.

\begin{proof}

Given $\mathsf{v} \in \tree$, we have $\mathsf{v}_l$ is an ancestor of $\mathsf{v}$ for some $l \leq 0$. Since $\mathsf{X}$ is
ancestral, $\mathsf{v}_l \in \mathsf{X}$.  We can find $n \geq 1 $ and $L \leq l$ such that $\F^n(\mathsf{v}) \in \tree_L$. Thus
$\mathsf{v}$ returns to $\mathsf{X}$, and so it must have a first return.

\end{proof}
\end{lem}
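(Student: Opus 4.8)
The plan is to prove the slightly stronger statement that the forward $\F$-orbit $\mathsf{v}, \F(\mathsf{v}), \F^2(\mathsf{v}), \dots$ of every vertex necessarily meets $\mathsf{X}$. Once some $\F^n(\mathsf{v})$ with $n \geq 1$ lies in $\mathsf{X}$, the set $\set{n \geq 1 : \F^n(\mathsf{v}) \in \mathsf{X}}$ is a nonempty subset of $\bZ^+$, so its minimum $N^1_{\mathsf{X}}(\mathsf{v})$ is defined, and then so is $\Ret_{\mathsf{X}}(\mathsf{v}) = \F^{N^1_{\mathsf{X}}(\mathsf{v})}(\mathsf{v})$. (Here one should assume $\mathsf{X} \neq \emptyset$; in every application in this paper $\mathsf{X}$ will in fact contain the root $\mathsf{v}_0$.)

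I would assemble three facts. First, iteration of $\F$ pushes levels downward without bound: since $H \geq 1$ and $\F(\tree_l) = \tree_{l-H}$ for all $l$, a vertex $\mathsf{v} \in \tree_k$ satisfies $\F^n(\mathsf{v}) \in \tree_{k-nH}$, whose level tends to $-\infty$. Second, the tree terminates downward in the chain supplied by (T4): $\tree_L = \set{\mathsf{v}_L}$ for every $L \leq 0$. For the tree with dynamics of a polynomial this is built into the construction of \S\ref{subsect: Poly TwD}; in the abstract setting it follows by induction downward from $\tree_0 = \set{\mathsf{v}_0}$, since if $\tree_{L+1} = \set{\mathsf{v}_{L+1}}$ then every vertex of $\tree_L$ has a child by (T2), that child must be $\mathsf{v}_{L+1}$, so every vertex of $\tree_L$ is a parent of $\mathsf{v}_{L+1}$ --- but by (T1) there is exactly one. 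Third, a nonempty ancestral set contains a tail of this chain: for any $\mathsf{x} \in \mathsf{X}$, the ancestors $\mathsf{x}^{\parent}, \mathsf{x}^{\parent\parent}, \dots$ all lie in $\mathsf{X}$ and meet every level below that of $\mathsf{x}$, so by the second fact $\mathsf{v}_L \in \mathsf{X}$ for all sufficiently negative $L$.

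Granting these, the argument is immediate: given $\mathsf{v} \in \tree_k$, choose $n \geq 1$ so that $L := k - nH$ is negative enough that $\mathsf{v}_L \in \mathsf{X}$; then $\F^n(\mathsf{v}) \in \tree_L = \set{\mathsf{v}_L} \subseteq \mathsf{X}$, so $\mathsf{v}$ returns to $\mathsf{X}$, finishing the proof. I do not anticipate a genuine obstacle here --- the whole point is that $\F$ drives every vertex into the bottom chain of $\tree$, where a nonempty ancestral set is forced to contain everything --- and the only step that warrants a sentence rather than a glance is the structural fact that the negative levels of $\tree$ consist of nothing but the chain of (T4).
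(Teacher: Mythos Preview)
Your proof is correct and follows essentially the same approach as the paper's: iterate $\F$ until the vertex lands at a non-positive level, where the tree is the single chain $\set{\mathsf{v}_L}$ and a nonempty ancestral set must contain the whole tail. You are more careful than the paper in two places --- you flag the implicit nonemptiness assumption on $\mathsf{X}$, and you actually justify via (T1), (T2) that $\tree_L = \set{\mathsf{v}_L}$ for $L \leq 0$ rather than taking it for granted --- but the underlying argument is identical.
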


\begin{lem}
In any tree with dynamics $(\tree, \F)$, the following sets are ancestral:
\begin{enumerate}

    \item An extended end of $\tree$. \label{sublem: ends ancestral}

    \item The critical set of $\tree$. \label{sublem: critical set ancestral}

        \item Any union of ancestral subsets of $\tree$. \label{sublem: union of ancestral}
\end{enumerate}

\begin{proof}
An extended end is ancestral by Definition \ref{defn: end of T}. The critical set is ancestral by monotonicity of the degree
(Lemma \ref{lem: TwD}.\ref{sublem: deg monotone}). The proof of \ref{sublem: union of ancestral} is easy.

\end{proof}

\end{lem}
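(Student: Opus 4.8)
The plan is to exploit the reformulation recorded just before the statement: a set $\mathsf{X} \subset \tree$ is ancestral if and only if $\mathsf{x}^{\parent} \in \mathsf{X}$ whenever $\mathsf{x} \in \mathsf{X}$. So in each of the three cases it suffices to verify closure under passage to the parent, and then the full ancestral property follows by iterating this one step (formally, by induction on the length $n$ of the ancestor chain $\mathsf{v}_0, \dots, \mathsf{v}_n$ in Definition \ref{defn: tree}).

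For part (\ref{sublem: ends ancestral}), let $\boldsymbol{\mathsf{x}} = (\mathsf{x}_l)_{l \in \bZ}$ be an extended end. By Definition \ref{defn: end of T} we have $\mathsf{x}_{l-1} = \mathsf{x}_l^{\parent}$ for every $l \in \bZ$, so the parent of any term of the sequence is again a term of the sequence; hence the extended end is ancestral. For part (\ref{sublem: critical set ancestral}), suppose $\mathsf{v}$ is critical and set $\mathsf{w} = \mathsf{v}^{\parent}$. Then $\mathsf{v}$ is a descendant of $\mathsf{w}$, so monotonicity of the degree (Lemma \ref{lem: TwD}.\ref{sublem: deg monotone}) gives $\deg \mathsf{w} \geq \deg \mathsf{v} > 1$, so $\mathsf{w}$ is critical. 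For part (\ref{sublem: union of ancestral}), if $\mathsf{X} = \bigcup_\alpha \mathsf{X}_\alpha$ with each $\mathsf{X}_\alpha$ ancestral and $\mathsf{x} \in \mathsf{X}$, choose $\alpha$ with $\mathsf{x} \in \mathsf{X}_\alpha$; then $\mathsf{x}^{\parent} \in \mathsf{X}_\alpha \subset \mathsf{X}$.

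I do not expect a genuine obstacle: all three verifications are immediate once the parent-closure reformulation is available, and no property beyond Definitions \ref{defn: tree} and \ref{defn: end of T} and the monotonicity in Lemma \ref{lem: TwD} is needed. The only point that merits a moment's care is in part (\ref{sublem: critical set ancestral}): one should use the notion of \emph{critical} consistently (a vertex whose degree exceeds $1$) and invoke the already-established monotonicity of $\deg$ rather than re-deriving it from (D\ref{eq: Monotone}).
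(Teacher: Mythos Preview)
Your proposal is correct and follows essentially the same approach as the paper: Definition \ref{defn: end of T} for part (\ref{sublem: ends ancestral}), monotonicity of the degree (Lemma \ref{lem: TwD}.\ref{sublem: deg monotone}) for part (\ref{sublem: critical set ancestral}), and the trivial union argument for part (\ref{sublem: union of ancestral}). You have simply spelled out the details that the paper leaves implicit.
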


Henceforth we will only consider ancestral sets.  First return maps of ancestral sets are straightforward near the top of the
tree.

\begin{lem}\label{lem: N_1 T_l = 1 for l <H}
Suppose that ${\mathsf{X}} \subset \tree$ is {ancestral} and $\mathsf{v}_0 \in {\mathsf{X}}$.  If $\mathsf{v} \in \tree_l$ for
some $l \leq H$, then $ \Ret_{\mathsf{X}}(\mathsf{v}) = \mathsf{v}_{l-H}$ and $N^1_{\mathsf{X}}(\mathsf{v}) =1$.
\begin{proof}
Since $\mathsf{X}$ is {ancestral}, $\mathsf{v}_{l} \in \mathsf{X}$ for any $l \leq 0$. By assumption, $\F(\tree_l) =
\tree_{l-H}$.  For $l \leq H$, $\tree_{l-H} = \set{\mathsf{v}_{l-H}} $ by (T4). Thus $\F(\mathsf{v})    = \Ret_{\mathsf{X}}
(\mathsf{v})$ and $N^1_{\mathsf{X}}(\mathsf{v}) =1$.
\end{proof}

\end{lem}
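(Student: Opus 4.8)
The plan is to verify directly that a single application of $\F$ already lands $\mathsf{v}$ in $\mathsf{X}$, so that the first return occurs at time $1$ and the first return map is just $\F$ itself on these vertices.

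First I would unpack the hypothesis on $\mathsf{X}$. Since $\mathsf{X}$ is ancestral and contains $\mathsf{v}_0$, it contains every ancestor of $\mathsf{v}_0$; by (T1) and (T4) the ancestors of $\mathsf{v}_0$ are precisely the vertices $\mathsf{v}_{-l}$ with $l \geq 1$, so $\mathsf{v}_j \in \mathsf{X}$ for every $j \leq 0$. Along the way I would record the companion fact that $\tree_j = \set{\mathsf{v}_j}$ for all $j \leq 0$: the part of $\tree$ at non-positive levels is a single ray, since each $\mathsf{v}_{-l}$ has $\mathsf{v}_{-l+1}$ as its only child (T4) and each vertex has a unique parent (T1), leaving no room for another vertex at a level $j \leq 0$.

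Next I would invoke the standing convention $\F(\tree_l) = \tree_{l-H}$ (which, as noted after Definition \ref{defn: Dyn - abstract}, holds for any children-preserving map) to conclude $\F(\mathsf{v}) \in \tree_{l-H}$. Because $l \leq H$, we have $l - H \leq 0$, so $\tree_{l-H} = \set{\mathsf{v}_{l-H}}$ by the previous step, and hence $\F(\mathsf{v}) = \mathsf{v}_{l-H}$. Since $l - H \leq 0$ also gives $\mathsf{v}_{l-H} \in \mathsf{X}$, we get $\F^1(\mathsf{v}) \in \mathsf{X}$, so $N^1_{\mathsf{X}}(\mathsf{v}) = 1$ and $\Ret_{\mathsf{X}}(\mathsf{v}) = \F(\mathsf{v}) = \mathsf{v}_{l-H}$.

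I do not expect any real obstacle here: the lemma is an immediate unwinding of the definition of the first return map together with axiom (T4). The one point I would make explicit rather than leave implicit is that the level sets $\tree_j$ are singletons for $j \leq 0$; with that in hand, everything else is a direct computation.
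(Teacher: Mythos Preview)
Your proposal is correct and follows essentially the same route as the paper's proof: both use that $\mathsf{X}$ ancestral with $\mathsf{v}_0 \in \mathsf{X}$ forces $\mathsf{v}_j \in \mathsf{X}$ for $j \leq 0$, then apply $\F(\tree_l) = \tree_{l-H}$ together with (T4) to conclude $\F(\mathsf{v}) = \mathsf{v}_{l-H} \in \mathsf{X}$. Your version is simply more explicit about why $\tree_j$ is a singleton for $j \leq 0$, which the paper compresses into a bare citation of (T4).
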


A key property of ancestral sets is that when a vertex returns, it forces all of its ancestors to return.

\begin{lem}\label{lem: ret of descen => ret}
Suppose that ${\mathsf{X}} \subset \tree$ is {ancestral}.  Let $ \mathsf{v}, \mathsf{v}' \in \tree$ with $\mathsf{v}$ an ancestor
of $\mathsf{v}'$.  Any of return time of $\mathsf{v}'$ is some return time of $\mathsf{v}$; for any $S \geq 1$, there is $R \geq
S$ such that
\[
    N^S_{\mathsf{X}}(\mathsf{v}') = N^R_{\mathsf{X}}(\mathsf{v}).
\]
\begin{proof}
Let $N = N^S_{\mathsf{X}}(\mathsf{v}')$, so $\F^N(\mathsf{v}') =  \mathsf{x} $ for some $\mathsf{x} \in \mathsf{X}$. Since
${\mathsf{X}}$ is ancestral, $ \mathsf{x}^{\parent}  \in \mathsf{X}$. It follows by induction, that any ancestor of $
\F^N(\mathsf{v}')$ is a vertex of $\mathsf{X}$.  Since $\F$ is child preserving, we have that $\F^N (\mathsf{v}) $ is also in
$\mathsf{X}$. Thus $N = N^R_{\mathsf{X}}(\mathsf{v})$ for some $R \geq S$.
\end{proof}
\end{lem}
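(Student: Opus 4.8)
The plan is to show that the return times of $\mathsf{v}'$ to $\mathsf{X}$ form a subset of the return times of $\mathsf{v}$ to $\mathsf{X}$, and then to match up the indices by a short counting argument.

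First I would record that $\F$, being children-preserving, carries the ancestor relation into itself: if $\mathsf{v}$ is an ancestor of $\mathsf{v}'$, then applying $\F(\vc) = \F(\mathsf{v})^{\child}$ along the chain of parents from $\mathsf{v}'$ up to $\mathsf{v}$ shows that $\F(\mathsf{v})$ is an ancestor of $\F(\mathsf{v}')$; by induction $\F^n(\mathsf{v})$ is an ancestor of $\F^n(\mathsf{v}')$ for every $n \geq 0$. Combined with the hypothesis that $\mathsf{X}$ is ancestral, this gives the key implication: for $n \geq 1$, if $\F^n(\mathsf{v}') \in \mathsf{X}$ then $\F^n(\mathsf{v}) \in \mathsf{X}$, since $\F^n(\mathsf{v})$ is then an ancestor of a vertex of $\mathsf{X}$. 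Hence every return time of $\mathsf{v}'$ is a return time of $\mathsf{v}$ (all return times in question exist by Lemma \ref{lem: ancestral => ret well defn}).

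Now fix $S \geq 1$ and set $N = N^S_{\mathsf{X}}(\mathsf{v}')$. By the previous step $N$ is a return time of $\mathsf{v}$, so $N = N^R_{\mathsf{X}}(\mathsf{v})$ for a unique $R \geq 1$, and it only remains to check $R \geq S$. The first $S$ return times of $\mathsf{v}'$ satisfy
\[
    N^1_{\mathsf{X}}(\mathsf{v}') < N^2_{\mathsf{X}}(\mathsf{v}') < \cdots < N^S_{\mathsf{X}}(\mathsf{v}') = N,
\]
and each is a return time of $\mathsf{v}$; thus $\mathsf{v}$ has at least $S$ distinct return times that are $\leq N$. Since the return times of $\mathsf{v}$ form a strictly increasing sequence with $N$ as its $R$-th term, exactly $R$ of them are $\leq N$, so $R \geq S$.

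The argument is essentially immediate from the definitions of \emph{ancestral} and \emph{children-preserving}; the only point requiring a little care is this final counting step, which upgrades ``$N$ is some return time of $\mathsf{v}$'' to ``$N$ is at least the $S$-th return time of $\mathsf{v}$.'' That is exactly where one must use that the \emph{entire} list of return times of $\mathsf{v}'$ — not merely its last entry — embeds into the list of return times of $\mathsf{v}$.
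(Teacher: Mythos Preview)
Your proof is correct and follows essentially the same route as the paper: both arguments use that $\F$ preserves the ancestor relation together with $\mathsf{X}$ being ancestral to conclude that $\F^N(\mathsf{v}') \in \mathsf{X}$ forces $\F^N(\mathsf{v}) \in \mathsf{X}$. Your version is slightly more careful than the paper's, which asserts $R \geq S$ without comment; your counting argument (embedding the first $S$ return times of $\mathsf{v}'$ into those of $\mathsf{v}$) makes this step explicit.
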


This has an important consequence for the returns to an end.

\begin{cor} \label{cor: N_1 non-dec }
Suppose that ${\mathsf{X}} \subset \tree$ is {ancestral}. For any extended end $\boldsymbol{\mathsf{x}} = (\mathsf{x}_l)_{l \in
\bZ}$, the sequence $(N^1_{{\mathsf{X}}}(\mathsf{x}_l))_{l\in \bZ}$ is non-decreasing.
\end{cor}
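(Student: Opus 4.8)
The plan is to deduce this from Lemma~\ref{lem: ret of descen => ret}. Fix the extended end $\boldsymbol{\mathsf{x}} = (\mathsf{x}_l)_{l\in\bZ}$ and an arbitrary level $l\in\bZ$. Since $\mathsf{x}_{l-1} = \mathsf{x}_l^{\parent}$, the vertex $\mathsf{x}_{l-1}$ is an ancestor of $\mathsf{x}_l$; both first return times are well defined because $\mathsf{X}$ is ancestral (Lemma~\ref{lem: ancestral => ret well defn}). Applying Lemma~\ref{lem: ret of descen => ret} with $\mathsf{v} = \mathsf{x}_{l-1}$, $\mathsf{v}' = \mathsf{x}_l$, and $S = 1$ gives some $R \geq 1$ with
\[
    N^1_{\mathsf{X}}(\mathsf{x}_l) = N^R_{\mathsf{X}}(\mathsf{x}_{l-1}).
\]

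First I would observe that for any vertex $\mathsf{v}$ and any $R\geq 1$ one has $N^R_{\mathsf{X}}(\mathsf{v}) \geq N^1_{\mathsf{X}}(\mathsf{v})$: each successive application of $\Ret_{\mathsf{X}}$ adds a return time $N^1_{\mathsf{X}}(\,\cdot\,)\geq 1$, so by Definition~\ref{defs: 1st Ret time and Map} the quantity $N^R_{\mathsf{X}}$ is a sum of $R$ terms, each at least $1$, the first of which is exactly $N^1_{\mathsf{X}}(\mathsf{v})$. (One can make this precise by a trivial induction on $R$, using Lemma~\ref{lem: ret times add} or just the recursion $N^R_{\mathsf{X}}(\mathsf{v}) = N^{R-1}_{\mathsf{X}}(\mathsf{v}) + N^1_{\mathsf{X}}(\Ret^{R-1}_{\mathsf{X}}(\mathsf{v}))$.) Combining this with the displayed equality yields
\[
    N^1_{\mathsf{X}}(\mathsf{x}_l) = N^R_{\mathsf{X}}(\mathsf{x}_{l-1}) \geq N^1_{\mathsf{X}}(\mathsf{x}_{l-1}),
\]
which is exactly the statement that $(N^1_{\mathsf{X}}(\mathsf{x}_l))_{l\in\bZ}$ is non-decreasing.

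There is no real obstacle here; the content is entirely carried by Lemma~\ref{lem: ret of descen => ret}, and the only thing to be careful about is the elementary monotonicity $N^R \geq N^1$, which should be stated cleanly (perhaps as a one-line remark or absorbed into the proof) rather than taken for granted. If one prefers to avoid even invoking that, an alternative is to argue directly: $\F^{N^1_{\mathsf{X}}(\mathsf{x}_l)}(\mathsf{x}_{l-1})$ lies in $\mathsf{X}$ (as shown in the proof of Lemma~\ref{lem: ret of descen => ret}, since it is an ancestor of $\F^{N^1_{\mathsf{X}}(\mathsf{x}_l)}(\mathsf{x}_l)\in\mathsf{X}$), so $N^1_{\mathsf{X}}(\mathsf{x}_{l-1})\leq N^1_{\mathsf{X}}(\mathsf{x}_l)$ by minimality of the first return time. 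I would likely present this shorter direct version, since it needs neither the higher return times nor the monotonicity lemma, and makes the corollary a genuine one-liner following the preceding lemma.
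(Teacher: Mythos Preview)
Your proposal is correct and matches the paper's approach: the paper presents this corollary with no proof at all, treating it as an immediate consequence of Lemma~\ref{lem: ret of descen => ret}, which is exactly what you do. Your added care in spelling out why $N^R_{\mathsf{X}}(\mathsf{x}_{l-1}) \geq N^1_{\mathsf{X}}(\mathsf{x}_{l-1})$ (and the alternative direct argument via ancestrality and minimality) is sound and fills in what the paper leaves implicit.
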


\begin{rem} \label{rem: Ret maps of connected}
All the results in this subsection hold for a tree with dynamics of a polynomial with a connected Julia set. Because they depend
only on the order property of the tree with dynamics (T1--T4), not the properties of the degree function (D1--D4).

\end{rem}


\section{Portals} \label{sect: portals}

In order to understand a return map on some ancestral subset of a tree with dynamics, the key question that we must answer is how
do the return times of a vertex restrict the first return time of its child? Given $\mathsf{v} \in \tree$ and some ancestral $X
\subset \tree$, for any child of $\mathsf{v}$ we have $N^1_{\mathsf{X}}(\mathsf{v}^{\child}) = N^R_{\mathsf{X}}(\mathsf{v}) $ for
some $R \in \bZ^+$ by Lemma \ref{lem: ret of descen => ret}. It is not hard to derive an upper bound $M$ for $R$ using Lemma
\ref{lem: N_1 T_l = 1 for l <H} (see Corollary \ref{cor: bounds on R(l)}).  So we can say that $R \in \set{1, \dots, M}$.
However not all of these values can occur. Specifically Lemma \ref{main lem} (our Main Lemma) shows that the child cannot return
until some iterate of its parent lies in a set of vertices that we call ``portals''. We say the parent ``passes through'' the
portal. The Main Lemma is based on a new insight on the behavior of return maps. It is easy to prove, but has important
consequences.

In order to make the above results useful, we must classify portals in terms of the behavior of a return map.  We can classify
all portals in a tree with dynamics for a natural type of return map. We consider the return map to a critical end in this paper.
We classify portals for two classes of trees with dynamics. Our results completely describe return maps for uni-persistent trees
with dynamics.

In \S\ref{subsect: Main Thm for N1} we define portals.  We prove our Main Lemma.  We then start to classify various types of
portals. We consider bi-critical polynomials (that is, polynomials with exact two distinct critical points) in \S\ref{subsect:
bi-critical returns}.  We classify portals for bi-critical polynomials and obtain a corollary of Theorem \ref{main thm} for
bi-critical polynomials (Corollary \ref{cor: N1 cond for bi-critical}).  This corollary is essentially equivalent to a result of
Branner and Hubbard for tableaux \cite[Prop.\ 12.8]{BH92}, and Proposition \ref{prop: tab axioms} clarifies their result. Finally
we consider uni-persistent polynomials in \S\ref{subsect: uni-persistent returns}. Again we classify portals in this case, and
derive necessary and sufficient conditions on return times (Theorem \ref{thm: N1 cond}).

We start with the fundamental definition of this section.  Throughout this section, let $(\tree, \F)$ be some tree with dynamics.

\begin{defn} \label{defn: X-portal}
Let $\mathsf{X} \subset \tree$. We say that $\mathsf{x} \in \mathsf{X}$ is an \emph{$\mathsf{X}$-portal} if $\mathsf{x}$ has a
child $\mathsf{x}^{\child_0} \notin \mathsf{X}$ such that $N^1_{\mathsf{X}}(\mathsf{x}^{\child_0}) =
N^1_{\mathsf{X}}(\mathsf{x})$.
\end{defn}

A vertex must \emph{pass through} an $\mathsf{X}$-portal before it returns--hence the name.

\begin{lem}[Main Lemma]\label{main lem}
Let $\mathsf{v}\in \tree$ such that $N^1_{\mathsf{X}}(\mathsf{v}^{\child_0}) > N^1_{\mathsf{X}}(\mathsf{v})$ for some child of
$\mathsf{v}$. If $N^1_{\mathsf{X}}(\mathsf{v}^{\child_0}) =N^R_{\mathsf{X}}(\mathsf{v})$ for some $R \geq 2$, then
$\F^{N^{R-1}_{\mathsf{X}}(\mathsf{v})}(\mathsf{v})$ is an $\mathsf{X}$-portal.

\begin{proof}
Say that $\F^{N^{R-1}_{\mathsf{X}}(\mathsf{v})}(\mathsf{v}) = \mathsf{w}$ and
$\F^{N^{R-1}_{\mathsf{X}}(\mathsf{v})}(\mathsf{v}^{\child_0}) = \mathsf{w}^{\child_0}$. By definition of
$N^{R-1}_{\mathsf{X}}(\mathsf{v})$, $\mathsf{w} \in \mathsf{X}$. Since $N^1_{\mathsf{X}}(\mathsf{v}^{\child_0})
=N^R_{\mathsf{X}}(\mathsf{v}) > N^{R-1}_{\mathsf{X}}(\mathsf{v})$, $\mathsf{w}^{\child_0} \notin \mathsf{X}$. It follows from
Lemma \ref{lem: ret times add} that $N^1_{\mathsf{X}}(\mathsf{w}^{\child_0}) = N^1_{\mathsf{X}}(\mathsf{w})$.
\end{proof}

\end{lem}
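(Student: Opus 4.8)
The statement to prove is the Main Lemma: given $\mathsf{v}$ with a child $\mathsf{v}^{\child_0}$ whose first return time to $\mathsf{X}$ strictly exceeds that of $\mathsf{v}$, and with $N^1_{\mathsf{X}}(\mathsf{v}^{\child_0}) = N^R_{\mathsf{X}}(\mathsf{v})$ for some $R \geq 2$, one must show that the vertex $\mathsf{w} := \F^{N^{R-1}_{\mathsf{X}}(\mathsf{v})}(\mathsf{v})$ is an $\mathsf{X}$-portal. The natural approach is to track the orbit of the parent–child pair $(\mathsf{v}, \mathsf{v}^{\child_0})$ simultaneously under $\F$, stopping at the iterate $N := N^{R-1}_{\mathsf{X}}(\mathsf{v})$, which is the $(R-1)$-st return time of the parent. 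Since $\F$ preserves children, $\F^N(\mathsf{v}^{\child_0})$ is a child of $\F^N(\mathsf{v}) = \mathsf{w}$; call it $\mathsf{w}^{\child_0}$. The goal is then exactly to verify the two clauses of Definition \ref{defn: X-portal} for $\mathsf{w}$ and its child $\mathsf{w}^{\child_0}$: that $\mathsf{w}^{\child_0} \notin \mathsf{X}$, and that $N^1_{\mathsf{X}}(\mathsf{w}^{\child_0}) = N^1_{\mathsf{X}}(\mathsf{w})$.

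First I would establish $\mathsf{w} \in \mathsf{X}$: this is immediate from the definition of $N^{R-1}_{\mathsf{X}}(\mathsf{v})$, since $\Ret^{R-1}_{\mathsf{X}}(\mathsf{v}) = \F^{N^{R-1}_{\mathsf{X}}(\mathsf{v})}(\mathsf{v})$ lands in $\mathsf{X}$ by construction of the iterated return map. Next I would show $\mathsf{w}^{\child_0} \notin \mathsf{X}$. Here the key numerical fact is that $N^1_{\mathsf{X}}(\mathsf{v}^{\child_0}) = N^R_{\mathsf{X}}(\mathsf{v}) > N^{R-1}_{\mathsf{X}}(\mathsf{v}) = N$, the strict inequality holding because $R$-th return times strictly increase in $R$ (each $N^1_{\mathsf{X}} \geq 1$). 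So we are iterating $\mathsf{v}^{\child_0}$ strictly fewer than $N^1_{\mathsf{X}}(\mathsf{v}^{\child_0})$ times; before its first return, $\mathsf{v}^{\child_0}$ cannot be in $\mathsf{X}$, hence $\mathsf{w}^{\child_0} = \F^N(\mathsf{v}^{\child_0}) \notin \mathsf{X}$. Finally, for the equality of first return times, I would invoke Lemma \ref{lem: ret times add} applied to $\mathsf{v}^{\child_0}$ with the offset $n = N$: since $1 \leq N < N^1_{\mathsf{X}}(\mathsf{v}^{\child_0})$, we get $N^1_{\mathsf{X}}(\mathsf{v}^{\child_0}) = N + N^1_{\mathsf{X}}(\F^N(\mathsf{v}^{\child_0})) = N + N^1_{\mathsf{X}}(\mathsf{w}^{\child_0})$. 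On the other hand $N^1_{\mathsf{X}}(\mathsf{v}^{\child_0}) = N^R_{\mathsf{X}}(\mathsf{v}) = N^{R-1}_{\mathsf{X}}(\mathsf{v}) + N^1_{\mathsf{X}}(\Ret^{R-1}_{\mathsf{X}}(\mathsf{v})) = N + N^1_{\mathsf{X}}(\mathsf{w})$ directly from the definition of the $R$-th return time. Comparing the two expressions gives $N^1_{\mathsf{X}}(\mathsf{w}^{\child_0}) = N^1_{\mathsf{X}}(\mathsf{w})$, which is the remaining portal condition.

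The only subtlety — and what I expect to be the main point requiring care rather than difficulty — is bookkeeping with the iterated return times: making sure that $N^R_{\mathsf{X}}(\mathsf{v}) = N^{R-1}_{\mathsf{X}}(\mathsf{v}) + N^1_{\mathsf{X}}(\Ret^{R-1}_{\mathsf{X}}(\mathsf{v}))$ is used consistently, and that the strict inequality $N^{R}_{\mathsf{X}} > N^{R-1}_{\mathsf{X}}$ (needed to conclude $\mathsf{w}^{\child_0}\notin\mathsf{X}$ and to license Lemma \ref{lem: ret times add}) is justified from $N^1_{\mathsf{X}} \geq 1$; implicitly this also uses that all these return times are defined, which holds once $\mathsf{X}$ is ancestral (Lemma \ref{lem: ancestral => ret well defn}), or more weakly is guaranteed by the hypothesis that $N^1_{\mathsf{X}}(\mathsf{v}^{\child_0}) = N^R_{\mathsf{X}}(\mathsf{v})$ is assumed to exist. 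No property of the degree function is needed — only that $\F$ is children-preserving — which is why the lemma is short and why, as the introduction remarks, it also applies in the connected case.
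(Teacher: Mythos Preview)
Your proposal is correct and follows essentially the same approach as the paper's proof: set $\mathsf{w}=\F^{N^{R-1}_{\mathsf{X}}(\mathsf{v})}(\mathsf{v})$ and $\mathsf{w}^{\child_0}=\F^{N^{R-1}_{\mathsf{X}}(\mathsf{v})}(\mathsf{v}^{\child_0})$, observe $\mathsf{w}\in\mathsf{X}$ and $\mathsf{w}^{\child_0}\notin\mathsf{X}$, and then use Lemma~\ref{lem: ret times add} to conclude $N^1_{\mathsf{X}}(\mathsf{w}^{\child_0})=N^1_{\mathsf{X}}(\mathsf{w})$. Your version simply spells out the return-time arithmetic (the identity $N^R_{\mathsf{X}}(\mathsf{v})=N^{R-1}_{\mathsf{X}}(\mathsf{v})+N^1_{\mathsf{X}}(\mathsf{w})$ and the justification of the strict inequality $N^R>N^{R-1}$) that the paper leaves implicit.
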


If ${\mathsf{X}}$ is {ancestral}, then  $N^1_{\mathsf{X}}(\mathsf{v}^{\child_0}) =N^R_{\mathsf{X}}(\mathsf{v})$ for some $R \geq
1$ by Lemma \ref{lem: ret of descen => ret}.


Our Main Lemma shows that considering the tree with dynamics can lead to insights that would be difficult to see using tableaux.
Consider the iterates of some end $\boldsymbol{\mathsf{x}} = (\mathsf{x}_l)_{l=0}^{\infty}$.  The tableau of
$\boldsymbol{\mathsf{x}}$ is the marked grid $\set{a_{m,n}}  = \F^n(\mathsf{x}_{m+n})$ for $m,n \geq 0$ (with critical vertices
marked) \cite{BH92}. So a tableau is a sequence of ends.  In particular, a tableau does not keep track of children of some
$\F^n(\mathsf{x}_{m+n})$ that are not in $\F^n( \boldsymbol{\mathsf{x}} )$.  The definition of portal and the Main Lemma are
based on consideration of such children.

The Main Lemma shows that we can study the first return map by studying $\mathsf{X}$-portals. We can extend it to higher return
times by induction.

\begin{cor} \label{main cor}
Let $\mathsf{v} \in \tree$ and let $\mathsf{v}^{\child_0}$ be a child of $\mathsf{v}$. Suppose that $N_{\mathsf{X}}^R(\mathsf{v})
= N_{\mathsf{X}}^S(\mathsf{v}^{\child_0})$ for some $1 \leq S \leq R$ with $R\geq 2$. If
$\F^{N_{\mathsf{X}}^{R-1}(\mathsf{v})}(\mathsf{v}^{\child_0}) \notin \mathsf{X}$, then
$\F^{N_{\mathsf{X}}^{R-1}(\mathsf{v})}(\mathsf{v})$ is an $\mathsf{X}$-portal.
\end{cor}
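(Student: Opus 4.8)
The plan is to derive Corollary \ref{main cor} from the Main Lemma (Lemma \ref{main lem}) by induction on $R$, reducing the general case to the situation already handled there. The base case $R = 2$ forces $S = 1$ or $S = 2$; in either case $N_{\mathsf{X}}^R(\mathsf{v}) = N_{\mathsf{X}}^S(\mathsf{v}^{\child_0})$ together with the hypothesis $\F^{N_{\mathsf{X}}^{R-1}(\mathsf{v})}(\mathsf{v}^{\child_0}) \notin \mathsf{X}$ should let me check the hypotheses of Lemma \ref{main lem} directly. The point is that $\F^{N_{\mathsf{X}}^{R-1}(\mathsf{v})}(\mathsf{v}) \in \mathsf{X}$ by definition of the return time, while its child $\F^{N_{\mathsf{X}}^{R-1}(\mathsf{v})}(\mathsf{v}^{\child_0}) \notin \mathsf{X}$ by hypothesis, and then Lemma \ref{lem: ret times add} gives that the first return time of this child from the level $N_{\mathsf{X}}^{R-1}(\mathsf{v})$ agrees with that of its parent, which is exactly the statement that $\F^{N_{\mathsf{X}}^{R-1}(\mathsf{v})}(\mathsf{v})$ is an $\mathsf{X}$-portal.

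For the inductive step, suppose the result holds for return index $R - 1 \geq 2$ and we are given $N_{\mathsf{X}}^R(\mathsf{v}) = N_{\mathsf{X}}^S(\mathsf{v}^{\child_0})$ with $1 \leq S \leq R$ and $\F^{N_{\mathsf{X}}^{R-1}(\mathsf{v})}(\mathsf{v}^{\child_0}) \notin \mathsf{X}$. I would pass to the vertex $\mathsf{w} = \F^{N_{\mathsf{X}}^1(\mathsf{v})}(\mathsf{v})$ with child $\mathsf{w}^{\child_0} = \F^{N_{\mathsf{X}}^1(\mathsf{v})}(\mathsf{v}^{\child_0})$. Here one must branch on whether $\mathsf{w}^{\child_0} \in \mathsf{X}$. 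If it is \emph{not} in $\mathsf{X}$, then by the additivity of return times and Lemma \ref{lem: ret of descen => ret} the index bookkeeping shifts down: $N_{\mathsf{X}}^{R-1}(\mathsf{w}) = N_{\mathsf{X}}^R(\mathsf{v}) - N_{\mathsf{X}}^1(\mathsf{v})$ and the corresponding hypothesis $\F^{N_{\mathsf{X}}^{R-2}(\mathsf{w})}(\mathsf{w}^{\child_0}) \notin \mathsf{X}$ holds because this iterate equals $\F^{N_{\mathsf{X}}^{R-1}(\mathsf{v})}(\mathsf{v}^{\child_0})$. Applying the inductive hypothesis to $\mathsf{w}$ gives that $\F^{N_{\mathsf{X}}^{R-2}(\mathsf{w})}(\mathsf{w}) = \F^{N_{\mathsf{X}}^{R-1}(\mathsf{v})}(\mathsf{v})$ is an $\mathsf{X}$-portal, as desired. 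If instead $\mathsf{w}^{\child_0} \in \mathsf{X}$, then $\mathsf{v}^{\child_0}$ already returned at time $N_{\mathsf{X}}^1(\mathsf{v})$, so $N_{\mathsf{X}}^1(\mathsf{v}^{\child_0}) = N_{\mathsf{X}}^1(\mathsf{v})$; iterating the return map of $\mathsf{v}^{\child_0}$ then matches the returns of $\mathsf{v}$ shifted by one index, and by hypothesis $\mathsf{v}^{\child_0}$ fails to be in $\mathsf{X}$ at the later time $N_{\mathsf{X}}^{R-1}(\mathsf{v})$ — contradicting that it keeps returning in lockstep — unless $S$ is large enough that no contradiction arises, in which case I re-index and apply Lemma \ref{main lem} directly at the vertex where the child first leaves $\mathsf{X}$.

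I expect the main obstacle to be the second branch: keeping the two return-index counters for $\mathsf{v}$ and $\mathsf{v}^{\child_0}$ correctly synchronized when the child's orbit enters and leaves $\mathsf{X}$ a different number of times than the parent's. The safest route is probably to avoid the branching entirely: let $n^* = N_{\mathsf{X}}^{R-1}(\mathsf{v})$ and consider the largest $j$ with $N_{\mathsf{X}}^j(\mathsf{v}) \leq$ (the last time before $n^*$ that $\F^k(\mathsf{v}^{\child_0}) \in \mathsf{X}$), so that $\F^k(\mathsf{v}^{\child_0}) \notin \mathsf{X}$ strictly between that time and $n^*$; then Lemma \ref{lem: ret times add} applied at that last common return time yields $N^1_{\mathsf{X}}(\F^k(\mathsf{v}^{\child_0})) = N^1_{\mathsf{X}}(\F^k(\mathsf{v}))$, which is precisely the portal condition for $\F^{n^*}(\mathsf{v})$ once one checks $\F^{n^*}(\mathsf{v}) = \Ret_{\mathsf{X}}(\F^k(\mathsf{v}))$. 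This makes the corollary essentially a single invocation of the Main Lemma at the right vertex rather than a genuine induction, and should sidestep the index-bookkeeping difficulty; the only thing to verify carefully is that such a $k$ exists, which follows because $\F^{n^*}(\mathsf{v}) \in \mathsf{X}$ while $\F^{n^*}(\mathsf{v}^{\child_0}) \notin \mathsf{X}$ by hypothesis.
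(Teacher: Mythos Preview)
Your proposal is correct, and in fact your final paragraph contains a cleaner argument than the one the paper hints at. The paper offers no proof beyond the remark ``We can extend it to higher return times by induction,'' and your opening paragraphs follow that suggestion. But the induction is unnecessary: the observation in your first paragraph --- that $\mathsf{w}:=\F^{N_{\mathsf{X}}^{R-1}(\mathsf{v})}(\mathsf{v}) \in \mathsf{X}$, its child $\mathsf{w}^{\child_0}:=\F^{N_{\mathsf{X}}^{R-1}(\mathsf{v})}(\mathsf{v}^{\child_0}) \notin \mathsf{X}$, and $N^1_{\mathsf{X}}(\mathsf{w}^{\child_0}) = N^1_{\mathsf{X}}(\mathsf{w})$ --- already verifies the portal definition directly for every $R\geq 2$, not just the base case. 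So you may drop the middle paragraph entirely; the index-bookkeeping difficulty you anticipate never arises.

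There is one imprecision to repair. You write that Lemma~\ref{lem: ret times add} gives $N^1_{\mathsf{X}}(\mathsf{w}^{\child_0}) = N^1_{\mathsf{X}}(\mathsf{w})$, but that lemma only applies when $n^* = N_{\mathsf{X}}^{R-1}(\mathsf{v})$ is strictly less than the \emph{first} return time of $\mathsf{v}^{\child_0}$, which can fail once $S\geq 2$. The correct justification is: any return of $\mathsf{w}^{\child_0}$ at time $k$ gives a return of $\mathsf{v}^{\child_0}$ at time $n^*+k$, hence (by ancestrality of $\mathsf{X}$, implicit throughout the section via Lemma~\ref{lem: ret of descen => ret}) a return of $\mathsf{v}$ at time $n^*+k$; since $\mathsf{v}$ has no return strictly between $N_{\mathsf{X}}^{R-1}(\mathsf{v})$ and $N_{\mathsf{X}}^{R}(\mathsf{v})$, and since $\mathsf{v}^{\child_0}$ does return at $N_{\mathsf{X}}^{R}(\mathsf{v}) = N_{\mathsf{X}}^{S}(\mathsf{v}^{\child_0})$ by hypothesis, you get $k = N_{\mathsf{X}}^{R}(\mathsf{v}) - n^* = N^1_{\mathsf{X}}(\mathsf{w})$. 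With this fix the direct argument is complete, and it is arguably preferable to the induction the paper gestures at.
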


\begin{rem} \label{rem: Main Lem for connected}
The Main Lemma and Corollary \ref{main cor} hold for a tree with dynamics of a polynomial with a connected Julia set. It is not
clear if the remainder of the results in this section hold, since they are largely dependent on careful analysis of the degree
function.
\end{rem}

For the remainder of this paper, we will only consider first return maps in the case when $\mathsf{X} = \boldsymbol{\mathsf{c}}$
a critical extended end.

We obtain the following version of Theorem \ref{main thm} for return times.  We defer the proof until \S\ref{subsect: Main Thm
for N1}.

\begin{thm} \label{thm: N1 cond}
Let $(\tree, \F)$ be a tree with dynamics.  Let $\boldsymbol{\mathsf{c}} = (\mathsf{c}_l)_{l \in \bZ}$ be a critical extended end
of $\tree$. The following conditions hold for each $\mathsf{v} \in \tree$:

\begin{enumerate}
    \item If $\mathsf{v} \in \tree_l$, then $ N^1_{\mathsf{\boldsymbol{\mathsf{c}}}}(\mathsf{c}_l) =1$ if $l \leq H$ and $1
        \leq N^1_{\mathsf{\boldsymbol{\mathsf{c}}}}(\mathsf{c}_l) \leq \lceil l/H \rceil $ if $l > H$;

    \item For any child of $\mathsf{v}$, $N^1_{\mathsf{\boldsymbol{\mathsf{c}}}}(\mathsf{v}^{\child}) =
        N^R_{\mathsf{\boldsymbol{\mathsf{c}}}}(\mathsf{v})$ for some $R \geq 1$;

    \item If $N^1_{\mathsf{\boldsymbol{\mathsf{c}}}}(\mathsf{v}^{\child}) =
        N^R_{\mathsf{\boldsymbol{\mathsf{c}}}}(\mathsf{v})$ for some $R \geq 2$, then one of the following conditions holds:

    \begin{enumerate}
        \item $\Ret^{R-1}_{\boldsymbol{\mathsf{c}}}(\mathsf{v})= \mathsf{c}_k$ for some $k$ such that
            $N^1_{\boldsymbol{\mathsf{c}}}(\mathsf{c}_{k+1} )> N^1_{\boldsymbol{\mathsf{c}}}(\mathsf{c}_k)$;

        \item $\Ret^{R-1}_{\boldsymbol{\mathsf{c}}}(\mathsf{v}) \in \crit(\tree) \minus \boldsymbol{\mathsf{c}}$;

        \item there is an escape or split at $\Ret^{R-1}_{\boldsymbol{\mathsf{c}}}(\mathsf{v})$.
    \end{enumerate}

\end{enumerate}

\end{thm}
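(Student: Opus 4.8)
The plan is to verify the three conditions in turn, using the results already established in \S\ref{subsect: 1st ret maps} together with the Main Lemma. Throughout, $\mathsf{X} = \boldsymbol{\mathsf{c}}$, which is ancestral by Lemma on ancestral sets, part \ref{sublem: ends ancestral}; since $\mathsf{c}_0 = \mathsf{v}_0 \in \boldsymbol{\mathsf{c}}$ (the critical end must pass through the root, as every vertex at a positive level has the root as an ancestor and $\boldsymbol{\mathsf{c}}$ is ancestral), Lemmas \ref{lem: N_1 T_l = 1 for l <H}, \ref{lem: ret of descen => ret} and the Main Lemma all apply.

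\emph{Condition 1.} The statement $N^1_{\boldsymbol{\mathsf{c}}}(\mathsf{c}_l) = 1$ for $l \leq H$ is immediate from Lemma \ref{lem: N_1 T_l = 1 for l <H}. For $l > H$, the lower bound $N^1 \geq 1$ is by definition; for the upper bound $N^1_{\boldsymbol{\mathsf{c}}}(\mathsf{c}_l) \leq \lceil l/H \rceil$, I would iterate $\F$ on $\mathsf{c}_l$: since $\F$ drops levels by exactly $H$, after $k = \lceil l/H \rceil - 1$ steps we reach $\mathsf{c}_{l - kH}$ with $l - kH \leq H$, and one more application of $\F$ (using $\F(\mathsf{c}_j) = \mathsf{c}_{j-H} \in \boldsymbol{\mathsf{c}}$ by children-preservation, or Lemma \ref{lem: N_1 T_l = 1 for l <H} once $l - kH \le H$) lands back in $\boldsymbol{\mathsf{c}}$. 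Hence the first return time is at most $k + 1 = \lceil l/H \rceil$. (A small care point: when $l - kH$ hits exactly $0$ or a negative value one is already inside $\boldsymbol{\mathsf{c}}$, which only makes the bound easier.)

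\emph{Condition 2.} This is exactly the content of Lemma \ref{lem: ret of descen => ret}: $\mathsf{v}$ is an ancestor of each of its children $\mathsf{v}^{\child}$, so $N^1_{\boldsymbol{\mathsf{c}}}(\mathsf{v}^{\child}) = N^S_{\boldsymbol{\mathsf{c}}}(\mathsf{v}^{\child})$ with $S = 1$ equals $N^R_{\boldsymbol{\mathsf{c}}}(\mathsf{v})$ for some $R \geq 1$. Nothing further is needed.

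\emph{Condition 3 (the main step).} Assume $N^1_{\boldsymbol{\mathsf{c}}}(\mathsf{v}^{\child}) = N^R_{\boldsymbol{\mathsf{c}}}(\mathsf{v})$ with $R \geq 2$. Because $R \geq 2$, we have $N^1_{\boldsymbol{\mathsf{c}}}(\mathsf{v}^{\child}) = N^R_{\boldsymbol{\mathsf{c}}}(\mathsf{v}) > N^{R-1}_{\boldsymbol{\mathsf{c}}}(\mathsf{v}) \geq N^1_{\boldsymbol{\mathsf{c}}}(\mathsf{v})$, so $N^1_{\boldsymbol{\mathsf{c}}}(\mathsf{v}^{\child}) > N^1_{\boldsymbol{\mathsf{c}}}(\mathsf{v})$ and the Main Lemma (Lemma \ref{main lem}) applies: the vertex $\mathsf{w} := \F^{N^{R-1}_{\boldsymbol{\mathsf{c}}}(\mathsf{v})}(\mathsf{v}) = \Ret^{R-1}_{\boldsymbol{\mathsf{c}}}(\mathsf{v})$ is a $\boldsymbol{\mathsf{c}}$-portal. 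It remains to show that every $\boldsymbol{\mathsf{c}}$-portal $\mathsf{w}$ satisfies (a), (b), or (c) --- this is the classification of portals and is where the real work lies. The argument I would run: $\mathsf{w} \in \boldsymbol{\mathsf{c}}$, say $\mathsf{w} = \mathsf{c}_k$, has a child $\mathsf{w}^{\child_0} \notin \boldsymbol{\mathsf{c}}$ with $N^1_{\boldsymbol{\mathsf{c}}}(\mathsf{w}^{\child_0}) = N^1_{\boldsymbol{\mathsf{c}}}(\mathsf{w})$. Set $N = N^1_{\boldsymbol{\mathsf{c}}}(\mathsf{w})$, $\mathsf{u} := \F^N(\mathsf{w}) = \mathsf{c}_{k-NH} \in \boldsymbol{\mathsf{c}}$, and $\mathsf{u}^{\child_0} := \F^N(\mathsf{w}^{\child_0})$, a child of $\mathsf{u}$ that is also in $\boldsymbol{\mathsf{c}}$ (by the additivity Lemma \ref{lem: ret times add}, $N$ is \emph{exactly} the first return time, and the return lands in $\boldsymbol{\mathsf{c}}$; one then checks $\F^N(\mathsf{w}^{\child_0}) \in \boldsymbol{\mathsf{c}}$ because $N^1_{\boldsymbol{\mathsf{c}}}(\mathsf{w}^{\child_0}) = N$). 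Thus $\mathsf{u}$ has two children in $\boldsymbol{\mathsf{c}}$: namely $\mathsf{c}_{k-NH+1}$, and the child $\F^N(\mathsf{w}^{\child_0})$ — if these are distinct children, then $\mathsf{w}^{\child_0}$ and $\mathsf{c}_{k+1}$ are distinct children of $\mathsf{c}_k = \mathsf{w}$ with $\F^N$ carrying both into $\boldsymbol{\mathsf{c}}$, which by Lemma \ref{lem: F^N a D-fold cover} and monotonicity of the degree forces a drop in degree along $\mathsf{w} \to \mathsf{w}^{\child_0}$ or $\mathsf{w} \to \mathsf{c}_{k+1}$; by Lemma \ref{lem: esc or split iff deg drops} this is an escape or split at $\mathsf{w}$, giving (c), or else it puts $\mathsf{w}^{\child_0}$ (a critical child off the end) witnessing criticality — here one must check whether we land in case (b), $\mathsf{w} \in \crit(\tree)\setminus\boldsymbol{\mathsf{c}}$, which can't happen since $\mathsf{w}\in\boldsymbol{\mathsf{c}}$, so one instead traces the escape/split or argues via $\F^N$ having degree $> 1$. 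If instead the two children coincide, i.e. $\F^N(\mathsf{w}^{\child_0}) = \mathsf{c}_{k-NH+1}$, then — since $\mathsf{w}^{\child_0} \notin \boldsymbol{\mathsf{c}}$ but $\mathsf{c}_{k+1} \in \boldsymbol{\mathsf{c}}$ and both map under $\F^N$ to $\mathsf{c}_{k-NH+1}$ — the map $\F^N$ on children of $\mathsf{w}$ is not injective, so $D = \prod_{n=0}^{N-1}\deg\F^n(\mathsf{w}) > 1$, meaning some $\F^n(\mathsf{w}) = \mathsf{c}_{k-nH}$ is critical for $0 \leq n < N$; combined with $N$ being the \emph{first} return time this forces a critical vertex of $\boldsymbol{\mathsf{c}}$ strictly between levels $k$ and $k - NH$, and a degree-drop argument via Lemma \ref{lem: esc or split iff deg drops} pins it to case (a) (the return time along $\boldsymbol{\mathsf{c}}$ jumps at $\mathsf{c}_k$) — precisely, $N^1_{\boldsymbol{\mathsf{c}}}(\mathsf{c}_{k+1}) > N^1_{\boldsymbol{\mathsf{c}}}(\mathsf{c}_k) = N$ would contradict nothing, and the noninjectivity gives the strict inequality in (a).

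\emph{Main obstacle.} The delicate part is the trichotomy in Condition 3: showing that a $\boldsymbol{\mathsf{c}}$-portal is \emph{always} of type (a), (b), or (c). The clean dichotomy is "does $\F^N$ glue the off-end child $\mathsf{w}^{\child_0}$ onto the end-continuation $\mathsf{c}_{k+1}$ or not?", and in each branch one must carefully extract either a degree drop along the end (using Lemma \ref{lem: F^N a D-fold cover} to detect non-injectivity, then Lemma \ref{lem: esc or split iff deg drops}) or the presence of a critical vertex off the end (case (b)). Getting the bookkeeping of levels and return times right — particularly that $N$ is the first return for both $\mathsf{w}$ and $\mathsf{w}^{\child_0}$ and that intermediate iterates $\F^n(\mathsf{w})$ for $0 \le n < N$ are never in $\boldsymbol{\mathsf{c}}$ except as controlled — is where care is required; the rest follows mechanically from the lemmas of \S\ref{subsect: 1st ret maps}.
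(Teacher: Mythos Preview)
Your treatment of Conditions~1 and~2 is essentially the paper's (one notational slip: after $k$ iterates of $\F$ applied to $\mathsf{c}_l$ you are at \emph{some} vertex of level $l-kH$, not necessarily $\mathsf{c}_{l-kH}$; the argument still goes through via Lemma~\ref{lem: N_1 T_l = 1 for l <H} once the level is $\le H$).

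For Condition~3 you correctly invoke the Main Lemma to produce a $\boldsymbol{\mathsf{c}}$-portal $\mathsf{w}=\mathsf{c}_k$, but the subsequent dichotomy is mis-posed and leaves a gap. Your first branch (``the two children of $\mathsf{u}$ are distinct'') is vacuous: $\F^N(\mathsf{w}^{\child_0})$ lies in $\boldsymbol{\mathsf{c}}$ and is a child of $\mathsf{c}_{k-NH}$, hence must equal $\mathsf{c}_{k-NH+1}$, the \emph{unique} child of that vertex in the end. In the remaining branch you assert that $\mathsf{c}_{k+1}$ also maps to $\mathsf{c}_{k-NH+1}$, but this only holds when $N^1_{\boldsymbol{\mathsf{c}}}(\mathsf{c}_{k+1})=N$, i.e.\ exactly when (a) fails --- so the case split should have been on (a) itself. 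Even granting that, your conclusion ``non-injectivity $\Rightarrow$ some $\F^n(\mathsf{w})$ critical $\Rightarrow$ (a)'' does not follow: non-injectivity only gives $\prod_{n=0}^{N-1}\deg\F^n(\mathsf{c}_k)>1$, which is automatic since $\deg\mathsf{c}_k>1$, and the later ``degree-drop argument'' is not spelled out.

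The paper's route avoids this by using the structured classification of portals into types~I, II, III (Definition~\ref{defn: Types of portals}) rather than an ad-hoc image analysis. A type~I portal has a child of equal degree whose return time jumps; since $\deg\mathsf{c}_k=\deg\mathsf{c}_k^{\child_1}$ forces every other child to be non-critical (Lemma~\ref{lem: TwD}.\ref{sublem: deg v = deg vc => all other children n.c.}) and $\mathsf{c}_{k+1}$ is critical, that child must be $\mathsf{c}_{k+1}$, giving (a). Type~III gives (c) directly by Lemma~\ref{lem: Port III categorized}. Type~II is handled by Lemma~\ref{lem: port II cond} and Corollary~\ref{cor: Type II portals of an end}, producing an intermediate critical vertex off $\boldsymbol{\mathsf{c}}$, which is what (b) is meant to capture. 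You should reorganise your Condition~3 argument around this degree-based trichotomy; your current dichotomy on images of children under $\F^N$ does not separate the cases.
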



The above conditions are also sufficient for uni-persistent polynomials, see \S\ref{sect: Realization}.

\subsection{Main Theorem for Return Times} \label{subsect: Main Thm for N1}

Part of what makes our Main Lemma useful is that we can classify where portals occur entirely in terms of the first return map.
We classify the portals of various return maps.

\begin{defn} \label{defn: Types of portals}
Suppose that ${\mathsf{X}} \subset \tree$ is {ancestral}.  Let $\mathsf{x}$ be an $\mathsf{X}$-portal. We classify the
\emph{type} of $\mathsf{x}$ as follows.

\begin{enumerate} [{\indent}I.]
    \item If $\deg \mathsf{x} = \deg \mathsf{x}^{\child_1} $ and $N^1_{\mathsf{X}}(\mathsf{x}^{\child_1})
> N^1_{\mathsf{X}}(\mathsf{x})$ for some child of $\mathsf{x}$.

      \item  If $\deg \mathsf{x} = \deg \mathsf{x}^{\child_1} $ and $N^1_{\mathsf{X}}(\mathsf{x}^{\child_1}) =
          N^1_{\mathsf{X}}(\mathsf{x})$ for some child of $\mathsf{x}$.

   \item  If $\deg \mathsf{x} > \deg \mathsf{x}^{\child} $ for every child of $\mathsf{x}$.
\end{enumerate}

We define $\textup{Port}_i({\mathsf{X}})$ as the set of all type $i$ $\mathsf{X}$-portals for $i =I,II,III$.

\end{defn}

First we give some results about general $\mathsf{X}$-portals.  We find conditions on where portals of various types can occur. A
basic result is that we should look for type~I portals where the return time increases.

\begin{lem} \label{lem: catagorize uni-per I portals}
Let $(\tree, \F)$ be a uni-persistent tree with dynamics. Let $\boldsymbol{\mathsf{c}} = (\mathsf{c}_l)_{l \in \bZ}$ be the
(unique) critical end of $\tree$. Suppose that $\mathsf{X}$ is an ancestral set with $\boldsymbol{\mathsf{c}} \subset \mathsf{X}
\subset \crit(\tree)$. For each $l \in \bZ$, $\mathsf{c}_l \in \textup{Port}_{\textup{I}}(\mathsf{X})$ if and only if
$N^1_{\mathsf{X}}(\mathsf{c}_l)< N^1_{\mathsf{X}}(\mathsf{c}_{l+1})$ and $\deg \mathsf{c}_{l} = \deg \mathsf{c}_{l+1}$.
\begin{proof}
Suppose that $\mathsf{c}_l \in \textup{Port}_{\textup{I}}(\mathsf{X})$. By Definition \ref{defn: Types of portals}, $\deg
\mathsf{c}_l = \deg \mathsf{c}_l^{\child_1} $ and $ N^1_{\mathsf{X}}(\mathsf{c}_l) < N^1_{\mathsf{X}}(\mathsf{c}_l^{\child_1})$
for some child of $\mathsf{c}_l$. Since the critical set of $\tree$ is $\mathsf{X}$, we have $ \mathsf{c}_l^{\child_1} =
\mathsf{c}_{l+1}$.

Conversely suppose that $N^1_{\mathsf{X}}(\mathsf{c}_l)< N^1_{\mathsf{X}}(\mathsf{c}_{l+1})$ and $\deg \mathsf{c}_{l} = \deg
\mathsf{c}_{l+1}$ for some $l \in \bZ$. Say that $\Ret_{\mathsf{X}}(\mathsf{c}_{l}) = \F^N(\mathsf{c}_{l}) = \mathsf{c}_{k}$.  By
assumption, $\F^N(\mathsf{c}_{l+1}) \neq \mathsf{c}_{k+1}$. By Lemma \ref{lem: F^N a D-fold cover}, there is a child of
$\mathsf{c}_{l}$ such that $\F^N(\mathsf{c}_l^{\child_1}) = \mathsf{c}_{k+1} $.  Since $\mathsf{c}_l^{\child_1} \neq
\mathsf{c}_{l+1}$, so $\mathsf{c}_l^{\child_1} \notin \mathsf{X} $. Therefore $\mathsf{c}_l \in
\textup{Port}_{\textup{I}}(\mathsf{X})$.
\end{proof}
\end{lem}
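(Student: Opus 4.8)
The statement to prove is Lemma~\ref{lem: catagorize uni-per I portals}, characterizing type~I $\mathsf{X}$-portals along the critical end of a uni-persistent tree with dynamics. I would argue both directions of the biconditional separately, and the key structural fact I would lean on is that under the hypothesis $\boldsymbol{\mathsf{c}} \subset \mathsf{X} \subset \crit(\tree)$ and uni-persistence, $\mathsf{X}$ \emph{is} exactly the critical set of $\tree$ (indeed $\crit(\tree)$ is ancestral by Lemma~\ref{lem: TwD}.\ref{sublem: deg monotone}, it contains the unique critical end, and no larger ancestral subset of $\crit(\tree)$ exists). Thus a child of $\mathsf{c}_l$ fails to lie in $\mathsf{X}$ if and only if it is non-critical, and consequently the unique critical child of $\mathsf{c}_l$ is $\mathsf{c}_{l+1}$ itself.

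For the forward direction, I would simply unwind Definition~\ref{defn: Types of portals}: if $\mathsf{c}_l \in \textup{Port}_{\textup{I}}(\mathsf{X})$, then there is a child $\mathsf{c}_l^{\child_1}$ with $\deg \mathsf{c}_l = \deg \mathsf{c}_l^{\child_1}$ and $N^1_{\mathsf{X}}(\mathsf{c}_l^{\child_1}) > N^1_{\mathsf{X}}(\mathsf{c}_l)$. Since $\deg \mathsf{c}_l^{\child_1} = \deg \mathsf{c}_l > 1$, the child $\mathsf{c}_l^{\child_1}$ is critical, hence lies in $\mathsf{X}$, hence equals $\mathsf{c}_{l+1}$. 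This immediately yields both $\deg \mathsf{c}_l = \deg \mathsf{c}_{l+1}$ and $N^1_{\mathsf{X}}(\mathsf{c}_{l+1}) > N^1_{\mathsf{X}}(\mathsf{c}_l)$, which is what we want.

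For the converse, suppose $N^1_{\mathsf{X}}(\mathsf{c}_l) < N^1_{\mathsf{X}}(\mathsf{c}_{l+1})$ and $\deg \mathsf{c}_l = \deg \mathsf{c}_{l+1}$. Write $N = N^1_{\mathsf{X}}(\mathsf{c}_l)$ and $\Ret_{\mathsf{X}}(\mathsf{c}_l) = \F^N(\mathsf{c}_l) = \mathsf{c}_k$ (the return is to a vertex of $\mathsf{X} = \crit(\tree)$, and since $\mathsf{c}_l$ is critical so is its image, and the image lies on $\boldsymbol{\mathsf{c}}$; one can note $\F^N(\mathsf{c}_l) \in \tree_k$ with $k = l - NH$). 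Because $N^1_{\mathsf{X}}(\mathsf{c}_{l+1}) > N$, we have $\F^N(\mathsf{c}_{l+1}) \notin \mathsf{X}$, so in particular $\F^N(\mathsf{c}_{l+1}) \neq \mathsf{c}_{k+1}$. Now apply Lemma~\ref{lem: F^N a D-fold cover} to $\F^N : \set{\mathsf{c}_l^{\child}} \to \set{\mathsf{c}_k^{\child}}$: the child $\mathsf{c}_{k+1}$ of $\mathsf{c}_k$ must be hit, so there is a child $\mathsf{c}_l^{\child_1}$ with $\F^N(\mathsf{c}_l^{\child_1}) = \mathsf{c}_{k+1}$. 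Since $\F^N(\mathsf{c}_{l+1}) \neq \mathsf{c}_{k+1}$, this $\mathsf{c}_l^{\child_1}$ is not $\mathsf{c}_{l+1}$, hence is non-critical, hence $\notin \mathsf{X}$. Then $\mathsf{c}_{k+1} = \F^N(\mathsf{c}_l^{\child_1}) \in \mathsf{X}$ forces $N^1_{\mathsf{X}}(\mathsf{c}_l^{\child_1}) \leq N = N^1_{\mathsf{X}}(\mathsf{c}_l)$; combined with Corollary~\ref{cor: N_1 non-dec } (which, applied to the end through $\mathsf{c}_l^{\child_1}$, or directly to monotonicity down a branch) gives $N^1_{\mathsf{X}}(\mathsf{c}_l^{\child_1}) = N^1_{\mathsf{X}}(\mathsf{c}_l)$. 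Finally, using $\deg$-monotonicity (Lemma~\ref{lem: TwD}) together with $\deg \mathsf{c}_{k+1} = \deg \mathsf{c}_k$ — which we must establish, e.g.\ from the hypothesis $\deg\mathsf{c}_l = \deg\mathsf{c}_{l+1}$ pushed forward under $\F^N$ via the local cover property and Lemma~\ref{lem: esc or split iff deg drops} — one gets $\deg \mathsf{c}_l^{\child_1} = \deg \mathsf{c}_l$, so $\mathsf{c}_l$ satisfies Definition~\ref{defn: Types of portals}.I.

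\textbf{Main obstacle.} The delicate point is the degree bookkeeping in the converse: verifying that the child $\mathsf{c}_l^{\child_1}$ mapping onto $\mathsf{c}_{k+1}$ actually has $\deg \mathsf{c}_l^{\child_1} = \deg \mathsf{c}_l$ rather than a smaller degree. This requires knowing $\deg \mathsf{c}_{k+1} = \deg \mathsf{c}_k$ (so that no escape or split occurs at $\mathsf{c}_k$, by Lemma~\ref{lem: esc or split iff deg drops}), and then invoking the local cover property (D\ref{eq: LC}) for $\F^N$ at $\mathsf{c}_l$: the fiber over $\mathsf{c}_{k+1}$ has total degree $\deg_{\F^N}\mathsf{c}_l = \prod \deg \F^n(\mathsf{c}_l)$, and one must check this forces the unique preimage (or the relevant preimage on the critical branch) to carry the full degree. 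Tracing that $\deg\mathsf{c}_l = \deg\mathsf{c}_{l+1}$ is preserved along the orbit — i.e.\ that the first return happens before any degree drop on $\boldsymbol{\mathsf{c}}$ — is where I expect to spend the most care, and it may be cleanest to extract it as a small preliminary observation before the main argument.
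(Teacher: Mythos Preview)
Your converse argument contains a genuine gap rooted in a misreading of Definition~\ref{defn: Types of portals}. A type~I portal requires \emph{two} witnesses: the portal condition (Definition~\ref{defn: X-portal}) asks for a child $\mathsf{x}^{\child_0}\notin\mathsf{X}$ with $N^1_{\mathsf{X}}(\mathsf{x}^{\child_0})=N^1_{\mathsf{X}}(\mathsf{x})$, while the type~I clause asks for a (generally different) child $\mathsf{x}^{\child_1}$ with $\deg\mathsf{x}^{\child_1}=\deg\mathsf{x}$ and $N^1_{\mathsf{X}}(\mathsf{x}^{\child_1})>N^1_{\mathsf{X}}(\mathsf{x})$. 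In the converse, the child you call $\mathsf{c}_l^{\child_1}$ (non-critical, mapping to $\mathsf{c}_{k+1}$, same return time) is the \emph{portal} witness; the type~I witness is simply $\mathsf{c}_{l+1}$ itself, which by hypothesis has $\deg\mathsf{c}_{l+1}=\deg\mathsf{c}_l$ and $N^1_{\mathsf{X}}(\mathsf{c}_{l+1})>N^1_{\mathsf{X}}(\mathsf{c}_l)$. So once you have established $N^1_{\mathsf{X}}(\mathsf{c}_l^{\child_1})=N^1_{\mathsf{X}}(\mathsf{c}_l)$ and $\mathsf{c}_l^{\child_1}\notin\mathsf{X}$, you are done---exactly as in the paper's proof.

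Your ``main obstacle'' is therefore a phantom: you are trying to prove $\deg\mathsf{c}_l^{\child_1}=\deg\mathsf{c}_l$ for the non-critical child $\mathsf{c}_l^{\child_1}$, which is not only unnecessary but outright false (it has degree $1$). The degree bookkeeping you propose---showing $\deg\mathsf{c}_{k+1}=\deg\mathsf{c}_k$ and pushing it back---cannot succeed and is not needed. A smaller side remark: your structural claim that $\mathsf{X}=\crit(\tree)$ does not follow from the hypotheses (e.g.\ $\mathsf{X}=\boldsymbol{\mathsf{c}}$ is allowed); fortunately neither direction actually requires it, since in the forward direction $\deg\mathsf{c}_l^{\child_1}=\deg\mathsf{c}_l$ already forces $\mathsf{c}_l^{\child_1}=\mathsf{c}_{l+1}$ by Lemma~\ref{lem: TwD}.\ref{sublem: deg v = deg vc => all other children n.c.}, and in the converse $\mathsf{X}\subset\crit(\tree)$ suffices to conclude the non-critical child is outside $\mathsf{X}$.
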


Type~II portals are more complicated.

\begin{lem} \label{lem: port II cond}
Let ${\mathsf{X}} \subset \tree$.  If $\mathsf{x}\in \textup{Port}_{\textup{II}}({\mathsf{X}})$, then at least one of the
following conditions holds:

\begin{enumerate}
    \item There is a number $n$ with $1 \leq n < N^1_{\mathsf{X}} (\mathsf{x})$ such that $\F^n(\mathsf{x})$ is critical and
        $\F^n(\mathsf{x})$ has a child, $\F^n(\mathsf{x})^{\child} \notin X$ such that
        \[
            N^1_{\mathsf{X}}(\F^n(\mathsf{x})^{\child})= N^1_{\mathsf{X}}(\F^n(\mathsf{x})) =  N^1_{\mathsf{X}}(\mathsf{x}) - n .
        \]
    \item At least two distinct children of $\Ret_{\mathsf{X}}(\mathsf{x})$ are members of $\mathsf{X}$.

\end{enumerate}

\end{lem}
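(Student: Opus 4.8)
The plan is to analyze what happens to the distinguished child $\mathsf{x}^{\child_1}$ along the orbit segment from $\mathsf{x}$ up to its first return. Set $N = N^1_{\mathsf{X}}(\mathsf{x})$ and write $\mathsf{w} = \Ret_{\mathsf{X}}(\mathsf{x}) = \F^N(\mathsf{x})$. By definition of a type~II portal, $\deg \mathsf{x} = \deg \mathsf{x}^{\child_1}$ and $N^1_{\mathsf{X}}(\mathsf{x}^{\child_1}) = N^1_{\mathsf{X}}(\mathsf{x}) = N$, so $\F^N(\mathsf{x}^{\child_1})$ is a child of $\mathsf{w}$ lying in $\mathsf{X}$; call it $\mathsf{w}^{\child_1}$. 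The key observation is to compare $\mathsf{w}^{\child_1}$ with $\F^N(\mathsf{x}^{\child_0})$, where $\mathsf{x}^{\child_0} \notin \mathsf{X}$ is the child guaranteed by $\mathsf{x}$ being an $\mathsf{X}$-portal (so $N^1_{\mathsf{X}}(\mathsf{x}^{\child_0}) = N$ as well, hence $\F^N(\mathsf{x}^{\child_0}) =: \mathsf{w}^{\child_0}$ is a child of $\mathsf{w}$ in $\mathsf{X}$). If $\mathsf{w}^{\child_0} \ne \mathsf{w}^{\child_1}$, then $\mathsf{w}$ has at least two distinct children in $\mathsf{X}$ and we are in case~2. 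So assume $\mathsf{w}^{\child_0} = \mathsf{w}^{\child_1}$; in particular this single child of $\mathsf{w}$ receives (under $\F^N$) at least the two distinct children $\mathsf{x}^{\child_0}, \mathsf{x}^{\child_1}$ of $\mathsf{x}$.

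The next step is to feed this into the covering arithmetic of Lemma \ref{lem: F^N a D-fold cover}. That lemma says $\sum_{\set{\vc:\ \F^N(\vc) = \mathsf{w}^{\child_0}}} \deg \vc = D$, where $D = \prod_{n=0}^{N-1} \deg \F^n(\mathsf{x})$. Since $\mathsf{x}^{\child_1}$ is among these children with $\deg \mathsf{x}^{\child_1} = \deg \mathsf{x}$, and $\mathsf{x}^{\child_0}$ is another such child with $\deg \mathsf{x}^{\child_0} \ge 1$, we get $D \ge \deg \mathsf{x} + 1$. On the other hand $D = \deg\mathsf{x} \cdot \prod_{n=1}^{N-1}\deg\F^n(\mathsf{x})$, so $\prod_{n=1}^{N-1}\deg\F^n(\mathsf{x}) \ge 2$ — hence $N \ge 2$ and there is some $n$ with $1 \le n \le N-1$ and $\deg \F^n(\mathsf{x}) \ge 2$, i.e. $\F^n(\mathsf{x})$ is critical. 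Pick the \emph{largest} such $n$ (equivalently, work down from the return and locate where the "extra" degree first appears).

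To land case~1, I then need to produce, at that critical vertex $\F^n(\mathsf{x})$, a child outside $\mathsf{X}$ with the stated return time. The idea is to track the images of $\mathsf{x}^{\child_0}$ and $\mathsf{x}^{\child_1}$ forward: both have first return time exactly $N$, and by Lemma \ref{lem: ret times add} the vertices $\F^n(\mathsf{x}^{\child_0})$ and $\F^n(\mathsf{x}^{\child_1})$ have first return time $N-n$ to $\mathsf{X}$, equal to $N^1_{\mathsf{X}}(\F^n(\mathsf{x})) = N - n$. These are both children of $\F^n(\mathsf{x})$ (since $\F$ preserves children). One of $\F^n(\mathsf{x}^{\child_0}), \F^n(\mathsf{x}^{\child_1})$ must fail to lie in $\mathsf{X}$: indeed if both were in $\mathsf{X}$ and distinct we would again be in case~2 applied at a different return (needs a small argument — the cleanest route is to choose $n$ minimal such that $\F^n(\mathsf{x}^{\child_0})$ and $\F^n(\mathsf{x}^{\child_1})$ have already merged and take the child at step $n-1$, so that at step $n-1$ they are still distinct children of $\F^{n-1}(\mathsf{x})$, forcing $\deg\F^{n-1}(\mathsf{x}) \ge 2$ by Lemma \ref{lem: F^N a D-fold cover} while their common image $\F^n(\mathsf{x})$-child sits in $\mathsf{X}$, and then the still-distinct one among them that avoids $\mathsf{X}$ is the desired child of $\F^{n-1}(\mathsf{x})$). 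This gives $\F^{n-1}(\mathsf{x})$ critical with a child off $\mathsf{X}$ whose return time to $\mathsf{X}$ equals $N-(n-1) = N^1_{\mathsf{X}}(\F^{n-1}(\mathsf{x}))$, which is exactly case~1 (after reindexing $n-1 \rightsquigarrow n$).

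The main obstacle I anticipate is the bookkeeping in that last step: making rigorous the dichotomy "at each step, either the two tracked children have merged (forcing a critical vertex and case~1 at the prior step) or they remain distinct until they both hit $\mathsf{X}$ simultaneously (case~2 at the return)" — this requires being careful that when the two children are still distinct and both eventually reach $\mathsf{X}$ at time $N$, their images at time $N$ are either distinct (case 2) or, if equal, then the merge happened at some intermediate step which is the critical vertex we want. The degree count from Lemma \ref{lem: F^N a D-fold cover} is what forces a merge point to be critical, so the whole argument reduces to locating that merge point and reading off the return time via Lemma \ref{lem: ret times add}; neither ingredient is deep, but stating the induction cleanly is the part that needs care.
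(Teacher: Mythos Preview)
The paper states this lemma without proof, so there is nothing to compare against directly; your merge-point idea is the natural argument and is essentially correct, but there is one genuine gap and one unnecessary complication.

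The gap is your tacit assumption that $\mathsf{x}^{\child_0}$ and $\mathsf{x}^{\child_1}$ are \emph{distinct}. Nothing in the definitions of ``portal'' and ``type~II'' prevents the child witnessing the portal condition from coinciding with the full-degree child witnessing type~II; if they coincide, your degree inequality $D \ge \deg\mathsf{x}+1$ collapses to $D \ge \deg\mathsf{x}$ and the argument stalls. The fix, at least in the only setting the paper actually uses (namely $\mathsf{X}=\boldsymbol{\mathsf{c}}$ a critical end), is to observe that the full-degree child $\mathsf{x}^{\child_1}$ must lie in $\mathsf{X}$: since $\deg\mathsf{x}\ge 2$, Lemma~\ref{lem: TwD}.\ref{sublem: deg v = deg vc => all other children n.c.} forces every other child of $\mathsf{x}=\mathsf{c}_l$ to be non-critical, while $\mathsf{c}_{l+1}\in\boldsymbol{\mathsf{c}}$ is critical, so $\mathsf{x}^{\child_1}=\mathsf{c}_{l+1}\in\mathsf{X}$. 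Then $\mathsf{x}^{\child_0}\notin\mathsf{X}$ gives distinctness. You should state this explicitly; for fully general $\mathsf{X}$ the lemma appears to need an extra hypothesis along these lines.

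The unnecessary complication is the second half of your sketch. Once $\mathsf{x}^{\child_0}\ne\mathsf{x}^{\child_1}$, simply let $n^{*}$ be the least $n\ge 1$ with $\F^{n}(\mathsf{x}^{\child_0})=\F^{n}(\mathsf{x}^{\child_1})$; this exists and satisfies $n^{*}\le N$ because both children land on the unique $\mathsf{X}$-child of $\mathsf{w}$ when case~2 fails. The local cover property (D\ref{eq: LC}) at $\F^{n^{*}-1}(\mathsf{x})$ then gives $\deg\F^{n^{*}-1}(\mathsf{x})\ge \deg\F^{n^{*}-1}(\mathsf{x}^{\child_0})+\deg\F^{n^{*}-1}(\mathsf{x}^{\child_1})\ge 2$, so that vertex is critical; the same inequality applied at $n^{*}=1$ would yield $\deg\mathsf{x}\ge 1+\deg\mathsf{x}$, so in fact $n^{*}\ge 2$ and hence $1\le n^{*}-1<N$. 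Finally, the child you need at $\F^{n^{*}-1}(\mathsf{x})$ is just $\F^{n^{*}-1}(\mathsf{x}^{\child_0})$: it lies outside $\mathsf{X}$ since $N^{1}_{\mathsf{X}}(\mathsf{x}^{\child_0})=N$, and its return time is $N-(n^{*}-1)=N^{1}_{\mathsf{X}}(\F^{n^{*}-1}(\mathsf{x}))$ by Lemma~\ref{lem: ret times add}. There is no need to worry about which of the two tracked children avoids $\mathsf{X}$, nor to invoke case~2 ``at a different return''; the image of $\mathsf{x}^{\child_0}$ works at every intermediate step.
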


Case 2 of the above lemma cannot occur when $\mathsf{x}$ is an end.

\begin{cor}\label{cor: Type II portals of an end}
Let $\boldsymbol{\mathsf{x}}  = (\mathsf{x}_{l})$ be an extended end of $\tree$.  If $\mathsf{x}\in
\textup{Port}_{\textup{II}}(\boldsymbol{\mathsf{x}} )$, then case 1 of Lemma \ref{lem: port II cond} holds.
\begin{proof}
By definition, no vertex of an end can have two distinct children that are in the end. So case 2 above never occurs.
\end{proof}

\end{cor}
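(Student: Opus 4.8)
The plan is to obtain the corollary as an immediate consequence of Lemma~\ref{lem: port II cond}, by showing that case~2 of that lemma cannot occur when the ambient ancestral set is itself an extended end. So I would apply Lemma~\ref{lem: port II cond} with $\mathsf{X} = \boldsymbol{\mathsf{x}}$. Since $\mathsf{x} \in \textup{Port}_{\textup{II}}(\boldsymbol{\mathsf{x}})$, Definition~\ref{defn: Types of portals} already guarantees that $N^1_{\boldsymbol{\mathsf{x}}}(\mathsf{x})$ is defined, hence so is $\Ret_{\boldsymbol{\mathsf{x}}}(\mathsf{x}) = \F^{N^1_{\boldsymbol{\mathsf{x}}}(\mathsf{x})}(\mathsf{x})$, and by the very definition of the first return map this vertex lies in $\boldsymbol{\mathsf{x}}$; that is, $\Ret_{\boldsymbol{\mathsf{x}}}(\mathsf{x}) = \mathsf{x}_k$ for some $k \in \bZ$.

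Next I would rule out case~2 of Lemma~\ref{lem: port II cond}. By Definition~\ref{defn: end of T}, the sequence $\boldsymbol{\mathsf{x}} = (\mathsf{x}_l)_{l\in\bZ}$ contains exactly one vertex at each level, and the vertex at level $k+1$, namely $\mathsf{x}_{k+1}$, is a child of $\mathsf{x}_k$. Any other child of $\mathsf{x}_k$ also lies in $\tree_{k+1}$ but is distinct from $\mathsf{x}_{k+1}$, hence is not a term of the sequence $(\mathsf{x}_l)$ and so is not an element of $\boldsymbol{\mathsf{x}}$. Consequently $\mathsf{x}_{k+1}$ is the only child of $\Ret_{\boldsymbol{\mathsf{x}}}(\mathsf{x})$ that belongs to $\boldsymbol{\mathsf{x}}$, so $\Ret_{\boldsymbol{\mathsf{x}}}(\mathsf{x})$ cannot have two distinct children in $\boldsymbol{\mathsf{x}}$; case~2 of Lemma~\ref{lem: port II cond} fails. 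Therefore case~1 must hold, which is the assertion of the corollary.

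There is no real obstacle here: the entire substance is carried by Lemma~\ref{lem: port II cond}, and the corollary merely records the structural fact that an end, being a single branch through the tree, meets each level in precisely one vertex and hence contains precisely one child of any of its own vertices. The only point that deserves a moment's attention is confirming that the vertex $\Ret_{\boldsymbol{\mathsf{x}}}(\mathsf{x})$ to which case~2 refers actually lies on the end, and this is immediate from the definitions of $N^1_{\boldsymbol{\mathsf{x}}}$ and $\Ret_{\boldsymbol{\mathsf{x}}}$ (and is in any case consistent with $\boldsymbol{\mathsf{x}}$ being ancestral, so that all the relevant return objects are well defined).
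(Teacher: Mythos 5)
Your proposal is correct and follows the paper's argument exactly: apply Lemma \ref{lem: port II cond} with $\mathsf{X}=\boldsymbol{\mathsf{x}}$ and rule out case~2 because an end contains exactly one vertex per level, hence at most one child of any of its vertices. The extra detail about $\Ret_{\boldsymbol{\mathsf{x}}}(\mathsf{x})$ lying on the end is fine but not needed beyond what the paper states.
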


Type~III portals are straightforward to categorize. We obtain the following two results immediately from Lemma \ref{lem: esc or
split iff deg drops} and Corollary \ref{cor: finitely many splits or escapes}

\begin{lem}\label{lem: Port III categorized}
Let ${\mathsf{X}} \subset \tree$.  If $\mathsf{x} \in \textup{Port}_{\textup{III}}({\mathsf{X}})$, then there is an escape or
split at $\mathsf{x}$.

\end{lem}

\begin{cor} \label{cor: finite number of type 3 portals}
For any ${\mathsf{X}} \subset \tree$, $\textup{Port}_{\textup{III}}({\mathsf{X}})$ is finite.
\end{cor}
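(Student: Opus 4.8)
The plan is simply to chain together the two results stated immediately before the corollary. First I would invoke Lemma~\ref{lem: Port III categorized}: every $\mathsf{x} \in \textup{Port}_{\textup{III}}({\mathsf{X}})$ is a vertex at which there is an escape or a split. Consequently $\textup{Port}_{\textup{III}}({\mathsf{X}}) \subseteq S$, where $S = \set{\mathsf{v} \in \tree : \text{there is an escape or a split at } \mathsf{v}}$. The point worth emphasizing is that $S$ is defined purely in terms of the degree function of $(\tree,\F)$ via Lemma~\ref{lem: esc or split iff deg drops}, so it makes no reference to $\mathsf{X}$ whatsoever; the containment therefore holds for every $\mathsf{X} \subset \tree$ with the same ambient set $S$.

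Then I would apply Corollary~\ref{cor: finitely many splits or escapes}, which asserts that a tree with dynamics has only finitely many escapes or splits, i.e.\ $S$ is finite. Since a subset of a finite set is finite, $\textup{Port}_{\textup{III}}({\mathsf{X}})$ is finite, and in fact $\#\,\textup{Port}_{\textup{III}}({\mathsf{X}}) \leq \#\,S$ uniformly in $\mathsf{X}$. There is no real obstacle here: both ingredients are already in hand, and the argument is a one-line set inclusion followed by the finiteness of the ambient set. The only subtlety to flag is the uniformity in $\mathsf{X}$, which is automatic from the $\mathsf{X}$-independence of $S$.
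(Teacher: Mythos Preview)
Your argument is correct and matches the paper's own reasoning exactly: the paper states that this corollary follows immediately from Lemma~\ref{lem: esc or split iff deg drops} and Corollary~\ref{cor: finitely many splits or escapes}, which is precisely the chain you describe via Lemma~\ref{lem: Port III categorized}. The extra observation about uniformity in $\mathsf{X}$ is a nice bonus but not needed for the statement as written.
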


\begin{lem} \label{lem: v_0 a type 3 portal}
If $\boldsymbol{\mathsf{x}}$ is an end of some tree with dynamics, then $\mathsf{v}_0 \in
\textup{Port}_{\textup{III}}(\boldsymbol{\mathsf{x}})$.

\begin{proof}
By (T3), $\mathsf{v}_0 = \mathsf{x}_0$ has at least two children.  Because $ \boldsymbol{\mathsf{x}}$ is an end $\mathsf{v}_0$
has a child $\mathsf{v}_0^{\child_0} \notin \boldsymbol{\mathsf{x}}$. By Lemma \ref{lem: N_1 T_l = 1 for l <H},
$N^1_{\boldsymbol{\mathsf{x}}}(\mathsf{v}_{0}) = N^1_{\boldsymbol{\mathsf{x}}}(\mathsf{v}_{0}^{\child_0}) = 1$. Therefore
$\mathsf{v}_0$ is a $\boldsymbol{\mathsf{x}}$-portal. By Lemma \ref{lem: TwD}.\ref{sublem: deg children of v_0}, $\deg
\mathsf{v}_0 > \deg\mathsf{v}_0^{\child_0} $. Hence $\mathsf{v}_0\in \textup{Port}_{\textup{III}}(\boldsymbol{\mathsf{x}})$.

\end{proof}
\end{lem}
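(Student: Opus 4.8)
The plan is to check the two defining properties of a type~III portal (Definitions \ref{defn: X-portal} and \ref{defn: Types of portals}) in turn: first that $\mathsf{v}_0$ is a $\boldsymbol{\mathsf{x}}$-portal at all, and then that its degree strictly exceeds that of each of its children, which is precisely condition~III.

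For the portal property I would start from $\mathsf{v}_0 = \mathsf{x}_0 \in \boldsymbol{\mathsf{x}}$. By (T3) the root has more than one child, while $\boldsymbol{\mathsf{x}}$ meets $\tree_1$ only in $\mathsf{x}_1$; hence there is a child $\mathsf{v}_0^{\child_0}$ of $\mathsf{v}_0$ with $\mathsf{v}_0^{\child_0} \ne \mathsf{x}_1$, and any such child lies off $\boldsymbol{\mathsf{x}}$. Since $\mathsf{v}_0 \in \tree_0$ and $\mathsf{v}_0^{\child_0} \in \tree_1$ both sit at levels $\le H$ (recall $H \ge 1$), and since $\boldsymbol{\mathsf{x}}$ is ancestral and contains $\mathsf{v}_0$, Lemma \ref{lem: N_1 T_l = 1 for l <H} yields $N^1_{\boldsymbol{\mathsf{x}}}(\mathsf{v}_0) = N^1_{\boldsymbol{\mathsf{x}}}(\mathsf{v}_0^{\child_0}) = 1$. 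In particular the two first return times coincide, so by Definition \ref{defn: X-portal} the pair $(\mathsf{v}_0, \mathsf{v}_0^{\child_0})$ exhibits $\mathsf{v}_0$ as a $\boldsymbol{\mathsf{x}}$-portal.

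It remains to identify the type. Here I would simply invoke Lemma \ref{lem: TwD}.\ref{sublem: deg children of v_0}, which says $\deg \mathsf{v}_0^{\child} < \deg \mathsf{v}_0$ for every child of $\mathsf{v}_0$; this is exactly case~III of Definition \ref{defn: Types of portals}, so $\mathsf{v}_0 \in \textup{Port}_{\textup{III}}(\boldsymbol{\mathsf{x}})$. (This is consistent with Lemma \ref{lem: Port III categorized} via Lemma \ref{lem: esc or split iff deg drops}.) The argument is short and I do not expect a genuine obstacle; the only points to watch are verifying the hypotheses of Lemma \ref{lem: N_1 T_l = 1 for l <H} — that $\boldsymbol{\mathsf{x}}$ is ancestral (as holds for an extended end) and that the levels $0$ and $1$ are both $\le H$ — and using (T3) to ensure $\mathsf{v}_0$ really does have a child outside $\boldsymbol{\mathsf{x}}$.
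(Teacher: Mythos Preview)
Your proposal is correct and follows essentially the same route as the paper's own proof: use (T3) to find a child of $\mathsf{v}_0$ off the end, apply Lemma~\ref{lem: N_1 T_l = 1 for l <H} to get equal first return times (hence the portal property), and then invoke Lemma~\ref{lem: TwD}.\ref{sublem: deg children of v_0} for the strict degree drop that forces type~III. Your write-up is in fact a bit more careful than the paper's in explicitly checking the hypotheses of Lemma~\ref{lem: N_1 T_l = 1 for l <H}; your parenthetical about needing the \emph{extended} end for the ancestral property is a valid observation, since the paper states the lemma for an end but tacitly uses it as extended.
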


It is important to note how many children an $\mathsf{X}$-portals has that satisfy the conditions of Definition \ref{defn:
X-portal}.

\begin{defn}
Let $\mathsf{x}$ be an $\mathsf{X}$-portal.  We call $\mathsf{x}$  a \emph{simple} $\mathsf{X}$-portal if $\mathsf{x}$ has a
unique child $\mathsf{x}^{\child_0} \notin \mathsf{X}$ with $N^1_{\mathsf{X}}(\mathsf{x}^{\child_0}) =
N^1_{\mathsf{X}}(\mathsf{x})$. Otherwise we call $\mathsf{x}$  a \emph{compound} $\mathsf{X}$-portal.
\end{defn}

The following lemma allows us to determine whether type~III $\boldsymbol{\mathsf{c}}$-portal is simple or compound.

\begin{lem}\label{lem: number of n.c.c of type 3 c-portal}
Let $\boldsymbol{\mathsf{c}} = (\mathsf{c}_{l})_{l \in \bZ} $ be a critical extended end of $(\tree, \F)$.  Suppose that $
N^1_{\boldsymbol{\mathsf{c}}}(\mathsf{c}_l) =  N^1_{\boldsymbol{\mathsf{c}}}(\mathsf{c}_{l+1})$ for some $l$. If there is not a
split at $\mathsf{c}_{l}$, then $\mathsf{c}_{l}$ has at least $\deg \mathsf{c}_{l} - \deg \mathsf{c}_{l+1}$ non-critical children
$\mathsf{c}_{l}^{\child}$ with $N^1_{\boldsymbol{\mathsf{c}}}(\mathsf{c}_l^{\child}) =
N^1_{\boldsymbol{\mathsf{c}}}(\mathsf{c}_l) $.
\begin{proof}
Since $\mathsf{c}_{l+1} \in \boldsymbol{\mathsf{c}}$, $\mathsf{c}_{l+1}$ is critical. By assumption there is not a split at
$\mathsf{c}_{l}$, so $\mathsf{c}_{l+1}$ is the only critical child of $\mathsf{c}_{l}$. By Lemma \ref{lem: F^N a D-fold cover},
$\mathsf{c}_{l}$ has at least $\deg \mathsf{c}_{l}$ children counted by degree with
$N^1_{\boldsymbol{\mathsf{c}}}(\mathsf{c}_l^{\child}) = N^1_{\boldsymbol{\mathsf{c}}}(\mathsf{c}_l) $. Therefore there are at
least  $\deg \mathsf{c}_{l} - \deg \mathsf{c}_{l+1}$ of these children which are non-critical (which might be 0 of them).
\end{proof}
\end{lem}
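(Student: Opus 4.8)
The plan is to count children of $\mathsf{c}_l$ "weighted by degree" using the $D$-fold cover statement (Lemma \ref{lem: F^N a D-fold cover}) applied along the first-return orbit of $\mathsf{c}_l$, and then subtract off the portion of that weight that is absorbed by the single critical child. First I would set $N = N^1_{\boldsymbol{\mathsf{c}}}(\mathsf{c}_l)$ and $\mathsf{c}_k = \Ret_{\boldsymbol{\mathsf{c}}}(\mathsf{c}_l) = \F^N(\mathsf{c}_l)$; by hypothesis $N^1_{\boldsymbol{\mathsf{c}}}(\mathsf{c}_{l+1}) = N$ as well, so $\F^N(\mathsf{c}_{l+1}) = \mathsf{c}_{k+1}$, the child of $\mathsf{c}_k$ lying in $\boldsymbol{\mathsf{c}}$. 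Now apply Lemma \ref{lem: F^N a D-fold cover} with $\mathsf{v} = \mathsf{c}_l$, $\mathsf{w} = \mathsf{c}_k$, and $\mathsf{w}^{\child_0} = \mathsf{c}_{k+1}$: the children $\mathsf{c}_l^{\child}$ of $\mathsf{c}_l$ with $\F^N(\mathsf{c}_l^{\child}) = \mathsf{c}_{k+1}$ have degrees summing to $D = \prod_{n=0}^{N-1}\deg \F^n(\mathsf{c}_l)$. In particular $D \geq \deg \mathsf{c}_l$ (since each factor is a positive integer and $\deg \F^0(\mathsf{c}_l) = \deg \mathsf{c}_l$ is one of them); this gives the "$\deg \mathsf{c}_l$ children counted by degree" claim stated in the proof sketch. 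Note that every such child $\mathsf{c}_l^{\child}$ returns at time $N$: by Lemma \ref{lem: ret of descen => ret}, $N^1_{\boldsymbol{\mathsf{c}}}(\mathsf{c}_l^{\child}) = N^R_{\boldsymbol{\mathsf{c}}}(\mathsf{c}_l)$ for some $R \geq 1$, and since $\F^N(\mathsf{c}_l^{\child}) = \mathsf{c}_{k+1} \in \boldsymbol{\mathsf{c}}$ while no earlier iterate lands in $\boldsymbol{\mathsf{c}}$ (an iterate of $\mathsf{c}_l^{\child}$ landing in $\boldsymbol{\mathsf{c}}$ forces the corresponding iterate of $\mathsf{c}_l$ into $\boldsymbol{\mathsf{c}}$, but $N$ is the first return time of $\mathsf{c}_l$), we get $R = 1$, i.e.\ $N^1_{\boldsymbol{\mathsf{c}}}(\mathsf{c}_l^{\child}) = N = N^1_{\boldsymbol{\mathsf{c}}}(\mathsf{c}_l)$.

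Next I would isolate the unique critical child. The hypothesis "no split at $\mathsf{c}_l$" means $\mathsf{c}_l$ does not have two critical children; since $\mathsf{c}_{l+1} \in \boldsymbol{\mathsf{c}}$ is critical and is a child of $\mathsf{c}_l$, it is therefore the \emph{only} critical child of $\mathsf{c}_l$. Among the children $\mathsf{c}_l^{\child}$ in the fiber $\{\F^N(\cdot) = \mathsf{c}_{k+1}\}$, the critical one $\mathsf{c}_{l+1}$ contributes exactly $\deg \mathsf{c}_{l+1}$ to the degree sum $D$. Hence the \emph{non-critical} children in this fiber have degrees (all equal to $1$, so this is just their count) summing to $D - \deg \mathsf{c}_{l+1} \geq \deg \mathsf{c}_l - \deg \mathsf{c}_{l+1}$. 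Each of these is non-critical, lies outside $\boldsymbol{\mathsf{c}}$ (being distinct from $\mathsf{c}_{l+1}$, the only child of $\mathsf{c}_l$ in $\boldsymbol{\mathsf{c}}$), and has first return time $N = N^1_{\boldsymbol{\mathsf{c}}}(\mathsf{c}_l)$ by the previous paragraph. This exhibits at least $\deg \mathsf{c}_l - \deg \mathsf{c}_{l+1}$ non-critical children with the required return-time property, completing the proof.

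The only subtle point — and the step I would be most careful about — is verifying that $\F^N$ does not collapse $\mathsf{c}_{l+1}$ together with some non-critical sibling onto $\mathsf{c}_{k+1}$ in a way that double-counts: but this is exactly what Lemma \ref{lem: F^N a D-fold cover} handles, since it counts the fiber \emph{with multiplicity} $\deg$, and the critical child simply occupies $\deg \mathsf{c}_{l+1}$ of the $D$ available "slots," leaving $D - \deg \mathsf{c}_{l+1}$ for the non-critical ones. A second point worth a sentence is the inequality $D \geq \deg \mathsf{c}_l$; this follows because $D = \deg \mathsf{c}_l \cdot \prod_{n=1}^{N-1}\deg \F^n(\mathsf{c}_l)$ and every $\deg \F^n(\mathsf{c}_l) \geq 1$. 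Everything else is bookkeeping, and no appeal to the degree axioms (D1)–(D4) beyond what is already packaged in Lemmas \ref{lem: F^N a D-fold cover} and \ref{lem: ret of descen => ret} is needed.
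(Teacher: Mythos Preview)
Your proof is correct and follows essentially the same approach as the paper's: apply Lemma~\ref{lem: F^N a D-fold cover} to the fiber of $\F^N$ over $\mathsf{c}_{k+1}$, observe that the unique critical child $\mathsf{c}_{l+1}$ accounts for $\deg \mathsf{c}_{l+1}$ of the (at least) $\deg \mathsf{c}_l$ slots, and conclude. You have simply unpacked the paper's terse sentence ``$\mathsf{c}_{l}$ has at least $\deg \mathsf{c}_{l}$ children counted by degree with $N^1_{\boldsymbol{\mathsf{c}}}(\mathsf{c}_l^{\child}) = N^1_{\boldsymbol{\mathsf{c}}}(\mathsf{c}_l)$'' by naming the target $\mathsf{c}_{k+1}$, justifying $D \geq \deg \mathsf{c}_l$, and checking that first return times of these children equal $N$ via Lemma~\ref{lem: ret of descen => ret}.
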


\begin{cor} \label{cor: c-l simple iff deg c-1 - deg c-l+1 =1}
Let $\boldsymbol{\mathsf{c}} = (\mathsf{c}_{l})_{l \in \bZ} $ be a critical extended end of $(\tree, \F)$.  Suppose that $
N^1_{\boldsymbol{\mathsf{c}}}(\mathsf{c}_l) =  N^1_{\boldsymbol{\mathsf{c}}}(\mathsf{c}_{l+1})$ for some $l$. If there is an
escape but not a split at $\mathsf{c}_{l}$, then $\mathsf{c}_{l}$ is a simple $\boldsymbol{\mathsf{c}}$-portal if and only if
$\deg \mathsf{c}_{l} - \deg \mathsf{c}_{l+1} = 1$.
\end{cor}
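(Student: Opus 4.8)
The plan is to reduce the statement to an exact count of the non-critical children of $\mathsf{c}_l$ that return to $\boldsymbol{\mathsf{c}}$ at the first possible time, and to compare that count with $1$. Write $N = N^1_{\boldsymbol{\mathsf{c}}}(\mathsf{c}_l)$, which by hypothesis equals $N^1_{\boldsymbol{\mathsf{c}}}(\mathsf{c}_{l+1})$, and $\mathsf{c}_k = \Ret_{\boldsymbol{\mathsf{c}}}(\mathsf{c}_l) = \F^{N}(\mathsf{c}_l)$. First I would unwind the hypothesis. Since there is no split at $\mathsf{c}_l$, the vertex $\mathsf{c}_{l+1}$ is the only critical child of $\mathsf{c}_l$; and since $\mathsf{c}_{l+1}\in\boldsymbol{\mathsf{c}}$ while every vertex of $\boldsymbol{\mathsf{c}}$ is critical, the children of $\mathsf{c}_l$ lying outside $\boldsymbol{\mathsf{c}}$ are exactly its non-critical children. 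Thus $\mathsf{c}_l$ is a simple $\boldsymbol{\mathsf{c}}$-portal precisely when it has exactly one non-critical child $\mathsf{c}_l^{\child}$ with $N^1_{\boldsymbol{\mathsf{c}}}(\mathsf{c}_l^{\child}) = N$. Finally, since there is an escape (and no split) at $\mathsf{c}_l$, Lemma~\ref{lem: esc or split iff deg drops} gives $\deg\mathsf{c}_l > \deg\mathsf{c}_{l+1}$, the latter being the largest degree among the children of $\mathsf{c}_l$.

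Next I would count the children of $\mathsf{c}_l$ with first return time $N$. A child $\mathsf{c}_l^{\child}$ has $N^1_{\boldsymbol{\mathsf{c}}}(\mathsf{c}_l^{\child}) = N$ if and only if $\F^{N}(\mathsf{c}_l^{\child}) = \mathsf{c}_{k+1}$: if $N^1_{\boldsymbol{\mathsf{c}}}(\mathsf{c}_l^{\child}) = N$ then $\F^{N}(\mathsf{c}_l^{\child})$ is a child of $\mathsf{c}_k$ lying in $\boldsymbol{\mathsf{c}}$, hence equals $\mathsf{c}_{k+1}$; conversely, if $\F^{N}(\mathsf{c}_l^{\child}) = \mathsf{c}_{k+1}\in\boldsymbol{\mathsf{c}}$ then $N^1_{\boldsymbol{\mathsf{c}}}(\mathsf{c}_l^{\child}) \le N$, while Lemma~\ref{lem: ret of descen => ret} forces $N^1_{\boldsymbol{\mathsf{c}}}(\mathsf{c}_l^{\child}) = N^R_{\boldsymbol{\mathsf{c}}}(\mathsf{c}_l) \ge N$ for some $R\ge 1$. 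By Lemma~\ref{lem: F^N a D-fold cover} the children in this set have total degree $D := \prod_{n=0}^{N-1}\deg\F^{n}(\mathsf{c}_l)$, and among them only $\mathsf{c}_{l+1}$ is critical; hence $\mathsf{c}_l$ has exactly $D - \deg\mathsf{c}_{l+1}$ non-critical children with first return time $N$. Since $\deg\F^{0}(\mathsf{c}_l) = \deg\mathsf{c}_l$ and the remaining factors are positive integers, $D\ge\deg\mathsf{c}_l$, so this number is at least $\deg\mathsf{c}_l - \deg\mathsf{c}_{l+1}$, recovering Lemma~\ref{lem: number of n.c.c of type 3 c-portal}. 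One implication is now immediate: if $\mathsf{c}_l$ is simple then $D - \deg\mathsf{c}_{l+1} = 1$, so $\deg\mathsf{c}_l\le D = \deg\mathsf{c}_{l+1}+1\le\deg\mathsf{c}_l$, whence $\deg\mathsf{c}_l - \deg\mathsf{c}_{l+1} = 1$.

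For the converse, suppose $\deg\mathsf{c}_l = \deg\mathsf{c}_{l+1}+1$. Then $D - \deg\mathsf{c}_{l+1}\ge 1$, so $\mathsf{c}_l$ has a non-critical child of first return time $N$ and is therefore a $\boldsymbol{\mathsf{c}}$-portal; and by the count it is \emph{simple} if and only if $D = \deg\mathsf{c}_l$, equivalently $\F^{n}(\mathsf{c}_l)$ is non-critical for each $1\le n\le N-1$. I expect this last point to be the main obstacle. The intended argument is that the forward orbit $\mathsf{c}_l, \F(\mathsf{c}_l), \dots, \F^{N}(\mathsf{c}_l)$ meets no critical vertex strictly before returning to $\boldsymbol{\mathsf{c}}$: a critical $\F^{n_0}(\mathsf{c}_l)$ with $0 < n_0 < N$ lies off $\boldsymbol{\mathsf{c}}$, hence carries a critical point distinct from the one associated to the critical end $\boldsymbol{\mathsf{c}}$ (that point lies in the puzzle piece of a vertex $\mathsf{v}$ exactly when $\mathsf{v}\in\boldsymbol{\mathsf{c}}$), and this must be excluded using the structure of the tree with dynamics together with the fact that the single unit of escaping criticality detected by the escape at $\mathsf{c}_l$ sits at $\mathsf{c}_l$ itself and so is not met again before the orbit returns to $\boldsymbol{\mathsf{c}}$; making this bookkeeping precise is where the work lies. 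Granting $D = \deg\mathsf{c}_l$, the count produces exactly $\deg\mathsf{c}_l - \deg\mathsf{c}_{l+1} = 1$ non-critical children of $\mathsf{c}_l$ with first return time $N$, so $\mathsf{c}_l$ is a simple $\boldsymbol{\mathsf{c}}$-portal, and the corollary follows.
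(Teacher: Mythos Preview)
The paper gives no proof of this corollary; it is presented as an immediate consequence of Lemma~\ref{lem: number of n.c.c of type 3 c-portal}. Your forward implication is correct and in fact sharper than needed: the preceding lemma already gives at least $\deg\mathsf{c}_l-\deg\mathsf{c}_{l+1}$ non-critical children with return time $N$, so if $\mathsf{c}_l$ is simple this number is at most $1$, and the escape forces it to equal $1$. Your exact count $D-\deg\mathsf{c}_{l+1}$, with $D=\prod_{n=0}^{N-1}\deg\F^n(\mathsf{c}_l)$, is the right refinement and is argued cleanly.

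The gap you flag in the converse is genuine, and your heuristic does not close it. As stated for an arbitrary critical end of an arbitrary tree with dynamics, the converse can fail: if some $\F^{n_0}(\mathsf{c}_l)$ with $0<n_0<N$ lands on a critical vertex outside $\boldsymbol{\mathsf{c}}$ (for instance on a second critical end in a multi-persistent tree), then $D>\deg\mathsf{c}_l$ and your count gives $D-\deg\mathsf{c}_{l+1}\ge 2$, so $\mathsf{c}_l$ is compound even though $\deg\mathsf{c}_l-\deg\mathsf{c}_{l+1}=1$. There is no bookkeeping of ``the single unit of escaping criticality'' that rules this out in general. What rescues the paper is that the corollary is only invoked in the bi-critical setting (Lemma~\ref{lem: bi-crit c0 simple iff simpe esc}), indeed specifically for $\mathsf{c}_0$, where $N^1_{\boldsymbol{\mathsf{c}}}(\mathsf{c}_0)=1$ and hence $D=\deg\mathsf{c}_0$ trivially; more broadly, in a bi-critical tree one has $\crit(\tree)=\boldsymbol{\mathsf{c}}$, which forces every intermediate iterate to be non-critical. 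So either add the hypothesis that $\F^n(\mathsf{c}_l)$ is non-critical for $1\le n<N$ (or that $\crit(\tree)=\boldsymbol{\mathsf{c}}$), in which case your argument is complete, or note that the converse as written is stronger than what the paper actually proves or needs.
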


\subsection{Bi-Critical Polynomials} \label{subsect: bi-critical returns}

We call a polynomial \emph{bi-critical} if it has exactly two critical points (of any multiplicities). For instance, a generic
cubic polynomial is bi-critical. In this subsection, we consider the tree with dynamics of a bi-critical polynomial with
disconnected Julia set. If both of the critical points escape to infinity, then the Julia set is an area zero Cantor set.  So the
interesting case is when one critical point escapes and the other is persistent. Combinatorially we ignore the trivial case and
call a tree with dynamics \emph{bi-critical} if it is uni-persistent and there is exactly one vertex in the tree where there is
an escape.

Bi-critical polynomials are the easiest class of polynomials to analyze using the tree with dynamics. There two other main
reasons that they are important. First, it is only class for which conditions on the tableaux are known.  Branner and Hubbard's
three tableaux axioms \cite{BH92} are necessary for this class of polynomial.  The three tableaux axioms combined with a fourth
axiom \cite{Kiwi_Puiseux} are known to be necessary and sufficient in the cubic case.

Second a uni-persistent polynomial $f$ can be associated to a bi-critical tree with dynamics in the following manner: choose
equipotentials $E_0$ and $E_1$ of $f$ such that all critical points of $f$ lie in the bounded component of $\bC \minus E_0$, all
escaping critical points of $f$ lie in the closure of the unbounded component of $\bC \minus E_1$, and $f^n(E_1)= E_0$ for some
$n \geq 1$. Extend to a collection of equipotentials $\set{E_l}_{l\in \bZ}$ using the relationship $f^n(E_l) = E_{l-1}$ for each
$l \in \bZ$.  Form the annuli and  then tree with dynamics of $f$ using these equipotential as in \S\ref{subsect: Poly TwD}.
Equivalently we can extract a subtree with dynamics from $(\tree, f^n)$  with $H=1$ using \cite[Prop.\ 5.3]{E03}.

Part of what makes the analysis of the bi-critical case easy is that the degree function only changes once on the critical end.

\begin{lem} \label{lem: deg fnc of uni-per}
Let $(\tree, \F)$ be a bi-critical tree with dynamics of degree $d$. If $\boldsymbol{\mathsf{c}} = (\mathsf{c}_l)$ is the
(unique) critical end of $\tree$, then
\[
    \deg  \mathsf{c}_l = D \  (l \geq 1) \quad \text{and} \quad
       \deg  \mathsf{c}_l = d \  (l \leq 0)
\]
for some $D$ with $2 \leq D <d$.
\begin{proof}

The unique escape in a bi-critical tree with dynamics must occur at $\mathsf{v}_0 = \mathsf{c}_0$ by Lemma \ref{lem: TwD}.2.
\end{proof}
\end{lem}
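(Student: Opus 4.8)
The plan is to exploit the two structural facts that define a bi-critical tree with dynamics: there is exactly one critical end $\boldsymbol{\mathsf{c}} = (\mathsf{c}_l)$, and there is exactly one vertex at which an escape occurs. First I would locate that escape. By Lemma \ref{lem: TwD}.\ref{sublem: deg children of v_0}, every child of $\mathsf{v}_0$ has strictly smaller degree than $\mathsf{v}_0$, so $\deg \mathsf{v}_0 > \max_{\set{\vc}} \deg \mathsf{v}_0^{\child}$, and hence by Lemma \ref{lem: esc or split iff deg drops} there is an escape or a split at $\mathsf{v}_0$. Since the tree is uni-persistent it has only one critical end, so no vertex can have two critical children (a split would force two disjoint subtrees each containing a critical end). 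Therefore the event at $\mathsf{v}_0$ is an escape, and by the defining property of a bi-critical tree with dynamics it is \emph{the} escape. Note also $\mathsf{v}_0 = \mathsf{c}_0$, since $\mathsf{c}_0 \in \tree_0 = \set{\mathsf{v}_0}$.

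Next I would show the degree is constant and equal to $d$ for $l \le 0$. This is immediate from (D\ref{eq: deg of root}): $\deg \mathsf{c}_l = \deg \mathsf{v}_l = \deg \mathsf{v}_0 = d$ for every $l < 0$, and $\deg \mathsf{c}_0 = \deg \mathsf{v}_0 = d$. Then I would show the degree is constant for $l \ge 1$. The sequence $(\deg \mathsf{c}_l)$ is non-increasing by monotonicity (D\ref{eq: Monotone}), so it suffices to rule out any further drop past level $1$. Suppose $\deg \mathsf{c}_l > \deg \mathsf{c}_{l+1}$ for some $l \ge 1$. Since $\mathsf{c}_{l+1}$ is the only critical child of $\mathsf{c}_l$ (there is no split anywhere, as argued above), $\deg \mathsf{c}_l > \max_{\set{\vc}} \deg \mathsf{c}_l^{\child}$, so by Lemma \ref{lem: esc or split iff deg drops} there is an escape or split at $\mathsf{c}_l$; with no split available this is an escape at $\mathsf{c}_l \ne \mathsf{v}_0$, contradicting uniqueness of the escape. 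Hence $\deg \mathsf{c}_l$ is constant for $l \ge 1$; call this common value $D$.

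Finally I would pin down the range of $D$. Since $\deg \mathsf{c}_1 \le \deg \mathsf{c}_0 = d$ and in fact the degree strictly drops at the escape $\mathsf{v}_0 = \mathsf{c}_0$, Lemma \ref{lem: TwD}.\ref{sublem: deg children of v_0} gives $D = \deg \mathsf{c}_1 \le \max_{\set{\vc}}\deg \mathsf{c}_0^{\child} < \deg \mathsf{c}_0 = d$, so $D < d$. And $D \ge 2$ because $\boldsymbol{\mathsf{c}}$ is by hypothesis a critical end, i.e.\ $\deg \boldsymbol{\mathsf{c}} = \lim_{l\to\infty}\deg \mathsf{c}_l = D > 1$. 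This completes the argument. The only genuinely delicate point is the repeated use of "no split anywhere'': I should make explicit once, near the start, that a split at any vertex would produce two distinct critical children, each generating a subtree containing a critical end, contradicting uni-persistence — after that, every invocation of Lemma \ref{lem: esc or split iff deg drops} reduces cleanly to "escape,'' and uniqueness of the escape does the rest.
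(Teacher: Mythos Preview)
Your approach matches the paper's: both pivot on locating the unique escape at $\mathsf{v}_0 = \mathsf{c}_0$ and then reading off the degree pattern. The expanded treatment of steps 2--4 (constancy of degree for $l\le 0$ via (D\ref{eq: deg of root}), constancy for $l\ge 1$ via monotonicity plus uniqueness of the escape, and the bounds on $D$) is correct and fills in what the paper leaves implicit.

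There is, however, a genuine circularity in your first paragraph. You deduce ``escape at $\mathsf{v}_0$'' from ``no split anywhere,'' and you justify ``no split anywhere'' by the claim that a split would force two disjoint subtrees each containing a critical end. But that claim is not automatic from uni-persistence alone: a critical child can fail to generate a critical end if some descendant is an escape site at which \emph{all} children become non-critical (such a vertex is a leaf of the critical subtree). In a bi-critical tree there is exactly one escape, and a priori it could sit strictly below one branch of a split and terminate that branch, leaving only one critical end --- no contradiction with uni-persistence. Your ``no split'' argument only becomes valid \emph{after} you know the escape is at $\mathsf{v}_0$ (and since $\mathsf{v}_0$ has the critical child $\mathsf{c}_1$, that escape is not terminal, so no critical branch is ever cut off below it).

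The fix is to establish the escape at $\mathsf{v}_0$ directly, without going through ``no split.'' Apply (D\ref{eq: LC}) at $\mathsf{v}_0$: since $\F(\mathsf{v}_0)=\mathsf{v}_{-H}$ has the unique child $\mathsf{v}_{1-H}$ and every child of $\mathsf{v}_0$ maps there, $\sum_{\{\mathsf{v}_0^{\child}\}}\deg\mathsf{v}_0^{\child}=d$. With $k\ge 2$ children by (T3), $\sum_{\{\mathsf{v}_0^{\child}\}}(\deg\mathsf{v}_0^{\child}-1)=d-k<d-1$, so there is an escape at $\mathsf{v}_0$ by definition. (This is the computation behind Lemma~\ref{lem: TwD}.\ref{sublem: deg children of v_0}, which is why the paper's one-line proof can cite that lemma alone.) Now the unique escape is pinned at level $0$, your ``no split at level $\ge 1$'' argument goes through cleanly, and the rest of your proof stands.
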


For the remainder of this section, let $\boldsymbol{\mathsf{c}} = (\mathsf{c}_l)_{l \in\bZ}$ be the unique critical end of a
uni-persistent tree with dynamics. We consider the first return map to $\boldsymbol{\mathsf{c}}$. So for any $\mathsf{v} \in
\tree$, Definition \ref{defs: 1st Ret time and Map} gives
\[
    N^1_{\boldsymbol{\mathsf{c}}}(\mathsf{v}) = \min \set{n \geq 1: \ \F^n(\mathsf{v}) \in \boldsymbol{\mathsf{c}}}
    \quad \text{and} \quad
    \Ret_{\boldsymbol{\mathsf{c}}}(\mathsf{v})= \F^{N^1_{\boldsymbol{\mathsf{c}}}(\mathsf{v})}(\mathsf{v}).
\]

Note that $\mathsf{v}_l = \mathsf{c}_l$ for $l \leq 0$. Thus for any $\mathsf{v} \in \tree$,
$N^R_{\boldsymbol{\mathsf{c}}}(\mathsf{v})$ and $\Ret^R_{\boldsymbol{\mathsf{c}}}(\mathsf{v})$ are well defined for all $R \geq
1$. We are primarily interested in the restriction of $ N^1_{\boldsymbol{\mathsf{c}}}$ to $\boldsymbol{\mathsf{c}}$.

We classify $\boldsymbol{\mathsf{c}}$-portals in the bi-critical case. The classification is straightforward.  Type I
$\boldsymbol{\mathsf{c}}$-portals occur when the return time increases, Type II $\boldsymbol{\mathsf{c}}$-portals do not occur,
and $\mathsf{c}_0$ is the only type III $\boldsymbol{\mathsf{c}}$-portal. First we classify type I
$\boldsymbol{\mathsf{c}}$-portals using Lemma \ref{lem: catagorize uni-per I portals}.

\begin{lem} \label{lem: bi-critical type I portals}
Let $(\tree, \F)$ be a bi-critical tree with dynamics with critical end $\boldsymbol{\mathsf{c}} = (\mathsf{c}_l)_{l \in \bZ}$.
For each $l \geq 2$, $\mathsf{c}_l \in \textup{Port}_{\textup{I}}(\boldsymbol{\mathsf{c}})$ if and only if
$N^1_{\boldsymbol{\mathsf{c}}}(\mathsf{c}_l)< N^1_{\boldsymbol{\mathsf{c}}}(\mathsf{c}_{l+1})$.

\end{lem}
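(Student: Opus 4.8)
The plan is to apply Lemma~\ref{lem: catagorize uni-per I portals} directly, since a bi-critical tree with dynamics is by definition uni-persistent, and to check that for such a tree the hypotheses of that lemma are satisfied with $\mathsf{X} = \boldsymbol{\mathsf{c}}$, and that the degree condition $\deg \mathsf{c}_l = \deg \mathsf{c}_{l+1}$ appearing there is automatic when $l \geq 2$. First I would recall that for a bi-critical tree with dynamics the critical set is exactly $\boldsymbol{\mathsf{c}}$: the only vertex where there is an escape is $\mathsf{v}_0 = \mathsf{c}_0$ (Lemma~\ref{lem: deg fnc of uni-per} and its proof), and by Lemma~\ref{lem: esc or split iff deg drops} there are no splits anywhere, so no vertex off the critical end can be critical (a critical vertex not on $\boldsymbol{\mathsf{c}}$ would force, via monotonicity and the local cover property, a split or escape somewhere along a path joining it to the end — more simply, $\crit(\tree) \supseteq \boldsymbol{\mathsf{c}}$ with equality because the degree is $\leq 1$ off the end). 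Since $\boldsymbol{\mathsf{c}}$ is an extended end it is ancestral by Lemma on ancestral sets, and $\boldsymbol{\mathsf{c}} = \crit(\tree)$, so the chain $\boldsymbol{\mathsf{c}} \subset \mathsf{X} \subset \crit(\tree)$ required by Lemma~\ref{lem: catagorize uni-per I portals} holds trivially with $\mathsf{X} = \boldsymbol{\mathsf{c}}$.

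Next I would invoke Lemma~\ref{lem: deg fnc of uni-per}, which gives $\deg \mathsf{c}_l = D$ for all $l \geq 1$ with $2 \leq D < d$. In particular, for every $l \geq 2$ (indeed every $l \geq 1$) we have $\deg \mathsf{c}_l = \deg \mathsf{c}_{l+1} = D$, so the degree hypothesis in the biconditional of Lemma~\ref{lem: catagorize uni-per I portals} is vacuously met. Therefore for $l \geq 2$ the lemma reduces to: $\mathsf{c}_l \in \textup{Port}_{\textup{I}}(\boldsymbol{\mathsf{c}})$ if and only if $N^1_{\boldsymbol{\mathsf{c}}}(\mathsf{c}_l) < N^1_{\boldsymbol{\mathsf{c}}}(\mathsf{c}_{l+1})$, which is exactly the claimed statement.

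This argument is essentially a one-paragraph specialization, so there is no serious obstacle; the only point that needs a word of care is confirming that $\crit(\tree) = \boldsymbol{\mathsf{c}}$ in the bi-critical case so that Lemma~\ref{lem: catagorize uni-per I portals} applies with $\mathsf{X}$ literally equal to $\boldsymbol{\mathsf{c}}$ rather than some larger ancestral subset of $\crit(\tree)$. If one prefers to avoid that identification, an alternative is to verify the hypothesis $\boldsymbol{\mathsf{c}} \subset \mathsf{X} \subset \crit(\tree)$ with $\mathsf{X} = \crit(\tree)$ and then note that in the bi-critical tree $\crit(\tree)$ and $\boldsymbol{\mathsf{c}}$ have the same first-return combinatorics because they differ only in vertices of degree $1$ below the escape — but the cleaner route is simply $\crit(\tree) = \boldsymbol{\mathsf{c}}$, which follows from the absence of splits (Corollary~\ref{cor: finitely many splits or escapes} type reasoning) together with Lemma~\ref{lem: TwD}.\ref{sublem: deg monotone}. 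I would write the proof as: apply Lemma~\ref{lem: catagorize uni-per I portals} with $\mathsf{X} = \boldsymbol{\mathsf{c}} = \crit(\tree)$, observe $\deg \mathsf{c}_l = \deg \mathsf{c}_{l+1}$ for all $l \geq 1$ by Lemma~\ref{lem: deg fnc of uni-per}, and conclude.
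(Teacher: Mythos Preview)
Your proposal is correct and matches the paper's approach exactly: the paper states this lemma without proof, indicating just before it that the classification follows from Lemma~\ref{lem: catagorize uni-per I portals}, and the only extra ingredient is that $\deg \mathsf{c}_l = \deg \mathsf{c}_{l+1}$ for $l \geq 1$ by Lemma~\ref{lem: deg fnc of uni-per}. Note that your discussion of whether $\crit(\tree) = \boldsymbol{\mathsf{c}}$ is unnecessary: with $\mathsf{X} = \boldsymbol{\mathsf{c}}$ the chain $\boldsymbol{\mathsf{c}} \subset \mathsf{X} \subset \crit(\tree)$ reduces to $\boldsymbol{\mathsf{c}} \subset \crit(\tree)$, which holds automatically since every vertex of a critical end is critical.
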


\begin{cor} \label{cor: classify type I c-portals}
Let $(\tree, \F)$ be a bi-critical tree with dynamics with critical end $\boldsymbol{\mathsf{c}} = (\mathsf{c}_l)_{l \in \bZ}$.
If $\mathsf{c}_l \in \textup{Port}_{\textup{I}}(\boldsymbol{\mathsf{c}})$ for some $l \in \bZ$, then $\mathsf{c}_l$ is a compound
$\boldsymbol{\mathsf{c}}$-portal.
\begin{proof}
By Lemma \ref{lem: catagorize uni-per I portals}, $N^1_{\boldsymbol{\mathsf{c}}}(\mathsf{c}_l)  <
N^1_{\boldsymbol{\mathsf{c}}}(\mathsf{c}_{l+1})$. It follows from Lemma \ref{lem: F^N a D-fold cover} that $\mathsf{c}_l$ has
exactly $\deg \mathsf{c}_l$ children (counted by degree) such that $N^1_{\boldsymbol{\mathsf{c}}}(\mathsf{c}_l^{\child_i})=
N^1_{\boldsymbol{\mathsf{c}}}(\mathsf{c}_l)$.  None of these children are critical sine the tree with dynamics is bi-critical, so
there are at least two of them.
\end{proof}
\end{cor}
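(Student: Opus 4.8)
The plan is to argue that a type~I $\boldsymbol{\mathsf{c}}$-portal on the critical end of a bi-critical tree must have \emph{at least two} distinct non-critical children witnessing the portal condition of Definition \ref{defn: X-portal}, which is precisely what it means to be compound. First I would invoke Lemma \ref{lem: bi-critical type I portals} (equivalently Lemma \ref{lem: catagorize uni-per I portals}): since $\mathsf{c}_l \in \textup{Port}_{\textup{I}}(\boldsymbol{\mathsf{c}})$, we have $N^1_{\boldsymbol{\mathsf{c}}}(\mathsf{c}_l) < N^1_{\boldsymbol{\mathsf{c}}}(\mathsf{c}_{l+1})$ and $\deg \mathsf{c}_l = \deg \mathsf{c}_{l+1}$. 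Write $N = N^1_{\boldsymbol{\mathsf{c}}}(\mathsf{c}_l)$ and $\Ret_{\boldsymbol{\mathsf{c}}}(\mathsf{c}_l) = \F^N(\mathsf{c}_l) = \mathsf{c}_k \in \boldsymbol{\mathsf{c}}$.

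Next I would apply Lemma \ref{lem: F^N a D-fold cover} to $\F^N$ acting on the children of $\mathsf{c}_l$. Since $\boldsymbol{\mathsf{c}}$ is the critical end and the tree is bi-critical, every $\deg \F^n(\mathsf{c}_l)$ for $0 \le n < N$ equals $d$ or $D$ according to Lemma \ref{lem: deg fnc of uni-per}; regardless, the cover degree over the child $\mathsf{c}_{k+1}$ of $\mathsf{c}_k$ equals $\deg \mathsf{c}_l$ (which is $D \ge 2$ for $l \ge 2$, by Lemma \ref{lem: deg fnc of uni-per}). Thus the children $\mathsf{c}_l^{\child}$ of $\mathsf{c}_l$ that map under $\F^N$ to $\mathsf{c}_{k+1}$ have degrees summing to $\deg \mathsf{c}_l \ge 2$; in particular there are at least two such children counted with multiplicity, and each one $\mathsf{c}_l^{\child_i}$ satisfies $\F^N(\mathsf{c}_l^{\child_i}) = \mathsf{c}_{k+1} \in \boldsymbol{\mathsf{c}}$, hence $N^1_{\boldsymbol{\mathsf{c}}}(\mathsf{c}_l^{\child_i}) \le N = N^1_{\boldsymbol{\mathsf{c}}}(\mathsf{c}_l)$; by Corollary \ref{cor: N_1 non-dec } (monotonicity of $N^1$ along the end, applied via Lemma \ref{lem: ret of descen => ret}) in fact $N^1_{\boldsymbol{\mathsf{c}}}(\mathsf{c}_l^{\child_i}) = N^1_{\boldsymbol{\mathsf{c}}}(\mathsf{c}_l)$.

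The remaining point is that none of these children lies in $\boldsymbol{\mathsf{c}}$: an end contains exactly one child of each of its vertices, namely $\mathsf{c}_{l+1}$, and $\F^N(\mathsf{c}_{l+1}) = \mathsf{c}_{k+1}$ would force $N^1_{\boldsymbol{\mathsf{c}}}(\mathsf{c}_{l+1}) \le N = N^1_{\boldsymbol{\mathsf{c}}}(\mathsf{c}_l)$, contradicting $N^1_{\boldsymbol{\mathsf{c}}}(\mathsf{c}_l) < N^1_{\boldsymbol{\mathsf{c}}}(\mathsf{c}_{l+1})$. So $\mathsf{c}_{l+1}$ is \emph{not} among the children mapping to $\mathsf{c}_{k+1}$, and all of the (at least two) children found above are non-critical and outside $\boldsymbol{\mathsf{c}}$. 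Hence $\mathsf{c}_l$ has more than one child witnessing Definition \ref{defn: X-portal}, so it is a compound $\boldsymbol{\mathsf{c}}$-portal. The main obstacle is really just bookkeeping the degrees: one must be careful that ``at least two children counted by degree'' translates to ``at least two distinct children'' here, which works because all the relevant children are non-critical (degree $1$) by the bi-critical hypothesis — if one of them were critical it would have to be $\mathsf{c}_{l+1}$, already excluded.
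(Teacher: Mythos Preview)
Your proof is correct and follows essentially the same route as the paper's: invoke Lemma~\ref{lem: catagorize uni-per I portals} to get $N^1_{\boldsymbol{\mathsf{c}}}(\mathsf{c}_l) < N^1_{\boldsymbol{\mathsf{c}}}(\mathsf{c}_{l+1})$, apply Lemma~\ref{lem: F^N a D-fold cover} to produce at least $\deg \mathsf{c}_l \ge 2$ children (counted by degree) of $\mathsf{c}_l$ landing in $\boldsymbol{\mathsf{c}}$ at time $N$, and then note these are all non-critical since the unique critical child $\mathsf{c}_{l+1}$ is excluded --- you simply spell out the last two steps in more detail than the paper does. One small slip: your aside that $\deg \F^n(\mathsf{c}_l)$ is $d$ or $D$ for $1 \le n < N$ is wrong --- those iterates lie off $\boldsymbol{\mathsf{c}}$ and hence have degree $1$ in the bi-critical case --- but this only makes the cover degree exactly $\deg \mathsf{c}_l$, so your conclusion is unaffected.
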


Next we show that there are no type~II $\boldsymbol{\mathsf{c}}$-portals in the bi-critical case.

\begin{lem} \label{lem: no bi-critical II portals}
Let $(\tree, \F)$ be a uni-persistent tree with dynamics with critical end $\boldsymbol{\mathsf{c}}$. If $\crit(\tree) =
\boldsymbol{\mathsf{c}}$, then $\textup{Port}_{\textup{II}}(\boldsymbol{\mathsf{c}}) = \emptyset$.
\begin{proof}
Suppose that $\mathsf{c}_l \in \textup{Port}_{\textup{II}}(\boldsymbol{\mathsf{c}})$.  By Lemma \ref{lem: port II cond}, either
$\F^n (\mathsf{c}_l) $ is critical for some $n < N^1_{\boldsymbol{\mathsf{c}}}(\mathsf{c}_l)$ or
$\Ret_{\boldsymbol{\mathsf{c}}}(\mathsf{c}_l)$ has two distinct children in $\boldsymbol{\mathsf{c}}$.  Neither of these
conditions can hold since $\crit(\tree) = \boldsymbol{\mathsf{c}}$ and every vertex of $\boldsymbol{\mathsf{c}}$ has exactly one
child that is in $\boldsymbol{\mathsf{c}}$.

\end{proof}
\end{lem}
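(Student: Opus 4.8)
The plan is a proof by contradiction that feeds the hypothesis $\crit(\tree) = \boldsymbol{\mathsf{c}}$ into the structural dichotomy for Type~II portals recorded in Lemma~\ref{lem: port II cond}. Suppose some $\mathsf{c}_l \in \textup{Port}_{\textup{II}}(\boldsymbol{\mathsf{c}})$. Apply Lemma~\ref{lem: port II cond} with the (arbitrary) target set taken to be $\mathsf{X} = \boldsymbol{\mathsf{c}}$: then at least one of its two alternatives holds for $\mathsf{x} = \mathsf{c}_l$, and I would contradict each.

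For the first alternative there would be an $n$ with $1 \le n < N^1_{\boldsymbol{\mathsf{c}}}(\mathsf{c}_l)$ such that $\F^n(\mathsf{c}_l)$ is critical. I would simply observe that, by the definition of the first return time (Definition~\ref{defs: 1st Ret time and Map}), the inequality $n < N^1_{\boldsymbol{\mathsf{c}}}(\mathsf{c}_l)$ forces $\F^n(\mathsf{c}_l) \notin \boldsymbol{\mathsf{c}}$; since $\crit(\tree) = \boldsymbol{\mathsf{c}}$, the vertex $\F^n(\mathsf{c}_l)$ is then non-critical, a contradiction. (If $N^1_{\boldsymbol{\mathsf{c}}}(\mathsf{c}_l) = 1$ there is no admissible $n$ at all, so the alternative fails vacuously.) For the second alternative $\Ret_{\boldsymbol{\mathsf{c}}}(\mathsf{c}_l)$ would have two distinct children lying in $\boldsymbol{\mathsf{c}}$; but $\Ret_{\boldsymbol{\mathsf{c}}}(\mathsf{c}_l) = \F^{N^1_{\boldsymbol{\mathsf{c}}}(\mathsf{c}_l)}(\mathsf{c}_l)$ lies on the end $\boldsymbol{\mathsf{c}}$, say it equals $\mathsf{c}_k$, and by Definition~\ref{defn: end of T} the only element of $\boldsymbol{\mathsf{c}}$ whose parent is $\mathsf{c}_k$ is $\mathsf{c}_{k+1}$, so $\mathsf{c}_k$ has exactly one child in $\mathsf{X} = \boldsymbol{\mathsf{c}}$ — again a contradiction. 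Both alternatives of Lemma~\ref{lem: port II cond} being impossible, no such $\mathsf{c}_l$ exists, and hence $\textup{Port}_{\textup{II}}(\boldsymbol{\mathsf{c}}) = \emptyset$.

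I do not expect a genuine obstacle here: the entire content is the two elementary observations that an iterate strictly before the first return to $\boldsymbol{\mathsf{c}}$ cannot land on $\boldsymbol{\mathsf{c}}$, and that an end meets the child set of any one of its vertices in at most one point. The only spot to stay alert is the degenerate case $N^1_{\boldsymbol{\mathsf{c}}}(\mathsf{c}_l)=1$ of the first alternative, which is handled vacuously as noted above.
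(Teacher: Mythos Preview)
Your proposal is correct and follows essentially the same approach as the paper: both proofs invoke Lemma~\ref{lem: port II cond} and rule out its two alternatives using, respectively, the hypothesis $\crit(\tree)=\boldsymbol{\mathsf{c}}$ (so a critical iterate before the first return is impossible) and the fact that each vertex of an end has a unique child in that end. Your write-up is simply a more explicit version of the paper's terse argument.
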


Finally we classify type~III $\boldsymbol{\mathsf{c}}$-portals.

\begin{cor} \label{cor: catagorize bi-critical III portals}
If $(\tree, \F)$ is a bi-critical tree with dynamics with critical end $\boldsymbol{\mathsf{c}} = (\mathsf{c}_l)_{l \in \bZ}$,
then $\textup{Port}_{\textup{III}}(\boldsymbol{\mathsf{c}}) = \set{\mathsf{c}_0}$.
\begin{proof}
By Lemma \ref{lem: Port III categorized}, $\textup{Port}_{\textup{III}}(\boldsymbol{\mathsf{c}}) = \set{\mathsf{c}_l: \
\deg\mathsf{c}_l > \max \mathsf{c}_l^{\child}} $. The only vertex of $\boldsymbol{\mathsf{c}}$ where the degree drops is
$\mathsf{c}_0$.
\end{proof}
\end{cor}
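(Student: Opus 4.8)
The plan is to prove Corollary~\ref{cor: catagorize bi-critical III portals} by combining two facts already established: the general characterization of type~III portals in terms of escapes and splits (Lemma~\ref{lem: Port III categorized}), and the explicit computation of the degree function along the critical end in the bi-critical case (Lemma~\ref{lem: deg fnc of uni-per}). First I would invoke Lemma~\ref{lem: Port III categorized}: if $\mathsf{x} \in \textup{Port}_{\textup{III}}(\boldsymbol{\mathsf{c}})$ then there is an escape or a split at $\mathsf{x}$, and by Lemma~\ref{lem: esc or split iff deg drops} this happens exactly when $\deg \mathsf{x} > \max_{\set{\vc}} \deg \mathsf{v}$. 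Since every type~III $\boldsymbol{\mathsf{c}}$-portal is in particular a vertex of $\boldsymbol{\mathsf{c}}$ (being a $\boldsymbol{\mathsf{c}}$-portal means it lies in $\boldsymbol{\mathsf{c}}$), it suffices to locate the vertices $\mathsf{c}_l$ of the critical end where $\deg \mathsf{c}_l > \max_{\set{\mathsf{c}_l^{\child}}} \deg \mathsf{c}_l^{\child}$.

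Next I would use Lemma~\ref{lem: deg fnc of uni-per}, which gives $\deg \mathsf{c}_l = D$ for $l \geq 1$ and $\deg \mathsf{c}_l = d$ for $l \leq 0$, where $2 \leq D < d$. For $l \geq 1$, the child $\mathsf{c}_{l+1}$ of $\mathsf{c}_l$ has $\deg \mathsf{c}_{l+1} = D = \deg \mathsf{c}_l$, so the maximum child degree equals $\deg \mathsf{c}_l$ and there is no escape or split at $\mathsf{c}_l$; hence $\mathsf{c}_l \notin \textup{Port}_{\textup{III}}(\boldsymbol{\mathsf{c}})$. For $l \leq -1$, the child $\mathsf{c}_{l+1}$ has $\deg \mathsf{c}_{l+1} = d = \deg \mathsf{c}_l$, so again the degree does not drop and $\mathsf{c}_l \notin \textup{Port}_{\textup{III}}(\boldsymbol{\mathsf{c}})$. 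The only candidate left is $l = 0$: here $\deg \mathsf{c}_0 = d$ while $\deg \mathsf{c}_1 = D < d$, so the degree drops at $\mathsf{c}_0$; combined with Lemma~\ref{lem: v_0 a type 3 portal} (which gives $\mathsf{v}_0 = \mathsf{c}_0 \in \textup{Port}_{\textup{III}}(\boldsymbol{\mathsf{c}})$ for any end), this shows $\textup{Port}_{\textup{III}}(\boldsymbol{\mathsf{c}}) = \set{\mathsf{c}_0}$.

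There is essentially no obstacle here — the corollary is a direct bookkeeping consequence of the structural lemmas that precede it. The one point requiring a word of care is that a type~III $\boldsymbol{\mathsf{c}}$-portal must actually be a vertex on the end $\boldsymbol{\mathsf{c}}$, which is immediate from Definition~\ref{defn: X-portal} with $\mathsf{X} = \boldsymbol{\mathsf{c}}$: an $\boldsymbol{\mathsf{c}}$-portal is by definition an element of $\boldsymbol{\mathsf{c}}$. The other implicit input, that $\mathsf{c}_0 = \mathsf{v}_0$ in a bi-critical tree with dynamics, is recorded in the proof of Lemma~\ref{lem: deg fnc of uni-per} (the unique escape occurs at $\mathsf{v}_0 = \mathsf{c}_0$). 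So the full argument is: restrict attention to $\boldsymbol{\mathsf{c}}$ via Lemma~\ref{lem: Port III categorized}, rule out $l \neq 0$ using the two-valued degree function from Lemma~\ref{lem: deg fnc of uni-per}, and confirm $l = 0$ works via Lemma~\ref{lem: v_0 a type 3 portal}.
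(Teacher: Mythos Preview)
Your proposal is correct and follows essentially the same approach as the paper: use Lemma~\ref{lem: Port III categorized} to restrict type~III $\boldsymbol{\mathsf{c}}$-portals to vertices of $\boldsymbol{\mathsf{c}}$ where the degree drops, then observe via Lemma~\ref{lem: deg fnc of uni-per} that the only such vertex is $\mathsf{c}_0$. Your version is in fact slightly more careful than the paper's, since you explicitly invoke Lemma~\ref{lem: v_0 a type 3 portal} to establish the inclusion $\set{\mathsf{c}_0} \subset \textup{Port}_{\textup{III}}(\boldsymbol{\mathsf{c}})$, which the paper leaves implicit.
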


We obtain the following corollary of Theorem \ref{thm: N1 cond} for bi-critical trees with dynamics.

\begin{cor} \label{cor: N1 cond for bi-critical}
Let $(\tree, \F)$ be a bi-critical tree with dynamics.  Let $\boldsymbol{\mathsf{c}} = (\mathsf{c}_l)_{l \in \bZ}$ be the
(unique) critical extended end of $\tree$. The following conditions hold for each $\mathsf{v} \in \tree$:

\begin{enumerate}
    \item If $\mathsf{v} \in \tree_l$, then $ N^1_{\mathsf{\boldsymbol{\mathsf{c}}}}(\mathsf{c}_l) =1$ if $l \leq 1$ and $1
        \leq N^1_{\mathsf{\boldsymbol{\mathsf{c}}}}(\mathsf{c}_l) \leq l$ if $l \geq 2$;

    \item For any child of $\mathsf{v}$, $N^1_{\mathsf{\boldsymbol{\mathsf{c}}}}(\mathsf{v}^{\child}) =
        N^R_{\mathsf{\boldsymbol{\mathsf{c}}}}(\mathsf{v})$ for some $R \geq 1$;

    \item If $N^1_{\mathsf{\boldsymbol{\mathsf{c}}}}(\mathsf{v}^{\child}) = N^R_{\boldsymbol{\mathsf{c}}}(\mathsf{v})$ for
        some $R \geq 2$, then either
\begin{enumerate}
    \item $\Ret^{R-1}_{\boldsymbol{\mathsf{c}}}(\mathsf{v})= \mathsf{c}_k$ for some $k$ such that
        $N^1_{\boldsymbol{\mathsf{c}}}(\mathsf{c}_{k+1} )> N^1_{\boldsymbol{\mathsf{c}}}(\mathsf{c}_k)$;

    \item $\Ret^{R-1}_{\boldsymbol{\mathsf{c}}}(\mathsf{v})= \mathsf{c}_0$.
\end{enumerate}

\end{enumerate}

\begin{proof}
Assume that Theorem \ref{thm: N1 cond} has been proved. Note that $H=1$ in a bi-critical tree with dynamics.  So Condition 1
follows from Condition 1 of Theorem \ref{thm: N1 cond}.  Condition 2 is the same as in Theorem \ref{thm: N1 cond}.  For Condition
3, we have $\Ret^{R-1}_{\boldsymbol{\mathsf{c}}}(\mathsf{v})$ is a $\boldsymbol{\mathsf{c}}$-portal by the Main Lemma (\ref{main
lem}.  By Lemma \ref{lem: no bi-critical II portals}, it cannot be a type~II $\boldsymbol{\mathsf{c}}$-portal.  If it is type~I,
Condition 3a holds by Lemma \ref{lem: bi-critical type I portals}.  Otherwise it is type~III, so Corollary \ref{cor: catagorize
bi-critical III portals} tells us it can only be $\mathsf{c}_0$.
\end{proof}

\end{cor}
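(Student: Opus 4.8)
The plan is to derive Corollary \ref{cor: N1 cond for bi-critical} directly from Theorem \ref{thm: N1 cond} together with the portal classifications of \S\ref{subsect: bi-critical returns}, exactly as the proof sketch in the excerpt indicates. So I assume Theorem \ref{thm: N1 cond} has been established; the work is purely bookkeeping to specialize its three conditions to the bi-critical setting, where much of the structure collapses.

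First I would record that $H=1$ in a bi-critical tree with dynamics. This is because the unique escape occurs at $\mathsf{v}_0 = \mathsf{c}_0$ (Lemma \ref{lem: deg fnc of uni-per} and its proof), and the associated equipotential $E_1$ is not a Jordan curve with all others in its orbit being Jordan curves, forcing a single orbit; more combinatorially, one invokes the construction in \S\ref{subsect: bi-critical returns} that builds the bi-critical tree precisely so that $H=1$. Granting $H=1$: in Condition 1 of Theorem \ref{thm: N1 cond}, the hypothesis $l\le H$ becomes $l\le 1$, and $\lceil l/H\rceil = l$, so Condition 1 of the corollary is immediate. Condition 2 of the corollary is verbatim Condition 2 of the theorem, so nothing is needed there.

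The substance is Condition 3. By the Main Lemma (Lemma \ref{main lem}), if $N^1_{\boldsymbol{\mathsf{c}}}(\mathsf{v}^{\child}) = N^R_{\boldsymbol{\mathsf{c}}}(\mathsf{v})$ with $R\ge 2$ and $N^1_{\boldsymbol{\mathsf{c}}}(\mathsf{v}^{\child}) > N^1_{\boldsymbol{\mathsf{c}}}(\mathsf{v})$ — which holds here, since otherwise $R=1$ by Lemma \ref{lem: ret times add} — then $\mathsf{w} := \Ret^{R-1}_{\boldsymbol{\mathsf{c}}}(\mathsf{v})$ is a $\boldsymbol{\mathsf{c}}$-portal, hence has type I, II, or III by Definition \ref{defn: Types of portals}. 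Now I would knock out the cases. A bi-critical tree has $\crit(\tree) = \boldsymbol{\mathsf{c}}$, so by Lemma \ref{lem: no bi-critical II portals} there are no type II $\boldsymbol{\mathsf{c}}$-portals; this kills case 3(b) of Theorem \ref{thm: N1 cond} (which required a critical vertex off $\boldsymbol{\mathsf{c}}$) automatically as well. If $\mathsf{w}$ is type I, then by Lemma \ref{lem: bi-critical type I portals} we have $\mathsf{w} = \mathsf{c}_k$ with $N^1_{\boldsymbol{\mathsf{c}}}(\mathsf{c}_k) < N^1_{\boldsymbol{\mathsf{c}}}(\mathsf{c}_{k+1})$, which is exactly case 3(a). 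If $\mathsf{w}$ is type III, then by Corollary \ref{cor: catagorize bi-critical III portals}, $\textup{Port}_{\textup{III}}(\boldsymbol{\mathsf{c}}) = \{\mathsf{c}_0\}$, so $\mathsf{w} = \mathsf{c}_0$, which is case 3(b). This exhausts the possibilities and gives Condition 3.

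I do not expect a genuine obstacle here — the corollary is a direct specialization. The one point requiring a little care is confirming that $H=1$ is legitimately available in all the scenarios the corollary is meant to cover (both a genuine bi-critical polynomial and a uni-persistent polynomial re-coded via the subtree construction of \cite[Prop.\ 5.3]{E03}); once $H=1$ is in hand, Condition 1's $\lceil l/H\rceil = l$ and the case $l\le 1$ fall out, and the rest is a clean application of the portal classification. A secondary subtlety is verifying that case 3(b) of Theorem \ref{thm: N1 cond} ("escape or split at $\Ret^{R-1}_{\boldsymbol{\mathsf{c}}}(\mathsf{v})$") really does coincide with "$\Ret^{R-1}_{\boldsymbol{\mathsf{c}}}(\mathsf{v}) = \mathsf{c}_0$" in the bi-critical case: there is exactly one escape and no split (no vertex of $\boldsymbol{\mathsf{c}}$ has two critical children since $\crit(\tree)=\boldsymbol{\mathsf{c}}$), and that escape is at $\mathsf{c}_0$, so this identification holds — which is just Corollary \ref{cor: catagorize bi-critical III portals} again via Lemma \ref{lem: esc or split iff deg drops}.
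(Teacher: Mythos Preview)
Your proof is correct and follows essentially the same approach as the paper: assume Theorem \ref{thm: N1 cond}, note $H=1$ to get Condition 1, carry Condition 2 over verbatim, and for Condition 3 invoke the Main Lemma to produce a $\boldsymbol{\mathsf{c}}$-portal, then use Lemmas \ref{lem: no bi-critical II portals}, \ref{lem: bi-critical type I portals} and Corollary \ref{cor: catagorize bi-critical III portals} to reduce to the two stated cases. Your extra remarks on why $H=1$ and on the collapse of the escape/split case to $\mathsf{c}_0$ are sound and slightly more explicit than the paper's own proof.
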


Corollary \ref{cor: bi-crit realization of tau} shows that the above conditions are also sufficient for bi-critical polynomials.

Whether or not $\mathsf{c}_0$ is a simple $\boldsymbol{\mathsf{c}}$-portal has important consequences. Branner and Hubbard gave 3
axioms for tableaux, which they claimed were necessary and sufficient conditions for a tableau to be the tableau of a
uni-persistent polynomial with exactly one escaping critical point \cite[Prop.\ 12.8]{BH92}. D.~Harris later found that these
axioms were not in fact sufficient to realize a tableau as a cubic polynomial, and proposed a fourth tableau axiom \cite{Harris}.
J.~Kiwi found a counter-example that shows Harris' axiom is not sufficient. Kiwi gave another fourth tableau axiom, which he
showed is sufficient for cubic polynomials \cite{Kiwi_Puiseux}. None of these previous works have addressed non-cubic
polynomials.  We ask precisely which polynomials require the fourth axiom?  We give a complete answer to this question in
Proposition \ref{prop: tab axioms}.

The key point of Harris' argument is equivalent to the fact that the tree with dynamics of a cubic polynomial has exactly two
vertices at level 1: one of which is critical and the other of which is non-critical.  Hence his argument generalizes to
bi-critical polynomials with a simple escaping critical point.  By the above lemma, this covers bi-critical trees with dynamics
such that $\mathsf{c}_0$ is a simple $\boldsymbol{\mathsf{c}}$-portal.  Hence we consider the first return map when
$\mathsf{c}_0$ is a simple $\boldsymbol{\mathsf{c}}$-portal, and derive some conditions.

Corollary \ref{cor: c-l simple iff deg c-1 - deg c-l+1 =1} implies that there is a simple relationship between the degree of the
escaping critical point of a bi-critical polynomial and whether $\mathsf{c}_0$ is simple or compound.

\begin{lem} \label{lem: bi-crit c0 simple iff simpe esc}
If $(\tree, \F)$ is a bi-critical tree with dynamics with critical end $\boldsymbol{\mathsf{c}} = (\mathsf{c}_l)_{l \in \bZ}$,
then $\mathsf{c}_0$ is a simple $\boldsymbol{\mathsf{c}}$-portal if and only if $\deg \mathsf{c}_0 = \deg \mathsf{c}_1 +1 $.

\end{lem}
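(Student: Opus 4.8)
The plan is to obtain the statement as a direct consequence of Corollary~\ref{cor: c-l simple iff deg c-1 - deg c-l+1 =1} applied to the critical end $\boldsymbol{\mathsf{c}}$ with $l=0$. That corollary asserts exactly the claimed equivalence, provided one knows: (i) $N^1_{\boldsymbol{\mathsf{c}}}(\mathsf{c}_0)=N^1_{\boldsymbol{\mathsf{c}}}(\mathsf{c}_1)$, and (ii) there is an escape but not a split at $\mathsf{c}_0$. Note first that $\mathsf{c}_0=\mathsf{v}_0$ is automatically a $\boldsymbol{\mathsf{c}}$-portal by Lemma~\ref{lem: v_0 a type 3 portal}, so the simple/compound dichotomy for it is meaningful. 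Thus the whole proof reduces to checking (i) and (ii).

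For (i), recall that a bi-critical tree with dynamics has $H=1$, so both $\mathsf{c}_0\in\tree_0$ and $\mathsf{c}_1\in\tree_1$ lie at levels $\le H$; since $\boldsymbol{\mathsf{c}}$ is ancestral and contains $\mathsf{v}_0$, Lemma~\ref{lem: N_1 T_l = 1 for l <H} gives $N^1_{\boldsymbol{\mathsf{c}}}(\mathsf{c}_0)=N^1_{\boldsymbol{\mathsf{c}}}(\mathsf{c}_1)=1$. For the ``escape'' half of (ii), Lemma~\ref{lem: deg fnc of uni-per} tells us that the unique escape of $\tree$ occurs at $\mathsf{v}_0=\mathsf{c}_0$, so in particular there is an escape at $\mathsf{c}_0$.

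The only point with real content is the ``no split at $\mathsf{c}_0$'' half of (ii); this is also where care is needed, since there is simultaneously an escape at $\mathsf{c}_0$ and \emph{no} split, and it is precisely this configuration that Corollary~\ref{cor: c-l simple iff deg c-1 - deg c-l+1 =1} is tailored to. I would prove it by contradiction: a split at $\mathsf{c}_0$ would furnish a critical child $\mathsf{w}\ne\mathsf{c}_1$ of $\mathsf{c}_0$. Follow a maximal chain of critical vertices starting at $\mathsf{w}$, passing at each step to a critical child whenever one exists. If this chain is infinite it is a critical end distinct from $\boldsymbol{\mathsf{c}}$, contradicting uni-persistence. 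If it terminates at a vertex $\mathsf{w}'$, then $\mathsf{w}'$ is critical and every child of $\mathsf{w}'$ has strictly smaller degree, so by Lemma~\ref{lem: esc or split iff deg drops} there is an escape or split at $\mathsf{w}'$; since $\mathsf{w}'$ has no critical child it must be an escape, and $\mathsf{w}'$ is a proper descendant of $\mathsf{v}_0$, hence $\mathsf{w}'\ne\mathsf{v}_0$, contradicting the uniqueness of the escape (Lemma~\ref{lem: deg fnc of uni-per}). Therefore $\mathsf{c}_1$ is the only critical child of $\mathsf{c}_0$ and there is no split at $\mathsf{c}_0$. With (i) and (ii) established, Corollary~\ref{cor: c-l simple iff deg c-1 - deg c-l+1 =1} with $l=0$ gives that $\mathsf{c}_0$ is a simple $\boldsymbol{\mathsf{c}}$-portal if and only if $\deg\mathsf{c}_0-\deg\mathsf{c}_1=1$, i.e.\ $\deg\mathsf{c}_0=\deg\mathsf{c}_1+1$, as desired.

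Alternatively, once the no-split fact is in hand one can bypass Corollary~\ref{cor: c-l simple iff deg c-1 - deg c-l+1 =1}: since $\F(\mathsf{c}_0)=\mathsf{v}_{-1}$ and $\mathsf{v}_{-1}$ has the unique child $\mathsf{v}_0=\mathsf{c}_0$ by (T4), and $\F$ preserves children, every child of $\mathsf{c}_0$ is mapped by $\F$ to $\mathsf{c}_0\in\boldsymbol{\mathsf{c}}$. Hence the degrees of the children of $\mathsf{c}_0$ sum to $\deg\mathsf{c}_0$ by Lemma~\ref{lem: F^N a D-fold cover} (equivalently by (D2)), every child of $\mathsf{c}_0$ other than $\mathsf{c}_1$ lies outside $\boldsymbol{\mathsf{c}}$ and has first return time $1=N^1_{\boldsymbol{\mathsf{c}}}(\mathsf{c}_0)$, and—using the absence of a split—each such child is non-critical. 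So $\mathsf{c}_0$ has exactly $\deg\mathsf{c}_0-\deg\mathsf{c}_1$ children outside $\boldsymbol{\mathsf{c}}$, and it is a simple $\boldsymbol{\mathsf{c}}$-portal precisely when this number is $1$. The main obstacle throughout is the no-split claim; no step is computationally heavy.
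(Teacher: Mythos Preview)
Your proof is correct and follows the same approach the paper intends: the lemma is stated immediately after the remark that Corollary~\ref{cor: c-l simple iff deg c-1 - deg c-l+1 =1} implies it, and you have carefully verified the hypotheses of that corollary at $l=0$ (the point with actual content being the ``no split at $\mathsf{c}_0$'' claim, which you handle cleanly). Your alternative direct argument via (T4) and (D2) is also valid and perhaps even more transparent in this special case.
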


\begin{lem} \label{lem: Tab d}
Let  $(\tree, \F)$ be a bi-critical tree with dynamics with critical end $\boldsymbol{\mathsf{c}} = (\mathsf{c}_l)_{l \in \bZ}$.
Suppose that $\mathsf{c}_0$ is a simple $\boldsymbol{\mathsf{c}}$-portal, and $\Ret_{\boldsymbol{\mathsf{c}}} (\mathsf{c}_l) =
\Ret_{\boldsymbol{\mathsf{c}}} (\mathsf{c}_{l+1}) = \mathsf{c}_0$ for some $l$. If $\mathsf{c}_l^{\child} \neq \mathsf{c}_{l+1}$
is a child of $\mathsf{c}_l$, then $\Ret_{\boldsymbol{\mathsf{c}}} (\mathsf{c}_l^{\child}) = \mathsf{c}_{1}$.

\begin{proof}
Since $\mathsf{c}_0$ is a simple $\boldsymbol{\mathsf{c}}$-portal, there are exactly two distinct children of $\mathsf{c}_0$, say
$\set{\mathsf{c}_1, \mathsf{c}_0^{\child_0}}$ by Lemma \ref{lem: bi-crit c0 simple iff simpe esc}. We apply Lemma \ref{lem: F^N a
D-fold cover} and note that $\F^N:\set{\mathsf{c}_l^{\child}} \to \set{\mathsf{c}_0^{\child}}$ is a $(\deg \mathsf{c}_l)$-fold
cover where $N = N^1_{\boldsymbol{\mathsf{c}}}(\mathsf{c}_{l})$. By assumption $\Ret_{\boldsymbol{\mathsf{c}}} (\mathsf{c}_{l+1})
= \mathsf{c}_0 \neq \mathsf{c}_1$, so $\F^N(\mathsf{c}_{l+1}) = \mathsf{c}_0^{\child_0}$.  Since $\deg \mathsf{c}_{l+1} = \deg
\mathsf{c}_{l}$, $\mathsf{c}_{l+1}$ accounts for all of the $\deg \mathsf{c}_{l}$ children of  $\mathsf{c}_{l}$ that are mapped
to $ \mathsf{c}_0^{\child_0}$. Hence for any other child of $\mathsf{c}_{l}$, $\F^N(\mathsf{c}_{l}^{\child})  = \mathsf{c}_{1}$.
Since $N^1_{\boldsymbol{\mathsf{c}}}(\mathsf{c}_{l}^{\child}) \geq N^1_{\boldsymbol{\mathsf{c}}}(\mathsf{c}_{l}) = N$, we have
$N^1_{\boldsymbol{\mathsf{c}}}(\mathsf{c}_{l}^{\child}) = N$ and $\Ret_{\boldsymbol{\mathsf{c}}} (\mathsf{c}_l^{\child}) =
\mathsf{c}_{1}$.

\end{proof}

\end{lem}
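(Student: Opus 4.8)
The plan is to use the hypothesis that $\mathsf{c}_0$ is a simple $\boldsymbol{\mathsf{c}}$-portal to pin down the children of $\mathsf{c}_0$ exactly, and then to count degrees through the branched cover $\F^N$ (with $N = N^1_{\boldsymbol{\mathsf{c}}}(\mathsf{c}_l)$) to force every child of $\mathsf{c}_l$ other than $\mathsf{c}_{l+1}$ to land on $\mathsf{c}_1$. First I would note that, since $\boldsymbol{\mathsf{c}}$ is an end, $\mathsf{c}_0 = \mathsf{v}_0$ (as $\tree_0 = \set{\mathsf{v}_0}$), and a bi-critical tree with dynamics has $H = 1$, so $\F(\mathsf{c}_0) = \mathsf{v}_{-1}$, whose unique child is $\mathsf{v}_0$ by (T4). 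Applying Lemma \ref{lem: F^N a D-fold cover} with $N = 1$ to $\mathsf{v}_0$ then shows that the degrees of the children of $\mathsf{c}_0$ sum to $\deg \mathsf{c}_0 = d$. One of those children is $\mathsf{c}_1$, and since $\mathsf{c}_0$ is a simple $\boldsymbol{\mathsf{c}}$-portal, Lemma \ref{lem: bi-crit c0 simple iff simpe esc} gives $\deg \mathsf{c}_1 = d - 1$; hence the remaining children of $\mathsf{c}_0$ have degrees summing to $1$, so there is exactly one of them, say $\mathsf{c}_0^{\child_0}$, and it is non-critical. Thus $\mathsf{c}_0$ has precisely the two children $\mathsf{c}_1$ and $\mathsf{c}_0^{\child_0}$.

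Next, set $N = N^1_{\boldsymbol{\mathsf{c}}}(\mathsf{c}_l)$, so that $\F^N(\mathsf{c}_l) = \mathsf{c}_0$ by hypothesis. Since $\F$ shifts levels by $H = 1$ this forces $N = l$, so $l \geq 1$ and, by Lemma \ref{lem: deg fnc of uni-per}, $\deg \mathsf{c}_l = \deg \mathsf{c}_{l+1} = D$. I would then apply Lemma \ref{lem: F^N a D-fold cover} to $\mathsf{c}_l$ with image $\mathsf{c}_0$: for $0 < n < N$ the vertex $\F^n(\mathsf{c}_l)$ lies off $\boldsymbol{\mathsf{c}}$, hence is non-critical, since the critical set of a bi-critical tree with dynamics is exactly $\boldsymbol{\mathsf{c}}$; therefore the covering degree $\prod_{n=0}^{N-1}\deg\F^n(\mathsf{c}_l)$ collapses to $\deg\mathsf{c}_l = D$. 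So, for each child $\mathsf{c}_0^{\child}$ of $\mathsf{c}_0$,
\[
    \sum_{\set{\mathsf{c}_l^{\child}:\ \F^N(\mathsf{c}_l^{\child}) = \mathsf{c}_0^{\child}}} \deg \mathsf{c}_l^{\child} = D.
\]

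To finish, observe that $\F^N(\mathsf{c}_{l+1})$ is a child of $\F^N(\mathsf{c}_l) = \mathsf{c}_0$ because $\F$ preserves children. It cannot equal $\mathsf{c}_1$: otherwise $\mathsf{c}_{l+1}$ would have returned to $\boldsymbol{\mathsf{c}}$ by time $N$, and since $N^1_{\boldsymbol{\mathsf{c}}}(\mathsf{c}_{l+1}) \geq N^1_{\boldsymbol{\mathsf{c}}}(\mathsf{c}_l) = N$ (a descendant cannot first-return strictly before its ancestor when the target set is ancestral, by Lemma \ref{lem: ret of descen => ret}), we would get $\Ret_{\boldsymbol{\mathsf{c}}}(\mathsf{c}_{l+1}) = \mathsf{c}_1 \neq \mathsf{c}_0$, contradicting the hypothesis. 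Hence $\F^N(\mathsf{c}_{l+1}) = \mathsf{c}_0^{\child_0}$, and since $\deg \mathsf{c}_{l+1} = D$, the single child $\mathsf{c}_{l+1}$ already saturates the displayed sum for $\mathsf{c}_0^{\child} = \mathsf{c}_0^{\child_0}$. Consequently no other child of $\mathsf{c}_l$ is carried by $\F^N$ to $\mathsf{c}_0^{\child_0}$, so by the first paragraph every child $\mathsf{c}_l^{\child} \neq \mathsf{c}_{l+1}$ satisfies $\F^N(\mathsf{c}_l^{\child}) = \mathsf{c}_1 \in \boldsymbol{\mathsf{c}}$. This gives $N^1_{\boldsymbol{\mathsf{c}}}(\mathsf{c}_l^{\child}) \leq N$, while the descendant inequality gives $N^1_{\boldsymbol{\mathsf{c}}}(\mathsf{c}_l^{\child}) \geq N$; so equality holds and $\Ret_{\boldsymbol{\mathsf{c}}}(\mathsf{c}_l^{\child}) = \mathsf{c}_1$.

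The step most likely to need care is the reduction of the covering degree of $\F^N$ over $\mathsf{c}_0$ to $\deg\mathsf{c}_l$ in the second paragraph; this is exactly where the bi-critical hypothesis enters, through the fact that every vertex off the critical end $\boldsymbol{\mathsf{c}}$ is non-critical, so each intermediate iterate $\F^n(\mathsf{c}_l)$ with $0 < n < N$ contributes only a factor $1$. The remaining steps are routine bookkeeping with the local cover property and the known profile of the degree function along $\boldsymbol{\mathsf{c}}$ (Lemma \ref{lem: deg fnc of uni-per}).
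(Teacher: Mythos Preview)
Your proof is correct and follows essentially the same approach as the paper's: identify the two children of $\mathsf{c}_0$ via Lemma~\ref{lem: bi-crit c0 simple iff simpe esc}, use Lemma~\ref{lem: F^N a D-fold cover} to view $\F^N$ as a $(\deg\mathsf{c}_l)$-fold cover on children, observe that $\mathsf{c}_{l+1}$ saturates the fibre over $\mathsf{c}_0^{\child_0}$ since $\deg\mathsf{c}_{l+1}=\deg\mathsf{c}_l$, and conclude. Your write-up is in fact more explicit than the paper's at two points the paper leaves tacit---the reduction of the covering degree $\prod_{n=0}^{N-1}\deg\F^n(\mathsf{c}_l)$ to $\deg\mathsf{c}_l$ (which, as you note, uses that off-$\boldsymbol{\mathsf{c}}$ vertices in a bi-critical tree are non-critical), and the derivation of $\deg\mathsf{c}_l=\deg\mathsf{c}_{l+1}$ from $l\geq 1$ and Lemma~\ref{lem: deg fnc of uni-per}.
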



The following corollary is a translation of the fourth tableaux axiom of Kiwi \cite{Kiwi_Puiseux} into the language of a tree
with dynamics.

\begin{cor} \label{cor: Tab d}
Under the hypotheses of Lemma \ref{lem: Tab d}, if $\Ret^S_{\boldsymbol{\mathsf{c}}} (\mathsf{v}) = \mathsf{c}_l$ and
$N^1_{\boldsymbol{\mathsf{c}}} (\mathsf{v}^{\child}) > N^S_{\boldsymbol{\mathsf{c}}} (\mathsf{v})$ for some $\mathsf{v} \in \tree
$ and some $S \geq 1$, then $\Ret_{\boldsymbol{\mathsf{c}}} (\mathsf{v}^{\child}) =  \mathsf{c}_{1}$.
\end{cor}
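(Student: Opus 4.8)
The plan is to reduce to Lemma \ref{lem: Tab d} by examining where the child $\mathsf{v}^{\child}$ first fails to shadow $\mathsf{v}$. Set $N = N^S_{\boldsymbol{\mathsf{c}}}(\mathsf{v})$, so by hypothesis $\F^N(\mathsf{v}) = \mathsf{c}_l$ and $N^1_{\boldsymbol{\mathsf{c}}}(\mathsf{v}^{\child}) > N$. Since $\boldsymbol{\mathsf{c}}$ is ancestral, Lemma \ref{lem: ret of descen => ret} gives $N^1_{\boldsymbol{\mathsf{c}}}(\mathsf{v}^{\child}) = N^R_{\boldsymbol{\mathsf{c}}}(\mathsf{v})$ for some $R$, and the strict inequality forces $R \geq S+1 \geq 2$; in particular $N^{S}_{\boldsymbol{\mathsf{c}}}(\mathsf{v}) = N^{R-1+\text{(something)}}$—more precisely, I first want to locate the last return of $\mathsf{v}$ before $\mathsf{v}^{\child}$ returns. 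Write $\mathsf{w} = \Ret^{R-1}_{\boldsymbol{\mathsf{c}}}(\mathsf{v}) = \F^{N^{R-1}_{\boldsymbol{\mathsf{c}}}(\mathsf{v})}(\mathsf{v})$ and let $\mathsf{w}^{\child_0} = \F^{N^{R-1}_{\boldsymbol{\mathsf{c}}}(\mathsf{v})}(\mathsf{v}^{\child})$ be the corresponding image of the child. By the Main Lemma (Lemma \ref{main lem}), $\mathsf{w}$ is a $\boldsymbol{\mathsf{c}}$-portal, so $\mathsf{w} = \mathsf{c}_k$ for some $k$ and $\mathsf{w}^{\child_0} \notin \boldsymbol{\mathsf{c}}$ with $N^1_{\boldsymbol{\mathsf{c}}}(\mathsf{w}^{\child_0}) = N^1_{\boldsymbol{\mathsf{c}}}(\mathsf{c}_k)$.

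Next I would pin down that $k = l$. Because $N^S_{\boldsymbol{\mathsf{c}}}(\mathsf{v}) = N$ with $\F^N(\mathsf{v}) = \mathsf{c}_l$, and $\mathsf{v}^{\child}$ tracks $\mathsf{v}$ up to (but not including) its own first return, the first $S$ returns of $\mathsf{v}$ coincide with returns of $\mathsf{v}^{\child}$ to vertices of $\boldsymbol{\mathsf{c}}$—hence $S \le R-1$, and the $S$-th return of $\mathsf{v}$ lands at $\mathsf{c}_l$. The portal $\mathsf{w} = \mathsf{c}_k$ is the $(R-1)$-st return of $\mathsf{v}$, which occurs at or after the $S$-th return. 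Using Corollary \ref{cor: N_1 non-dec } (the first-return times along $\boldsymbol{\mathsf{c}}$ are non-decreasing) together with the fact that $\mathsf{w}^{\child_0} \notin \boldsymbol{\mathsf{c}}$ has the same first-return time as $\mathsf{c}_k$ while $\mathsf{c}_{k+1}$ must have strictly larger first-return time (otherwise $\mathsf{w}$ would be type~I or the child would continue shadowing), I would argue $\mathsf{c}_k = \mathsf{c}_l$ itself, i.e.\ the relevant portal is exactly $\mathsf{c}_l$ and $S = R-1$. The hypotheses of Lemma \ref{lem: Tab d} then apply at $\mathsf{c}_l$: $\mathsf{c}_0$ is a simple portal and $\Ret_{\boldsymbol{\mathsf{c}}}(\mathsf{c}_l) = \Ret_{\boldsymbol{\mathsf{c}}}(\mathsf{c}_{l+1}) = \mathsf{c}_0$ (this last equality is the content of "$\mathsf{c}_l$ is a portal with the child not in $\boldsymbol{\mathsf{c}}$ returning to a child of $\mathsf{c}_0$", which forces both $\mathsf{c}_l$ and $\mathsf{c}_{l+1}$ to return to $\mathsf{c}_0$).

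With Lemma \ref{lem: Tab d} in hand, the child $\mathsf{w}^{\child_0}$ of $\mathsf{c}_l$ with $\mathsf{w}^{\child_0} \neq \mathsf{c}_{l+1}$ satisfies $\Ret_{\boldsymbol{\mathsf{c}}}(\mathsf{w}^{\child_0}) = \mathsf{c}_1$. Finally I would transport this back to $\mathsf{v}^{\child}$: since $\F^{N^{R-1}_{\boldsymbol{\mathsf{c}}}(\mathsf{v})}(\mathsf{v}^{\child}) = \mathsf{w}^{\child_0}$ and $N^1_{\boldsymbol{\mathsf{c}}}(\mathsf{v}^{\child}) = N^{R-1}_{\boldsymbol{\mathsf{c}}}(\mathsf{v}) + N^1_{\boldsymbol{\mathsf{c}}}(\mathsf{w}^{\child_0})$ by Lemma \ref{lem: ret times add}, we get $\Ret_{\boldsymbol{\mathsf{c}}}(\mathsf{v}^{\child}) = \Ret_{\boldsymbol{\mathsf{c}}}(\mathsf{w}^{\child_0}) = \mathsf{c}_1$, as desired. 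The main obstacle I anticipate is the bookkeeping in the middle step—showing rigorously that the $(R-1)$-st return portal is $\mathsf{c}_l$ and not some later vertex $\mathsf{c}_k$ with $k > l$; this requires using that $N^1_{\boldsymbol{\mathsf{c}}}(\mathsf{v}^{\child}) > N^S_{\boldsymbol{\mathsf{c}}}(\mathsf{v})$ means the child's shadowing breaks *exactly* at the $S$-th return of $\mathsf{v}$, so that $\mathsf{v}^{\child}$ has not yet returned when $\mathsf{v}$ reaches $\mathsf{c}_l$, giving $R - 1 = S$ directly and hence $\mathsf{w} = \Ret^S_{\boldsymbol{\mathsf{c}}}(\mathsf{v}) = \mathsf{c}_l$. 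Once that identification is clean, everything else is a routine application of the additivity lemma and Lemma \ref{lem: Tab d}.
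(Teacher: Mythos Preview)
Your proposal takes an unnecessary detour and contains a genuine gap in the middle. The phrase ``under the hypotheses of Lemma~\ref{lem: Tab d}'' means that $l$ is \emph{already fixed} with $\Ret_{\boldsymbol{\mathsf{c}}}(\mathsf{c}_l)=\Ret_{\boldsymbol{\mathsf{c}}}(\mathsf{c}_{l+1})=\mathsf{c}_0$ and $\mathsf{c}_0$ a simple portal. You do not need to rediscover this via the Main Lemma or portal classification; it is handed to you. The intended argument is immediate: set $N=N^S_{\boldsymbol{\mathsf{c}}}(\mathsf{v})$, so $\F^N(\mathsf{v})=\mathsf{c}_l$ and $\F^N(\mathsf{v}^{\child})$ is some child of $\mathsf{c}_l$. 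Since $N^1_{\boldsymbol{\mathsf{c}}}(\mathsf{v}^{\child})>N$, this child is not in $\boldsymbol{\mathsf{c}}$, hence is not $\mathsf{c}_{l+1}$. Lemma~\ref{lem: Tab d} gives $\Ret_{\boldsymbol{\mathsf{c}}}(\F^N(\mathsf{v}^{\child}))=\mathsf{c}_1$, and Lemma~\ref{lem: ret times add} then yields $\Ret_{\boldsymbol{\mathsf{c}}}(\mathsf{v}^{\child})=\Ret_{\boldsymbol{\mathsf{c}}}(\F^N(\mathsf{v}^{\child}))=\mathsf{c}_1$. No reference to $R$, the Main Lemma, or portal types is needed.

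The gap in your approach is the claim that $R-1=S$. Your justification---that ``the first $S$ returns of $\mathsf{v}$ coincide with returns of $\mathsf{v}^{\child}$''---is false: by hypothesis $\mathsf{v}^{\child}$ has \emph{no} returns up through time $N^S_{\boldsymbol{\mathsf{c}}}(\mathsf{v})$. Your fallback, that ``the shadowing breaks exactly at the $S$-th return,'' only yields $R>S$, not $R=S+1$; nothing in the hypotheses rules out the child failing to return at the $(S+1)$-st return of $\mathsf{v}$ as well. In fact $R=S+1$ is a \emph{consequence} of Lemma~\ref{lem: Tab d} (which forces $N^1_{\boldsymbol{\mathsf{c}}}(\mathsf{c}_l^{\child})=N^1_{\boldsymbol{\mathsf{c}}}(\mathsf{c}_l)$), so trying to establish it first and then invoke the lemma is circular. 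Drop the portal bookkeeping entirely and apply Lemma~\ref{lem: Tab d} directly at the $S$-th return.
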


In the above corollary although $\mathsf{c}_{0}$ is a $\boldsymbol{\mathsf{c}}$-portal, a vertex cannot pass through
$\mathsf{c}_{0}$ after hitting $\mathsf{c}_{l}$.  Hence we say that $\mathsf{c}_{l}$ \emph{obstructs} $\mathsf{c}_{0}$.

We obtain the following result for tableaux, which clarifies a result of Branner and Hubbard \cite[Prop.\ 12.8]{BH92} and
generalizes a result of Kiwi \cite{Kiwi_Puiseux}. All 4 tableaux axioms for marked grids (Ma--Md) are given in
\cite{Kiwi_Puiseux}.

\begin{prop} \label{prop: tab axioms}
If $f$ is a bi-critical polynomial with a disconnected Julia set, then any tableau of $f$ satisfies Ma--Mc.  Moreover if the
escaping critical point of $f$ has multiplicity 1, then any tableau of $f$ also satisfies Md. Conversely a tableau that satisfies
Ma--Mc can be realized by a bi-critical polynomial whose escaping critical point has multiplicity at least 2.  If the tableau
also satisfies Md, then it can be realized by bi-critical polynomial whose escaping critical has multiplicity at least 1.

\begin{proof}
The hard work for this proposition was already done in \cite[Prop.\ 12.8]{BH92} and \cite{Kiwi_Puiseux}. The necessary part
follows from and Corollary \ref{cor: Tab d}. The conditions for sufficiency follow from Corollary \ref{cor: bi-crit realization
of tau}.

\end{proof}

\end{prop}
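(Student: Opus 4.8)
The plan is to assemble the statement from three ingredients: the classical analysis of Branner and Hubbard \cite[Prop.\ 12.8]{BH92} of the axioms Ma--Mc, Kiwi's fourth axiom Md \cite{Kiwi_Puiseux}, and the dictionary that, for a bi-critical polynomial, identifies the tableau of the critical end, its Yoccoz $\tau$-function, and the restriction of $N^1_{\boldsymbol{\mathsf{c}}}$ to the critical end $\boldsymbol{\mathsf{c}}$ \cite[Rem.\ 9.3]{Hubbard_Loc_Con}. I would move freely among these three encodings throughout. For the necessity of Ma--Mc, I would simply quote \cite[Prop.\ 12.8]{BH92}: a bi-critical polynomial with disconnected Julia set has exactly one escape, which by Lemma \ref{lem: deg fnc of uni-per} sits at $\mathsf{c}_0 = \mathsf{v}_0$, so its tableau is that of a uni-persistent polynomial with exactly one escaping critical point and the Branner--Hubbard verification of Ma--Mc applies verbatim.

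The new content on the necessity side is that Md holds precisely when the escaping critical point has multiplicity $1$. Here I would first record that the multiplicity of the escaping critical point equals $\deg\mathsf{c}_0 - \deg\mathsf{c}_1$: by Lemma \ref{lem: deg fnc of uni-per} these degrees are $d$ and $D$, and $d-D$ counts exactly the critical points in the annulus at $\mathsf{v}_0$ that do not lie in the critical end, i.e.\ the unique escaping critical point counted with multiplicity. Hence, by Lemma \ref{lem: bi-crit c0 simple iff simpe esc}, escaping multiplicity $1$ is equivalent to $\mathsf{c}_0$ being a simple $\boldsymbol{\mathsf{c}}$-portal. Under that hypothesis Corollary \ref{cor: Tab d} applies, and the ``obstruction'' it asserts---once a vertex hits $\mathsf{c}_l$ it can no longer pass through $\mathsf{c}_0$---is, after translation through the dictionary, exactly the assertion of Md. So Md is forced.

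For sufficiency I would argue in the reverse direction. Given a tableau satisfying Ma--Mc (resp.\ Ma--Md), its $\tau$-function satisfies Conditions 1--3 of Theorem \ref{main thm} with $H=1$ and $E=\set{0}$ (the Branner--Hubbard axioms being exactly this case), so Corollary \ref{cor: bi-crit realization of tau} produces a bi-critical polynomial realizing $\tau$. The last point is the degree bookkeeping: a bi-critical polynomial has degree $m_e+m_p+1$ with $m_e,m_p$ the multiplicities of its escaping and persistent critical points, and $m_e=1$ if and only if $\mathsf{c}_0$ is a simple $\boldsymbol{\mathsf{c}}$-portal (Lemma \ref{lem: bi-crit c0 simple iff simpe esc}). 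With only Ma--Mc one cannot in general force $\mathsf{c}_0$ to be simple---indeed Corollary \ref{cor: Tab d} may fail for such a $\tau$---so the tree with dynamics must be built with $\mathsf{c}_0$ a compound portal, giving $m_e\geq 2$; adding Md, equivalently the conclusion of Corollary \ref{cor: Tab d}, removes the obstruction and permits $\mathsf{c}_0$ simple, so that $m_e=1$, and hence cubic realizations, become possible.

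The main obstacle I anticipate is not any single estimate but the faithfulness of the translation at exactly the place where the first three axioms fall short: the behaviour at the portal $\mathsf{c}_0$ and the notion of obstruction. Concretely, one must check that ``the tableau satisfies Md'' corresponds, under the dictionary, to ``the construction may realize $\mathsf{c}_0$ as a simple $\boldsymbol{\mathsf{c}}$-portal,'' and conversely that a $\tau$-function violating the conclusion of Corollary \ref{cor: Tab d} genuinely cannot be realized with a multiplicity-$1$ escaping critical point. Everything else reduces to the cited results \cite{BH92, Kiwi_Puiseux} or to a mechanical passage among the three combinatorial languages.
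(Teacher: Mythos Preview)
Your proposal is correct and follows essentially the same route as the paper: cite \cite[Prop.\ 12.8]{BH92} and \cite{Kiwi_Puiseux} for the background, use Corollary~\ref{cor: Tab d} (via Lemma~\ref{lem: bi-crit c0 simple iff simpe esc}) for the necessity of Md when the escaping critical point is simple, and invoke Corollary~\ref{cor: bi-crit realization of tau} for both sufficiency statements. Your version is more explicit than the paper's about the dictionary between tableaux, $\tau$-functions, and return times, and about why multiplicity~$1$ corresponds to $\mathsf{c}_0$ being a simple portal, but the architecture is identical.
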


\begin{rem}
In fact we can weaken the assumption that $f$ is bi-critical. We only need to assume that the tree with dynamics of $f$ is
bi-critical.  It is straightforward to check from the definitions in \S\ref{subsect: Poly TwD} that the tree with dynamics of $f$
is bi-critical if and only if all persistent critical points of $f$ lie in the same connected component of $\Kjulia_f$ and the
Green's function of $f$ is constant on the set of escaping critical points of $f$.
\end{rem}

\subsection{Uni-Persistent Polynomials} \label{subsect: uni-persistent returns}

In this subsection, we consider a general uni-persistent polynomial.  That is, a polynomial that has exactly one persistent
critical point and possibly multiple escaping critical points.  The corresponding tree with dynamics has a unique critical end
$\boldsymbol{\mathsf{c}}$, but $\boldsymbol{\mathsf{c}} \varsubsetneq \crit(\tree)$ in general.  Thus
$\Ret_{\boldsymbol{\mathsf{c}}}$ and $\Ret_{\crit(\tree)}$ are generally different maps, so we must choose which one to use. As
above we consider $N^1_{\boldsymbol{\mathsf{c}}}$ and $\Ret_{\boldsymbol{\mathsf{c}}}$. Although the return map to the critical
set is an equally natural choice and some results about it are known \cite{E03}.

We classify $\boldsymbol{\mathsf{c}}$-portals.  Lemmas \ref{lem: catagorize uni-per I portals} and \ref{lem: Port III
categorized} apply to uni-persistent tree with dynamics, so we have already classified type~I and type~III
$\boldsymbol{\mathsf{c}}$-portals.

\begin{lem}\label{lem: 1CB => P2 finite}
Let $(\tree, \F)$ be a uni-persistent tree with dynamics. If $\boldsymbol{\mathsf{c}}$ is the unique critical end of $(\tree,
\F)$, then $\textup{Port}_{II}(\boldsymbol{\mathsf{c}})$ is finite.

\begin{proof}
By Lemma \ref{lem: port II cond} and Corollary \ref{cor: Type II portals of an end},
$\textup{Port}_{\textup{II}}(\boldsymbol{\mathsf{c}})$ is the set of all vertices $\mathsf{c}_L$ that hit a critical vertex not
in $\boldsymbol{\mathsf{c}}$ before returning to $\boldsymbol{\mathsf{c}}$. It is straightforward to check that if
$\boldsymbol{\mathsf{c}}$ is the unique critical end of $(\tree, \F)$, then $\tree$ has only finitely many critical vertices that
are not in $\boldsymbol{\mathsf{c}}$.
\end{proof}
\end{lem}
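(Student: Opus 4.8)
The plan is to reduce the statement to a purely structural fact about the degree function: a uni-persistent tree with dynamics has only finitely many critical vertices outside its unique critical end $\boldsymbol{\mathsf{c}}$. Granting this, Lemma \ref{lem: port II cond} together with Corollary \ref{cor: Type II portals of an end} identifies every type~II $\boldsymbol{\mathsf{c}}$-portal $\mathsf{c}_L$ as one for which there is an $n$ with $1 \leq n < N^1_{\boldsymbol{\mathsf{c}}}(\mathsf{c}_L)$ such that $\F^n(\mathsf{c}_L)$ is critical and has a child off $\boldsymbol{\mathsf{c}}$ with the same first return time; in particular $\F^n(\mathsf{c}_L)$ is a critical vertex, and since $n \geq 1$ it maps by iteration of $\F$ into $\boldsymbol{\mathsf{c}}$ only at the end of its return, so at the intermediate stage $\F^n(\mathsf{c}_L) \notin \boldsymbol{\mathsf{c}}$ (it has a child not in $\boldsymbol{\mathsf{c}}$ whose return time equals its own, and in an end each vertex has only one child in the end; more carefully, one checks $\F^n(\mathsf{c}_L)$ is a critical vertex not lying on $\boldsymbol{\mathsf{c}}$, because if it were $\mathsf{c}_k$ then $N^1_{\boldsymbol{\mathsf{c}}}(\mathsf{c}_L) = n + N^1_{\boldsymbol{\mathsf{c}}}(\mathsf{c}_k)$ by Lemma \ref{lem: ret times add}, and the displayed equality in case~1 of Lemma \ref{lem: port II cond} would have to be reconciled — this case is exactly what the finiteness of off-$\boldsymbol{\mathsf{c}}$ critical vertices rules out as a source of infinitely many portals). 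The upshot: each type~II portal $\mathsf{c}_L$ determines, via $\F^n$, a critical vertex of $\crit(\tree) \minus \boldsymbol{\mathsf{c}}$; and for a fixed such critical vertex $\mathsf{w}$, only finitely many $\mathsf{c}_L$ can be sent to $\mathsf{w}$ by iterating $\F$, because $\F$ raises the level by a fixed amount $H$ and $\mathsf{c}_L$ sits at level $L$. Hence $\textup{Port}_{\textup{II}}(\boldsymbol{\mathsf{c}})$ is finite.

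The one genuine step is the claimed structural fact. First I would note that by Corollary \ref{cor: finitely many splits or escapes} there are only finitely many splits in $\tree$. A critical vertex $\mathsf{w} \notin \boldsymbol{\mathsf{c}}$ has a critical ancestor path going up toward $\mathsf{v}_0$; let $\mathsf{v}$ be the last vertex on this path that also lies on $\boldsymbol{\mathsf{c}}$ (it exists, since $\mathsf{v}_0 \in \boldsymbol{\mathsf{c}}$). Then $\mathsf{v}$ has at least two critical children — the one continuing along $\boldsymbol{\mathsf{c}}$ and the one continuing toward $\mathsf{w}$ — so there is a split at $\mathsf{v}$. Since splits are finite in number, only finitely many such "branch vertices" $\mathsf{v}$ occur. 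At each branch vertex $\mathsf{v}$, monotonicity of the degree (Lemma \ref{lem: TwD}.\ref{sublem: deg monotone}) and axiom (D\ref{eq: Monotone}) bound the total excess critical content below $\mathsf{v}$ that does not flow into $\boldsymbol{\mathsf{c}}$; since $\deg$ takes positive integer values and is monotone nonincreasing down every branch, beyond finitely many levels every descendant of $\mathsf{v}$ that is off $\boldsymbol{\mathsf{c}}$ must be noncritical (otherwise the sum in (D\ref{eq: Monotone}) would force a strictly decreasing infinite sequence of positive integers, or else one would produce a second critical end, contradicting uni-persistence). Collecting the finitely many critical vertices above and between these finitely many branch vertices gives the finiteness of $\crit(\tree) \minus \boldsymbol{\mathsf{c}}$.

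The main obstacle I expect is making the last degree-counting argument airtight: I must rule out an infinite "staircase" of critical vertices hanging off $\boldsymbol{\mathsf{c}}$ at ever-deeper levels without ever splitting twice. The clean way to do this is to observe that any infinite descending chain of critical vertices (parent to child) is, by Definition of the degree of an end, a critical end; if such a chain lay off $\boldsymbol{\mathsf{c}}$ it would be a second critical end, contradicting that $(\tree,\F)$ is uni-persistent; hence every maximal descending critical chain off $\boldsymbol{\mathsf{c}}$ is finite, and there are only finitely many places (the finitely many splits) where such chains can begin, with each chain having bounded length — indeed length bounded by the number of levels before its terminal degree, which drops, reaches $1$, and stays. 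This converts "finitely many splits" plus "no second critical end" into the desired finiteness, and then the reduction in the first paragraph finishes the proof.
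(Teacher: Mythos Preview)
Your approach is essentially the paper's: use Lemma~\ref{lem: port II cond} together with Corollary~\ref{cor: Type II portals of an end} to see that every type~II $\boldsymbol{\mathsf{c}}$-portal $\mathsf{c}_L$ must have an iterate $\F^n(\mathsf{c}_L)$ (with $1\le n < N^1_{\boldsymbol{\mathsf{c}}}(\mathsf{c}_L)$) landing on a critical vertex off $\boldsymbol{\mathsf{c}}$, and then observe that $\crit(\tree)\setminus\boldsymbol{\mathsf{c}}$ is finite in the uni-persistent case. Your argument for the latter structural fact---branching off $\boldsymbol{\mathsf{c}}$ forces a split, there are finitely many splits, and any infinite descending critical chain off $\boldsymbol{\mathsf{c}}$ would be a second critical end---is more detailed than the paper's bare ``straightforward to check,'' and is correct.

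There is one genuine slip, however, in your bridge from ``finitely many off-$\boldsymbol{\mathsf{c}}$ critical vertices'' to ``finitely many type~II portals.'' You write that for a fixed such vertex $\mathsf{w}$, only finitely many $\mathsf{c}_L$ can be sent to $\mathsf{w}$ ``because $\F$ raises the level by a fixed amount $H$.'' In fact $\F$ \emph{lowers} the level by $H$ (that is, $\F(\tree_l)=\tree_{l-H}$), so if $\mathsf{w}$ sits at level $k$ then $\F^{(L-k)/H}(\mathsf{c}_L)$ lands at level $k$ for \emph{every} $L\equiv k\pmod H$ with $L>k$; level-counting alone gives infinitely many candidates, not finitely many. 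The paper itself does not spell out this passage either, so you are not worse off than the source---but your stated justification is incorrect as written and should be replaced (for instance, by tracking how the constraint $n<N^1_{\boldsymbol{\mathsf{c}}}(\mathsf{c}_L)$ interacts with the finitely many levels at which off-$\boldsymbol{\mathsf{c}}$ critical vertices occur, or simply by noting, as the paper does, that the containment in a set governed by a finite object suffices without claiming an explicit preimage bound).
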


We have classified all $\boldsymbol{\mathsf{c}}$-portals in the uni-persistent case.  We are now ready to prove our main theorem
for return times.

\begin{proof}[Proof of Theorem \ref{thm: N1 cond}]

There are two cases in Condition~1. If $l \leq H$, apply Lemma \ref{lem: N_1 T_l = 1 for l <H}. If $l > H$, say $l = kH + h$ for
some $k \in \bZ^+$ and $0 < h \leq H$. Then $\F^{k+1}(\mathsf{v}) =\mathsf{v}_{h-H} \in\boldsymbol{\mathsf{c}} $ since $h -H \leq
0$ by T4. So $N^1_{\boldsymbol{\mathsf{c}}}(\mathsf{v}) \leq k+1 = \lceil l/H \rceil$.  Condition~2 is a consequence of Lemma
\ref{lem: ret of descen => ret}. Condition~3 follows from our Main Lemma (\ref{main lem}) and the classification of
$\boldsymbol{\mathsf{c}}$-portals.
\end{proof}

The key assumption in Lemma \ref{lem: Tab d} is that $\mathsf{c}_0$ is a simple $\boldsymbol{\mathsf{c}}$-portal.    We show that
the any simple $\boldsymbol{\mathsf{c}}$-portal can be obstructed.

\begin{lem} \label{lem: Returns to simple portals}
Let  $(\tree, \F)$ be a uni-persistent tree with dynamics with critical end $\boldsymbol{\mathsf{c}} = (\mathsf{c}_l)_{l \in
\bZ}$. Let $\mathsf{c}_m$ be a simple $\boldsymbol{\mathsf{c}}$-portal for some $m$. Suppose that for some $l$,
$\Ret_{\boldsymbol{\mathsf{c}}} (\mathsf{c}_l) =  \mathsf{c}_m$, $N^1_{\boldsymbol{\mathsf{c}}} (\mathsf{c}_{l+1}) =
N^2_{\boldsymbol{\mathsf{c}}} (\mathsf{c}_{l})$, $\deg \mathsf{c}_l = \deg \mathsf{c}_{l+1}$, and $\F^n(\mathsf{c}_l)$ is
non-critical for $1 \leq n < N^1_{\boldsymbol{\mathsf{c}}}(\mathsf{c}_l)$. If $\mathsf{c}_{l}^{\child} \neq \mathsf{c}_{l+1}$ is
a child of $\mathsf{c}_{l}$, then $N^1_{\boldsymbol{\mathsf{c}}} (\mathsf{c}_l^{\child}) \neq N^1_{\boldsymbol{\mathsf{c}}}
(\mathsf{c}_{l+1})$.

\begin{proof}
Since $\mathsf{c}_{m} $ is a simple $\boldsymbol{\mathsf{c}}$-portal, it has unique child of $\mathsf{c}_{m}^{\child_0} $ such
that $\mathsf{c}_{m}^{\child_0} \notin \boldsymbol{\mathsf{c}}$ and $N^1_{\boldsymbol{\mathsf{c}}}(\mathsf{c}_m) =
N^1_{\boldsymbol{\mathsf{c}}}(\mathsf{c}_{m}^{\child_0})$. Let $N = N^1_{\boldsymbol{\mathsf{c}}}(\mathsf{c}_{l})$.  Since
$\F^n(\mathsf{c}_l)$ is non-critical for $1 \leq n < N$, $\F^N:\set{\mathsf{c}_l^{\child}} \to \set{\mathsf{c}_m^{\child}}$ is a
$(\deg \mathsf{c}_l)$-fold cover  by Lemma \ref{lem: F^N a D-fold cover}. Since $\deg \mathsf{c}_l = \deg \mathsf{c}_{l+1}$,
$\mathsf{c}_{l+1}$ accounts for all the children of $\mathsf{c}_{l}$ that are mapped to $\mathsf{c}_{m}^{\child_0}$. Hence if
$\mathsf{c}_{l}^{\child} \neq \mathsf{c}_{l+1}$, then $F^N(\mathsf{c}_{l}^{\child}) \neq \mathsf{c}_{m}^{\child_0}$. Thus
$N_1(\mathsf{c}_{l}^{\child}) \neq N + N^1_{\boldsymbol{\mathsf{c}}}(\mathsf{c}_m)$, and $ N_1(\mathsf{c}_{l}^{\child}) \neq
N^2_{\boldsymbol{\mathsf{c}}}(\mathsf{c}_l) = N^1_{\boldsymbol{\mathsf{c}}} (\mathsf{c}_{l+1})$.
\end{proof}

\end{lem}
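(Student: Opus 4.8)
The plan is to trace the children of $\mathsf{c}_l$ under $\F^{N}$, where $N = N^1_{\boldsymbol{\mathsf{c}}}(\mathsf{c}_l)$, and combine this with a degree count. First I would record the structure near $\mathsf{c}_m$: since $\mathsf{c}_m$ is a simple $\boldsymbol{\mathsf{c}}$-portal it has a \emph{unique} child $\mathsf{c}_m^{\child_0}\notin\boldsymbol{\mathsf{c}}$ with $N^1_{\boldsymbol{\mathsf{c}}}(\mathsf{c}_m^{\child_0}) = N^1_{\boldsymbol{\mathsf{c}}}(\mathsf{c}_m)$; the only child of $\mathsf{c}_m$ lying in $\boldsymbol{\mathsf{c}}$ is $\mathsf{c}_{m+1}$, and by monotonicity of first return times (Lemma~\ref{lem: ret of descen => ret}) every other child of $\mathsf{c}_m$ outside $\boldsymbol{\mathsf{c}}$ has first return time $>N^1_{\boldsymbol{\mathsf{c}}}(\mathsf{c}_m)$. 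Next, because $\F^n(\mathsf{c}_l)$ is non-critical for $1\le n<N$ and $\F^N(\mathsf{c}_l)=\Ret_{\boldsymbol{\mathsf{c}}}(\mathsf{c}_l)=\mathsf{c}_m$, the product $\prod_{n=0}^{N-1}\deg\F^n(\mathsf{c}_l)$ collapses to $\deg\mathsf{c}_l$, so Lemma~\ref{lem: F^N a D-fold cover} gives that $\F^N\colon\set{\mathsf{c}_l^{\child}}\to\set{\mathsf{c}_m^{\child}}$ is a $(\deg\mathsf{c}_l)$-fold cover.

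The key step is to pin down $\F^N(\mathsf{c}_{l+1})$. Since $\F$ preserves children, $\F^N(\mathsf{c}_{l+1})$ is a child of $\mathsf{c}_m$; the hypothesis says the second return time of $\mathsf{c}_l$ is $N^1_{\boldsymbol{\mathsf{c}}}(\mathsf{c}_{l+1})$, i.e.\ $N^1_{\boldsymbol{\mathsf{c}}}(\mathsf{c}_{l+1}) = N + N^1_{\boldsymbol{\mathsf{c}}}(\mathsf{c}_m) > N$, so $\F^N(\mathsf{c}_{l+1})\notin\boldsymbol{\mathsf{c}}$, and Lemma~\ref{lem: ret times add} gives $N^1_{\boldsymbol{\mathsf{c}}}(\F^N(\mathsf{c}_{l+1})) = N^1_{\boldsymbol{\mathsf{c}}}(\mathsf{c}_{l+1}) - N = N^1_{\boldsymbol{\mathsf{c}}}(\mathsf{c}_m)$; by the uniqueness in the simple-portal condition this forces $\F^N(\mathsf{c}_{l+1}) = \mathsf{c}_m^{\child_0}$. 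Now invoke $\deg\mathsf{c}_{l+1} = \deg\mathsf{c}_l$: the $\F^N$-fiber over $\mathsf{c}_m^{\child_0}$ among children of $\mathsf{c}_l$ has total degree $\deg\mathsf{c}_l$, and $\mathsf{c}_{l+1}$ alone already contributes that whole amount, so $\mathsf{c}_{l+1}$ is the entire fiber. Hence $\F^N(\mathsf{c}_l^{\child})\neq\mathsf{c}_m^{\child_0}$ for every child $\mathsf{c}_l^{\child}\neq\mathsf{c}_{l+1}$ of $\mathsf{c}_l$.

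To finish, fix such a child $\mathsf{c}_l^{\child}\neq\mathsf{c}_{l+1}$. By Corollary~\ref{cor: N_1 non-dec } $N^1_{\boldsymbol{\mathsf{c}}}(\mathsf{c}_l^{\child})\ge N$. If equality holds then $N^1_{\boldsymbol{\mathsf{c}}}(\mathsf{c}_l^{\child}) = N < N + N^1_{\boldsymbol{\mathsf{c}}}(\mathsf{c}_m) = N^1_{\boldsymbol{\mathsf{c}}}(\mathsf{c}_{l+1})$ and we are done. Otherwise $N^1_{\boldsymbol{\mathsf{c}}}(\mathsf{c}_l^{\child}) > N$, so $\F^N(\mathsf{c}_l^{\child})$ is a child of $\mathsf{c}_m$ lying outside $\boldsymbol{\mathsf{c}}$ and different from $\mathsf{c}_m^{\child_0}$, whence $N^1_{\boldsymbol{\mathsf{c}}}(\F^N(\mathsf{c}_l^{\child})) \neq N^1_{\boldsymbol{\mathsf{c}}}(\mathsf{c}_m)$ by the simple-portal condition; then Lemma~\ref{lem: ret times add} gives $N^1_{\boldsymbol{\mathsf{c}}}(\mathsf{c}_l^{\child}) = N + N^1_{\boldsymbol{\mathsf{c}}}(\F^N(\mathsf{c}_l^{\child})) \neq N + N^1_{\boldsymbol{\mathsf{c}}}(\mathsf{c}_m) = N^1_{\boldsymbol{\mathsf{c}}}(\mathsf{c}_{l+1})$, as desired.

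The only point requiring real care is the middle step: making the chain ``$N^1_{\boldsymbol{\mathsf{c}}}(\mathsf{c}_{l+1}) = N^2_{\boldsymbol{\mathsf{c}}}(\mathsf{c}_l) \Rightarrow \F^N(\mathsf{c}_{l+1}) = \mathsf{c}_m^{\child_0} \Rightarrow \mathsf{c}_{l+1}$ exhausts the fiber over $\mathsf{c}_m^{\child_0}$'' airtight, since it is exactly here that all four hypotheses (simplicity of $\mathsf{c}_m$ as a portal, the matching of return times, the degree equality $\deg\mathsf{c}_l=\deg\mathsf{c}_{l+1}$, and non-criticality along the $\F$-orbit of $\mathsf{c}_l$) are used together. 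Everything else is bookkeeping with additivity (Lemma~\ref{lem: ret times add}) and monotonicity (Corollary~\ref{cor: N_1 non-dec }) of return times.
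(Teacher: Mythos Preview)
Your proof is correct and follows essentially the same route as the paper: apply Lemma~\ref{lem: F^N a D-fold cover} to see that $\F^N$ is a $(\deg\mathsf{c}_l)$-fold cover from the children of $\mathsf{c}_l$ onto the children of $\mathsf{c}_m$, use the degree equality $\deg\mathsf{c}_l=\deg\mathsf{c}_{l+1}$ to conclude that $\mathsf{c}_{l+1}$ exhausts the fiber over $\mathsf{c}_m^{\child_0}$, and finish with the additivity of return times. If anything, your argument is a bit more careful than the paper's: you explicitly verify that $\F^N(\mathsf{c}_{l+1})=\mathsf{c}_m^{\child_0}$ using the hypothesis $N^1_{\boldsymbol{\mathsf{c}}}(\mathsf{c}_{l+1})=N^2_{\boldsymbol{\mathsf{c}}}(\mathsf{c}_l)$ together with the simple-portal uniqueness (the paper simply asserts this), and you split the final step into the two cases $N^1_{\boldsymbol{\mathsf{c}}}(\mathsf{c}_l^{\child})=N$ and $N^1_{\boldsymbol{\mathsf{c}}}(\mathsf{c}_l^{\child})>N$, whereas the paper compresses this into a single line.
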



In a manner similar to Corollary \ref{cor: Tab d}, $\mathsf{c}_{l}$ obstructs $\mathsf{c}_{m}$.

\begin{cor}
Under the hypotheses of Lemma \ref{lem: Returns to simple portals}, if $\Ret^S_{\boldsymbol{\mathsf{c}}} (\mathsf{v}) =
\mathsf{c}_l$ and $N^1_{\boldsymbol{\mathsf{c}}}(\mathsf{v}^{\child}) > N^S_{\boldsymbol{\mathsf{c}}}(\mathsf{v})$ for some
$\mathsf{v} \in \tree$ and $S \geq 1$, then $N^1_{\boldsymbol{\mathsf{c}}}(\mathsf{v}^{\child}) \neq
N^{S+2}_{\boldsymbol{\mathsf{c}}}(\mathsf{v})$. Thus $\Ret_{\boldsymbol{\mathsf{c}}} (\mathsf{v}^{\child}) \neq
\Ret_{\boldsymbol{\mathsf{c}}} (\mathsf{c}_{l+1})$.
\begin{proof}
We have $F^{N^S_{\boldsymbol{\mathsf{c}}}(\mathsf{v})}(\mathsf{v}^{\child}) = \mathsf{c}_{l}^{\child} \neq \mathsf{c}_{l+1} $
since $N^1_{\boldsymbol{\mathsf{c}}}(\mathsf{v}^{\child}) > N^S_{\boldsymbol{\mathsf{c}}}(\mathsf{v})$. Thus
$\Ret_{\boldsymbol{\mathsf{c}}}(\mathsf{v}^{\child}) = \Ret_{\boldsymbol{\mathsf{c}}}(\mathsf{c}_{l}^{\child})$ and
$\Ret_{\boldsymbol{\mathsf{c}}}(\mathsf{c}_{l}^{\child}) \neq \Ret_{\boldsymbol{\mathsf{c}}} (\mathsf{c}_{l+1})$ by Lemma
\ref{lem: Returns to simple portals} since their return times are different.
\end{proof}

\end{cor}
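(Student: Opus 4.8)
The plan is to transport the statement down the orbit of $\mathsf{v}$ by $N:=N^{S}_{\boldsymbol{\mathsf{c}}}(\mathsf{v})$ iterates, so that it becomes a statement about a non-$\mathsf{c}_{l+1}$ child of $\mathsf{c}_{l}$, and then to invoke Lemma \ref{lem: Returns to simple portals} essentially verbatim. All the return times occurring here are well defined because $\boldsymbol{\mathsf{c}}$ is ancestral (Lemma \ref{lem: ancestral => ret well defn}), so I may compute freely.

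First I would set $N = N^{S}_{\boldsymbol{\mathsf{c}}}(\mathsf{v})$, so $\F^{N}(\mathsf{v}) = \Ret^{S}_{\boldsymbol{\mathsf{c}}}(\mathsf{v}) = \mathsf{c}_{l}$. Since $\F$ is children-preserving (Definition \ref{defn: Dyn - abstract}), $\F^{N}(\mathsf{v}^{\child})$ is a child of $\mathsf{c}_{l}$; and it is not $\mathsf{c}_{l+1}$, for otherwise $\mathsf{v}^{\child}$ would land in $\boldsymbol{\mathsf{c}}$ at time $N$, contradicting the hypothesis $N^{1}_{\boldsymbol{\mathsf{c}}}(\mathsf{v}^{\child}) > N$. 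Write $\mathsf{c}_{l}^{\child} := \F^{N}(\mathsf{v}^{\child})$, a child of $\mathsf{c}_{l}$ with $\mathsf{c}_{l}^{\child} \neq \mathsf{c}_{l+1}$. Since $1 \leq N < N^{1}_{\boldsymbol{\mathsf{c}}}(\mathsf{v}^{\child})$, Lemma \ref{lem: ret times add} gives $N^{1}_{\boldsymbol{\mathsf{c}}}(\mathsf{v}^{\child}) = N + N^{1}_{\boldsymbol{\mathsf{c}}}(\mathsf{c}_{l}^{\child})$ and $\Ret_{\boldsymbol{\mathsf{c}}}(\mathsf{v}^{\child}) = \Ret_{\boldsymbol{\mathsf{c}}}(\mathsf{c}_{l}^{\child})$. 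Next, unwinding the definition of iterated return times and using $\Ret^{S}_{\boldsymbol{\mathsf{c}}}(\mathsf{v}) = \mathsf{c}_{l}$ together with the hypotheses $\Ret_{\boldsymbol{\mathsf{c}}}(\mathsf{c}_{l}) = \mathsf{c}_{m}$ and $N^{1}_{\boldsymbol{\mathsf{c}}}(\mathsf{c}_{l+1}) = N^{2}_{\boldsymbol{\mathsf{c}}}(\mathsf{c}_{l})$ of Lemma \ref{lem: Returns to simple portals}, I get $N^{S+2}_{\boldsymbol{\mathsf{c}}}(\mathsf{v}) = N + N^{1}_{\boldsymbol{\mathsf{c}}}(\mathsf{c}_{l}) + N^{1}_{\boldsymbol{\mathsf{c}}}(\mathsf{c}_{m}) = N + N^{2}_{\boldsymbol{\mathsf{c}}}(\mathsf{c}_{l}) = N + N^{1}_{\boldsymbol{\mathsf{c}}}(\mathsf{c}_{l+1})$. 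Now Lemma \ref{lem: Returns to simple portals} applied to the child $\mathsf{c}_{l}^{\child} \neq \mathsf{c}_{l+1}$ yields $N^{1}_{\boldsymbol{\mathsf{c}}}(\mathsf{c}_{l}^{\child}) \neq N^{1}_{\boldsymbol{\mathsf{c}}}(\mathsf{c}_{l+1})$; adding $N$ to both sides gives the first assertion $N^{1}_{\boldsymbol{\mathsf{c}}}(\mathsf{v}^{\child}) \neq N^{S+2}_{\boldsymbol{\mathsf{c}}}(\mathsf{v})$.

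For the concluding inequality of return maps, I would note that $\mathsf{c}_{l}^{\child}$ and $\mathsf{c}_{l+1}$ are both children of $\mathsf{c}_{l}$, hence both lie in $\tree_{l+1}$. Because $\F(\tree_{j}) = \tree_{j-H}$ for all $j$, the level of $\F^{n}$ of a vertex depends only on $n$ and the level of the vertex; so two vertices of $\tree_{l+1}$ that return to $\boldsymbol{\mathsf{c}}$ at different times must return to different vertices of $\boldsymbol{\mathsf{c}}$. Since $N^{1}_{\boldsymbol{\mathsf{c}}}(\mathsf{c}_{l}^{\child}) \neq N^{1}_{\boldsymbol{\mathsf{c}}}(\mathsf{c}_{l+1})$, this forces $\Ret_{\boldsymbol{\mathsf{c}}}(\mathsf{c}_{l}^{\child}) \neq \Ret_{\boldsymbol{\mathsf{c}}}(\mathsf{c}_{l+1})$, and combined with $\Ret_{\boldsymbol{\mathsf{c}}}(\mathsf{v}^{\child}) = \Ret_{\boldsymbol{\mathsf{c}}}(\mathsf{c}_{l}^{\child})$ this is exactly what is wanted. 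I do not expect a real obstacle: the argument is a short reduction, and the only points needing care are checking that Lemma \ref{lem: Returns to simple portals} is applicable — i.e. that $\mathsf{c}_{l}^{\child}$ is genuinely a child of $\mathsf{c}_{l}$ distinct from $\mathsf{c}_{l+1}$ — and the bookkeeping of the iterated first-return times through Lemma \ref{lem: ret times add}.
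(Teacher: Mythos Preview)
Your argument is correct and follows essentially the same route as the paper's proof: transport $\mathsf{v}^{\child}$ forward by $N^{S}_{\boldsymbol{\mathsf{c}}}(\mathsf{v})$ to a child $\mathsf{c}_{l}^{\child}\neq\mathsf{c}_{l+1}$ of $\mathsf{c}_{l}$, then apply Lemma~\ref{lem: Returns to simple portals}. You simply spell out the return-time bookkeeping (via Lemma~\ref{lem: ret times add}) and the level argument for distinct return vertices that the paper leaves implicit in the phrase ``since their return times are different.''
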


\section{Yoccoz Return Functions} \label{sect: Yoccoz Ret}

One method of describing the returns of an end to itself is what is generally called the ``Yoccoz $\tau$-function.''  Yoccoz
defined $\tau$ for quadratic polynomials, and his definition easily extends to bi-critical polynomials with disconnected Julia
sets \cite[Rem.\ 9.3]{Hubbard_Loc_Con}. There are several ways to generalize this function to more general polynomials. We wish
to emphasize the particular generalization which we work with. So we use more descriptive terminology-- \emph{the Yoccoz return
function}.

In this section, we define the Yoccoz return function. We extend our results from \S\ref{sect: portals} to these functions
(\S\ref{subsect: properties of tau}). This allows us to prove Theorem \ref{thm: tau cond}, which implies Theorem \ref{main thm}.
This theorem provides enough information to recursively construct a Yoccoz return function (\S\ref{subsect: recursive defs of
tau}). However such a construction is not a simple matter of choosing some initial values and using a recursive relation to
define the function. We give some results on legal ways to extend a Yoccoz return function. We show that constructing such a
function requires a choice at infinitely many stages of the construction (Proposition \ref{prop: 2 choices to extend tau}). We
also give some examples illustrate the process.

\begin{defn} \label{defn: tau}
Let $\boldsymbol{\mathsf{c}} = (\mathsf{c}_l)_{l \in \bZ}$ be a critical extended end of a tree with dynamics. We define the
\emph{Yoccoz return function of} $\boldsymbol{\mathsf{c}}$  as the map $\tau: \bZ \to \bZ$ such that
\[
    \tau(l)  = l - N^1_{\boldsymbol{\mathsf{c}}}(\mathsf{c}_l)  H .
\]
Equivalently, $\Ret_{\boldsymbol{\mathsf{c}}}(\mathsf{c}_l) = \mathsf{c}_{\tau(l)}$.
\end{defn}

\begin{rem}
The above definition generalizes the previous definition of $\tau$ \cite[Rem.\ 9.3]{Hubbard_Loc_Con} in 3 ways. The first 2
differences are minor generalizations.  We define $\tau:\bZ \to \bZ$ instead of $\tau:\bZ^+ \to \bN$, and we allow $H >1$.  The
last difference is substantive. Previously $\tau$ was defined in terms of a tableau by $\tau(l) = l-n$ where $n \geq 1$ is least
such that the tableau position $(l-n,n)$ is critical. In this definition, $n$ is first return time of $\mathsf{c}_l$ to the
critical set, but not necessarily the first return time to critical end $\boldsymbol{\mathsf{c}}$. We use the above
generalization because it induces a dynamically meaningful map on the integers. The two definitions agree for bi-critical
polynomials. The differences come from how one generalizes $\tau$ for polynomials with more than one escaping critical point.
\end{rem}

The Yoccoz return function a persistent critical point of a polynomial is the Yoccoz return function of the critical end of the
tree with dynamics of the polynomial associated with the nest of the persistent critical point. Informally we refer to Yoccoz
return functions as ``$\tau$-functions.''

Suppose $\tau$ is the Yoccoz return function of the critical extend end of a uni-persistent polynomial.  Then $\tau $ function
codes basic dynamical behavior of a persistent critical point. We say $\tau$ is \emph{recurrent} if $\limsup_{l \to \infty}
\tau(l) = \infty$.  We say $\tau$ is \emph{persistently recurrent} if $\liminf_{l \to \infty} \tau(l) = \infty$.   It is well
known that $\boldsymbol{\mathsf{c}}$ is recurrent if and only if $\tau$ is recurrent. Persistent recurrence is a combinatorial
condition. It is well known that if $\tau$ is recurrent, but not persistently recurrent, then the Julia set of the polynomial is
an area zero Cantor set.

\subsection{Properties of Yoccoz Return Functions}\label{subsect: properties of tau}

We derive some properties of Yoccoz return functions.  Our goal is to find a collection conditions that are necessary and
sufficient for realization.

\begin{lem} \label{lem: tau(l+1) = tau^R(l)}
For any $l \in \bZ$, $\tau(l) = \tau^R(l-1) + 1$ for some $R \geq 1$.
\begin{proof}
Combine Lemma \ref{lem: ret of descen => ret} and Definition \ref{defn: tau}.
\end{proof}

\end{lem}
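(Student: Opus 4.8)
The plan is to unwind the two ingredients named in the lemma and check that the relevant return times align. Write $N = N^1_{\boldsymbol{\mathsf c}}$ for brevity. By Definition~\ref{defn: tau}, the claim $\tau(l) = \tau^R(l-1)+1$ is equivalent, after dividing by $H$ and rearranging, to the statement that $N(\mathsf c_l)$ equals some $R$-th return time of $\mathsf c_{l-1}$ to $\boldsymbol{\mathsf c}$; more precisely, since $\F(\mathsf c_l) = \F(\mathsf c_{l-1})^{\child}$ and in fact $\mathsf c_l$ is a child of $\mathsf c_{l-1}$, we should apply Lemma~\ref{lem: ret of descen => ret} with $\mathsf v = \mathsf c_{l-1}$, $\mathsf v' = \mathsf c_l$, and $S = 1$ to obtain $R \geq 1$ with $N^1_{\boldsymbol{\mathsf c}}(\mathsf c_l) = N^R_{\boldsymbol{\mathsf c}}(\mathsf c_{l-1})$.

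First I would verify that $\boldsymbol{\mathsf c}$ is ancestral, which is needed to invoke Lemma~\ref{lem: ret of descen => ret}: this is exactly item~\ref{sublem: ends ancestral} of the lemma on ancestral sets (an extended end is ancestral by Definition~\ref{defn: end of T}). Then the cited lemma gives $R \geq 1$ with $N^R_{\boldsymbol{\mathsf c}}(\mathsf c_{l-1}) = N^1_{\boldsymbol{\mathsf c}}(\mathsf c_l)$. Next I would translate this back through Definition~\ref{defn: tau}: $\Ret^R_{\boldsymbol{\mathsf c}}(\mathsf c_{l-1}) = \mathsf c_{\tau^R(l-1)}$, so $\Ret_{\boldsymbol{\mathsf c}}(\mathsf c_l) = \F^{N^1_{\boldsymbol{\mathsf c}}(\mathsf c_l)}(\mathsf c_l)$ is the vertex reached from $\mathsf c_{l-1}$ after the same number of iterates that realize its $R$-th return, namely $\mathsf c_{\tau^R(l-1)}$ — but viewed one level lower, i.e. the child. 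Concretely, $\F$ is children-preserving, so $\F^{N}(\mathsf c_l)$ is the child of $\F^{N}(\mathsf c_{l-1})$ lying in $\boldsymbol{\mathsf c}$, which is $\mathsf c_{\tau^R(l-1)+1}$. Hence $\Ret_{\boldsymbol{\mathsf c}}(\mathsf c_l) = \mathsf c_{\tau^R(l-1)+1}$, i.e. $\tau(l) = \tau^R(l-1)+1$.

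The only mild subtlety — and the step I would be most careful about — is the bookkeeping that the child of $\mathsf c_k$ lying on the end $\boldsymbol{\mathsf c}$ is precisely $\mathsf c_{k+1}$, together with the fact that iterating the first return map $R$ times from $\mathsf c_{l-1}$ uses the same total iteration count $N^1_{\boldsymbol{\mathsf c}}(\mathsf c_l)$ as the single first return from $\mathsf c_l$; both are immediate from the definitions ($N^R$ is defined by iterating $\Ret$, and $\Ret^R_{\boldsymbol{\mathsf c}}(\mathsf c_{l-1}) = \F^{N^R_{\boldsymbol{\mathsf c}}(\mathsf c_{l-1})}(\mathsf c_{l-1})$) but should be spelled out to avoid an off-by-one. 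No estimate on $R$ is claimed, so there is no further obstacle; the lemma is essentially a repackaging of Lemma~\ref{lem: ret of descen => ret} in the $\tau$-language.
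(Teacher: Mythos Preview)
Your proposal is correct and is exactly the unpacking of the paper's one-line proof: you apply Lemma~\ref{lem: ret of descen => ret} (with $\mathsf{X} = \boldsymbol{\mathsf{c}}$, $\mathsf{v} = \mathsf{c}_{l-1}$, $\mathsf{v}' = \mathsf{c}_l$, $S=1$) and then translate the resulting equality of return times through Definition~\ref{defn: tau}, using that $\F$ is children-preserving so that the first return of $\mathsf{c}_l$ lands one level below the $R$-th return of $\mathsf{c}_{l-1}$. The explicit check that $\boldsymbol{\mathsf{c}}$ is ancestral is a sensible addition, and your care about the off-by-one bookkeeping is appropriate but, as you note, routine.
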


\begin{defn}
Let $\tau: \bZ \to \bZ$ be a Yoccoz return function.  For each $l \in \bZ$, we define $R(l)\in \bZ^+$ so that
\[
    \tau(l) = \tau^{R(l)}(l-1) + 1.
\]
\end{defn}

It follows from Lemma \ref{lem: tau(l+1) = tau^R(l)} that $R(l)$ is well defined. Note that $\tau(l) = \tau(l-1)+1$ if and only
$R(l) =1$.

We restate Theorem \ref{thm: N1 cond} in terms of the $\tau$-function, which expresses Theorem \ref{main thm} in terms of a tree
with dynamics.

\begin{thm} \label{thm: tau cond}
If $\tau$ is the Yoccoz return function of a critical end $\boldsymbol{\mathsf{c}}$ of a tree with dynamics, then for some $H \in
\bZ^+$ and some $E \subset \bN$ with $0 \in E$ the following conditions hold for each $l \in  \bZ$:

\begin{enumerate}
    \item $\tau(l)= l-H$ if $l \leq H$, and $- H < \tau (l) < l $ if $l >H$;
    \item $\tau(l) = \tau^R(l-1)+1$ for some $R = R(l)  \geq 1$;
    \item if $R(l)  \geq 2$, then either
        \begin{enumerate}
            \item $\tau(\tau^{R-1}(l-1) + 1) \leq \tau^R(l-1)$,
            \item $\tau^{R-1}(l-1) \in E$.
        \end{enumerate}
\end{enumerate}
Moreover if $\boldsymbol{\mathsf{c}}$ is the only critical end of the tree with dynamics, then $E$ is finite.

\end{thm}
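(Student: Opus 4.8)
\textbf{Proof proposal for Theorem \ref{thm: tau cond}.}

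The plan is to translate Theorem \ref{thm: N1 cond} directly, using the dictionary $\Ret_{\boldsymbol{\mathsf{c}}}(\mathsf{c}_l) = \mathsf{c}_{\tau(l)}$ from Definition \ref{defn: tau} and the identity $N^1_{\boldsymbol{\mathsf{c}}}(\mathsf{c}_l) = (l - \tau(l))/H$. First I would set $E = \set{k \in \bN : \mathsf{c}_k \in \crit(\tree) \minus \boldsymbol{\mathsf{c}}} \cup \set{k \in \bN : \text{there is an escape or split at } \mathsf{c}_k} \cup \set{0}$; here $0 \in E$ automatically since $\mathsf{c}_0 = \mathsf{v}_0$ has an escape or split by Lemma \ref{lem: v_0 a type 3 portal} and Lemma \ref{lem: esc or split iff deg drops} (or is forced in by hand). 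Condition~1 is immediate: for $l \le H$, Lemma \ref{lem: N_1 T_l = 1 for l <H} gives $N^1_{\boldsymbol{\mathsf{c}}}(\mathsf{c}_l) = 1$, hence $\tau(l) = l - H$; for $l > H$, Condition~1 of Theorem \ref{thm: N1 cond} gives $1 \le N^1_{\boldsymbol{\mathsf{c}}}(\mathsf{c}_l) \le \lceil l/H \rceil$, and the lower bound yields $\tau(l) < l$ while the upper bound $N^1 \le \lceil l/H \rceil$ combined with $\tau(l) = l - N^1 H$ gives $\tau(l) > l - (\lceil l/H\rceil) H \ge -H$ after a short arithmetic check (writing $l = kH + h$ with $0 < h \le H$ so $\lceil l/H\rceil = k+1$ and $\tau(l) \ge l - (k+1)H = h - H > -H$). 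Condition~2 is exactly Lemma \ref{lem: tau(l+1) = tau^R(l)}, with $R = R(l)$.

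For Condition~3, suppose $R = R(l) \ge 2$. By definition of $R(l)$ this means $\tau(l) = \tau^R(l-1) + 1$ with $R \ge 2$, and via the dictionary this says $N^1_{\boldsymbol{\mathsf{c}}}(\mathsf{c}_l) = N^R_{\boldsymbol{\mathsf{c}}}(\mathsf{c}_{l-1})$ with $\mathsf{c}_l$ a child of $\mathsf{c}_{l-1}$ and $N^1(\mathsf{c}_l) > N^1(\mathsf{c}_{l-1})$; so Theorem \ref{thm: N1 cond}.3 applies with $\mathsf{v} = \mathsf{c}_{l-1}$, $\mathsf{v}^{\child} = \mathsf{c}_l$. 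That theorem gives that $\Ret^{R-1}_{\boldsymbol{\mathsf{c}}}(\mathsf{c}_{l-1})$ is a $\boldsymbol{\mathsf{c}}$-portal of one of three types. Now $\Ret^{R-1}_{\boldsymbol{\mathsf{c}}}(\mathsf{c}_{l-1}) = \mathsf{c}_{\tau^{R-1}(l-1)}$; write $k = \tau^{R-1}(l-1)$, so $\tau^R(l-1) = \tau(k)$. In case (a) of Theorem \ref{thm: N1 cond}.3, $\Ret^{R-1}(\mathsf{c}_{l-1}) = \mathsf{c}_k$ with $N^1_{\boldsymbol{\mathsf{c}}}(\mathsf{c}_{k+1}) > N^1_{\boldsymbol{\mathsf{c}}}(\mathsf{c}_k)$; translating, $\tau(k+1) - (k+1) < \tau(k) - k$, i.e. $\tau(k+1) < \tau(k) + 1 \le \tau(k)$ wait—more precisely $\tau(k+1) \le \tau(k)$, and since $k+1 = \tau^{R-1}(l-1)+1$ this is exactly $\tau(\tau^{R-1}(l-1)+1) \le \tau^R(l-1)$, which is 3(a). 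In cases (b) and (c), $\mathsf{c}_k \in \crit(\tree)\minus\boldsymbol{\mathsf{c}}$ or there is an escape or split at $\mathsf{c}_k$; by construction of $E$ this gives $k = \tau^{R-1}(l-1) \in E$, which is 3(b). The arithmetic translation in case (a) is the one place requiring care, since one must verify that the strict inequality on return times $N^1(\mathsf{c}_{k+1}) > N^1(\mathsf{c}_k)$ becomes the non-strict $\tau(k+1) \le \tau(k)$ after multiplying by $H$ and rearranging—namely $N^1(\mathsf{c}_{k+1}) \ge N^1(\mathsf{c}_k) + 1$ gives $(k+1 - \tau(k+1))/H \ge (k - \tau(k))/H + 1$, so $k + 1 - \tau(k+1) \ge k - \tau(k) + H$, hence $\tau(k+1) \le \tau(k) + 1 - H \le \tau(k)$ since $H \ge 1$.

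Finally, the last sentence: if $\boldsymbol{\mathsf{c}}$ is the only critical end, then $(\tree,\F)$ is uni-persistent, so by Lemma \ref{lem: 1CB => P2 finite} there are finitely many critical vertices not in $\boldsymbol{\mathsf{c}}$, hence finitely many $k$ with $\mathsf{c}_k \in \crit(\tree)\minus\boldsymbol{\mathsf{c}}$; and by Corollary \ref{cor: finitely many splits or escapes} there are finitely many escapes or splits, hence finitely many $k$ with an escape or split at $\mathsf{c}_k$. Therefore $E$ is a finite subset of $\bN$. The main obstacle is purely bookkeeping—keeping the index $k = \tau^{R-1}(l-1)$ straight and confirming the inequality directions under the affine change of variables $N^1 \mapsto \tau$—there is no conceptual difficulty beyond invoking Theorem \ref{thm: N1 cond}, which does the real work.
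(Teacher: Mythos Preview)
Your approach---translating Theorem \ref{thm: N1 cond} into $\tau$-language via the dictionary $\Ret_{\boldsymbol{\mathsf{c}}}(\mathsf{c}_l)=\mathsf{c}_{\tau(l)}$---is the paper's approach, and your treatment of Conditions~1 and~2 is correct (your arithmetic for $\tau(l)>-H$ is in fact more explicit than the paper's).

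There is, however, a genuine gap in your choice of $E$. The set $\{k\in\bN:\mathsf{c}_k\in\crit(\tree)\setminus\boldsymbol{\mathsf{c}}\}$ is empty, since each $\mathsf{c}_k$ is by construction a vertex of $\boldsymbol{\mathsf{c}}$. (Correspondingly, case~(b) of Theorem~\ref{thm: N1 cond} is vacuous as written: $\Ret_{\boldsymbol{\mathsf{c}}}$ always lands in $\boldsymbol{\mathsf{c}}$.) So your $E$ collapses to $\{k:\text{escape or split at }\mathsf{c}_k\}\cup\{0\}$, which records only type~III $\boldsymbol{\mathsf{c}}$-portals. Type~II portals---those $\mathsf{c}_k$ for which some $\F^n(\mathsf{c}_k)$ with $1\le n<N^1_{\boldsymbol{\mathsf{c}}}(\mathsf{c}_k)$ lies in $\crit(\tree)\setminus\boldsymbol{\mathsf{c}}$ (Corollary~\ref{cor: Type II portals of an end})---need have no escape or split at $\mathsf{c}_k$ and need not be $\tau$-portals, so they slip through your dichotomy for Condition~3. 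A symptom you should have noticed: your $E$ is finite for \emph{every} tree with dynamics, since Corollary~\ref{cor: finitely many splits or escapes} requires no uni-persistence hypothesis; that makes the ``moreover'' clause trivial rather than a genuine constraint.

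The paper avoids this by taking $E$ tautologically (Definition~\ref{defn: E for tau}): $E=\{k:\mathsf{c}_k\text{ is a }\boldsymbol{\mathsf{c}}\text{-portal but }k\text{ is not a }\tau\text{-portal}\}$. Condition~3 then follows straight from the Main Lemma---$\mathsf{c}_k$ is a $\boldsymbol{\mathsf{c}}$-portal, so either $k$ is a $\tau$-portal (giving 3(a)) or $k\in E$ by definition (giving 3(b))---without routing through the portal-type trichotomy. Finiteness in the uni-persistent case then requires the extra observation $E\subset\textup{Port}_{\textup{II}}(\boldsymbol{\mathsf{c}})\cup\textup{Port}_{\textup{III}}(\boldsymbol{\mathsf{c}})$ together with Lemma~\ref{lem: 1CB => P2 finite} and Corollary~\ref{cor: finite number of type 3 portals}, and here uni-persistence is genuinely used.
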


If $E$ is infinite, then condition 3 is fairly weak. Thus to study a tree with dynamics with several critical ends (equivalently
a polynomial with several persistent critical points) we need to modify our combinatorics.

We start proving the results we use to prove the above theorem.

\begin{lem}  \label{lem: tau(l) = l-H for l leq H}
If $l \leq H$, then $\tau(l) = l-H$.

\begin{proof}
Apply Lemma \ref{lem: N_1 T_l = 1 for l <H}.
\end{proof}

\end{lem}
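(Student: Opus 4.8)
The plan is to reduce the statement to the already-established triviality of first return times near the top of the tree. First I would unwind Definition \ref{defn: tau}: writing $\boldsymbol{\mathsf{c}} = (\mathsf{c}_l)_{l\in\bZ}$ for the critical extended end in question, we have $\tau(l) = l - N^1_{\boldsymbol{\mathsf{c}}}(\mathsf{c}_l)\,H$. Hence the claim $\tau(l) = l - H$ for $l \le H$ is equivalent to the single assertion that $N^1_{\boldsymbol{\mathsf{c}}}(\mathsf{c}_l) = 1$ for every $l \le H$.

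That assertion is exactly what Lemma \ref{lem: N_1 T_l = 1 for l <H} provides, once its two hypotheses are verified for $\mathsf{X} = \boldsymbol{\mathsf{c}}$. That lemma requires $\mathsf{X}$ to be ancestral and to contain the root $\mathsf{v}_0$, and then gives $N^1_{\mathsf{X}}(\mathsf{v}) = 1$ (indeed $\Ret_{\mathsf{X}}(\mathsf{v}) = \mathsf{v}_{l-H}$) whenever $\mathsf{v} \in \tree_l$ with $l \le H$. An extended end is ancestral by the lemma listing ancestral sets (part on extended ends), and it contains the root since $\tree_0 = \set{\mathsf{v}_0}$ forces $\mathsf{c}_0 = \mathsf{v}_0 \in \boldsymbol{\mathsf{c}}$. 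As $\mathsf{c}_l \in \tree_l$, applying Lemma \ref{lem: N_1 T_l = 1 for l <H} yields $N^1_{\boldsymbol{\mathsf{c}}}(\mathsf{c}_l) = 1$ for all $l \le H$, and therefore $\tau(l) = l - H$.

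There is essentially no obstacle: the whole content is the bookkeeping check that $\boldsymbol{\mathsf{c}}$ satisfies the hypotheses of Lemma \ref{lem: N_1 T_l = 1 for l <H}, both of which are immediate from the structure of extended ends. (Equivalently, one could apply Lemma \ref{lem: ret of descen => ret} along the chain $\mathsf{v}_0 = \mathsf{c}_0, \mathsf{c}_1, \dots, \mathsf{c}_l$ together with the same lemma at the root, but the direct application above is the cleanest route.)
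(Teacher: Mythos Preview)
Your proof is correct and follows exactly the same approach as the paper, which simply says ``Apply Lemma \ref{lem: N_1 T_l = 1 for l <H}.'' You have merely spelled out the routine verification of hypotheses that the paper leaves implicit.
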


As a result of Lemmas \ref{lem: tau(l+1) = tau^R(l)} and \ref{lem: tau(l) = l-H for l leq H}, we can recover $\tau$ from $R$.
Hence $\tau$ and $R$ are equivalent combinatorial objects.

We extend the concept of portals to $\tau$-functions.

\begin{defn}
We call $l \in \bZ$ a $\tau$-\emph{portal} if  $\tau(l+1) \leq \tau(l)$.
\end{defn}

\begin{lem} \label{lem: tau-portal equivalences}

For any $l \in \bZ$, the following are equivalent:

\begin{enumerate}
  \item $l $ is a $\tau$-{portal},
  \item $R(l+1) \geq 2$,
  \item $N^1_{\boldsymbol{\mathsf{c}}}(\mathsf{c}_{l}) < N^1_{\boldsymbol{\mathsf{c}}}(\mathsf{c}_{l+1})$.
\end{enumerate}

\begin{proof}
We show that $1 \implies 2 \iff 3$. If $\tau(l+1) \leq \tau(l)$, then $\tau(l+1) \neq \tau^1(l)+1$. So $R(l+1) \geq 2$ and $1
\implies 2$ is shown. We know $N^R_{\boldsymbol{\mathsf{c}}}(\mathsf{c}_{l}) = N^1_{\boldsymbol{\mathsf{c}}}(\mathsf{c}_{l+1})$
for some $R \geq 1$ from Lemma \ref{lem: ret of descen => ret}.  Since the return times of $\mathsf{c}_{l}$ are distinct,
$N^1_{\boldsymbol{\mathsf{c}}}(\mathsf{c}_{l}) = N^1_{\boldsymbol{\mathsf{c}}}(\mathsf{c}_{l+1})$ if and only if $R(l+1) = 1$.
Which is to say not $3 \iff \text{ not }2$.
\end{proof}

\end{lem}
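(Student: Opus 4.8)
The plan is to verify the cycle of implications $(1)\Rightarrow(2)\Rightarrow(1)$ together with $(2)\Leftrightarrow(3)$. The first two implications come almost for free once one notes that $\tau$ strictly decreases, and the last equivalence is a translation between first-return times and iterates of $\tau$.

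First I would record the ingredients. From Definition \ref{defn: tau}, $\tau(m)=m-N^1_{\boldsymbol{\mathsf{c}}}(\mathsf{c}_m)\,H$ with $N^1_{\boldsymbol{\mathsf{c}}}(\mathsf{c}_m)\geq 1$ and $H\geq 1$, so $\tau(m)<m$ for every $m$; hence $(\tau^k(l))_{k\geq 0}$ is strictly decreasing, and in particular the equation $\tau(l+1)=\tau^R(l)+1$ has at most one solution $R$, namely $R(l+1)$. Iterating $\Ret_{\boldsymbol{\mathsf{c}}}(\mathsf{c}_m)=\mathsf{c}_{\tau(m)}$ gives $\Ret^R_{\boldsymbol{\mathsf{c}}}(\mathsf{c}_l)=\mathsf{c}_{\tau^R(l)}$ for all $R\geq 1$, and the return times satisfy $N^1_{\boldsymbol{\mathsf{c}}}(\mathsf{c}_l)<N^2_{\boldsymbol{\mathsf{c}}}(\mathsf{c}_l)<\cdots$, since Definition \ref{defs: 1st Ret time and Map} gives $N^{R+1}_{\boldsymbol{\mathsf{c}}}=N^R_{\boldsymbol{\mathsf{c}}}+N^1_{\boldsymbol{\mathsf{c}}}(\Ret^R_{\boldsymbol{\mathsf{c}}})\geq N^R_{\boldsymbol{\mathsf{c}}}+1$.

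Then $(1)\Leftrightarrow(2)$: if $R(l+1)\geq 2$, then $\tau(l+1)=\tau^{R(l+1)}(l)+1\leq\tau^2(l)+1<\tau^1(l)+1=\tau(l)+1$, so $\tau(l+1)\leq\tau(l)$ and $l$ is a $\tau$-portal; conversely if $\tau(l+1)\leq\tau(l)$, then $\tau(l+1)<\tau(l)+1=\tau^1(l)+1$, so $\tau(l+1)\neq\tau^1(l)+1$, forcing $R(l+1)\neq 1$, hence $R(l+1)\geq 2$. For $(2)\Leftrightarrow(3)$: since $\mathsf{c}_l=\mathsf{c}_{l+1}^{\parent}$, Lemma \ref{lem: ret of descen => ret} yields some $R\geq 1$ with $N^1_{\boldsymbol{\mathsf{c}}}(\mathsf{c}_{l+1})=N^R_{\boldsymbol{\mathsf{c}}}(\mathsf{c}_l)$; because $\F$ preserves children, $\Ret_{\boldsymbol{\mathsf{c}}}(\mathsf{c}_{l+1})=\F^{N^R_{\boldsymbol{\mathsf{c}}}(\mathsf{c}_l)}(\mathsf{c}_{l+1})$ is a child of $\F^{N^R_{\boldsymbol{\mathsf{c}}}(\mathsf{c}_l)}(\mathsf{c}_l)=\Ret^R_{\boldsymbol{\mathsf{c}}}(\mathsf{c}_l)=\mathsf{c}_{\tau^R(l)}$ lying on $\boldsymbol{\mathsf{c}}$, hence equals $\mathsf{c}_{\tau^R(l)+1}$, so $\tau(l+1)=\tau^R(l)+1$ and by uniqueness $R=R(l+1)$. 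Strict monotonicity of $(N^S_{\boldsymbol{\mathsf{c}}}(\mathsf{c}_l))_S$ then shows $R(l+1)=1$ exactly when $N^1_{\boldsymbol{\mathsf{c}}}(\mathsf{c}_{l+1})=N^1_{\boldsymbol{\mathsf{c}}}(\mathsf{c}_l)$, while Corollary \ref{cor: N_1 non-dec} gives $N^1_{\boldsymbol{\mathsf{c}}}(\mathsf{c}_l)\leq N^1_{\boldsymbol{\mathsf{c}}}(\mathsf{c}_{l+1})$; combining these, $R(l+1)\geq 2$ if and only if $N^1_{\boldsymbol{\mathsf{c}}}(\mathsf{c}_l)<N^1_{\boldsymbol{\mathsf{c}}}(\mathsf{c}_{l+1})$.

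The only real obstacle is the bookkeeping in $(2)\Leftrightarrow(3)$: confirming that the index $R$ produced by Lemma \ref{lem: ret of descen => ret} is the same $R(l+1)$ that appears in the definition of the $\tau$-function, which is where children-preservation of $\F$ and the identity $\Ret^R_{\boldsymbol{\mathsf{c}}}(\mathsf{c}_l)=\mathsf{c}_{\tau^R(l)}$ are used. Everything else reduces to the two elementary facts $\tau(m)<m$ and the non-decreasing first-return sequence.
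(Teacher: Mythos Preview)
Your proof is correct and follows essentially the same route as the paper: establish $(1)\Leftrightarrow(2)$ from the strict decrease of $\tau$, and translate $(2)\Leftrightarrow(3)$ via Lemma~\ref{lem: ret of descen => ret} and the distinctness of successive return times. The paper's argument is terser---it only records $1\Rightarrow 2\Leftrightarrow 3$ and leaves the implication $(2)\Rightarrow(1)$ implicit---whereas you supply that missing direction explicitly and also spell out why the index $R$ coming from Lemma~\ref{lem: ret of descen => ret} coincides with $R(l+1)$, which the paper takes for granted.
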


We consider the set where Condition 3b holds.

\begin{defn} \label{defn: E for tau}
Let $\tau$ be the Yoccoz return function of a critical end $\boldsymbol{\mathsf{c}}$. We define the \emph{exceptional set of}
$\tau$ by $E  =\set{l \in \bZ: \ \mathsf{c}_l \text{ is a $\boldsymbol{\mathsf{c}}$-portal, but $l$ is not a $\tau$-portal}}$.
\end{defn}

\begin{lem}
Let $(\tree,\F)$ be a uni-persistent tree with dynamics.  If $\tau$ is the Yoccoz return function of the unique critical end of
$(\tree,\F)$, then $E$ is finite.
\begin{proof}
Note that if $\mathsf{c}_l \in \textup{Port}_{\textup{I}}(\boldsymbol{\mathsf{c}})$, then $l$ is a $\tau$-portal. Thus
\[
    E \subset \set{l: \ \mathsf{c}_l \in \textup{Port}_{\textup{II}}(\boldsymbol{\mathsf{c}}) \cup
    \textup{Port}_{\textup{III}}(\boldsymbol{\mathsf{c}})}.
\]
Apply Lemma \ref{lem: 1CB => P2 finite} and Corollary \ref{cor: finite number of type 3 portals}.
\end{proof}

\end{lem}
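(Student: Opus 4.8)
The plan is to exhibit $E$ as (the index set of) a subset of $\textup{Port}_{\textup{II}}(\boldsymbol{\mathsf{c}}) \cup \textup{Port}_{\textup{III}}(\boldsymbol{\mathsf{c}})$, which is finite by results already in hand; the only real content is that a type~I portal never lies in $E$.

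First I would record that the three types of Definition~\ref{defn: Types of portals} exhaust the $\boldsymbol{\mathsf{c}}$-portals. Indeed, if a $\boldsymbol{\mathsf{c}}$-portal $\mathsf{c}_l$ is not of type~III, it has a child $\mathsf{c}_l^{\child_1}$ with $\deg \mathsf{c}_l^{\child_1} \geq \deg \mathsf{c}_l$, hence $\deg \mathsf{c}_l^{\child_1} = \deg \mathsf{c}_l$ by monotonicity (Lemma~\ref{lem: TwD}.\ref{sublem: deg monotone}); and since the first return time of a child is one of the return times of the parent (Lemma~\ref{lem: ret of descen => ret}), it is at least $N^1_{\boldsymbol{\mathsf{c}}}(\mathsf{c}_l)$, so $\mathsf{c}_l$ is of type~I or type~II according as this inequality is strict or an equality. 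Thus every $\boldsymbol{\mathsf{c}}$-portal lies in $\textup{Port}_{\textup{I}}(\boldsymbol{\mathsf{c}}) \cup \textup{Port}_{\textup{II}}(\boldsymbol{\mathsf{c}}) \cup \textup{Port}_{\textup{III}}(\boldsymbol{\mathsf{c}})$.

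Next I would eliminate type~I. The set $\boldsymbol{\mathsf{c}}$ is ancestral and trivially satisfies $\boldsymbol{\mathsf{c}} \subset \boldsymbol{\mathsf{c}} \subset \crit(\tree)$, so Lemma~\ref{lem: catagorize uni-per I portals} applies with $\mathsf{X} = \boldsymbol{\mathsf{c}}$ and tells us that $\mathsf{c}_l \in \textup{Port}_{\textup{I}}(\boldsymbol{\mathsf{c}})$ forces $N^1_{\boldsymbol{\mathsf{c}}}(\mathsf{c}_l) < N^1_{\boldsymbol{\mathsf{c}}}(\mathsf{c}_{l+1})$; by Lemma~\ref{lem: tau-portal equivalences} this means $l$ is a $\tau$-portal, so $l \notin E$. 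Combining with the previous paragraph,
\[
    E \subset \set{l \in \bZ: \ \mathsf{c}_l \in \textup{Port}_{\textup{II}}(\boldsymbol{\mathsf{c}}) \cup \textup{Port}_{\textup{III}}(\boldsymbol{\mathsf{c}})}.
\]

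Finally I would invoke finiteness: $\textup{Port}_{\textup{II}}(\boldsymbol{\mathsf{c}})$ is finite by Lemma~\ref{lem: 1CB => P2 finite} (this is the one place the uni-persistence hypothesis is needed), $\textup{Port}_{\textup{III}}(\boldsymbol{\mathsf{c}})$ is finite by Corollary~\ref{cor: finite number of type 3 portals}, and since $l \mapsto \mathsf{c}_l$ is injective the displayed set is finite; hence $E$ is finite. I do not anticipate any genuine obstacle --- the only non-formal step is the exclusion of type~I portals, and that is already packaged in Lemma~\ref{lem: catagorize uni-per I portals} (the underlying fact being that a vertex having a child of its own degree has all other children non-critical, Lemma~\ref{lem: TwD}.\ref{sublem: deg v = deg vc => all other children n.c.}, so the type~I witness on the critical end must be $\mathsf{c}_{l+1}$).
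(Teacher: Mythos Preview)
Your proposal is correct and follows essentially the same route as the paper's proof: both exclude type~I portals from $E$ (via Lemma~\ref{lem: catagorize uni-per I portals} and Lemma~\ref{lem: tau-portal equivalences}) to obtain the inclusion $E \subset \set{l: \mathsf{c}_l \in \textup{Port}_{\textup{II}}(\boldsymbol{\mathsf{c}}) \cup \textup{Port}_{\textup{III}}(\boldsymbol{\mathsf{c}})}$, then invoke Lemma~\ref{lem: 1CB => P2 finite} and Corollary~\ref{cor: finite number of type 3 portals}. Your version simply makes explicit a step the paper leaves implicit, namely that the three portal types are exhaustive.
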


\begin{lem} \label{lem: 0 in E}
For the Yoccoz return function of any critical end of a tree with dynamics, $0 \in E$.
\begin{proof}
By Lemma \ref{lem: v_0 a type 3 portal}, $\mathsf{v}_0 =  \mathsf{c}_0$ is a type~III $\boldsymbol{\mathsf{c}}$-portal. Since
$N^1_{\boldsymbol{\mathsf{c}}}(\mathsf{c}_{1})  = N^1_{\boldsymbol{\mathsf{c}}}(\mathsf{c}_{0}) = 1$ by Lemma \ref{lem: N_1 T_l =
1 for l <H}, $0$ is not a $\tau$-portal. Therefore $0 \in E$.

\end{proof}

\end{lem}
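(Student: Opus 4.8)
The plan is to unwind the definition of the exceptional set $E$ (Definition \ref{defn: E for tau}): to place $0$ in $E$ I need two things, namely that $\mathsf{c}_0$ is a $\boldsymbol{\mathsf{c}}$-portal and that $0$ is \emph{not} a $\tau$-portal. Both follow immediately from results already in hand, so the argument is really just a matter of assembling them.

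First I would establish that $\mathsf{c}_0$ is a $\boldsymbol{\mathsf{c}}$-portal. Since $\mathsf{c}_0 \in \tree_0 = \set{\mathsf{v}_0}$, we have $\mathsf{c}_0 = \mathsf{v}_0$, the root, and $\boldsymbol{\mathsf{c}}$ is in particular an end of $\tree$. Lemma \ref{lem: v_0 a type 3 portal} then gives $\mathsf{v}_0 \in \textup{Port}_{\textup{III}}(\boldsymbol{\mathsf{c}})$; in particular $\mathsf{c}_0$ is a $\boldsymbol{\mathsf{c}}$-portal. (Internally that lemma uses only that $\mathsf{v}_0$ has a child off the end by (T3) and that $N^1_{\boldsymbol{\mathsf{c}}} \equiv 1$ on $\mathsf{v}_0$ and on that child by Lemma \ref{lem: N_1 T_l = 1 for l <H}.)

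Next I would show that $0$ is not a $\tau$-portal. Since $\boldsymbol{\mathsf{c}}$ is ancestral and contains $\mathsf{v}_0$, Lemma \ref{lem: N_1 T_l = 1 for l <H} applies at levels $0$ and $1$, which both satisfy $l \leq H$ because $H \in \bZ^+$; hence $N^1_{\boldsymbol{\mathsf{c}}}(\mathsf{c}_0) = N^1_{\boldsymbol{\mathsf{c}}}(\mathsf{c}_1) = 1$. So the strict inequality $N^1_{\boldsymbol{\mathsf{c}}}(\mathsf{c}_0) < N^1_{\boldsymbol{\mathsf{c}}}(\mathsf{c}_1)$ fails, and by the equivalence of (1) and (3) in Lemma \ref{lem: tau-portal equivalences}, $0$ is not a $\tau$-portal. (Equivalently, Lemma \ref{lem: tau(l) = l-H for l leq H} gives $\tau(0) = -H$ and $\tau(1) = 1 - H > -H$, so $\tau(1) \leq \tau(0)$ fails, which by definition is exactly the assertion that $0$ is not a $\tau$-portal.) Combining the two points places $0$ in $E$.

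There is essentially no obstacle here; the only care required is checking that the cited lemmas' hypotheses are met — that $\boldsymbol{\mathsf{c}}$ is ancestral and passes through $\mathsf{v}_0$, and that $1 \leq H$ — all of which are immediate from $H$ being a positive integer and $\boldsymbol{\mathsf{c}}$ being an (extended) end.
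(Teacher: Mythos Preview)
Your proposal is correct and follows essentially the same approach as the paper: invoke Lemma~\ref{lem: v_0 a type 3 portal} to get that $\mathsf{c}_0=\mathsf{v}_0$ is a $\boldsymbol{\mathsf{c}}$-portal, then use Lemma~\ref{lem: N_1 T_l = 1 for l <H} to conclude $N^1_{\boldsymbol{\mathsf{c}}}(\mathsf{c}_0)=N^1_{\boldsymbol{\mathsf{c}}}(\mathsf{c}_1)=1$ so that $0$ is not a $\tau$-portal. Your write-up simply makes explicit the hypothesis checks and the appeal to Lemma~\ref{lem: tau-portal equivalences} that the paper leaves implicit.
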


We now prove Theorem \ref{thm: tau cond} which implies Theorem \ref{main thm}.

\begin{proof}[Proof of Theorem \ref{thm: tau cond}.]
We have $0\in E$ from Lemma \ref{lem: 0 in E}. The first part of Condition 1 is Lemma \ref{lem: tau(l) = l-H for l leq H}.  It
follows by a routine induction argument from Lemmas \ref{lem: tau(l) = l-H for l leq H} and \ref{lem: tau(l+1) = tau^R(l)} that
$\tau(l) < l$ for any $l \in \bZ$. Condition 2 is Lemma \ref{lem: tau(l+1) = tau^R(l)}. Condition 3 is a consequence of our Main
Lemma. Say $\Ret^{R(l)-1}_{\boldsymbol{\mathsf{c}}}(\mathsf{c}_{l-1}) =\mathsf{c}_{k}$. Then $\mathsf{c}_{k}$ is a
$\boldsymbol{\mathsf{c}}$-portal. If $N^1_{\boldsymbol{\mathsf{c}}}(\mathsf{c}_{k}) <
N^1_{\boldsymbol{\mathsf{c}}}(\mathsf{c}_{k+1})$, then $k$ is a $\tau$-portal by Lemma \ref{lem: tau-portal equivalences}.
Otherwise $k \in E$ by Definition \ref{defn: E for tau}.   Moreover in uni-persistent tree with dynamics,
$\textup{Port}_{II}(\boldsymbol{\mathsf{c}})$ and $\textup{Port}_{III}(\boldsymbol{\mathsf{c}})$ are finite by Lemma \ref{lem:
1CB => P2 finite} and Corollary \ref{cor: finite number of type 3 portals} respectively. Thus $E \subset
\textup{Port}_{II}(\boldsymbol{\mathsf{c}}) \cup \textup{Port}_{III}(\boldsymbol{\mathsf{c}})$ is also finite.
\end{proof}

The conditions in Theorem \ref{thm: tau cond} are also sufficient. We (temporarily) call a function $\tau: \bZ \to \bZ$ that
satisfies Conditions 1--3 above for some $H$ and some finite set $E \subset \bN$ with $0 \in E$ an \emph{abstract Yoccoz return
function}. Theorem \ref{thm: realize tau} shows that every abstract Yoccoz return function is the Yoccoz return function of the
critical end of a uni-persistent tree with dynamics.

Finally we translate Lemma \ref{lem: Returns to simple portals} into the language of $\tau$-functions.

\begin{lem} \label{lem: tau simple portals}
Let $m \in E$ such that $\mathsf{c}_{m}$ is a simple $\boldsymbol{\mathsf{c}}$-portal. Suppose that $\tau(l) = m $ and $R(l+1) =
2$ for some $l$. If $\tau^S(k) = l$ and $R(k+1) >S$ for some $k$ and $S\geq 1$, then $\tau(k+1) \neq \tau(l+1)$. Thus $R(k+1)
\neq S+2$.

\end{lem}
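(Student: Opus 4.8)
The plan is to read the statement back through Definition \ref{defn: tau} (using Lemma \ref{lem: ret of descen => ret} to identify the exponents $R$) and then invoke the corollary following Lemma \ref{lem: Returns to simple portals}. Under this translation $\tau(l)=m$ becomes $\Ret_{\boldsymbol{\mathsf{c}}}(\mathsf{c}_l)=\mathsf{c}_m$, $R(l+1)=2$ becomes $N^1_{\boldsymbol{\mathsf{c}}}(\mathsf{c}_{l+1})=N^2_{\boldsymbol{\mathsf{c}}}(\mathsf{c}_l)$, $\tau^S(k)=l$ becomes $\Ret^S_{\boldsymbol{\mathsf{c}}}(\mathsf{c}_k)=\mathsf{c}_l$, and $R(k+1)>S$ becomes $N^1_{\boldsymbol{\mathsf{c}}}(\mathsf{c}_{k+1})=N^{R(k+1)}_{\boldsymbol{\mathsf{c}}}(\mathsf{c}_k)>N^S_{\boldsymbol{\mathsf{c}}}(\mathsf{c}_k)$. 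With $\mathsf{v}=\mathsf{c}_k$ and $\mathsf{v}^{\child}=\mathsf{c}_{k+1}$ these are precisely the hypotheses of the corollary following Lemma \ref{lem: Returns to simple portals}, provided the two additional hypotheses of Lemma \ref{lem: Returns to simple portals} itself hold, namely $\deg\mathsf{c}_l=\deg\mathsf{c}_{l+1}$ and that $\F^n(\mathsf{c}_l)$ is non-critical for $1\le n<N^1_{\boldsymbol{\mathsf{c}}}(\mathsf{c}_l)$. Granting these, that corollary yields $N^1_{\boldsymbol{\mathsf{c}}}(\mathsf{c}_{k+1})\neq N^{S+2}_{\boldsymbol{\mathsf{c}}}(\mathsf{c}_k)$ together with $\Ret_{\boldsymbol{\mathsf{c}}}(\mathsf{c}_{k+1})\neq\Ret_{\boldsymbol{\mathsf{c}}}(\mathsf{c}_{l+1})$, and translating the latter back through Definition \ref{defn: tau} gives exactly $\tau(k+1)\neq\tau(l+1)$.

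The substance is thus in checking those two degree conditions in the situation at hand. Writing $N=N^1_{\boldsymbol{\mathsf{c}}}(\mathsf{c}_l)$, the hypothesis $N^1_{\boldsymbol{\mathsf{c}}}(\mathsf{c}_{l+1})=N^2_{\boldsymbol{\mathsf{c}}}(\mathsf{c}_l)=N+N^1_{\boldsymbol{\mathsf{c}}}(\mathsf{c}_m)>N$ forces, by Lemma \ref{lem: ret times add}, that $\F^N(\mathsf{c}_{l+1})$ is a child of $\F^N(\mathsf{c}_l)=\mathsf{c}_m$ lying outside $\boldsymbol{\mathsf{c}}$ whose first return time to $\boldsymbol{\mathsf{c}}$ is $N^1_{\boldsymbol{\mathsf{c}}}(\mathsf{c}_m)$; since $\mathsf{c}_m$ is a \emph{simple} $\boldsymbol{\mathsf{c}}$-portal this child must be the unique one $\mathsf{c}_m^{\child_0}$, so $\F^N(\mathsf{c}_{l+1})=\mathsf{c}_m^{\child_0}$. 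The degree bookkeeping in Lemma \ref{lem: F^N a D-fold cover} then needs $\deg\mathsf{c}_l=\deg\mathsf{c}_{l+1}$ — equivalently, by Lemma \ref{lem: esc or split iff deg drops} and the fact that $\mathsf{c}_{l+1}$ is the sole critical child of $\mathsf{c}_l$ in the absence of a split, that there is neither an escape nor a split at $\mathsf{c}_l$ — together with the non-criticality of $\F^n(\mathsf{c}_l)$ for $1\le n<N$, in order to conclude that $\mathsf{c}_{l+1}$ exhausts the $\F^N$-preimage of $\mathsf{c}_m^{\child_0}$ among the children of $\mathsf{c}_l$, so that no other child $\mathsf{c}_l^{\child}\neq\mathsf{c}_{l+1}$ can have $N^1_{\boldsymbol{\mathsf{c}}}(\mathsf{c}_l^{\child})=N^1_{\boldsymbol{\mathsf{c}}}(\mathsf{c}_{l+1})$. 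When $\crit(\tree)=\boldsymbol{\mathsf{c}}$ both conditions are automatic — the first because the only escape is at $\mathsf{c}_0$ while here $l\neq 0$ (else $m=\tau(l)=l-H<0$, against $m\in E\subset\bN$) and there is no split, and the second because for a vertex of $\boldsymbol{\mathsf{c}}$ the words ``non-critical'' and ``not yet in $\boldsymbol{\mathsf{c}}$'' coincide — so I expect the verification of these conditions (or the argument that the finitely many exceptional configurations are harmless) in the general uni-persistent case to be the main obstacle.

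Finally, the closing claim $R(k+1)\neq S+2$ is immediate from $\tau(k+1)\neq\tau(l+1)$: if $R(k+1)=S+2$ then, using $\tau^S(k)=l$ and $R(l+1)=2$,
\[
  \tau(k+1)=\tau^{R(k+1)}(k)+1=\tau^{2}\bigl(\tau^{S}(k)\bigr)+1=\tau^{2}(l)+1=\tau^{R(l+1)}(l)+1=\tau(l+1),
\]
a contradiction; equivalently it follows from $N^1_{\boldsymbol{\mathsf{c}}}(\mathsf{c}_{k+1})=N^{R(k+1)}_{\boldsymbol{\mathsf{c}}}(\mathsf{c}_k)\neq N^{S+2}_{\boldsymbol{\mathsf{c}}}(\mathsf{c}_k)$ together with the strict monotonicity of the return times of $\mathsf{c}_k$.
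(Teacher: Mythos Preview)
Your approach is exactly the paper's: the lemma is stated there with no proof, introduced simply as the translation of Lemma~\ref{lem: Returns to simple portals} (and its corollary) into $\tau$-language. Your first and third paragraphs carry this out correctly.

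You have in fact been more scrupulous than the paper in isolating the two auxiliary hypotheses of Lemma~\ref{lem: Returns to simple portals} --- that $\deg\mathsf{c}_l=\deg\mathsf{c}_{l+1}$ and that $\F^n(\mathsf{c}_l)$ is non-critical for $1\le n<N^1_{\boldsymbol{\mathsf{c}}}(\mathsf{c}_l)$ --- which the paper leaves implicit. One small correction to your verification: the condition $\crit(\tree)=\boldsymbol{\mathsf{c}}$ does \emph{not} by itself force the only escape to be at $\mathsf{c}_0$; the degree can drop at several points along $\boldsymbol{\mathsf{c}}$ even when every critical vertex lies on $\boldsymbol{\mathsf{c}}$. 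That claim holds only in the bi-critical case (by definition there is a unique escape). The second condition, however, \emph{is} automatic under $\crit(\tree)=\boldsymbol{\mathsf{c}}$, for the reason you give.

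Your concern about the general uni-persistent case is therefore legitimate: as stated, Lemma~\ref{lem: tau simple portals} does not obviously follow from Lemma~\ref{lem: Returns to simple portals} without those extra hypotheses, and the paper does not supply the missing argument. In practice the paper only invokes this lemma in the closing remark of \S\ref{subsect: Proof of ext lem}, where the tree has been constructed (Proposition~\ref{prop: tau TwD}) so that $\crit(\tree)=\boldsymbol{\mathsf{c}}$ and escapes occur only at $\mathsf{c}_m$ for $m\in E$; in that setting $l\notin E$ (else $\esc(m)=2$ via Definition~\ref{defn: esc(m)} with $k=l$, contradicting simplicity of $\mathsf{c}_m$), so $\deg\mathsf{c}_l=\deg\mathsf{c}_{l+1}$ and both conditions hold. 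So the gap you flag is real at the level of the stated lemma, but harmless for the paper's applications.
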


For $k,l,m$ in the above lemma, we say that $l$ obstructs $m$.\\

\subsection{Recursively Defining Yoccoz Return Functions}\label{subsect: recursive defs of tau}

Theorem \ref{thm: tau cond} provides recursively verifiable conditions that must be satisfied by $\tau$-functions.  Since these
conditions are quite complicated, we give some examples of how they can be used to recursively define a $\tau$-function. For
simplicity we will we assume $H=1$ and $E = \set{0}$ in our examples. So we are considering the case of the $\tau$-function of a
bi-critical tree with dynamics.

\begin{defn}
A \emph{Yoccoz return function of length} $L \in \bZ^+$ is a function
\[
    \tau: \set{l \in \bZ: l \leq L} \to \bZ
\]
which satisfies the 3 conditions of Theorem \ref{thm: tau cond}.

\end{defn}

In order to define an abstract Yoccoz return function (with $H=1$), we must start by defining $\tau(l) = l-1$ for $l \leq 1$ by
Condition 1. Equivalently, $R(l)= 1$ for $l \leq 1$. The problem is to take a Yoccoz return function of length $L$ and extend it
to length $L+1$. The key question is what are the valid choices for $R(L+1)$?

\begin{example} \label{ex: tau of 2}

We have a choice for $\tau(2)$.  We can choose $R(2) = 1$ since it satisfies Condition 2 and vacuously satisfies Condition 3.  If
$R(2) =1$, then $\tau(2) = \tau(1) + 1 = 1$. Also $\tau(1) = 0 \in E$, so we can choose $R(2) = 2$ by Condition 3b. In this case,
$\tau(2) = \tau^2(1) + 1 = 0$ and 1 is a $\tau$-portal.

\end{example}

We generalize the arguments in the above example, and give a few results on extending a Yoccoz return function of length $L$. The
simplest choice is to take $R(L+1) = 1$.

\begin{lem} \label{lem: can choose R(L+1) = 1}
Given $\tau$ a Yoccoz return function of length $L$, the choice $R(L+1) =1$ extends $\tau$ to length $L+1$.
\begin{proof}
The choice $R(L+1) =1$ satisfies Condition 1 since $\tau(L)$ does. It clearly satisfies Condition 2 and vacuously satisfies
Condition 3.
\end{proof}

\end{lem}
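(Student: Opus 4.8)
The plan is to verify directly that setting $R(L+1) = 1$ produces a function $\tau: \{l \in \bZ : l \leq L+1\} \to \bZ$ that still satisfies Conditions 1--3 of Theorem \ref{thm: tau cond}. The new value is determined by the formula $\tau(L+1) = \tau^{R(L+1)}(L) + 1 = \tau^1(L) + 1 = \tau(L) + 1$, so the extension is well defined; it remains only to check the three conditions at the single new point $l = L+1$ (the conditions at all $l \leq L$ are inherited unchanged from the hypothesis that $\tau$ restricted to $\{l \leq L\}$ is a Yoccoz return function of length $L$).

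First I would dispose of Condition 2: by construction $\tau(L+1) = \tau^{R}(L) + 1$ with $R = R(L+1) = 1 \geq 1$, so it holds immediately. Next, Condition 3: it only has content when $R(L+1) \geq 2$, and here $R(L+1) = 1$, so Condition 3 is vacuous at $l = L+1$. The only real work is Condition 1 at $l = L+1$. If $L + 1 \leq H$, we need $\tau(L+1) = (L+1) - H$; since $L \leq H$ as well (indeed $L \leq L+1 \leq H$), the inductive hypothesis gives $\tau(L) = L - H$, hence $\tau(L+1) = \tau(L) + 1 = L - H + 1 = (L+1) - H$, as required. If $L + 1 > H$, we must show $-H < \tau(L+1) < L+1$. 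For the upper bound, $\tau(L+1) = \tau(L) + 1$, and by the inductive hypothesis $\tau(L) < L$ (this holds whether $L \leq H$ or $L > H$, as noted in the proof of Theorem \ref{thm: tau cond}), so $\tau(L+1) \leq L < L+1$. For the lower bound, again from the inductive hypothesis $\tau(L) > -H$ when $L > H$, or $\tau(L) = L - H \geq 1 - H > -H$ when $L \leq H$; in either case $\tau(L) \geq -H + 1$, so $\tau(L+1) = \tau(L) + 1 \geq -H + 2 > -H$.

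There is essentially no obstacle here: the content of the lemma is that the ``do nothing new'' choice $\tau(L+1) = \tau(L)+1$ can never violate the constraints, which is intuitively clear because this choice neither creates a new $\tau$-portal (so Condition 3 is not triggered) nor moves $\tau$ outside the band $(-H, l)$ (since incrementing both $\tau(L)$ and the argument by $1$ preserves the strict inequalities). The mild point worth stating carefully is that $\tau(l) < l$ for all $l \leq L$ is available to us --- this is established in the proof of Theorem \ref{thm: tau cond} by the routine induction from Lemmas \ref{lem: tau(l) = l-H for l leq H} and \ref{lem: tau(l+1) = tau^R(l)} --- so that the upper bound in Condition 1 propagates. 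I would present the argument exactly as the short verification above, since the author's proof is likely to be equally brief.
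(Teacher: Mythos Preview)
Your proposal is correct and follows essentially the same approach as the paper's proof: verify Condition~2 by construction, note Condition~3 is vacuous since $R(L+1)=1$, and deduce Condition~1 at $L+1$ from Condition~1 at $L$. The paper compresses all of this into a single line (``satisfies Condition~1 since $\tau(L)$ does''), whereas you have written out the case split $L+1\le H$ versus $L+1>H$ in full; the underlying argument is identical.
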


In Example \ref{ex: tau of 2} there are 2 valid choices for $R(2)$.  We isolated the property that allowed us to make the choice
$R(2) = 2$.

\begin{defn} \label{defn: M(L)}
Let $\tau$ be the Yoccoz return function (possibly of length $L \geq 1$). We define $M(l)$ by
\[
    M(l) = \min \set{S: \ \tau^{S}(l) \leq 0}
\]
for $1 \leq l \ (\leq L)$.

\end{defn}

An easy inductive argument shows that $M(l)$ is well defined since $\tau(k) < k$ for any $k$. We have $l \equiv 0  \pmod H$ if
and only if $\tau^{M(l)}(l) = 0$.

\begin{cor} \label{cor: bounds on R(l)}
If $\tau$ is a Yoccoz return function (possibly of length $L \geq 2$), then $R(l) \leq  M(l-1) + 1$ for any $1 \leq l \ (\leq
L)$.
\begin{proof}
We have $-H < \tau^{M(l-1)}(l-1) \leq 0$ from the definition of $M(l)$ and Condition 1. So if $R(l) > M(l-1) + 1$, then
$\tau^{R(l)}(l-1) \leq -2 H$. So we would have $\tau(l) = \tau^{R(l)}(l-1)+1 \leq -2H +1 \leq -H$ contrary to Condition 1.
\end{proof}

\end{cor}

The above corollary only depends on Conditions 1 and 2. We call $\set{1,\dots, M(L) + 1}$ the \emph{a priori choices} for
$R(L+1)$.

\begin{lem} \label{lem: l equiv 0 mod H => choose R(L+1) = M+1}
Let $\tau$ be a Yoccoz return function of length $L \geq 1$. If $L \equiv 0 \pmod H$, then the choice $R(L+1) = M(L) + 1$ extends
$\tau$ to length $L+1$.
\begin{proof}
It is straightforward to check that $\tau^{M(L)}(L) = 0 \in E$.  Thus the choice $R(L+1) = M(L) + 1$ satisfies Condition 3b. Also
$\tau(L+1) = \tau^{M(L)+1}(L)+1 = 0 $ satisfies Condition 1.
\end{proof}
\end{lem}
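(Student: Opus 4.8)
The plan is to observe that a function on $\set{l \le L+1}$ is a Yoccoz return function of length $L+1$ precisely when it satisfies Conditions 1--3 of Theorem \ref{thm: tau cond}, and that since $\tau$ already satisfies these on $\set{l \le L}$ by hypothesis, only the case $l = L+1$ needs checking. So the proof reduces to verifying the three conditions at the single new index $L+1$, where the prescription $R(L+1) = M(L)+1$ forces $\tau(L+1) = \tau^{M(L)+1}(L) + 1$.

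The first step is to establish the identity $\tau^{M(L)}(L) = 0$. Since $L \ge 1$ and $L \equiv 0 \pmod H$, this is exactly the remark following Definition \ref{defn: M(L)}. One should also note that $R(L+1) = M(L)+1$ is a legitimate a priori choice, being the maximal one permitted by Corollary \ref{cor: bounds on R(l)}. From this identity and the fact that $0 \in E$ for any abstract Yoccoz return function (Lemma \ref{lem: 0 in E}), I would then read off the needed consequences: applying the first clause of Condition 1 to the index $0 \le H$ gives $\tau(0) = -H$, whence $\tau(L+1) = \tau(\tau^{M(L)}(L)) + 1 = \tau(0) + 1 = 1 - H$.

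With these in hand the three conditions are immediate. Condition 2 holds by the very definition $\tau(L+1) = \tau^{R(L+1)}(L)+1$ with $R(L+1) = M(L)+1 \ge 1$. For Condition 3, the hypothesis $R(L+1) \ge 2$ amounts to $M(L) \ge 1$, which always holds since $\tau^0(L) = L \ge 1 > 0$; then alternative (b) is satisfied because $\tau^{R(L+1)-1}(L) = \tau^{M(L)}(L) = 0 \in E$. For Condition 1 at $l = L+1$, note that $L$ being a positive multiple of $H$ forces $L+1 > H$, so we must check $-H < \tau(L+1) < L+1$; this follows from $\tau(L+1) = 1 - H$ together with $-H < 1-H$ and $1-H \le 0 < 2 \le L+1$.

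I expect no real obstacle here: the argument is a direct substitution once the identity $\tau^{M(L)}(L) = 0$ is invoked. The only points deserving attention are confirming that the remark after Definition \ref{defn: M(L)} applies (it requires $L \equiv 0 \pmod H$, which is the hypothesis) and that the chosen value of $R(L+1)$ lies within the a priori range from Corollary \ref{cor: bounds on R(l)}.
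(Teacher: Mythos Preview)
Your proof is correct and follows the same route as the paper's: show $\tau^{M(L)}(L) = 0 \in E$ so that Condition~3b is satisfied, and then check that the resulting value of $\tau(L+1)$ meets Condition~1. Two small remarks: first, the citation of Lemma~\ref{lem: 0 in E} is slightly off, since that lemma concerns the concrete Yoccoz return function of a critical end rather than the abstract setting here --- in the present context $0 \in E$ is simply part of the standing hypothesis on $E$ (as in Theorem~\ref{thm: tau cond} and the definition of a Yoccoz return function of length $L$); second, your computation $\tau(L+1) = 1-H$ is in fact more careful than the paper's own proof, which writes ``$= 0$'' and thus tacitly assumes $H=1$.
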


It follows that we can make a choice when one extends a Yoccoz return function of length $L$ whenever $L \equiv 0 \pmod H$.

\begin{prop} \label{prop: 2 choices to extend tau}
Let $\tau$ be a Yoccoz return function of length $L \geq 1$. If $L \equiv 0 \pmod H$, there are at least two distinct ways to
choose $R(L+1)$ that extends $\tau$ to length $L+1$.

\begin{proof}
We can choose $R(L+1) = 1$ or $R(L+1 )= M(L)+1$ by Lemmas \ref{lem: can choose R(L+1) = 1} and \ref{lem: l equiv 0 mod H =>
choose R(L+1) = M+1} respectively.  Since $L \geq 1$, we have $M(L) \geq 1$. Therefore these are distinct choices.
\end{proof}
\end{prop}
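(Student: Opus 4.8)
The plan is to exhibit two explicit candidate values for $R(L+1)$, each of which extends $\tau$ to a Yoccoz return function of length $L+1$, and then to verify that the two candidates are distinct integers. Both extensions are already established above, so the argument is mostly a matter of assembling them in the right way.

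First I would take $R(L+1) = 1$. By Lemma \ref{lem: can choose R(L+1) = 1} this always yields a valid extension: Condition 1 is inherited from $\tau(L)$, Condition 2 holds by the choice itself, and Condition 3 is vacuous. No hypothesis on $L$ is needed here. Second, using the assumption $L \equiv 0 \pmod H$, I would take $R(L+1) = M(L) + 1$. Lemma \ref{lem: l equiv 0 mod H => choose R(L+1) = M+1} is stated under exactly this congruence hypothesis and shows that this choice satisfies Condition 3b (since $\tau^{M(L)}(L) = 0 \in E$) and Condition 1 (since the resulting value of $\tau(L+1)$ is $0$).

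It then remains only to check that $1 \neq M(L) + 1$, that is, that $M(L) \geq 1$. This is immediate from Definition \ref{defn: M(L)}: since $L \geq 1$ we have $\tau^0(L) = L > 0$, so $S = 0$ is not in the set $\set{S : \tau^S(L) \leq 0}$, and hence its minimum $M(L)$ is at least $1$. Thus $M(L) + 1 \geq 2$, and the two extensions are genuinely different, which is what the proposition asserts.

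There is essentially no obstacle here: the only point requiring care is the bookkeeping of matching each candidate value to the lemma that licenses it, and noticing that the second lemma's hypothesis is precisely $L \equiv 0 \pmod H$. The real content was already carried out in proving those two lemmas, which in turn reduce the three conditions of Theorem \ref{thm: tau cond} to Condition 1 alone (for $R(L+1) = 1$) and to Conditions 1 and 3b (for $R(L+1) = M(L)+1$); the role of the standing hypothesis $L \geq 1$ is only to force $M(L) \geq 1$ so that the two choices do not collapse.
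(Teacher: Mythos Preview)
Your proposal is correct and follows essentially the same approach as the paper's proof: invoke Lemma~\ref{lem: can choose R(L+1) = 1} for the choice $R(L+1)=1$, invoke Lemma~\ref{lem: l equiv 0 mod H => choose R(L+1) = M+1} for the choice $R(L+1)=M(L)+1$, and then observe that $L\geq 1$ forces $M(L)\geq 1$ so the two choices are distinct. Your version simply spells out a bit more explicitly why $M(L)\geq 1$.
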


\begin{cor}
For $L \geq 1$ and $H \geq 1$, there are at least $2^{\lfloor (L-1)/H \rfloor}$ Yoccoz return functions of length $L$ with
$\tau(0) = -H$.
\end{cor}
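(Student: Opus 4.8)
The plan is to prove the corollary by induction on $L$, counting the number of ways to build a Yoccoz return function of length $L$ satisfying $\tau(0) = -H$, using the two-choices result (Proposition \ref{prop: 2 choices to extend tau}) at every length that is a multiple of $H$. First I would note that Condition 1 forces $\tau(l) = l - H$ for all $l \leq H$; in particular $\tau(0) = -H$ is automatic, so the constraint $\tau(0) = -H$ is not really an extra restriction but simply records that we are in the $H \geq 1$ setting where the initial segment is determined. Thus there is exactly one Yoccoz return function of length $L$ for $L \leq H$, namely $\tau(l) = l-H$.

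Next I would set up the counting. For each $L$ with $L \geq H$, extending a length-$L$ function to a length-$(L+1)$ function amounts to choosing a valid value of $R(L+1)$ among the a priori choices $\set{1, \dots, M(L)+1}$ (Corollary \ref{cor: bounds on R(l)}), subject to Condition 3. By Lemma \ref{lem: can choose R(L+1) = 1} the choice $R(L+1) = 1$ is \emph{always} valid, so there is at least one extension at every step. Moreover, when $L \equiv 0 \pmod H$, Proposition \ref{prop: 2 choices to extend tau} gives a \emph{second} valid choice, $R(L+1) = M(L)+1$, distinct from the first since $M(L) \geq 1$. The key point is that these choices are independent across steps: a length-$L$ function determines, but is determined by, the sequence of choices made at steps $H, H+1, \dots, L-1$ (since $\tau$ is recovered from $R$ by Lemmas \ref{lem: tau(l+1) = tau^R(l)} and \ref{lem: tau(l) = l-H for l leq H}), and each choice made is legal regardless of the prior choices because the validity of $R(L+1)=1$ and (when applicable) $R(L+1) = M(L)+1$ depends only on $\tau$ being a valid length-$L$ function, not on how it was built.

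The induction then runs as follows. Let $N(L)$ denote the number of Yoccoz return functions of length $L$ with $\tau(0) = -H$. For $L \leq H$ we have $N(L) = 1 = 2^0 = 2^{\lfloor (L-1)/H \rfloor}$ (noting $\lfloor (L-1)/H \rfloor = 0$ when $1 \leq L \leq H$). For the inductive step, suppose $N(L) \geq 2^{\lfloor (L-1)/H \rfloor}$. If $L \not\equiv 0 \pmod H$, each length-$L$ function extends in at least one way, so $N(L+1) \geq N(L) \geq 2^{\lfloor (L-1)/H \rfloor} = 2^{\lfloor L/H \rfloor}$, where the last equality holds precisely because $L \not\equiv 0 \pmod H$ means $\lfloor (L-1)/H \rfloor = \lfloor L/H \rfloor$. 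If instead $L \equiv 0 \pmod H$, each length-$L$ function extends in at least two distinct ways, so $N(L+1) \geq 2 N(L) \geq 2^{\lfloor (L-1)/H \rfloor + 1} = 2^{\lfloor L/H \rfloor}$, since $L \equiv 0 \pmod H$ forces $\lfloor L/H \rfloor = \lfloor (L-1)/H \rfloor + 1$. This completes the induction and gives $N(L) \geq 2^{\lfloor (L-1)/H \rfloor}$ for all $L \geq 1$.

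The main obstacle, such as it is, is the bookkeeping that extensions genuinely multiply rather than merely branch: one must verify that two length-$L$ functions built from different choice-sequences are actually different functions, and that a legal choice at step $L$ remains legal no matter what was chosen before. Both facts follow from the equivalence of $\tau$ and $R$ as combinatorial data and from the observation that Lemmas \ref{lem: can choose R(L+1) = 1} and \ref{lem: l equiv 0 mod H => choose R(L+1) = M+1} impose conditions only on the already-constructed length-$L$ function; so this is a matter of care rather than a substantive difficulty. A clean way to phrase it is to biject length-$L$ functions (with $\tau(0)=-H$) with sequences of admissible choices $(R(H+1), \dots, R(L))$, observe the choice set at each step $j$ with $j \equiv 1 \pmod H$ has size at least $2$, and conclude the count is at least the product of these lower bounds, which is $2^{\lfloor (L-1)/H \rfloor}$.
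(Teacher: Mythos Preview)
Your proof is correct and follows exactly the route the paper intends: the corollary is stated immediately after Proposition~\ref{prop: 2 choices to extend tau} with no proof, as an evident counting consequence, and your induction on $L$ (doubling at each step with $L\equiv 0 \pmod H$, using Lemma~\ref{lem: can choose R(L+1) = 1} elsewhere) makes that implicit argument explicit. The only remark is that your careful justification that distinct $R$-sequences yield distinct $\tau$'s, while true (since $\tau^R(l-1)$ is strictly decreasing in $R$), is more than the paper bothers to say.
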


Some of the a priori choices for $R(L+1)$ may not be permitted. The valid choice for $R(L+1)$ are only those $R$ that satisfy
Condition 3, so they depend on the definition of $\tau(2), \dots, \tau(L)$. For instance the valid choices for $\tau(3)$ depend
on how we define $\tau(2)$.

\begin{example} \label{ex: tau of 3}
First suppose that $\tau(2) = 0$.  We can choose $R(3) =1$ and $\tau(3) = 1$.  Since $\tau(2) = 0 \in E$, we can choose $R(3) =2$
and $\tau(3) = 0$. These are the only valid choices by Condition 1. Now suppose that $\tau(2) = 1$. We can choose $R(3) = 1$, so
$\tau(3) =2$.  Since $\tau^2(2) = 0 \in E$, we can choose $R(3) = 3$ and $\tau(3) = 0$.  It is not immediately clear if we can
choose $R(3) = 2$.  We must check Condition 3a, so we ask if $\tau^{2-1}(3-1)$ is a $\tau$-portal?  That is, we check if $\tau(2)
=1$ is a $\tau$-portal. But $R(2) = 1$, so 1 is not a $\tau$-portal by Lemma \ref{lem: tau-portal equivalences}.  Hence if
$\tau(2) = 1$, then $R(3) \neq 2$.

\end{example}

The last case illustrates a key consequence of Condition 3.  The choices of $R(L)$ are restricted. A priori if $\tau(2) = 1$,
then we have that $R(3) \in \set{1,2, 3}$. However we have just shown that $R(3) = 2$ is not a valid choice. In general we should
not expect that all the a priori choices are valid.

We continue Example \ref{ex: tau of 3}. It would be too cumbersome to consider all possible $\tau$-functions of length $L$ with
$H=1$ since there are at least $2^{L-1}$ of them. So we will we consider particular choices for $\tau(2)$ and $\tau(3)$.

\begin{example}
Suppose that $\tau(2) = 0$ and $\tau(3) =1$.  We define $\tau(4)$. We can choose $R(4)= 1$ ($\tau(4) = 2$). Since $\tau^2(3) = 0
\in E$ we can choose $R(4) = 2+1 = 3$ ($\tau(4) = 0$).  We use Condition 3a to check if we can choose $R(4) = 2$:
\[
    \tau(\tau^{2-1}(4-1)+1) = \tau(\tau(3) +1) = \tau(2) =0 \leq 0 = \tau(1) =  \tau^{2}(3).
\]
So $R(4) = 2$ is a possible choice. If we choose $R(4) = 2$ ($\tau(4) = 1$), then 3 is $\tau$-portal.

\end{example}

In the above example all three of the a priori choices are possible.

\begin{example}
Continuing the above example, the cases when $ L = 4$ or $5$ are similar, so suppose we define $R(5) = R(6)= 1$, so $\tau(5) = 2$
and $\tau(6) = 3$. We have defined the following Yoccoz return function of length 6:

\[
\begin{array}{|c|c|c|c|c|c|c|c|c|c|c|c|}\hline
  l         & 1  &  2 &  3 & 4  & 5  &  6 \\
  \hline
  R(l)      &  1 &  2 &  1 &  2 &  1 & 1  \\
  \hline
  \tau(l)   &0  &  0 & 1  & 1  & 2  &  3\\
  \hline
\end{array}
\]

Now we consider $L = 6$ and the possible choices for $R(7)$.  Consider the orbit of $L = 6$: we have $6 \mapsto 3 \mapsto 1
\mapsto 0$. Hence $R(7) \in \set{1,2, 3, 4}$. Of course $R(7)= 1$ ($\tau(7) =4$) is a valid choice. Since $\tau^3(6) = 0 \in E$,
we can choose $R(7) = 4$ and $\tau(7) = 0$. Now $3$ and $1$ are $\tau$-portals, so $R(7) = 2$ or $3$ are also legal choices
corresponding to $\tau(7) = 2$ or $\tau(7) = 1$ respectively.

\end{example}

In the above example, notice how we found valid choices for $R(L+1)$ by tracking the orbit of $L$ and looking for $\tau$-portals.

We give an example of defining a $\tau$-function on all integers, rather than just an initial segment. Branner and Hubbard showed
that there are uni-persistent polynomials where first return times of the persistent critical point are the Fibonacci numbers
\cite[Ex.\ 12.4]{BH92}.  We give a new proof of their result using Theorem \ref{thm: tau cond}. For an abstract Yoccoz return
function, we define the \emph{first return time} of $l \in \bZ$ by $N^1_{\tau}(l) = (l -\tau(l))/H$ (compare to Definition
\ref{defn: tau}).

\begin{example}
Let $(a_k)_{k \in \bN}$ denote the Fibonacci numbers: $a_0 = a_1 =1$  and $a_k = a_{k-1} + a_{k-2}$ for $k \geq 2$. Define
$(b_k)_{k \in \bN}$ by $b_0= 0 $ and $b_k = b_{k-1} + a_k$. We construct a $\tau$-function so that $\tau(b_k) = b_{k-1}$ for $k
\geq 1$. It follows that $N^1_{\tau}(b_k) = a_k$.

We have $b_1 = 1$, and $\tau(b_1) = 0 = b_0$ by Condition 1. Now $b_2 = 1+2 = 3$, and we want $\tau(b_2)= b_1 = 1$.  We define
$\tau(2) = 0 $ and $\tau(3) =1$ (see Example \ref{ex: tau of 2}).  Note that $\tau(2)= 2-a_2$ and $\tau(3) = 3 - a_2$.

Inductively assume that we have defined $\tau(l)$ for $l \leq b_K$ for some $K$ such that for any $l \in \bZ^+$, if $b_{k-1} < l
\leq b_k$ for some $k$ with $1 \leq k \leq K$, then $\tau(l) = l- a_k$. In particular, $\tau(b_k ) = b_{k-1}$ for $1 \leq k \leq
K$. It follows that $b_k$ is a $\tau$-portal for any $1 \leq k < K$. By the inductive hypothesis, $\tau(b_K) = b_{K-1} $ and
$b_{K-1}$ is a $\tau$-portal.  Hence $R(1+b_K) = 2$ is a valid choice, which gives $\tau(1+b_K) = 1+\tau^2(b_K) = 1+b_{K-2}$.
With this choice, $b_K$ is a $\tau$-portal. Using the Fibonacci relation and the definition of $b_k$, we find
\[
    \tau(1+b_K)= 1+b_{K-2} = 1 + b_K -a_{K+1}.
\]
We define $R(l) = 1$ for $1+b_K < l \leq b_{K+1}$. It is easy to check that $\tau(l ) = l  -a_{K+1}$ for $b_{K-1} < l \leq
b_{K+1}$.

\end{example}

We choose $R(1+b_K) = 2$ precisely because the Fibonacci numbers are defined by an order 2 recursion. Recall that for $r\in
\bZ^+$, the $r$-bonacci numbers are defined by the order $r$ recursion $a_k = a_{k-1} + a_{k-2} + \dots + a_{k-r} $ for $k \geq
r$ (we use the initial conditions  $a_{r-2} =a_{r-1} = 1$ and $a_k = 0$ for $k \leq r-3$). It is straightforward to generalize
the above example to give a $\tau$-function whose first return times are the $r$-bonacci numbers.  The main difference is that we
choose $R(1+b_K) = r$ for $K$ sufficiently large.

\section{Realization} \label{sect: Realization}

In this section, we prove Theorem \ref{main thm converse}.  That is, we show that any abstract Yoccoz return function with a
finite exceptional set $E$ can be realized by a uni-persistent polynomial (Theorem \ref{thm: realize tau}). There are two main
parts to the proof. In \S\ref{subsect: TwD from tau}, we construct an uni-persistent tree with dynamics that realizes $\tau$ as
the Yoccoz return function of its critical end (Proposition \ref{prop: tau TwD}). Every tree with dynamics is realizable by a
polynomial \cite{DM-MC-pp}, so this construction gives the desired polynomial. We construct the tree with dynamics level by
level.  Having constructed the tree with dynamics up to some level $L$, and realizing $\tau$ to length $L$, Lemma \ref{lem: ext}
(the \emph{Extension Lemma}) tells us we can extend the tree to level $L+1$ in such a way that we realize $\tau(L+1)$. The
remainder of the section (\S\ref{subsect: Proof of ext lem}) is the technical details of the proof the Extension Lemma. The key
point is to keep track of where in the tree there are portals.


In our construction, points in $E$ will be realized as type~III portals using escaping critical points. We need information about
the degree of the escape, which in turn tells us whether the portal is simple or compound (see Lemma \ref{lem: tau simple
portals}).

\begin{defn} \label{defn: esc(m)}
Let $\tau$ be an abstract Yoccoz return function with exceptional set $E$.  For $m \in E$, define $\esc(m)$ by
\begin{enumerate}
    \item $\esc(m)= 0 $ if  $\set{l: \ \tau^{R(l)}(l-1) = m }  = \emptyset$ and $m \geq 1$;
    \item $\esc(m)= 2 $ if there are distinct $k,l \in \bZ^+$ such that $k \notin E$, $\tau(k)=m$, $R(k+1)=2$ and
        $\tau^{R(l+1)-2}(l) = k$;
    \item $\esc(m)= 1 $ otherwise.
\end{enumerate}
Define $\esc(\tau) =  \sum_{m \in E} \esc(m) $.
\end{defn}

When we construct the tree with dynamics, $\esc(m)$ will correspond to the degree of the escape at $\mathsf{c}_m$ (see
Proposition \ref{prop: tau TwD}).

We restate Theorem \ref{main thm converse}.

\begin{thm} \label{thm: realize tau}
Let $\tau$ be an abstract Yoccoz return function for some finite set $E \subset \bN$ with $0 \in E$. For any integers $C \geq
\esc(\tau)$ and $D \geq 2$, there is a uni-persistent polynomial $f$ of degree $C+D$ with a disconnected Julia set such that
\begin{enumerate}
    \item the multiplicity the persistent critical point of $f$ is $D-1$;
    \item the Yoccoz return function of the persistent critical point of $f$ is $\tau$.
\end{enumerate}

\end{thm}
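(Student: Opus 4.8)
The plan is to reduce Theorem \ref{thm: realize tau} to a purely combinatorial construction followed by a known realization result. Since every tree with dynamics is realized by a polynomial \cite{DM-MC-pp}, it suffices to build, for each abstract Yoccoz return function $\tau$ (with $H$, finite $E \ni 0$) and each $C \geq \esc(\tau)$, $D \geq 2$, a uni-persistent tree with dynamics $(\tree, \F)$ of degree $C+D$ whose unique critical end $\boldsymbol{\mathsf{c}}$ has $\deg \boldsymbol{\mathsf{c}} = D$ (so the persistent critical point has multiplicity $D-1$) and whose Yoccoz return function is $\tau$. This is the content of the promised Proposition \ref{prop: tau TwD}, so the real work is its proof.

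First I would set up the critical end $\boldsymbol{\mathsf{c}} = (\mathsf{c}_l)$ and its degrees: put $\deg \mathsf{c}_l = C+D$ for $l \leq 0$, and $\deg \mathsf{c}_l = D$ for $l \geq 1$, with the entire ``escape'' of total weight $C$ distributed among the vertices $\mathsf{c}_m$ with $m \in E$ according to $\esc(m)$ — this is exactly why the quantity $\esc(\tau) = \sum_{m\in E}\esc(m)$ is the minimal admissible $C$, and any excess $C - \esc(\tau)$ can be absorbed as an additional escape at $\mathsf{c}_0$ (whose degree is already $C+D > D$), keeping (D1)--(D2) intact. Then I would build the rest of the tree level by level. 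Having constructed $\tree$ through level $L$ together with all the data ($\F$, $\deg$, the children of each vertex) so that the restriction of the return function realizes $\tau$ up to length $L$, the Extension Lemma (Lemma \ref{lem: ext}) produces level $L+1$: it tells us how to attach the children of each level-$L$ vertex, assign their degrees and their images under $\F$, so that $N^1_{\boldsymbol{\mathsf{c}}}(\mathsf{c}_{L+1}) = (L+1 - \tau(L+1))/H$, i.e.\ $R(L+1) = R(l)$ as prescribed by $\tau$. The bookkeeping needed is precisely: for each vertex at level $L$ whose first return time has been determined, track whether it is a $\boldsymbol{\mathsf{c}}$-portal and of which type (I, II, or III), because by the Main Lemma (Lemma \ref{main lem}) a child can only acquire a larger first return time by having its parent's $R$th return land on a portal, and Condition 3 of Theorem \ref{thm: tau cond} is exactly the condition guaranteeing such a portal is available; conversely the obstruction phenomenon of Lemma \ref{lem: tau simple portals} must be respected, which is why the simple/compound distinction (governed by $\esc(m)$) matters.

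Concretely the induction hypothesis I would carry is a statement of the form: ``$\tree$ is defined through level $L$; it satisfies (T1)--(T4) and (D1)--(D2) as far as they can be checked; $\F$ and $\deg$ are defined; $\Ret_{\boldsymbol{\mathsf{c}}}(\mathsf{c}_l) = \mathsf{c}_{\tau(l)}$ for $l \leq L$; every vertex $\mathsf{v}$ at level $\leq L$ with $N^1_{\boldsymbol{\mathsf{c}}}(\mathsf{v})$ determined is a portal iff the corresponding combinatorial condition on $\tau$ holds, with its type matching the degree configuration; and $\mathsf{c}_m$ for $m \in E$, $m \leq L$, is a type~III portal with escape degree $\esc(m)$, simple iff $\esc(m)=1$.'' The base case is the vertices of level $\leq 1$: Condition 1 forces $\tau(l)=l-H$ there, and Lemma \ref{lem: v_0 a type 3 portal} already makes $\mathsf{v}_0$ a type~III portal. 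The Extension Lemma is the inductive step. Passing to the limit gives $(\tree, \F)$ on all of $\tree$; (D3)--(D4) then follow from the remark after Definition \ref{defn: TwD}, and uni-persistence follows since finiteness of $E$ forces all escapes and splits to occur at finitely many vertices, so $\boldsymbol{\mathsf{c}}$ is the only critical end.

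The main obstacle — and the reason \S\ref{subsect: Proof of ext lem} is deferred — is the Extension Lemma itself. The difficulty is that attaching the children of $\mathsf{c}_L$ is constrained simultaneously by several requirements that interact: the Local Cover Property (D2) demands that the children of $\mathsf{c}_L$ mapping to a given child of $\F(\mathsf{c}_L) = \mathsf{c}_{L-H}$ have degrees summing to $\deg \mathsf{c}_L$; the prescribed value $R(L+1)$ dictates which child of $\mathsf{c}_{L-H}$ the vertex $\mathsf{c}_{L+1}$ must map to, namely one that is \emph{not} on the next step of the already-constructed return orbit unless $R(L+1)=1$ (this is where ``passing through a portal'' is forced); Monotonicity (D1) caps the total criticality of the children; and the non-$\boldsymbol{\mathsf{c}}$ children must be given first return times consistent with all previously established obstructions (Lemma \ref{lem: tau simple portals}) so that we do not accidentally create a second critical end or violate Condition 3 at a later stage. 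Verifying that Conditions 1--3 on $\tau$ provide exactly enough room to satisfy all of these at once — in particular that when $R(L+1) \geq 2$ the required portal genuinely exists in the tree built so far, with a spare non-$\boldsymbol{\mathsf{c}}$ child of the right degree to route $\mathsf{c}_{L+1}$ through — is the technical heart of the argument, and I would expect to spend the bulk of the proof there, organizing the case analysis by the type (I/II/III) of the portal $\Ret^{R(L+1)-1}_{\boldsymbol{\mathsf{c}}}(\mathsf{c}_L)$ that Condition 3 guarantees.
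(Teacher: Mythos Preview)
Your proposal is correct and follows essentially the same route as the paper: reduce to Proposition~\ref{prop: tau TwD} via the DeMarco--McMullen realization result, build the tree level by level using the Extension Lemma, and track portals to ensure the prescribed $R(L+1)$ can be achieved. Two small points worth tightening: (i) your degree specification ``$\deg \mathsf{c}_l = D$ for $l\geq 1$'' contradicts the distribution of escapes over $m\in E$ that you describe in the same sentence --- the paper instead takes a $\tau$-admissible sequence $D_l = D + \sum_{m\in E,\, m\geq l}\esc(m)$ (Definition~\ref{defn: D_l}), so the degree drops by $\esc(m)$ precisely at each $m\in E$, not all at once at level~$0$; (ii) the paper's case split in the Extension Lemma is organized by the value of $R(L+1)$ ($=1$, $=2$, $\geq 3$ compound, $\geq 3$ simple) rather than by portal type I/II/III, and in fact the constructed tree has no type~II portals at all --- all non-$\boldsymbol{\mathsf{c}}$ vertices are given degree~$1$, which also makes uni-persistence automatic and removes the need to worry about ``accidentally creating a second critical end.''
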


Note that the degree of $f$ above is at least $\esc(\tau)+2$.

\begin{cor} \label{cor: bi-crit realization of tau}
Let $\tau$ be an abstract Yoccoz return function with $E= \set{0}$ and $H=1$. Let $D \in \bN$ with $D \geq 2$.  Then there is a
bi-critical polynomial which realizes $\tau$.

\begin{enumerate}
  \item If there are distinct $k,l \in \bZ^+$ such that $\tau(k)=0$, $\tau(k+1)=0$ and $\tau(l+1) = 0$, then any polynomial
      realizing $\tau$ must have degree at least $D+2$. In particular, the minimal degree of a polynomial realizing $\tau$ is
      4.

  \item Otherwise $\tau$ can be realized by a polynomial of degree $D+1$.  In particular, $\tau$ can be realized by a cubic
      polynomial.
\end{enumerate}

\begin{proof}
Condition 1 above holds if and only if $\esc(0) =2$ by Definition \ref{defn: esc(m)}. The only statement that does not follow
immediately from Theorem \ref{thm: realize tau} is the degree of the polynomial in Case 1 must be at least $D+2$.  If the degree
were $D+1$, then in the tree with dynamics $\mathsf{v}_0=\mathsf{c}_0$ would be a simple $\boldsymbol{\mathsf{c}}$-portal. So
Corollary \ref{cor: Tab d} would apply, which would contradict the existence of $k$ and $l$.

\end{proof}

\end{cor}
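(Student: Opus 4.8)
The plan is to deduce the corollary from Theorem~\ref{thm: realize tau} once $\esc(\tau)$ has been computed, and to obtain the sharpness asserted in Case~1 from Corollary~\ref{cor: Tab d}.

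First I would compute $\esc(\tau)$. Since $E=\set{0}$ we have $\esc(\tau)=\esc(0)$, and clause~(1) of Definition~\ref{defn: esc(m)} is vacuous for $m=0$ (it requires $m\geq 1$), so $\esc(0)\in\set{1,2}$, with $\esc(0)=2$ exactly when clause~(2) holds. The key preliminary is to check that, when $H=1$, clause~(2) of Definition~\ref{defn: esc(m)} for $m=0$ is equivalent to Condition~1 of the present corollary. This uses $\Ret_{\boldsymbol{\mathsf{c}}}(\mathsf{c}_l)=\mathsf{c}_{\tau(l)}$, the value $\tau(0)=-1$, and the fact (Condition~1 of Theorem~\ref{thm: tau cond}) that the orbit $l\mapsto\tau(l)\mapsto\tau^2(l)\mapsto\cdots$ is strictly decreasing with $\tau(j)=-1$ only for $j=0$: for $k\in\bZ^+$, ``$\tau(k)=0$ and $R(k+1)=2$'' is the same as ``$\tau(k)=0$ and $\tau(k+1)=0$'' (since $\tau(k+1)=\tau^{R(k+1)}(k)+1$ and the orbit $k\mapsto 0\mapsto -1\mapsto\cdots$ decreases strictly), and for such a $k$ the equation $\tau^{R(l+1)-2}(l)=k$ forces $\tau(l+1)=0$, while conversely $\tau(l+1)=0$ puts the element of the $\tau$-orbit of $l$ lying two steps before $-1$ among the $\tau$-preimages of $0$. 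Hence $\esc(0)=2$ if and only if Condition~1 holds, and $\esc(0)=1$ otherwise.

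Next I would apply Theorem~\ref{thm: realize tau} with $C=\esc(\tau)=\esc(0)$ and the given $D\geq 2$: this yields a uni-persistent polynomial $f$ of degree $\esc(0)+D$ with disconnected Julia set, whose persistent critical point has multiplicity $D-1$ and Yoccoz return function $\tau$. By the construction underlying that theorem (Proposition~\ref{prop: tau TwD}), the tree with dynamics of $f$ has exactly one escape, located at $\mathsf{c}_0=\mathsf{v}_0$ and of degree $\esc(0)$; it is therefore a bi-critical tree with dynamics, and $f$ is correspondingly a bi-critical polynomial. If $\esc(0)=1$ (Condition~1 fails) this is a realization in degree $D+1$, in particular a cubic for $D=2$; if $\esc(0)=2$ (Condition~1 holds) it is a realization in degree $D+2$, of degree $4$ when $D=2$. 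For the minimality in Case~1, suppose Condition~1 holds and that some polynomial $g$ realizes $\tau$ with persistent critical point of multiplicity $D-1$ and degree $D+1$; then its tree with dynamics is bi-critical (Lemma~\ref{lem: deg fnc of uni-per}) with $\deg\mathsf{v}_0=D+1=\deg\mathsf{c}_1+1$, so $\mathsf{c}_0$ is a \emph{simple} $\boldsymbol{\mathsf{c}}$-portal by Lemma~\ref{lem: bi-crit c0 simple iff simpe esc}. Using the equivalence above, re-express Condition~1 in the form of clause~(2): there are distinct $k,l\in\bZ^+$ with $\tau(k)=0$, $R(k+1)=2$ and $\tau^{R(l+1)-2}(l)=k$ (hence $R(l+1)\geq 3$ by distinctness), i.e.\ $\Ret_{\boldsymbol{\mathsf{c}}}(\mathsf{c}_k)=\Ret_{\boldsymbol{\mathsf{c}}}(\mathsf{c}_{k+1})=\mathsf{c}_0$, $\Ret^{R(l+1)-2}_{\boldsymbol{\mathsf{c}}}(\mathsf{c}_l)=\mathsf{c}_k$, and $N^1_{\boldsymbol{\mathsf{c}}}(\mathsf{c}_{l+1})=N^{R(l+1)}_{\boldsymbol{\mathsf{c}}}(\mathsf{c}_l)>N^{R(l+1)-2}_{\boldsymbol{\mathsf{c}}}(\mathsf{c}_l)$. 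Corollary~\ref{cor: Tab d}, applied with its ``$l$'' equal to our $k$ and with $\mathsf{v}=\mathsf{c}_l$, $\mathsf{v}^{\child}=\mathsf{c}_{l+1}$, $S=R(l+1)-2$, then forces $\Ret_{\boldsymbol{\mathsf{c}}}(\mathsf{c}_{l+1})=\mathsf{c}_1$; but $\tau(l+1)=0$ gives $\Ret_{\boldsymbol{\mathsf{c}}}(\mathsf{c}_{l+1})=\mathsf{c}_0\neq\mathsf{c}_1$, a contradiction. So $g$ has degree at least $D+2$, and for $D=2$ the minimal degree of a polynomial realizing $\tau$ is $4$.

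The step I expect to be the main obstacle is the first one: verifying that clause~(2) of Definition~\ref{defn: esc(m)}, phrased through $R$ and iterated $\tau$, genuinely collapses (for $H=1$) to the plain three-equation Condition~1, and then making sure that the witnesses $k,l$ coming out of that reformulation fit exactly the hypotheses of Corollary~\ref{cor: Tab d} --- in particular that $\mathsf{c}_{l+1}$ has not yet returned to $\boldsymbol{\mathsf{c}}$ at the $(R(l+1)-2)$-nd return of $\mathsf{c}_l$, which is what makes the obstruction of $\mathsf{c}_0$ applicable. The existence half, via Theorem~\ref{thm: realize tau}, and the remaining degree arithmetic are routine by comparison.
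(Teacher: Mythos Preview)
Your proposal is correct and follows essentially the same approach as the paper's proof: reduce to computing $\esc(0)$ via Definition~\ref{defn: esc(m)}, invoke Theorem~\ref{thm: realize tau} for existence in the appropriate degree, and use the simple-portal obstruction (Corollary~\ref{cor: Tab d}, via Lemma~\ref{lem: bi-crit c0 simple iff simpe esc}) to rule out degree $D+1$ in Case~1. You supply considerably more detail than the paper's three-sentence argument --- in particular you unpack the equivalence between Condition~1 and $\esc(0)=2$, and you make explicit the choice of $\mathsf{v}=\mathsf{c}_l$, $S=R(l+1)-2$ when applying Corollary~\ref{cor: Tab d} --- whereas the paper simply asserts the equivalence ``by Definition~\ref{defn: esc(m)}'' and says Corollary~\ref{cor: Tab d} ``would contradict the existence of $k$ and $l$.''
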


In the above corollary, Condition 1 is the negation of Kiwi's fourth tableau axiom.  Hence tableaux which do not satisfy the
fourth axiom are covered by Condition 1 and those that do are covered by Condition 2.

\subsection{A Tree with Dynamics from $\tau$} \label{subsect: TwD from tau}

In order to construct a tree with dynamics from $\tau$, we need a sequence $(D_l)$ which gives the degree of each critical
vertex.

\begin{defn}\label{defn: tau admissible}
Let $\tau$ be an abstract Yoccoz return function for some finite set $E \subset \bN$ with $0 \in E$. We say a sequence of
positive integers $(D_l)_{l \in \bZ}$ is $\tau$ \emph{admissible} if
\begin{enumerate}
    \item $D_l = D_0$ for each $l \leq 0$;
    \item $D_l - D_{l+1} \geq \esc(l)$ for each $l \geq 0$;
    \item $\min D_l \geq 2$.
\end{enumerate}
\end{defn}

It is easy to define a $\tau$-admissible sequence.

\begin{defn} \label{defn: D_l}
Let $\tau$ be an abstract Yoccoz return function for some finite set $E \subset \bN$ with $0 \in E$. Fix $D \in \bN$ with $D \geq
2$. For each $l \in \bZ$, define
\[
    D_l =  D + \sum_{\set{m \in E: \ m \geq l}} \esc(m) .
\]
\end{defn}

The following properties of $(D_l)$ follow from its definition.

\begin{lem}
Let $\tau$ be an abstract Yoccoz return function for some finite set $E \subset \bN$ with $0 \in E$. Fix $D \in \bZ$ with $D \geq
2$. If $(D_l)$ is the sequence defined in Definition \ref{defn: D_l}, then the following conditions hold:
\begin{enumerate}
    \item $D_l = D +  \esc(\tau)$ for each $l \leq 0$;
    \item $D_l - D_{l+1} = \esc(l)$ for each $l \geq 0$;
    \item $D_l =D$ for each $l > \max E$.
\end{enumerate}

In particular, $(D_l)$ is $\tau$ admissible.

\end{lem}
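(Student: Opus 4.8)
The plan is to read off all three items, and then $\tau$-admissibility, directly from the closed form in Definition~\ref{defn: D_l}, using only that $E \subset \bN$ is finite and that $\esc(m) \in \{0,1,2\}$ for each $m \in E$ by Definition~\ref{defn: esc(m)} (together with the convention, implicit in Definition~\ref{defn: tau admissible}(2), that $\esc(m) = 0$ when $m \notin E$). Nothing here uses dynamics; it is pure bookkeeping with the finite sum $\sum_{\{m \in E:\ m \geq l\}} \esc(m)$.

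First I would dispose of the two boundary cases. For $l \leq 0$, every $m \in E$ satisfies $m \geq 0 \geq l$, so $\{m \in E:\ m \geq l\} = E$ and hence $D_l = D + \sum_{m \in E} \esc(m) = D + \esc(\tau)$ by the definition of $\esc(\tau)$; this is item (1). For $l > \max E$, no $m \in E$ satisfies $m \geq l$, so the indexing set is empty and $D_l = D$; this is item (3).

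For item (2) I would fix $l \geq 0$ and subtract. Since $\{m \in E:\ m \geq l+1\} \subset \{m \in E:\ m \geq l\}$ and these two sets differ at most in the single element $l$, the difference of the corresponding sums is $\sum_{\{m \in E:\ m = l\}} \esc(m)$, which equals $\esc(l)$ when $l \in E$ and equals $0 = \esc(l)$ otherwise; hence $D_l - D_{l+1} = \esc(l)$. Finally, $\tau$-admissibility follows at once: item (1) gives $D_l = D_0$ for all $l \leq 0$, which is Definition~\ref{defn: tau admissible}(1); item (2) gives $D_l - D_{l+1} = \esc(l) \geq \esc(l)$ for $l \geq 0$, which is Definition~\ref{defn: tau admissible}(2); and since every $\esc(m) \geq 0$, each $D_l$ is an integer with $D_l \geq D \geq 2$, which is Definition~\ref{defn: tau admissible}(3). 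The only step that needs a moment's care is the telescoping in item (2) together with the convention that $\esc$ vanishes off $E$; I do not expect any genuine obstacle.
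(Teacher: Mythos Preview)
Your proof is correct and is exactly the routine verification the paper has in mind. The paper itself does not spell out a proof of this lemma; it simply states that ``the following properties of $(D_l)$ follow from its definition,'' and your argument supplies precisely that computation, including the one point requiring comment, namely the implicit convention that $\esc(l)=0$ for $l\notin E$ so that both item~(2) and Definition~\ref{defn: tau admissible}(2) make sense for all $l\geq 0$.
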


\begin{cor}
The sequence $(D_l)$ from Definition \ref{defn: D_l} with $D=2$ is the minimal $\tau$-admissible sequence in the sense that if
$(D'_l)$ is any $\tau$-admissible sequence, then $D_l \leq D'_l$ for each $l\in \bZ$. In this sequence, $D_0= \esc(\tau)$ +2.
\end{cor}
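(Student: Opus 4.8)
The plan is to establish the inequality $D_l \leq D'_l$ by a downward induction on $l$, exploiting the finiteness of $E$. First I would note that, since $E$ is finite, the preceding lemma gives $D_l = 2$ for every $l > \max E$, while any $\tau$-admissible sequence $(D'_l)$ satisfies $D'_l \geq 2$ by Condition~3 of Definition~\ref{defn: tau admissible}. This yields $D_l \leq D'_l$ for all $l > \max E$ and serves as the base of the induction.

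For the inductive step I would fix $l \geq 0$ and assume $D_{l+1} \leq D'_{l+1}$. The lemma gives $D_l - D_{l+1} = \esc(l)$, hence $D_l = D_{l+1} + \esc(l)$, whereas Condition~2 of $\tau$-admissibility gives $D'_l \geq D'_{l+1} + \esc(l)$; combining these with the inductive hypothesis produces $D'_l \geq D'_{l+1} + \esc(l) \geq D_{l+1} + \esc(l) = D_l$. Iterating down to $l = 0$ handles all $l \geq 0$. For $l \leq 0$, Condition~1 of $\tau$-admissibility gives $D'_l = D'_0$, and the lemma gives that $(D_l)$ is constant on $\set{l \leq 0}$, so $D_l = D_0 \leq D'_0 = D'_l$. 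Finally, since $E \subset \bN$ every $m \in E$ is nonnegative, so $D_0 = 2 + \sum_{\set{m \in E :\ m \geq 0}} \esc(m) = 2 + \sum_{m \in E} \esc(m) = 2 + \esc(\tau)$ by Definition~\ref{defn: esc(m)}.

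The argument is a routine telescoping comparison, so I do not expect a substantial obstacle. The one point needing care is the direction of the induction: one cannot induct upward starting from $l = 0$, because the datum that pins down the minimal sequence is the lower bound $\min D_l \geq 2$, which only constrains the sequence at large $l$; the recursion $D_l - D_{l+1} = \esc(l)$ then forces all earlier values. One must also keep in mind that Condition~2 only applies for $l \geq 0$, which is precisely the range over which the downward induction runs before Condition~1 takes over for $l \leq 0$.
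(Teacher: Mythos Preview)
Your argument is correct. The paper states this corollary without proof, treating it as immediate from the preceding lemma; your downward induction from $l > \max E$ (where $D_l = 2 \leq D'_l$) using $D_l - D_{l+1} = \esc(l) \leq D'_l - D'_{l+1}$, followed by the constancy on $l \leq 0$, is exactly the routine verification the author had in mind, and your computation of $D_0$ is correct.
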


We can realize a $\tau$-function by a uni-critical tree with dynamics, and a $\tau$-admissible sequence tells us the degree of
each vertex of the critical end.

\begin{prop} \label{prop: tau TwD}
Let $\tau$ be an abstract Yoccoz return function for some finite set $E \subset \bN$ with $0 \in E$. If $(D_l)_{l \in \bZ}$ is
$\tau$ admissible, then there is a tree with dynamics with a unique critical end $\boldsymbol{\mathsf{c}} = (\mathsf{c}_l)_{l \in
\bZ}$ such that the Yoccoz return function of $\boldsymbol{\mathsf{c}}$ is $\tau$ and $\deg \mathsf{c}_l = D_l$ for each $l \in
\bZ$.
\end{prop}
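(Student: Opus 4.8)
The plan is to construct the tree with dynamics $(\tree,\F)$ level by level, together with the critical end $\boldsymbol{\mathsf{c}}=(\mathsf{c}_l)$, so that at each stage the partial object satisfies the tree axioms (T1)--(T4) and dynamics axioms (D1)--(D2), and so that the first return times along $\boldsymbol{\mathsf{c}}$ agree with $N^1_\tau(l)=(l-\tau(l))/H$. First I would set up the top of the tree: put $\mathsf{c}_l=\mathsf{v}_l$ with $\deg\mathsf{c}_l=D_0$ for $l\le 0$, give $\mathsf{v}_0$ enough children so that $\deg\mathsf{v}_0-1\ge\sum(\deg\mathsf{v}_0^{\child}-1)$ with one critical child $\mathsf{c}_1$ of degree $D_1$ and the remaining escape of total degree $\esc(0)=D_0-D_1$ realized by non-critical children (this is where $0\in E$ is used, and Condition 1 giving $\tau(l)=l-H$ for $l\le H$ is automatic since $\tree_{l-H}$ is a single vertex). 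The dynamics $\F$ on levels $\le H$ is forced by (T4): $\F(\mathsf{v})=\mathsf{v}_{l-H}$.

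The inductive step is the heart, and it is exactly the \emph{Extension Lemma} (Lemma~\ref{lem: ext}) referenced in the excerpt: assuming $(\tree,\F)$ has been built through level $L$, realizing $\tau$ to length $L$, with the portal structure under control, I extend to level $L+1$. Concretely, for each vertex $\mathsf{w}\in\tree_{L+1-H}$ and each preimage structure demanded by the local cover property (D2), I must add children to the vertices of $\tree_{L+1}$'s parents; the delicate case is the vertex $\mathsf{c}_L$, whose new child $\mathsf{c}_{L+1}$ must have first return time $N^1_\tau(L+1)=R(L+1)$-many returns of $\mathsf{c}_L$, i.e.\ $\F^{N^1_\tau(L+1)}(\mathsf{c}_{L+1})=\mathsf{c}_{\tau(L+1)}$ and no earlier return. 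Condition 2 ($\tau(L+1)=\tau^{R}(L)+1$) guarantees $\mathsf{c}_{\tau(L+1)}$ is a child of $\Ret^{R-1}_{\boldsymbol{\mathsf{c}}}(\mathsf{c}_L)=\mathsf{c}_k$; I must route the end through the first $R-1$ returns of $\mathsf{c}_L$ and then map $\mathsf{c}_{L+1}$ onto the appropriate child of $\mathsf{c}_k$. By the Main Lemma (Lemma~\ref{main lem}) this is only possible if $\mathsf{c}_k$ is a $\boldsymbol{\mathsf{c}}$-portal, which is precisely what Condition 3 supplies: either $k$ is a $\tau$-portal (so $\mathsf{c}_k\in\textup{Port}_{\textup{I}}(\boldsymbol{\mathsf{c}})$, providing a non-critical child with the right return time by Lemma~\ref{lem: catagorize uni-per I portals} and Lemma~\ref{lem: F^N a D-fold cover}) or $k\in E$ (so I have arranged $\mathsf{c}_k$ to be a type~III portal via an escape of degree $\esc(k)$, which $\tau$-admissibility of $(D_l)$ makes room for). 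The degree bookkeeping---that $\deg\mathsf{c}_l=D_l$ is maintained, that (D1) holds (which needs $D_l-D_{l+1}\ge\esc(l)$, condition 2 of admissibility), and that (D2) holds at every newly completed vertex---is the routine but lengthy part deferred in the paper to \S\ref{subsect: Proof of ext lem}.

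The main obstacle I anticipate is precisely the consistency of the portal structure under iteration: when I add the child $\mathsf{c}_{L+1}$ and route it through $R-1$ intermediate returns, I must ensure those intermediate vertices $\F^n(\mathsf{c}_{L+1})$ are genuinely non-critical where required (so that the $D$-fold cover counting of Lemma~\ref{lem: F^N a D-fold cover} applies and no spurious earlier return is created), and that I do not accidentally make some $\mathsf{c}_j$ with $j<L$ a portal that $\tau$ forbids (i.e.\ respect the obstruction phenomenon of Lemma~\ref{lem: tau simple portals} / Corollary~\ref{cor: Tab d}). This requires carrying a strong enough inductive hypothesis: not just ``$\tau$ realized to length $L$'' but a full description of which vertices of $\bigcup_{l\le L}\tree_l$ are portals, of what type, simple or compound, and with how many available non-critical children of each return time. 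With that hypothesis in hand, Conditions 1--3 of Theorem~\ref{thm: tau cond} are exactly the combinatorial input needed to verify the hypothesis persists to level $L+1$; the finiteness of $E$ ensures only finitely many escapes are ever needed, so the degrees stabilize and the resulting $(\tree,\F)$ has $\boldsymbol{\mathsf{c}}$ as its unique critical end. Passing from the partial trees to the limit tree is immediate since each level is fixed once constructed, and the attached set $\set{\mathsf{v}_{-l}}_{l=1}^{\infty}$ is adjoined at the end purely to satisfy (T4).
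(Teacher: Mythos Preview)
Your proposal is correct and follows essentially the same approach as the paper: construct the tree level by level, with the non-positive levels and level~1 built explicitly, then apply the Extension Lemma (Lemma~\ref{lem: ext}) inductively, carrying as inductive hypothesis precisely the portal data (type~I from $\tau$-portals, type~III from $E$ via escapes, simple versus compound) that you flag as the main obstacle. One minor slip: $\esc(0)$ is by definition in $\{1,2\}$ (Definition~\ref{defn: esc(m)}), not equal to $D_0-D_1$; $\tau$-admissibility only gives $D_0-D_1\ge\esc(0)$, and the paper gives $\mathsf{v}_0$ exactly $D_0-D_1$ non-critical children.
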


Before we prove Proposition \ref{prop: tau TwD}, we need some technical results. When constructing a tree with dynamics, we
construct a sequence of finite trees with dynamics.
\begin{defn}\label{defn: tree of finite length}
For $L \in \bZ^+$, a tree of \emph{length} $L$ with dynamics is a tree with levels $\set{\tree_l}_{l \leq L}$, which satisfies
all tree axioms (Definition \ref{defn: Tree Axioms} and \ref{defn: TwD}), except that the vertices of $\tree_L$ have no children.
We also refer to such a tree as a tree of \emph{finite length}.
\end{defn}

Most concepts associated an infinite tree with dynamics make sense for a finite tree, and we will use them without comment. One
concept that we need to make explicit is a finite analogue of an end.

\begin{defn}
Let $(\tree, \F)$ be a tree with dynamics (possibly of length $L$).  A \emph{branch} is a set of vertices
$\boldsymbol{\mathsf{x}} = \set{\mathsf{x}_l}_{l = 0}^L$ where $L \in \bZ^+$ and $\mathsf{x}_l \in \tree_l$ and $\mathsf{x}_{l-1}
= \mathsf{x}_l^{\parent}$ for all $0 < l \ (\leq L)$.
\end{defn}

First return maps and related concepts have obvious generalizations to branches.

\begin{defn}
Given two trees with dynamics: $(\tree,\F)$ with degree function $\deg$ of length $L$ and $(\tree',\F')$ with degree function $
\deg '$ of length $L' > L$, we say that $(\tree', \F')$ is an \emph{extension} of $(\tree, \F)$ if $\tree_l = \tree_{l}'$ for $l
= 0, \dots, L$, $\F'|\tree = \F$ and $\deg\mathsf{v} = \deg' \mathsf{v} $ for all $\mathsf{v} \in \tree$.
\end{defn}

We will always construct extensions where we add one level to a tree. Given a tree with dynamics of length $L$ that realizes
$\tau(1), \dots , \tau(L)$, the key point is to extend in such a way that we realize $\tau(L+1)$. The following lemma gives
conditions when we can do so.

\begin{lem}[Extension Lemma] \label{lem: ext}
Let $\tau$ be an abstract Yoccoz return function for some finite set $E \subset \bN$ with $0 \in E$. Let $(D_l)_{l \in \bZ}$ be
$\tau$ admissible.  Let $(\tree,\F)$ be a tree with dynamics of length $L$.  Suppose that there is a critical branch
$\boldsymbol{\mathsf{c}}=\set{\mathsf{c}_0, \dots , \mathsf{c}_L}$ such that $\deg \mathsf{c}_l = D_l$, and
$\Ret_{\boldsymbol{\mathsf{c}}}(\mathsf{c}_l) = \mathsf{c}_{\tau(l)}$ for $0 \leq l \leq L$.  Also suppose that $\deg \mathsf{v}
= 1$ for $ \mathsf{v} \notin \boldsymbol{\mathsf{c}}$. Then there exists $(\tree',\F')$, an extension of $(\tree,\F)$ of length
$L+1$ with $\mathsf{c}_{L+1} \in \tree_{L+1}'$ such that
\begin{enumerate}
    \item $\mathsf{c}_{L+1}$ is a child of $\mathsf{c}_{L}$,
    \item $\deg \mathsf{c}_{L+1} = D_{L+1}$,
    \item $\Ret_{\boldsymbol{\mathsf{c}}}(\mathsf{c}_{L+1}) = \mathsf{c}_{\tau(L+1)} $.
\end{enumerate}

\end{lem}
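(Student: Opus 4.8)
The plan is to build $\tree_{L+1}'$ one critical vertex plus a controlled collection of non-critical vertices, dictated by the dynamics that $\F'$ must support. First I would handle the base of the recursion: if $L+1 \le H$, then by axiom (T4) the level $\tree_{L+1}$ should consist of the single vertex $\mathsf{c}_{L+1}$ (which equals $\mathsf{v}_{L+1-H}$ when $L+1-H \le 0$), with $\deg\mathsf{c}_{L+1}=D_{L+1}=D_0$, and $\Ret_{\boldsymbol{\mathsf{c}}}(\mathsf{c}_{L+1})=\mathsf{c}_{L+1-H}=\mathsf{c}_{\tau(L+1)}$ by Lemma~\ref{lem: tau(l) = l-H for l leq H}. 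So assume $L+1 > H$. The children of a vertex $\mathsf{w}\in\tree_L$ must be chosen so that $\F'$ remains children-preserving: each child of $\mathsf{w}$ must map under $\F'$ to some child of $\F(\mathsf{w})\in\tree_{L-H}$, and the Local Cover Property (D2) together with Monotonicity (D1) must hold at $\mathsf{w}$. For a non-critical $\mathsf{w}$, this forces exactly one child per child of $\F(\mathsf{w})$, each of degree $1$; for $\mathsf{w}=\mathsf{c}_L$ we must distribute $\deg\mathsf{c}_L=D_L$ (counted with degree) over the children of $\F(\mathsf{c}_L)=\mathsf{c}_{L-H}$, one of which is $\mathsf{c}_{L+1-H}$. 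This is where we insert the distinguished child $\mathsf{c}_{L+1}$.

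The key step is the choice of where $\mathsf{c}_{L+1}$ maps, i.e. which child of $\mathsf{c}_{L-H}$ is $\F'(\mathsf{c}_{L+1})$, and what its degree is. Write $\tau(L+1)=\tau^{R}(L)+1$ with $R=R(L+1)\ge 1$ as supplied by Condition~2; equivalently $\Ret^{R}_{\boldsymbol{\mathsf{c}}}$ should send $\mathsf{c}_L$ to $\mathsf{c}_{\tau(L+1)-1}=\mathsf{c}_{\tau^R(L)}$, and we want $\F'(\mathsf{c}_{L+1})=\mathsf{c}_{\tau(L+1)}$, a child of $\mathsf{c}_{\tau^R(L)}$. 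If $R=1$ we are simply following the critical end: set $\F'(\mathsf{c}_{L+1})=\mathsf{c}_{\tau(L)+1}$, which is the child of $\mathsf{c}_{\tau(L)}=\Ret_{\boldsymbol{\mathsf{c}}}(\mathsf{c}_L)$ lying in $\boldsymbol{\mathsf{c}}$, and give $\mathsf{c}_{L+1}$ degree $D_{L+1}$; then $N^1_{\boldsymbol{\mathsf{c}}}(\mathsf{c}_{L+1})=N^1_{\boldsymbol{\mathsf{c}}}(\mathsf{c}_L)$ and the return statement is immediate. If $R\ge 2$, then by Condition~3 either (3a) $\tau(\tau^{R-1}(L)+1)\le\tau^R(L)$, or (3b) $\tau^{R-1}(L)\in E$. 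In case (3b), $\mathsf{c}_{\tau^{R-1}(L)}$ is being built (inductively) as a type~III portal carrying an escape of degree $\esc(\tau^{R-1}(L))\ge 1$, i.e. it has a non-critical child not in $\boldsymbol{\mathsf{c}}$ with the same first return time; $\mathsf{c}_{L+1}$'s orbit should be routed through that portal, reaching $\mathsf{c}_{\tau^R(L)}$ and then landing on $\mathsf{c}_{\tau(L+1)}$. In case (3a), the inequality $\tau(\tau^{R-1}(L)+1)\le\tau^R(L)$ says precisely that $\mathsf{c}_{\tau^{R-1}(L)}$ is a type~I or type~II $\boldsymbol{\mathsf{c}}$-portal (the return time strictly increases across it, by Lemma~\ref{lem: tau-portal equivalences}, or a critical vertex off $\boldsymbol{\mathsf{c}}$ is hit), so again a suitable non-$\boldsymbol{\mathsf{c}}$ child is available to route $\mathsf{c}_{L+1}$ through. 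In all cases I would set $\deg\mathsf{c}_{L+1}=D_{L+1}$, fill in the remaining children of $\mathsf{c}_L$ (with degrees summing, by (D2), to the required total over each child of $\mathsf{c}_{L-H}$, all of them non-critical since $D_L-D_{L+1}=\esc(L)$ leaves no room for a second critical child by Monotonicity), and fill in children of every non-critical $\mathsf{w}\in\tree_L$ in the unique degree-$1$ way.

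Then I would verify the three asserted properties and, crucially, re-verify all the axioms at level $L$: (T1)–(T3) are automatic from the construction, (D1) at $\mathsf{c}_L$ reduces to $D_L-1\ge (D_{L+1}-1)$ since all other children are non-critical, which holds as $D_{L+1}\le D_L$; (D1) at non-critical $\mathsf{w}$ is trivial; (D2) holds by construction at every vertex of $\tree_L$; (D3)–(D4) follow from the remark that they are consequences of (D1)–(D2). Property~(1) is immediate; property~(2) is the choice $\deg\mathsf{c}_{L+1}=D_{L+1}$; property~(3) follows from tracking $N^1_{\boldsymbol{\mathsf{c}}}(\mathsf{c}_{L+1})$: in the $R=1$ case it equals $N^1_{\boldsymbol{\mathsf{c}}}(\mathsf{c}_L)$, and in the $R\ge 2$ case Lemma~\ref{lem: ret times add} and the portal structure give $N^1_{\boldsymbol{\mathsf{c}}}(\mathsf{c}_{L+1})=N^R_{\boldsymbol{\mathsf{c}}}(\mathsf{c}_L)$, hence $\Ret_{\boldsymbol{\mathsf{c}}}(\mathsf{c}_{L+1})=\mathsf{c}_{\tau^R(L)+1}=\mathsf{c}_{\tau(L+1)}$. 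I expect the main obstacle to be the bookkeeping in case (3a)/(3b): one must know that the earlier levels were built so that $\mathsf{c}_{\tau^{R-1}(L)}$ genuinely has a non-$\boldsymbol{\mathsf{c}}$ child with the right first-return time available as a "passage", and that routing $\mathsf{c}_{L+1}$ through it does not violate (D2) or the Local Cover Property at intermediate vertices — this is where the full technical argument (deferred to \S\ref{subsect: Proof of ext lem}) lives, and it requires carrying a stronger inductive hypothesis about the portal structure of the finite tree than the Lemma's statement literally records.
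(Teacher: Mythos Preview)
Your overall architecture matches the paper's: case on $R=R(L+1)$, handle $R=1$ by following the critical branch, and for $R\ge 2$ route $\mathsf{c}_{L+1}$ through a non-$\boldsymbol{\mathsf{c}}$ child of the portal $\mathsf{c}_m$ with $m=\tau^{R-1}(L)$. However, there is a genuine gap in your treatment of $R\ge 3$, and it is precisely the point where the paper's machinery (Definition~\ref{defn: esc(m)}, $\tau$-admissibility, Lemma~\ref{lem: 2c of FN(v) hit by 2 ncc of v}, and Sublemmas~\ref{sublem: R geq 3 and compound => ext}--\ref{sublem: R geq 3 and simple => Ext}) earns its keep.

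The issue is this: for $R\ge 3$ you cannot choose the entire orbit of $\mathsf{c}_{L+1}$; you can only choose $\F'(\mathsf{c}_{L+1})$, a child of $\F(\mathsf{c}_L)$ at level $L+1-H$, and the rest of the orbit is then determined by the already-built tree. So it is not enough that $\mathsf{c}_m$ has \emph{some} non-$\boldsymbol{\mathsf{c}}$ child with the correct first return time. You need the intermediate vertex $\mathsf{c}_k$ with $k=\tau^{R-2}(L)$ (and ultimately $\mathsf{c}_{\tau(L)}$) to have a \emph{non-critical} child whose orbit reaches that portal child. When $\mathsf{c}_m$ is a \emph{simple} portal this can fail: if $R(k+1)=2$ then $\mathsf{c}_{k+1}$ already occupies the unique passage through $\mathsf{c}_m$, and by the covering count every non-critical child of $\mathsf{c}_k$ is forced elsewhere (this is exactly the obstruction phenomenon of Lemma~\ref{lem: Returns to simple portals}). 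The paper resolves this by the simple/compound dichotomy: in case (3a) the portal is type~I and hence compound (Corollary~\ref{cor: classify type I c-portals}), and Lemma~\ref{lem: 2c of FN(v) hit by 2 ncc of v} then guarantees two non-critical pullbacks; in case (3b) with $\mathsf{c}_m$ simple, one must have $\esc(m)=1$, and then Definition~\ref{defn: esc(m)}.2 rules out $R(k+1)=2$, so $\mathsf{c}_k$ is unobstructed. Your proposal never invokes $\esc$ or $\tau$-admissibility, and without them the construction can genuinely get stuck at $R\ge 3$. The ``stronger inductive hypothesis'' you anticipate is real (the paper carries it in the proof of Proposition~\ref{prop: tau TwD}: each $\mathsf{c}_m$ with $\esc(m)=2$ is built as a compound portal), but identifying and using the simple/compound split is the missing idea, not just bookkeeping.
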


The proof of Lemma \ref{lem: ext} is long and technical.  Assuming Lemma \ref{lem: ext} is true for the moment, we prove
Proposition \ref{prop: tau TwD}.

\begin{proof}[Proof of Proposition \ref{prop: tau TwD}.]

We will construct the first few levels of the tree explicitly and then start an inductive procedure.  The tree will satisfy the
following conditions at each stage of the construction: There is a critical branch $\boldsymbol{\mathsf{c}} = \set{\mathsf{c}_0,
\dots \mathsf{c}_L}$, $\deg \mathsf{c}_l = D_l$ for $1 \leq l \leq L$, $\Ret_{\boldsymbol{\mathsf{c}}}(\mathsf{c}_l) =
\mathsf{c}_{\tau(l)}$, and $\deg \mathsf{v} = 1$ for all $ \mathsf{v} \in \tree \minus \boldsymbol{\mathsf{c}}$. Also assume that
for each $m \in E$ with $\esc(m) \geq 1$, $\mathsf{c}_{m} \in \textup{Port}_{III}({\boldsymbol{\mathsf{c}}})$ and if $\esc(m) =
2$, then $\mathsf{c}_{m}$ is a compound $\boldsymbol{\mathsf{c}}$-portal.

Define $\tree_l = \set{\mathsf{v}_l = \mathsf{c}_l}$ with $\F(\mathsf{v}_l) = \mathsf{v}_{l-H}$ and $\deg \mathsf{v}_l = D_l$ for
each $ l \leq 0$.

We have $D_0 - D_1 \geq \esc(0) \geq 1$. Let $k = D_0 - D_1$ and define $\tree_1 = \set{\mathsf{c}_1, \mathsf{v}_0^{\child_1},
\dots, \mathsf{v}_0^{\child_{k}}}$ where $\deg \mathsf{c}_1 = D_1$, $\deg \mathsf{v}_0^{\child_i} = 1$ ($1 \leq i \leq k$), and
$F(\mathsf{v}) = \mathsf{v}_{1-H}$ for every $ \mathsf{v} \in \tree_1$.  See figure \ref{fig: top of tree}. Now $\mathsf{c}_0$
has at least one non-critical child, $\mathsf{v}_0^{\child_1}$, such that $N_1(\mathsf{v}_0^{\child_1}) = 1 =
N^1_{\boldsymbol{\mathsf{c}}}(\mathsf{c}_0)$. Thus $\mathsf{c}_0 \in \textup{Port}_{III}({\boldsymbol{\mathsf{c}}})$. By
Condition 1, $\tau(1)=1-H$. So we have $\Ret_{\boldsymbol{\mathsf{c}}}(\mathsf{c}_1) = \mathsf{c}_0 = \mathsf{c}_{\tau(1)} $.
Also if $\esc(0) = 2$, then $\mathsf{c}_0$ has at least two distinct non-critical children with $N_1(\mathsf{v}_0^{\child_i}) = 1
= N^1_{\boldsymbol{\mathsf{c}}}(\mathsf{c}_0)$ so it is a compound $\boldsymbol{\mathsf{c}}$-portal. We satisfy all hypotheses
stated above, so we can start the inductive process.

Suppose that we have constructed $\tree_1, \dots, \tree_L$ satisfying the above hypotheses for some $L \geq 2$.  We apply Lemma
\ref{lem: ext} and extend to length $L+1$. If $L$ is a $\tau$-portal, then $N^1_{\boldsymbol{\mathsf{c}}}(\mathsf{c}_L) <
N^1_{\boldsymbol{\mathsf{c}}}(\mathsf{c}_{L+1})$ and $\mathsf{c}_L$ is a compound $\boldsymbol{\mathsf{c}}$-portal by Lemma
\ref{cor: classify type I c-portals}.  If $L \in E$, then $D_L - D_{L+1} \geq \esc(L)$ so there is an escape at $\mathsf{c}_{L}$
provided $\esc(L) >0$. Thus $\mathsf{c}_{L} \in \textup{Port}_{III}({\boldsymbol{\mathsf{c}}})$ by Lemma \ref{lem: Port III
categorized}. If $\esc(L) = 2$, then $\mathsf{c}_{L}$ is a compound $\boldsymbol{\mathsf{c}}$-portal by Lemma \ref{lem: number of
n.c.c of type 3 c-portal}.

\end{proof}

\begin{figure}[hbt]  \label{fig: top of tree}
 \begin{center}
    \includegraphics{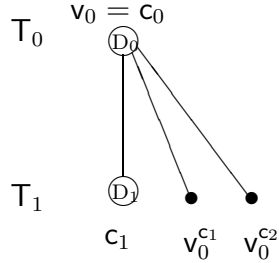}
    \caption{The top of $\tree$ from the proof of Proposition \ref{prop: tau TwD} when $D_0 -D_1 =2$.}
 \end{center}
\end{figure}

\subsection{Proof of the Extension Lemma} \label{subsect: Proof of ext lem}

We now prove the Extension Lemma. Suppose that $\tau$ is an abstract Yoccoz return function with exceptional set $E$ and
$(D_l)_{l \in \bZ}$ is a $\tau$-admissible sequence.  We say a tree with dynamics $(\tree, \F)$ of length $L$ that satisfies the
hypotheses of the Extension Lemma \emph{realizes $\tau$ to level $L$}.  We prove the Extension Lemma by showing that we can find
an extension of $(\tree, \F)$ of length $L+1$ that realizes $\tau$ to level $L+1$.  The only difficulty is to extend in such a
way that $\Ret_{\boldsymbol{\mathsf{c}}}(\mathsf{c}_{L+1}) = \mathsf{c}_{\tau(L+1)}$ (equivalently
$N^1_{\boldsymbol{\mathsf{c}}}(\mathsf{c}_{L+1}) = N^{R(L+1)}_{\boldsymbol{\mathsf{c}}}(\mathsf{c}_{L})$). The proof consists of
considering various cases for $R(L+1)$.  The cases when $R(L+1) = 1$ or $2$ are straightforward.  If $R(L+1) \geq 3$, we need to
carefully examine $\tau^{R(L+1) - 1}(L)$.  In particular, whether or not it is a simple or compound
$\boldsymbol{\mathsf{c}}$-portal.  We prove the lemma when we have covered all possible cases.

\begin{lem} \label{lem: ext choose FN(vc)}
Let $(\tree, \F)$ be a tree with dynamics of length $L$.  Suppose that $\mathsf{v} \in\tree_L$, $D\in \bZ$ with $1 \leq D \leq
\deg  \mathsf{v} $, and $\F^N(\mathsf{v})= \mathsf{w}$ for some $N \geq 1$. If $\mathsf{w}^{\child_0}$ is a child of  $\mathsf{w}
$, then there exists $(\tree',\F')$, an extension of $(\tree,\F)$ of length $L+1$ with $\mathsf{v}^{\child_0} \in \tree_{L+1}'$
such that
\begin{enumerate}
    \item $\mathsf{v}^{\child_0}$ is a child of $\mathsf{v}$,
    \item $\deg \mathsf{v}^{\child_0} = D$,
    \item $\F^N(\mathsf{v}^{\child_0}) = \mathsf{w}^{\child_0}$.
\end{enumerate}
\begin{proof}
The hard case is when $N=1$, the general case follows by a straightforward induction argument. We define $\tree_{L+1}'$ in three
steps. First let $\mathsf{v}^{\child_0} \in \tree_{L+1}'$ have the above 3 properties.  We need to give $\mathsf{v}$ enough
additional children to satisfy D2, and be sure we do not give $\mathsf{v}$ so many children that we violate D1.  Since $\deg
\mathsf{v}^{\child_0} = D \leq \deg \mathsf{v}$, we can give $\mathsf{v}$ exactly $\deg  \mathsf{v} - D$ non-critical children
such that $\F(\mathsf{v}^{\child}) = \F(\mathsf{v})^{\child_0}$. For each other child of $\F(\mathsf{v})$,
$\F(\mathsf{v})^{\child_i} \neq \F(\mathsf{v})^{\child_0}$, $\mathsf{v}$ gets $\deg  \mathsf{v}$ non-critical children such that
$\F(\mathsf{v}^{\child}) = \F(\mathsf{v})^{\child_i}$. For any other $\mathsf{u} \in \tree_L$ we give $\mathsf{u}$ exactly $\deg
\mathsf{u}$ non-critical children which map to each child of $\F(\mathsf{u})$.

\end{proof}
\end{lem}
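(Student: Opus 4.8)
The plan is to build the extension $(\tree',\F')$ by hand, adjoining a single new level $\tree_{L+1}'$ for which I specify, for each vertex of $\tree_L$, its children together with their degrees and their $\F'$-images. Once the construction is in place, the only axioms of Definition \ref{defn: TwD} that need checking are the monotonicity axiom (D1) and the local cover property (D2) at the vertices of $\tree_L$; the order axioms and the remaining degree axioms are automatic for a tree of finite length (the latter by the remark after Definition \ref{defn: TwD}).

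First I would reduce to the case $N=1$. If $N\geq 2$, put $\mathsf{w}_1=\F(\mathsf{v})\in\tree_{L-H}$, a vertex lying strictly below level $L$, so all of its children already belong to $\tree$. By Lemma \ref{lem: F^N a D-fold cover} the map $\F^{N-1}:\set{\mathsf{w}_1^{\child}}\to\set{\mathsf{w}^{\child}}$ is a $D'$-fold cover for some $D'\geq 1$, so there is a child $\mathsf{w}_1^{\child_0}$ of $\mathsf{w}_1$ with $\F^{N-1}(\mathsf{w}_1^{\child_0})=\mathsf{w}^{\child_0}$. It is then enough to produce an extension in which $\mathsf{v}$ has a child $\mathsf{v}^{\child_0}$ with $\deg\mathsf{v}^{\child_0}=D$ and $\F'(\mathsf{v}^{\child_0})=\mathsf{w}_1^{\child_0}$, since then $(\F')^{N}(\mathsf{v}^{\child_0})=\F^{N-1}(\mathsf{w}_1^{\child_0})=\mathsf{w}^{\child_0}$. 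So I may assume $N=1$ and $\mathsf{w}=\F(\mathsf{v})$.

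Next I would define $\tree_{L+1}'$. To $\mathsf{v}$ I attach one child $\mathsf{v}^{\child_0}$ with $\deg\mathsf{v}^{\child_0}=D$ and $\F'(\mathsf{v}^{\child_0})=\mathsf{w}^{\child_0}$, together with $\deg\mathsf{v}-D$ more non-critical children mapped to $\mathsf{w}^{\child_0}$ (here the hypothesis $D\leq\deg\mathsf{v}$ is used), and for each child $\mathsf{w}^{\child_i}\neq\mathsf{w}^{\child_0}$ of $\mathsf{w}$ exactly $\deg\mathsf{v}$ non-critical children of $\mathsf{v}$ mapped to $\mathsf{w}^{\child_i}$. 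To every other $\mathsf{u}\in\tree_L$ and each child $\F(\mathsf{u})^{\child_j}$ of $\F(\mathsf{u})$ I attach exactly $\deg\mathsf{u}$ non-critical children of $\mathsf{u}$ mapped to $\F(\mathsf{u})^{\child_j}$. I let $\F'|\tree=\F$ and leave $\deg$ unchanged on $\tree$. This is visibly an extension of $(\tree,\F)$ of length $L+1$, and conclusions (1)--(3) hold by construction.

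Finally I would verify the axioms at the vertices of $\tree_L$. Each such vertex acquires at least one and only finitely many children, since the relevant vertex of $\tree_{L-H}$ has at least one and only finitely many children by (T2) and every degree is a positive integer. The local cover property (D2) holds because, over each child of $\F(\mathsf{u})$, the degrees of the children of $\mathsf{u}$ mapped there sum to $\deg\mathsf{u}$: for $\mathsf{u}=\mathsf{v}$ over $\mathsf{w}^{\child_0}$ this is $D+(\deg\mathsf{v}-D)=\deg\mathsf{v}$, and in every other case it is a sum of $\deg\mathsf{u}$ ones. Monotonicity (D1) holds because all new children are non-critical except $\mathsf{v}^{\child_0}$, so $\sum_{\mathsf{u}^{\child}}(\deg\mathsf{u}^{\child}-1)$ equals $0$ when $\mathsf{u}\neq\mathsf{v}$ and $D-1\leq\deg\mathsf{v}-1$ when $\mathsf{u}=\mathsf{v}$. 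I expect the only genuinely delicate point to be exactly this simultaneous balancing in the definition of the children of $\mathsf{v}$ --- supplying enough children to satisfy (D2) without overshooting the bound in (D1) --- and the inequality $D\leq\deg\mathsf{v}$ is precisely what makes both requirements compatible.
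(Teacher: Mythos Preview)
Your proposal is correct and follows essentially the same approach as the paper's proof: reduce to the case $N=1$ (the paper calls this ``a straightforward induction argument,'' while you phrase it as a direct pullback via Lemma~\ref{lem: F^N a D-fold cover}, which amounts to the same thing), then build $\tree_{L+1}'$ by giving $\mathsf{v}$ the distinguished child of degree $D$ together with $\deg\mathsf{v}-D$ non-critical siblings over $\mathsf{w}^{\child_0}$, $\deg\mathsf{v}$ non-critical children over each other child of $\mathsf{w}$, and treating every other $\mathsf{u}\in\tree_L$ generically. Your write-up is more explicit than the paper's in spelling out the reduction step and the verification of (D1) and (D2), but the construction and the underlying idea are identical.
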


It is easy to extend in such a way that the first return time of the child is the first return time of the parent.

\begin{cor} \label{cor: ext with N_1 equal}
Let $(\tree, \F)$ be a tree with dynamics of length $L$. Suppose that $\mathsf{v} \in\tree_L$ and $D\in \bZ$ with $1 \leq D \leq
\deg \mathsf{v} $.  If $(\tree, \F)$ has a critical branch ${\boldsymbol{\mathsf{c}}} = \set{\mathsf{c}_0, \dots, \mathsf{c}_L}$,
then there exists $(\tree',\F')$, an extension of $(\tree,\F)$ of length $L+1$ with $\mathsf{v}^{\child_0} \in \tree_{L+1}'$ such
that $\mathsf{v}^{\child_0}$ is a child of $\mathsf{v}$, $\deg \mathsf{v}^{\child_0} = D$, and
$N^{\boldsymbol{\mathsf{c}}}_1(\mathsf{v}^{\child_0}) = N^{\boldsymbol{\mathsf{c}}}_1(\mathsf{v})$.

\begin{proof}
Say that $\Ret_{\boldsymbol{\mathsf{c}}}(\mathsf{v}) = \mathsf{c}_l$ for some $l <L$. Then by assumption, $\mathsf{c}_{l+1}$ is a
vertex of $\tree$ and a child of $\mathsf{c}_l$ since $ 0 \leq l < L$. Apply Lemma \ref{lem: ext choose FN(vc)} with $\mathsf{w}
= \mathsf{c}_l$, $\mathsf{w}^{\child_0} = \mathsf{c}_{l+1}$, and $N = N^{\boldsymbol{\mathsf{c}}}_1(\mathsf{v})$. The first two
conclusion follow immediately. Also we have $N^{\boldsymbol{\mathsf{c}}}_1(\mathsf{v}^{\child_0}) \leq
N^1_{\boldsymbol{\mathsf{c}}}(\mathsf{v})$ since $\mathsf{c}_{l+1} \in {\boldsymbol{\mathsf{c}}} $. We always have
$N_{\boldsymbol{\mathsf{c}}}^1(\mathsf{v}^{\child_0}) \geq N_{\boldsymbol{\mathsf{c}}}^1(\mathsf{v})$ by Corollary \ref{cor: N_1
non-dec }. Thus the desired equality is true.

\end{proof}
\end{cor}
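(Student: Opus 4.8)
The plan is to deduce this directly from Lemma \ref{lem: ext choose FN(vc)}, once the correct target child on the branch $\boldsymbol{\mathsf{c}}$ has been located. Put $N = N^1_{\boldsymbol{\mathsf{c}}}(\mathsf{v})$ and let $\mathsf{c}_l = \Ret_{\boldsymbol{\mathsf{c}}}(\mathsf{v}) = \F^N(\mathsf{v})$. First I would note that $l < L$: since $\mathsf{v} \in \tree_L$ and $\F$ lowers levels by $H \geq 1$, we have $\F^N(\mathsf{v}) \in \tree_{L-NH}$ with $N \geq 1$, so $l = L - NH \leq L - H < L$. Hence $\mathsf{c}_{l+1}$ is a vertex already present in the length-$L$ tree, and by the branch hypothesis $\boldsymbol{\mathsf{c}} = \set{\mathsf{c}_0,\dots,\mathsf{c}_L}$ it is a child of $\mathsf{c}_l$.

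Next I would invoke Lemma \ref{lem: ext choose FN(vc)} with $\mathsf{w} = \mathsf{c}_l$, $\mathsf{w}^{\child_0} = \mathsf{c}_{l+1}$, and the integer $N$ above; the standing hypothesis $1 \leq D \leq \deg \mathsf{v}$ is exactly what that lemma requires. This yields an extension $(\tree',\F')$ of length $L+1$ with a child $\mathsf{v}^{\child_0}$ of $\mathsf{v}$ such that $\deg \mathsf{v}^{\child_0} = D$ and $\F^N(\mathsf{v}^{\child_0}) = \mathsf{c}_{l+1}$. Since $\mathsf{c}_{l+1} \in \boldsymbol{\mathsf{c}}$, this exhibits a return of $\mathsf{v}^{\child_0}$ to $\boldsymbol{\mathsf{c}}$ at time $N$, so $N^1_{\boldsymbol{\mathsf{c}}}(\mathsf{v}^{\child_0}) \leq N$.

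For the reverse inequality I would use monotonicity of first return times along ancestry. Since $\mathsf{v}$ is an ancestor of $\mathsf{v}^{\child_0}$ and $\boldsymbol{\mathsf{c}}$ is ancestral, Lemma \ref{lem: ret of descen => ret} gives that the first return time of $\mathsf{v}^{\child_0}$ is some return time of $\mathsf{v}$, whence $N^1_{\boldsymbol{\mathsf{c}}}(\mathsf{v}^{\child_0}) \geq N^1_{\boldsymbol{\mathsf{c}}}(\mathsf{v}) = N$ (this is also immediate from Corollary \ref{cor: N_1 non-dec } applied to a branch through $\mathsf{v}^{\child_0}$). Combining the two inequalities gives the claimed equality $N^1_{\boldsymbol{\mathsf{c}}}(\mathsf{v}^{\child_0}) = N^1_{\boldsymbol{\mathsf{c}}}(\mathsf{v})$. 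There is no real obstacle here; the only step that deserves a moment's attention is the inequality $l < L$, which is precisely what guarantees that the target child $\mathsf{c}_{l+1}$ already lives in the length-$L$ tree, so that Lemma \ref{lem: ext choose FN(vc)} applies verbatim.
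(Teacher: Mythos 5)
Your proposal is correct and follows the same route as the paper's own proof: locate the return vertex $\mathsf{c}_l$ with $l<L$, apply Lemma \ref{lem: ext choose FN(vc)} with $\mathsf{w}=\mathsf{c}_l$, $\mathsf{w}^{\child_0}=\mathsf{c}_{l+1}$, $N=N^1_{\boldsymbol{\mathsf{c}}}(\mathsf{v})$, and combine the resulting upper bound with the monotonicity of return times (Corollary \ref{cor: N_1 non-dec }). Your explicit level computation $l=L-NH<L$ just spells out what the paper leaves implicit.
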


We consider extension when $R(L+1) = 1$.

\begin{cor} \label{cor: R=1 => ext}
Suppose that $(\tree, \F)$ realizes $\tau$ to level $L$. If $R(L+1) = 1$, then there is an extension of $(\tree, \F)$ of length
$L+1$ that realizes $\tau$ to level $L+1$.

\begin{proof}
Apply the above corollary with $\mathsf{v}= \mathsf{c}_L$, $\mathsf{v}^{\child_0}= \mathsf{c}_{L+1}$, and $D = D_{L+1}$.
\end{proof}
\end{cor}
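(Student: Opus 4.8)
The plan is to derive this directly from Corollary \ref{cor: ext with N_1 equal}, which already carries out the delicate bookkeeping of adjoining one level to a finite tree with dynamics while prescribing a first return. First I would unpack the hypothesis $R(L+1)=1$: by the definition of $R$ it means $\tau(L+1)=\tau^{1}(L)+1=\tau(L)+1$, so the target $\mathsf{c}_{\tau(L+1)}$ is precisely the child $\mathsf{c}_{\tau(L)+1}$ of $\mathsf{c}_{\tau(L)}=\Ret_{\boldsymbol{\mathsf{c}}}(\mathsf{c}_L)$. Since $(\tree,\F)$ realizes $\tau$ to level $L$, this vertex already lies in the critical branch (using $\tau(L)<L$), so there is an honest child of $\Ret_{\boldsymbol{\mathsf{c}}}(\mathsf{c}_L)$ in $\tree$ to aim the extension at.

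Next I would check the hypotheses of Corollary \ref{cor: ext with N_1 equal} with $\mathsf{v}=\mathsf{c}_L$ and $D=D_{L+1}$. The only non-formal point is $1\le D_{L+1}\le\deg\mathsf{c}_L$, and this is immediate from $\tau$-admissibility of $(D_l)$ — which gives $D_L-D_{L+1}\ge\esc(L)\ge 0$ and $D_{L+1}\ge\min D_l\ge 2$ — together with $\deg\mathsf{c}_L=D_L$ from the standing hypothesis. The corollary then produces an extension $(\tree',\F')$ of length $L+1$ in which $\mathsf{c}_L$ acquires a new child $\mathsf{c}_{L+1}$ with $\deg\mathsf{c}_{L+1}=D_{L+1}$ and $N^1_{\boldsymbol{\mathsf{c}}}(\mathsf{c}_{L+1})=N^1_{\boldsymbol{\mathsf{c}}}(\mathsf{c}_L)$; moreover, tracing through the construction in Lemma \ref{lem: ext choose FN(vc)} on which that corollary rests, $\F^{N}(\mathsf{c}_{L+1})=\mathsf{c}_{\tau(L)+1}$ for $N=N^1_{\boldsymbol{\mathsf{c}}}(\mathsf{c}_L)$, and every newly created vertex off $\boldsymbol{\mathsf{c}}$ is non-critical.

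It then remains only to read off the three properties defining ``realizes $\tau$ to level $L+1$'': the first two ($\mathsf{c}_{L+1}$ a child of $\mathsf{c}_L$, of degree $D_{L+1}$) are exactly the first two conclusions above, and the third follows since $\Ret_{\boldsymbol{\mathsf{c}}}(\mathsf{c}_{L+1})=\F^{N}(\mathsf{c}_{L+1})=\mathsf{c}_{\tau(L)+1}=\mathsf{c}_{\tau(L+1)}$ by the first paragraph; invariance of the levels $\le L$ and $\deg\mathsf{v}=1$ off the branch come for free from the notion of extension and from Lemma \ref{lem: ext choose FN(vc)}. I do not anticipate a genuine obstacle here: the case $R(L+1)=1$ is the trivial base of the case analysis that will prove the Extension Lemma, with all the substantive work absorbed into Corollary \ref{cor: ext with N_1 equal}; the one thing to be careful about is verifying the degree bound $D_{L+1}\le\deg\mathsf{c}_L$ so that that corollary legitimately applies.
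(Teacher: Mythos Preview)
Your proposal is correct and follows exactly the paper's approach: apply Corollary~\ref{cor: ext with N_1 equal} with $\mathsf{v}=\mathsf{c}_L$ and $D=D_{L+1}$. You have simply made explicit the verifications the paper leaves implicit (the degree bound $D_{L+1}\le D_L=\deg\mathsf{c}_L$ from $\tau$-admissibility, and the translation of $N^1_{\boldsymbol{\mathsf{c}}}(\mathsf{c}_{L+1})=N^1_{\boldsymbol{\mathsf{c}}}(\mathsf{c}_L)$ into $\Ret_{\boldsymbol{\mathsf{c}}}(\mathsf{c}_{L+1})=\mathsf{c}_{\tau(L+1)}$ via $R(L+1)=1$).
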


We also prove the case when $R(L+1) = 2$.

\begin{sublem} \label{sublem: R=2 => ext}
Suppose that $(\tree, \F)$ realizes $\tau$ to level $L$. If $R(L+1) = 2$, then there is an extension of $(\tree, \F)$ of length
$L+1$ that realizes $\tau$ to level $L+1$.
\begin{proof}
Say $\tau(L) =m$.  By Lemma \ref{main lem}, $\mathsf{c}_m $ is a $\boldsymbol{\mathsf{c}}$-portal.  Say $\mathsf{c}_m^{\child_0}$
is a non-critical child of $\mathsf{c}_m$ such that $N_{\boldsymbol{\mathsf{c}}}^1(\mathsf{c}_m ) =
N_{\boldsymbol{\mathsf{c}}}^1(\mathsf{c}_m^{\child_0} )$. Apply Lemma \ref{lem: ext choose FN(vc)} with $\mathsf{v} =
\mathsf{c}_L$, $D = D_{L+1}$, $N = N^{\boldsymbol{\mathsf{c}}}_1(\mathsf{c}_L)$, $mathsf{w} =\mathsf{c}_m  $, and
$\mathsf{w}^{\child_0} = \mathsf{c}_m^{\child_0}$. Then $\F^N(\mathsf{c}_{L+1})=\mathsf{c}_m^{\child_0} $. Since
$N_{\boldsymbol{\mathsf{c}}}^1(\mathsf{c}_{L+1}) \geq N$ and $\mathsf{c}_m^{\child_0}$ is non-critical, we have
$N_{\boldsymbol{\mathsf{c}}}^1(\mathsf{c}_{L+1}) = N + N_{\boldsymbol{\mathsf{c}}}^1(\mathsf{c}_m^{\child_0} ) =
N^2_{\boldsymbol{\mathsf{c}}}(\mathsf{c}_{L})$ by Lemma \ref{lem: ret times add}.
\end{proof}

\end{sublem}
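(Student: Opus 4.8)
The plan is to extract from the branch built so far the unique vertex that $\mathsf{c}_{L+1}$ is obliged to be mapped to under $\F^N$, check that this vertex is available, and then let Lemma~\ref{lem: ext choose FN(vc)} do the construction. Put $m := \tau(L)$ and $N := N^1_{\boldsymbol{\mathsf{c}}}(\mathsf{c}_L)$, so that $\F^N(\mathsf{c}_L) = \Ret_{\boldsymbol{\mathsf{c}}}(\mathsf{c}_L) = \mathsf{c}_m$. Since $R(L+1)=2$, ``realizing $\tau$ to level $L+1$'' means producing a child $\mathsf{c}_{L+1}$ of $\mathsf{c}_L$ with $\deg\mathsf{c}_{L+1}=D_{L+1}$ and $N^1_{\boldsymbol{\mathsf{c}}}(\mathsf{c}_{L+1}) = N^2_{\boldsymbol{\mathsf{c}}}(\mathsf{c}_L) = N + N^1_{\boldsymbol{\mathsf{c}}}(\mathsf{c}_m)$. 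Because the dynamics preserves children, $\F^N(\mathsf{c}_{L+1})$ is forced to be a child of $\mathsf{c}_m$; to make $\mathsf{c}_{L+1}$ take one extra lap we want that child to lie off $\boldsymbol{\mathsf{c}}$ while still returning to $\boldsymbol{\mathsf{c}}$ in exactly $N^1_{\boldsymbol{\mathsf{c}}}(\mathsf{c}_m)$ further steps --- that is, $\mathsf{c}_m$ should be a $\boldsymbol{\mathsf{c}}$-portal and $\mathsf{c}_{L+1}$ should be sent to its portal child.

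The first step is therefore to exhibit a non-critical portal child $\mathsf{c}_m^{\child_0}$ of $\mathsf{c}_m$ inside the length-$L$ tree; note $m=\tau(L)<L$, so level $m+1$, and hence all children of $\mathsf{c}_m$, are already present. By the Main Lemma (Lemma~\ref{main lem}), or equivalently by Condition~3 of Theorem~\ref{thm: tau cond} applied at $l=L+1$ (legitimate since $\tau$ is an abstract Yoccoz return function), either $m$ is a $\tau$-portal or $m\in E$. If $m$ is a $\tau$-portal, then $N^1_{\boldsymbol{\mathsf{c}}}(\mathsf{c}_m) < N^1_{\boldsymbol{\mathsf{c}}}(\mathsf{c}_{m+1})$, so $\F^N(\mathsf{c}_{m+1})\notin\boldsymbol{\mathsf{c}}$; applying Lemma~\ref{lem: F^N a D-fold cover} to $\F^N\colon\set{\mathsf{c}_m^{\child}}\to\set{\mathsf{c}_{\tau(m)}^{\child}}$ --- whose covering degree is $\deg\mathsf{c}_m\ge 2$, because the intermediate iterates of $\mathsf{c}_m$ lie off $\boldsymbol{\mathsf{c}}$ and are non-critical --- shows that the $\F^N$-preimages of $\mathsf{c}_{\tau(m)+1}$ among the children of $\mathsf{c}_m$ cannot all equal $\mathsf{c}_{m+1}$; any other such preimage lies off $\boldsymbol{\mathsf{c}}$, hence is non-critical, and has first return time exactly $N$ (at most $N$ since it maps into $\boldsymbol{\mathsf{c}}$ at step $N$, at least $N$ by Lemma~\ref{lem: ret of descen => ret}). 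If instead $m\in E$, the construction of Proposition~\ref{prop: tau TwD} has, by its inductive hypotheses, already installed the required $\boldsymbol{\mathsf{c}}$-portal at $\mathsf{c}_m$ (a type~III portal supplied by an escape of size $\esc(m)$, or a compound type~I portal), so such a child again exists.

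Given $\mathsf{c}_m^{\child_0}$, I would apply Lemma~\ref{lem: ext choose FN(vc)} with $\mathsf{v}=\mathsf{c}_L$, $D=D_{L+1}$, $N$ as above, $\mathsf{w}=\mathsf{c}_m$, and $\mathsf{w}^{\child_0}=\mathsf{c}_m^{\child_0}$. This yields an extension $(\tree',\F')$ of length $L+1$ in which $\mathsf{c}_{L+1}$ is a child of $\mathsf{c}_L$, $\deg\mathsf{c}_{L+1}=D_{L+1}$, and $\F^N(\mathsf{c}_{L+1})=\mathsf{c}_m^{\child_0}$; since that lemma adds only non-critical vertices apart from $\mathsf{c}_{L+1}$, the standing hypotheses of the Extension Lemma persist and $\set{\mathsf{c}_0,\dots,\mathsf{c}_{L+1}}$ still realizes $\tau$ through level $L$. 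To finish, $N^1_{\boldsymbol{\mathsf{c}}}(\mathsf{c}_{L+1})\ge N$ by Lemma~\ref{lem: ret of descen => ret}, while $\F^N(\mathsf{c}_{L+1})=\mathsf{c}_m^{\child_0}\notin\boldsymbol{\mathsf{c}}$ is non-critical, so by Lemma~\ref{lem: ret times add} $N^1_{\boldsymbol{\mathsf{c}}}(\mathsf{c}_{L+1}) = N + N^1_{\boldsymbol{\mathsf{c}}}(\mathsf{c}_m^{\child_0}) = N + N^1_{\boldsymbol{\mathsf{c}}}(\mathsf{c}_m) = N^2_{\boldsymbol{\mathsf{c}}}(\mathsf{c}_L)$. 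Hence $\Ret_{\boldsymbol{\mathsf{c}}}(\mathsf{c}_{L+1}) = \F^{N^2_{\boldsymbol{\mathsf{c}}}(\mathsf{c}_L)}(\mathsf{c}_{L+1})$ is the child of $\Ret^2_{\boldsymbol{\mathsf{c}}}(\mathsf{c}_L) = \mathsf{c}_{\tau^2(L)}$ that lies on $\boldsymbol{\mathsf{c}}$, namely $\mathsf{c}_{\tau^2(L)+1} = \mathsf{c}_{\tau(L+1)}$, which is exactly realizing $\tau$ through level $L+1$.

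The main obstacle is the first step: converting the purely arithmetic hypothesis $R(L+1)=2$ into the geometric assertion that $\mathsf{c}_m$ already carries a non-critical portal child in the finite tree constructed so far. This is transparent when $m$ is a $\tau$-portal (a branched-cover count with Lemma~\ref{lem: F^N a D-fold cover}), but when $m\in E$ it genuinely depends on running the extension inside the induction of Proposition~\ref{prop: tau TwD}, where the inductive hypotheses have pre-installed the appropriate type~III (or compound type~I) portal at $\mathsf{c}_m$ --- which is precisely what the quantity $\esc(m)$ is designed to make possible. Keeping that bookkeeping consistent from level to level --- in particular tracking which vertices off $\boldsymbol{\mathsf{c}}$ realize which first return times --- is where the real content lies; everything after the portal child is produced is a mechanical application of Lemmas~\ref{lem: ext choose FN(vc)} and \ref{lem: ret times add}.
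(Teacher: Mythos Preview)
Your argument is correct and follows the paper's route exactly: locate a non-critical portal child $\mathsf{c}_m^{\child_0}$ of $\mathsf{c}_m$ (where $m=\tau(L)$), invoke Lemma~\ref{lem: ext choose FN(vc)} to send the new $\mathsf{c}_{L+1}$ there, and compute $N^1_{\boldsymbol{\mathsf{c}}}(\mathsf{c}_{L+1})=N^2_{\boldsymbol{\mathsf{c}}}(\mathsf{c}_L)$ via Lemma~\ref{lem: ret times add}. You are in fact more careful than the paper at the first step: the paper simply cites the Main Lemma to assert that $\mathsf{c}_m$ is a $\boldsymbol{\mathsf{c}}$-portal, but that lemma formally runs the other way (it deduces a portal from an already-existing return time, whereas $\mathsf{c}_{L+1}$ does not yet exist); your case split on ``$m$ is a $\tau$-portal'' versus ``$m\in E$'', appealing to Condition~3 of the abstract $\tau$ together with the inductive hypotheses maintained in Proposition~\ref{prop: tau TwD}, is the honest justification. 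One small notational slip: having set $N:=N^1_{\boldsymbol{\mathsf{c}}}(\mathsf{c}_L)$ in the opening paragraph, you then write $\F^N(\mathsf{c}_{m+1})\notin\boldsymbol{\mathsf{c}}$ and $\F^N\colon\{\mathsf{c}_m^{\child}\}\to\{\mathsf{c}_{\tau(m)}^{\child}\}$ in the $\tau$-portal case, where the exponent should instead be $N^1_{\boldsymbol{\mathsf{c}}}(\mathsf{c}_m)$---a different integer.
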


It is harder to show that there is an extension for the case $R(L+1) \geq 3$. We do so not just keeping track of
$\boldsymbol{\mathsf{c}}$-portals, but by considering simple versus compound $\boldsymbol{\mathsf{c}}$-portals. Particularly we
need conditions that insure that a vertex does not obstruct a portal.

We give a general result about the dynamics from the children of one vertex to the children of an iterate of the vertex.

\begin{lem}\label{lem: 2c of FN(v) hit by 2 ncc of v}
Let $(\tree, \F)$ be a tree with dynamics possibly of finite length.  Suppose that $\mathsf{v} \in\tree$, and $\F^N(\mathsf{v})=
\mathsf{w}$ for some $N \geq 1$. If $\mathsf{w}^{\child_1}$ and $\mathsf{w}^{\child_2}$ are two distinct children of $\mathsf{w}
$, then $\mathsf{v}$ has at least two non-critical children, $\mathsf{v}^{\child_1}$ and $\mathsf{v}^{\child_2}$, such that
\[
    \mathsf{v}^{\child_i} \in \F^{-N}(\set{\mathsf{w}^{\child_1}, \mathsf{w}^{\child_2}}) \quad i = 1,2.
\]

\begin{proof}
We proceed by induction on $N$.  The hard case is when $N=1$, the general case is straightforward. Let
$\F^{-1}(\set{\mathsf{w}^{\child_1}, \mathsf{w}^{\child_2}})  = \set{\mathsf{v}^{\child_1}, \dots, \mathsf{v}^{\child_k}}$ for
some $k$. By D1,
\begin{align*}
    \sum_{i=1}^k (\deg \mathsf{v}^{\child_i}- 1) &\leq \deg \mathsf{v} -1\\
    \left( \sum_{i=1}^k \deg \mathsf{v}^{\child_i} \right)
    - k &\leq \deg \mathsf{v} -1.\\
    \intertext{Now $ \displaystyle \sum_{i=1}^k \deg \mathsf{v}^{\child_i} = 2 \deg \mathsf{v}$ by D2. So}
    2 \deg \mathsf{v} - k &\leq \deg \mathsf{v} -1\\
    \deg \mathsf{v} + 1 &\leq k.
\end{align*}
It follows from this lower bound on $k$ and the second inequality above that at most $k-2$ of $\set{\mathsf{v}^{\child_1}, \dots,
\mathsf{v}^{\child_k}}$ are critical.
\end{proof}
\end{lem}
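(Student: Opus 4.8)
The plan is to prove the statement by induction on $N$, doing all the real work in the base case $N=1$; the inductive step is a short bootstrap off that case. Throughout, the key inputs are the two degree axioms: Monotonicity (D\ref{eq: Monotone}) and the Local Cover Property (D\ref{eq: LC}).

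First I would settle the base case $N=1$. Write $\F(\mathsf{v}) = \mathsf{w}$ and let $\set{\mathsf{v}^{\child_1}, \dots, \mathsf{v}^{\child_k}}$ be the set of all children of $\mathsf{v}$ whose $\F$-image is $\mathsf{w}^{\child_1}$ or $\mathsf{w}^{\child_2}$. (This set is nonempty, and indeed $\mathsf{v}$ has children at all, because $\mathsf{w}$ has a child and so (D\ref{eq: LC}) forces children of $\mathsf{v}$ mapping onto each child of $\mathsf{w}$.) Applying (D\ref{eq: LC}) to the child $\mathsf{w}^{\child_1}$ of $\F(\mathsf{v})$ and then to $\mathsf{w}^{\child_2}$, and adding the two identities, gives $\sum_{i=1}^{k} \deg \mathsf{v}^{\child_i} = 2 \deg \mathsf{v}$. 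On the other hand, applying (D\ref{eq: Monotone}) to $\mathsf{v}$ and discarding the (non-negative) contributions of the children of $\mathsf{v}$ not in our subset yields $\sum_{i=1}^{k} (\deg \mathsf{v}^{\child_i} - 1) \leq \deg \mathsf{v} - 1$. Subtracting $k$ from the first identity and comparing with this inequality gives $2\deg\mathsf{v} - k \leq \deg \mathsf{v} - 1$, i.e. $k \geq \deg \mathsf{v} + 1$. Hence $\sum_{i=1}^{k}(\deg \mathsf{v}^{\child_i} - 1) \leq \deg \mathsf{v} - 1 \leq k - 2$; since each summand is a non-negative integer and a child is critical exactly when its degree exceeds $1$, at most $k-2$ of the $\mathsf{v}^{\child_i}$ are critical, so at least two of them are non-critical. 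These two (distinct, non-critical) children lie in $\F^{-1}(\set{\mathsf{w}^{\child_1}, \mathsf{w}^{\child_2}})$ by construction, which is the conclusion for $N=1$.

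For the inductive step, assume the statement for $N-1 \geq 1$ and set $\mathsf{u} = \F(\mathsf{v})$, so that $\F^{N-1}(\mathsf{u}) = \mathsf{w}$. By the inductive hypothesis, $\mathsf{u}$ has two distinct non-critical children $\mathsf{u}^{\child_1}, \mathsf{u}^{\child_2}$ with $\F^{N-1}(\mathsf{u}^{\child_i}) \in \set{\mathsf{w}^{\child_1}, \mathsf{w}^{\child_2}}$. Now $\mathsf{u}^{\child_1}$ and $\mathsf{u}^{\child_2}$ are two distinct children of $\mathsf{u} = \F(\mathsf{v})$, so the base case, applied to $\mathsf{v}$ with these two children of $\F(\mathsf{v})$ playing the roles of $\mathsf{w}^{\child_1}, \mathsf{w}^{\child_2}$, produces two distinct non-critical children $\mathsf{v}^{\child_1}, \mathsf{v}^{\child_2}$ of $\mathsf{v}$ with $\F(\mathsf{v}^{\child_i}) \in \set{\mathsf{u}^{\child_1}, \mathsf{u}^{\child_2}}$. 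Then $\F^{N}(\mathsf{v}^{\child_i}) = \F^{N-1}(\F(\mathsf{v}^{\child_i})) \in \F^{N-1}(\set{\mathsf{u}^{\child_1}, \mathsf{u}^{\child_2}}) \subseteq \set{\mathsf{w}^{\child_1}, \mathsf{w}^{\child_2}}$, completing the induction.

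I expect the base case to be the only real obstacle: one must be careful to use (D\ref{eq: Monotone}) on all children of $\mathsf{v}$ while using (D\ref{eq: LC}) on exactly the two chosen children of $\mathsf{w}$, and to combine the resulting inequalities in the direction that produces $k \geq \deg\mathsf{v}+1$ — this is precisely the ``combinatorial Riemann-Hurwitz'' slack that makes two non-critical preimages unavoidable. The remainder is bookkeeping; for finite-length trees one only needs the mild observation (again from (D\ref{eq: LC})) that the intermediate iterates $\F^{j}(\mathsf{v})$, $0 \leq j < N$, all have children, so that none of the statements applied along the way is vacuous.
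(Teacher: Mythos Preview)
Your proof is correct and follows essentially the same approach as the paper: induction on $N$ with the base case $N=1$ handled by combining (D\ref{eq: Monotone}) and (D\ref{eq: LC}) to obtain $k \geq \deg \mathsf{v} + 1$ and hence at most $k-2$ critical children among the preimages. You actually spell out the inductive step and the justification for dropping the non-negative terms in (D\ref{eq: Monotone}) more explicitly than the paper does, which merely calls the general case ``straightforward.''
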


In the above lemma, the assumption that $\mathsf{w}$ has two distinguished children cannot be weakened. If $\mathsf{w}$ has only
one distinguished child, then it is quite easy to construct examples where $\mathsf{v}$ has only one child mapped to the
distinguished child. We apply the above lemma in the case when $\mathsf{w}$ is a compound $\boldsymbol{\mathsf{c}}$-portal.

\begin{cor} \label{cor: ret to compound c-port => 2 ncc}
Let $(\tree, \F)$ be a tree with dynamics possibly of finite length.  Let $\boldsymbol{\mathsf{c}}$ be a critical end or branch.
Let $\mathsf{v} \in \tree$  and $ \mathsf{c}_k \in\boldsymbol{\mathsf{c}} $ with $\Ret^S_{\boldsymbol{\mathsf{c}}}(\mathsf{v})=
\mathsf{c}_k$ for some $S \geq 1$. If $\mathsf{c}_k $ is a compound $\boldsymbol{\mathsf{c}}$-portal, then $\mathsf{v}$ has two
distinct non-critical children $\mathsf{v}^{\child_1}$ and $\mathsf{v}^{\child_2}$, such that
$N^1_{\boldsymbol{\mathsf{c}}}(\mathsf{v}^{\child_i}) = N^S_{\boldsymbol{\mathsf{c}}}(\mathsf{v}) +
N^1_{\boldsymbol{\mathsf{c}}}(\mathsf{c}_k)$ for $i=1,2$.

\begin{proof}
We will use induction on $S$. The induction step is easy, so we only give the proof when $S=1$. Since $\mathsf{c}_k $ is a
compound $\boldsymbol{\mathsf{c}}$-portal, it has two children $\mathsf{c}_k^{\child_1}, \mathsf{c}_k^{\child_2} \notin
\boldsymbol{\mathsf{c}}$ with $N^1_{\boldsymbol{\mathsf{c}}}(\mathsf{c}_k) =
N^1_{\boldsymbol{\mathsf{c}}}(\mathsf{c}_k^{\child_j})$ for $j=1,2$. Apply Lemma \ref{lem: 2c of FN(v) hit by 2 ncc of v} with
$\mathsf{w} = \mathsf{c}_k$, $\mathsf{w}^{\child_j} = \mathsf{c}_k^{\child_j}$, and $N =
N^1_{\boldsymbol{\mathsf{c}}}(\mathsf{v})$. So $\mathsf{v}$ has at least two non-critical children
$\mathsf{v}^{\child_1},\mathsf{v}^{\child_2} \in \F^{-N}(\set{\mathsf{c}_k^{\child_1}, \mathsf{c}_k^{\child_2}})$. Fix $i=1$ or
$2$. By Corollary \ref{cor: N_1 non-dec }, $N^1_{\boldsymbol{\mathsf{c}}}(\mathsf{v}^{\child_i}) \geq N$. But
$\F^N(\mathsf{v}^{\child_i}) = \mathsf{c}_k^{\child_j}$ for some $j$.  So $\F^N(\mathsf{v}^{\child_i}) \notin
\boldsymbol{\mathsf{c}} $ by definition of $ \mathsf{c}_k^{\child_j} $. Therefore by Lemma \ref{lem: ret times add},
$N^1_{\boldsymbol{\mathsf{c}}}(\mathsf{v}^{\child_i}) = N + N^1_{\boldsymbol{\mathsf{c}}}(\mathsf{c}_k^{\child_j}) =
N^1_{\boldsymbol{\mathsf{c}}}(\mathsf{v}) + N^1_{\boldsymbol{\mathsf{c}}}(\mathsf{c}_k)$.

\end{proof}
\end{cor}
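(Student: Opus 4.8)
The plan is to prove the corollary by induction on $S$; the case $S=1$ carries all the geometric content, and the general case reduces to it.

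For $S=1$: I would start from $\F^{N}(\mathsf{v}) = \mathsf{c}_k$, where $N = N^1_{\boldsymbol{\mathsf{c}}}(\mathsf{v})$. Because $\mathsf{c}_k$ is a \emph{compound} $\boldsymbol{\mathsf{c}}$-portal, by definition it has two distinct children $\mathsf{c}_k^{\child_1},\mathsf{c}_k^{\child_2}\notin\boldsymbol{\mathsf{c}}$ with $N^1_{\boldsymbol{\mathsf{c}}}(\mathsf{c}_k^{\child_j}) = N^1_{\boldsymbol{\mathsf{c}}}(\mathsf{c}_k)$. Feeding $\mathsf{w}=\mathsf{c}_k$, these two children, and this $N$ into Lemma \ref{lem: 2c of FN(v) hit by 2 ncc of v} produces two distinct non-critical children $\mathsf{v}^{\child_1},\mathsf{v}^{\child_2}$ of $\mathsf{v}$ with $\F^{N}(\mathsf{v}^{\child_i})\in\{\mathsf{c}_k^{\child_1},\mathsf{c}_k^{\child_2}\}$. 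Then I would note that for $0<n<N$ the vertex $\F^{n}(\mathsf{v}^{\child_i})$ is a child of $\F^{n}(\mathsf{v})\notin\boldsymbol{\mathsf{c}}$ and hence, $\boldsymbol{\mathsf{c}}$ being ancestral, lies off $\boldsymbol{\mathsf{c}}$; and $\F^{N}(\mathsf{v}^{\child_i})\notin\boldsymbol{\mathsf{c}}$ by construction. Thus $N < N^1_{\boldsymbol{\mathsf{c}}}(\mathsf{v}^{\child_i})$, so Lemma \ref{lem: ret times add} gives
\[
 N^1_{\boldsymbol{\mathsf{c}}}(\mathsf{v}^{\child_i}) = N + N^1_{\boldsymbol{\mathsf{c}}}\big(\F^{N}(\mathsf{v}^{\child_i})\big) = N + N^1_{\boldsymbol{\mathsf{c}}}(\mathsf{c}_k) = N^1_{\boldsymbol{\mathsf{c}}}(\mathsf{v}) + N^1_{\boldsymbol{\mathsf{c}}}(\mathsf{c}_k),
\]
which is the claim with $S=1$.

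For the inductive step I would set $\mathsf{c}_l = \Ret_{\boldsymbol{\mathsf{c}}}(\mathsf{v})\in\boldsymbol{\mathsf{c}}$ and $N = N^1_{\boldsymbol{\mathsf{c}}}(\mathsf{v})$, so $\F^{N}(\mathsf{v}) = \mathsf{c}_l$ and $\Ret^{S-1}_{\boldsymbol{\mathsf{c}}}(\mathsf{c}_l) = \mathsf{c}_k$, with $S\geq 2$. Applying the induction hypothesis to $\mathsf{c}_l$ (still with the compound portal $\mathsf{c}_k$) yields two distinct non-critical children $\mathsf{c}_l^{\child_1},\mathsf{c}_l^{\child_2}$ of $\mathsf{c}_l$ with $N^1_{\boldsymbol{\mathsf{c}}}(\mathsf{c}_l^{\child_j}) = N^{S-1}_{\boldsymbol{\mathsf{c}}}(\mathsf{c}_l) + N^1_{\boldsymbol{\mathsf{c}}}(\mathsf{c}_k)$. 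Running Lemma \ref{lem: 2c of FN(v) hit by 2 ncc of v} once more with $\mathsf{w}=\mathsf{c}_l$, these children, and this $N$ gives two distinct non-critical children of $\mathsf{v}$ mapping under $\F^{N}$ into $\{\mathsf{c}_l^{\child_1},\mathsf{c}_l^{\child_2}\}$; since such an image is non-critical, it lies off the critical end or branch $\boldsymbol{\mathsf{c}}$, and the same ``off-$\boldsymbol{\mathsf{c}}$ until time $N$'' observation plus Lemma \ref{lem: ret times add} gives each of these children first return time $N + N^{S-1}_{\boldsymbol{\mathsf{c}}}(\mathsf{c}_l) + N^1_{\boldsymbol{\mathsf{c}}}(\mathsf{c}_k)$. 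Finally, additivity of the first return time along the orbit (since $\mathsf{c}_l$ is the \emph{first} return of $\mathsf{v}$, one has $N + N^{S-1}_{\boldsymbol{\mathsf{c}}}(\mathsf{c}_l) = N^{S}_{\boldsymbol{\mathsf{c}}}(\mathsf{v})$) closes the induction.

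The one genuinely substantive ingredient is Lemma \ref{lem: 2c of FN(v) hit by 2 ncc of v}, which is where the degree axioms (D\ref{eq: Monotone}) and (D\ref{eq: LC}) enter to force two non-critical preimage children; everything else is bookkeeping with additive return times and with the fact that a non-critical vertex cannot lie on a critical end or branch. The point I expect to need the most care — and where I would slow down — is checking $N < N^1_{\boldsymbol{\mathsf{c}}}(\mathsf{v}^{\child_i})$ so that Lemma \ref{lem: ret times add} applies with $n = N$; this is exactly where ancestrality of $\boldsymbol{\mathsf{c}}$ together with $\F^{N}(\mathsf{v}^{\child_i})\notin\boldsymbol{\mathsf{c}}$ is used, and it is also why the argument must keep track of \emph{non-critical} children throughout (so that the conclusion can feed the induction hypothesis at the next stage).
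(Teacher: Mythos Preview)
Your proposal is correct and follows essentially the same route as the paper: induction on $S$, with the $S=1$ case carried by Lemma~\ref{lem: 2c of FN(v) hit by 2 ncc of v} applied to the two portal-witness children of $\mathsf{c}_k$, followed by additivity of return times (Lemma~\ref{lem: ret times add}). The only cosmetic difference is that the paper obtains $N^1_{\boldsymbol{\mathsf{c}}}(\mathsf{v}^{\child_i}) \geq N$ by citing Corollary~\ref{cor: N_1 non-dec } and then upgrades to strict inequality using $\F^N(\mathsf{v}^{\child_i})\notin\boldsymbol{\mathsf{c}}$, whereas you argue the strict inequality directly from ancestrality; and you spell out the induction step (again via Lemma~\ref{lem: 2c of FN(v) hit by 2 ncc of v}) while the paper merely declares it easy.
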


We prove another case of the Extension Lemma where we pass through a compound $\boldsymbol{\mathsf{c}}$-portal.

\begin{sublem} \label{sublem: R geq 3 and compound => ext}
Suppose that $(\tree, \F)$ realizes $\tau$ to level $L$.  Suppose $R(L+1) \geq 3$ and $\tau^{R-1}(L) =m$. If $\mathsf{c}_m$ is a
compound $\boldsymbol{\mathsf{c}}$-portal, then there is an extension of $(\tree, \F)$ of length $L+1$ that realizes $\tau$ to
level $L+1$.
\begin{proof}
Let $R = R(L+1)$. Apply Corollary \ref{cor: ret to compound c-port => 2 ncc} with $\mathsf{v}=
\Ret_{\boldsymbol{\mathsf{c}}}(\mathsf{c}_L)$ and $S = R-2$, to get a non-critical child $\mathsf{v}^{\child_1}$ with $
N^1_{\boldsymbol{\mathsf{c}}}(\mathsf{v}^{\child_1}) = N_{R-2}^{\boldsymbol{\mathsf{c}}}(\mathsf{v}) +
N^1_{\boldsymbol{\mathsf{c}}}(\mathsf{c}_m)$. By Lemma \ref{lem: ext choose FN(vc)}, we can extend to a tree with dynamics of
length $L+1$ with $\mathsf{c}_{L+1} \in \tree_{L+1}'$ such that

\begin{enumerate}
    \item $\mathsf{c}_{L+1}$ is a child of $\mathsf{c}_{L}$,
    \item $\deg \mathsf{c}_{L+1} = D_{L+1}$,
    \item $\F^{N^1_{\boldsymbol{\mathsf{c}}}(\mathsf{c}_L)}(\mathsf{c}_{L+1}) = \mathsf{v}^{\child_1}$.
\end{enumerate}

Since $\mathsf{v}^{\child_1}$ is non-critical, Lemma \ref{lem: ret times add} implies
\begin{align*}
    N^1_{\boldsymbol{\mathsf{c}}}(\mathsf{c}_{L+1}) &= N^1_{\boldsymbol{\mathsf{c}}}(\mathsf{c}_L)+
        N^1_{\boldsymbol{\mathsf{c}}}(\mathsf{v}^{\child_1})\\
             &= N^1_{\boldsymbol{\mathsf{c}}}(\mathsf{c}_L) + N_{R-2}^{\boldsymbol{\mathsf{c}}}(\mathsf{v}) +
        N^1_{\boldsymbol{\mathsf{c}}}(\mathsf{c}_m)\\
             &= N^1_{\boldsymbol{\mathsf{c}}}(\mathsf{c}_L) +
        N_{R-2}^{\boldsymbol{\mathsf{c}}}(\Ret_{\boldsymbol{\mathsf{c}}}(\mathsf{c}_L)) +
        N_{1}^{\boldsymbol{\mathsf{c}}}(\Ret^{R-1}_{\boldsymbol{\mathsf{c}}}(\mathsf{c}_L))\\
            &= N^R_{\boldsymbol{\mathsf{c}}}(\mathsf{c}_L).
\end{align*}

\end{proof}
\end{sublem}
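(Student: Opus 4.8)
The plan is to realize $\mathsf{c}_{L+1}$ as a child of $\mathsf{c}_L$ whose $N^1_{\boldsymbol{\mathsf{c}}}(\mathsf{c}_L)$-th iterate lands on a carefully chosen non-critical child of $\Ret_{\boldsymbol{\mathsf{c}}}(\mathsf{c}_L)$ that lies outside $\boldsymbol{\mathsf{c}}$; additivity of return times (Lemma \ref{lem: ret times add}) then pins down $N^1_{\boldsymbol{\mathsf{c}}}(\mathsf{c}_{L+1})$. Write $R = R(L+1) \geq 3$, $\mathsf{v} = \Ret_{\boldsymbol{\mathsf{c}}}(\mathsf{c}_L) \in \boldsymbol{\mathsf{c}}$, and $N = N^1_{\boldsymbol{\mathsf{c}}}(\mathsf{c}_L)$, so $\F^N(\mathsf{c}_L) = \mathsf{v}$. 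From $\Ret^{R-1}_{\boldsymbol{\mathsf{c}}}(\mathsf{c}_L) = \mathsf{c}_m$ we obtain $\Ret^{R-2}_{\boldsymbol{\mathsf{c}}}(\mathsf{v}) = \mathsf{c}_m$, and since $S = R-2 \geq 1$ the assumption that $\mathsf{c}_m$ is a compound $\boldsymbol{\mathsf{c}}$-portal lets me apply Corollary \ref{cor: ret to compound c-port => 2 ncc} to $\mathsf{v}$: there is a non-critical child $\mathsf{v}^{\child_1}$ of $\mathsf{v}$ (necessarily $\mathsf{v}^{\child_1} \notin \boldsymbol{\mathsf{c}}$, since every vertex of $\boldsymbol{\mathsf{c}}$ has degree at least $2$) with $N^1_{\boldsymbol{\mathsf{c}}}(\mathsf{v}^{\child_1}) = N^{R-2}_{\boldsymbol{\mathsf{c}}}(\mathsf{v}) + N^1_{\boldsymbol{\mathsf{c}}}(\mathsf{c}_m)$.

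Next I would invoke Lemma \ref{lem: ext choose FN(vc)} with base vertex $\mathsf{c}_L$, target degree $D = D_{L+1}$ (legitimate since $1 \leq D_{L+1} \leq D_L = \deg \mathsf{c}_L$ by $\tau$-admissibility), iterate $N$, and $\mathsf{w} = \mathsf{v}$, $\mathsf{w}^{\child_0} = \mathsf{v}^{\child_1}$. This produces an extension $(\tree',\F')$ of length $L+1$ in which $\mathsf{c}_{L+1}$ is a child of $\mathsf{c}_L$, $\deg \mathsf{c}_{L+1} = D_{L+1}$, and $\F^N(\mathsf{c}_{L+1}) = \mathsf{v}^{\child_1}$; moreover that lemma assigns degree $1$ to every vertex of $\tree_{L+1}' \minus \set{\mathsf{c}_{L+1}}$ and alters nothing at levels $\leq L$ (in particular the return maps of $\mathsf{c}_l$ for $l \leq L$ are unaffected, as the $\F$-orbit of such a $\mathsf{c}_l$ never reaches level $L+1$), so $(\tree',\F')$ still realizes $\tau$ to level $L$. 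It then remains to check $\Ret_{\boldsymbol{\mathsf{c}}}(\mathsf{c}_{L+1}) = \mathsf{c}_{\tau(L+1)}$, i.e., $N^1_{\boldsymbol{\mathsf{c}}}(\mathsf{c}_{L+1}) = N^R_{\boldsymbol{\mathsf{c}}}(\mathsf{c}_L)$. Since $\mathsf{v}^{\child_1}$ is non-critical and not in $\boldsymbol{\mathsf{c}}$, and $N^1_{\boldsymbol{\mathsf{c}}}(\mathsf{c}_{L+1}) \geq N$ by Corollary \ref{cor: N_1 non-dec }, Lemma \ref{lem: ret times add} gives $N^1_{\boldsymbol{\mathsf{c}}}(\mathsf{c}_{L+1}) = N + N^1_{\boldsymbol{\mathsf{c}}}(\mathsf{v}^{\child_1})$; substituting from the previous paragraph and telescoping, $N + N^{R-2}_{\boldsymbol{\mathsf{c}}}(\mathsf{v}) = N^{R-1}_{\boldsymbol{\mathsf{c}}}(\mathsf{c}_L)$ (as $\mathsf{v} = \Ret_{\boldsymbol{\mathsf{c}}}(\mathsf{c}_L)$) and $N^{R-1}_{\boldsymbol{\mathsf{c}}}(\mathsf{c}_L) + N^1_{\boldsymbol{\mathsf{c}}}(\mathsf{c}_m) = N^R_{\boldsymbol{\mathsf{c}}}(\mathsf{c}_L)$ (as $\Ret^{R-1}_{\boldsymbol{\mathsf{c}}}(\mathsf{c}_L) = \mathsf{c}_m$), which yields the required equality.

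The step I expect to be the crux is the first one: the existence of a non-critical child of $\mathsf{v}$ landing on a child of $\mathsf{c}_m$ outside $\boldsymbol{\mathsf{c}}$ with exactly the predicted first return time. This is precisely where compoundness of $\mathsf{c}_m$ is essential. Corollary \ref{cor: ret to compound c-port => 2 ncc}, which rests on Lemma \ref{lem: 2c of FN(v) hit by 2 ncc of v} and the Riemann--Hurwitz-type local cover property, needs $\mathsf{c}_m$ to possess \emph{two} distinct children outside $\boldsymbol{\mathsf{c}}$ with the right first return time, and that forces at least two non-critical vertices among their $\F^{-N}$-preimages. When $\mathsf{c}_m$ is only a simple $\boldsymbol{\mathsf{c}}$-portal its unique non-$\boldsymbol{\mathsf{c}}$ child of that return time can be entirely ``absorbed'' by the branch child $\mathsf{c}_{m+1} \in \boldsymbol{\mathsf{c}}$ under the covering relation, so one cannot conclude that $\mathsf{v}$ has a suitable non-critical child; that remaining case must be handled by a separate argument built on the obstruction phenomenon (Lemma \ref{lem: tau simple portals}) together with Condition 3 of Theorem \ref{thm: tau cond}.
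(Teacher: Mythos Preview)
Your proof is correct and follows essentially the same route as the paper's: apply Corollary~\ref{cor: ret to compound c-port => 2 ncc} with $\mathsf{v}=\Ret_{\boldsymbol{\mathsf{c}}}(\mathsf{c}_L)$ and $S=R-2$ to obtain the non-critical child $\mathsf{v}^{\child_1}$, then use Lemma~\ref{lem: ext choose FN(vc)} to extend with $\F^{N}(\mathsf{c}_{L+1})=\mathsf{v}^{\child_1}$, and finish with the additivity computation via Lemma~\ref{lem: ret times add}. Your write-up is in fact slightly more explicit than the paper's in justifying $\mathsf{v}^{\child_1}\notin\boldsymbol{\mathsf{c}}$ and the strict inequality $N^1_{\boldsymbol{\mathsf{c}}}(\mathsf{c}_{L+1})>N$ needed for Lemma~\ref{lem: ret times add}.
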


We prove the case of the Extension Lemma when $R(L+1) \geq 3$ and $\mathsf{c}_L$ passes through a simple
$\boldsymbol{\mathsf{c}}$-portal $\mathsf{c}_m$. In light of Lemma \ref{lem: Returns to simple portals} we need to ensure that
$\mathsf{c}_m$ is not obstructed by some other vertex.

\begin{sublem} \label{sublem: R geq 3 and simple => Ext}
Suppose that $(\tree, \F)$ realizes $\tau$ to level $L$. Let $R= R(L+1) \geq 3$ and $\tau^{R-1}(L) =m$. If $\mathsf{c}_m$ is a
simple $\boldsymbol{\mathsf{c}}$-portal, then there is an extension of $(\tree, \F)$ of length $L+1$ that realizes $\tau$ to
level $L+1$.
\begin{proof}
We must have $\esc(m) = 1$ or else $\mathsf{c}_m$ would be a compound $\boldsymbol{\mathsf{c}}$-portal by Lemma \ref{lem: number
of n.c.c of type 3 c-portal}.  Let $\tau^{R-2}(L) = k$. We cannot have $R(k+1) = 2$ or we have $\esc(m)=2$ by Definition
\ref{defn: esc(m)}.2.  So $R(k+1) \neq 2$, which implies $\tau(k+1) \neq \tau(L+1)$.  Thus $\mathsf{c}_k$ has at least two
non-critical children $\mathsf{c}_k^{\child_i}$ with $\Ret(\mathsf{c}_k^{\child_i})= \mathsf{c}_{\tau(L+1)}$. Apply Corollary
\ref{cor: ret to compound c-port => 2 ncc}, to get an extension with $\Ret_{\boldsymbol{\mathsf{c}}} (\mathsf{c}_{L+1}) =
\Ret_{\boldsymbol{\mathsf{c}}}(\mathsf{c}_k^{\child_i})$ for some $i$.

\end{proof}
\end{sublem}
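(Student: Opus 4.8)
The goal is to adjoin a child $\mathsf{c}_{L+1}$ of $\mathsf{c}_{L}$ of degree $D_{L+1}$ with $N^1_{\boldsymbol{\mathsf{c}}}(\mathsf{c}_{L+1}) = N^R_{\boldsymbol{\mathsf{c}}}(\mathsf{c}_L)$; this already forces $\Ret_{\boldsymbol{\mathsf{c}}}(\mathsf{c}_{L+1}) = \mathsf{c}_{\tau^R(L)+1} = \mathsf{c}_{\tau(L+1)}$, because $\Ret_{\boldsymbol{\mathsf{c}}}(\mathsf{c}_{L+1})$ is then the unique child of $\Ret^{R}_{\boldsymbol{\mathsf{c}}}(\mathsf{c}_L) = \mathsf{c}_{\tau^R(L)}$ lying on $\boldsymbol{\mathsf{c}}$. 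By the Main Lemma (\ref{main lem}) such a $\mathsf{c}_{L+1}$ must pass through the portal $\Ret^{R-1}_{\boldsymbol{\mathsf{c}}}(\mathsf{c}_L) = \mathsf{c}_m$, and since $\mathsf{c}_m$ is \emph{simple} there is only one non-$\boldsymbol{\mathsf{c}}$ child of $\mathsf{c}_m$ onto which we may route it; by Lemma \ref{lem: Returns to simple portals} the only real difficulty is that this single passage might be obstructed by the part of the tree already built, and the hypotheses are designed to rule this out.

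First I would check that $\esc(m) = 1$. Being simple, $\mathsf{c}_m$ cannot be a $\tau$-portal, for a $\tau$-portal $\mathsf{c}_m$ has, by Lemma \ref{lem: F^N a D-fold cover}, at least $\deg\mathsf{c}_m \geq 2$ children realizing $N^1_{\boldsymbol{\mathsf{c}}}(\mathsf{c}_m)$, none of them $\mathsf{c}_{m+1}$, hence non-critical, hence $\mathsf{c}_m$ would be compound. So $N^1_{\boldsymbol{\mathsf{c}}}(\mathsf{c}_m) = N^1_{\boldsymbol{\mathsf{c}}}(\mathsf{c}_{m+1})$; counting degrees of the children of $\mathsf{c}_m$ with this return time (Lemma \ref{lem: F^N a D-fold cover} again, together with the existence of the non-$\boldsymbol{\mathsf{c}}$ witnessing child) gives $\deg\mathsf{c}_m > \deg\mathsf{c}_{m+1}$, i.e.\ an escape at $\mathsf{c}_m$, and Corollary \ref{cor: c-l simple iff deg c-1 - deg c-l+1 =1} forces $\deg\mathsf{c}_m - \deg\mathsf{c}_{m+1} = 1$; since $(D_l)$ is $\tau$-admissible this yields $\esc(m) = 1$. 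Now put $k = \tau^{R-2}(L)$, so $\tau(k) = m$, $\Ret_{\boldsymbol{\mathsf{c}}}(\mathsf{c}_k) = \mathsf{c}_m$ and $\tau^{R(L+1)-2}(L) = k$. If $R(k+1) = 2$ then clause~2 of Definition \ref{defn: esc(m)}, with $l = L$, would give $\esc(m) = 2$, a contradiction; hence $R(k+1) \neq 2$, so $\tau(k+1) \neq \tau^2(k)+1 = \tau(m)+1 = \tau(L+1)$. (The lone gap here — the hypothesis $k \notin E$ in Definition \ref{defn: esc(m)} — is handled separately, as in the compound case Sublemma \ref{sublem: R geq 3 and compound => ext}: when $k \in E$ one routes through the portal $\mathsf{c}_k$ itself.)

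Next I would manufacture two non-critical children of $\mathsf{c}_k$ that do the job, and then lift the construction backward along the orbit. Let $\mathsf{c}_m^{\child_0}$ be the unique non-$\boldsymbol{\mathsf{c}}$ child of $\mathsf{c}_m$ with $N^1_{\boldsymbol{\mathsf{c}}}(\mathsf{c}_m^{\child_0}) = N^1_{\boldsymbol{\mathsf{c}}}(\mathsf{c}_m)$; by Lemma \ref{lem: ret times add} its return is $\mathsf{c}_{\tau(m)+1} = \mathsf{c}_{\tau(L+1)}$. By Lemma \ref{lem: F^N a D-fold cover} the children of $\mathsf{c}_k$ sent to $\mathsf{c}_m^{\child_0}$ by $\F^{N^1_{\boldsymbol{\mathsf{c}}}(\mathsf{c}_k)}$ have total degree $\geq \deg\mathsf{c}_k \geq 2$; this family cannot contain the only critical child $\mathsf{c}_{k+1}$, since that would give $N^1_{\boldsymbol{\mathsf{c}}}(\mathsf{c}_{k+1}) = N^2_{\boldsymbol{\mathsf{c}}}(\mathsf{c}_k)$, i.e.\ $R(k+1) = 2$; so it consists of at least two non-critical children $\mathsf{c}_k^{\child_1},\mathsf{c}_k^{\child_2}$, each with $N^1_{\boldsymbol{\mathsf{c}}}(\mathsf{c}_k^{\child_i}) = N^2_{\boldsymbol{\mathsf{c}}}(\mathsf{c}_k)$ and $\Ret_{\boldsymbol{\mathsf{c}}}(\mathsf{c}_k^{\child_i}) = \mathsf{c}_{\tau(L+1)}$. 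Then a downward induction on $s$ from $R-2$ to $1$, each step applying Lemma \ref{lem: 2c of FN(v) hit by 2 ncc of v} with $\mathsf{w} = \mathsf{c}_{\tau^{s+1}(L)}$ and its two ``good'' children (and Lemma \ref{lem: ret times add}, plus ancestrality of $\boldsymbol{\mathsf{c}}$, which keeps the intermediate vertices off $\boldsymbol{\mathsf{c}}$ since they are children of the non-$\boldsymbol{\mathsf{c}}$ iterates of $\mathsf{c}_{\tau^s(L)}$), shows that each $\mathsf{c}_{\tau^s(L)}$ has two non-critical children with $\Ret_{\boldsymbol{\mathsf{c}}} = \mathsf{c}_{\tau(L+1)}$ and first return time $N^{R-s}_{\boldsymbol{\mathsf{c}}}(\mathsf{c}_{\tau^s(L)})$. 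In particular $\mathsf{c}_{\tau(L)}$ has a non-$\boldsymbol{\mathsf{c}}$ child $\mathsf{v}'$ with $N^1_{\boldsymbol{\mathsf{c}}}(\mathsf{v}') = N^{R-1}_{\boldsymbol{\mathsf{c}}}(\mathsf{c}_{\tau(L)})$.

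Finally I would apply Lemma \ref{lem: ext choose FN(vc)} with $\mathsf{v} = \mathsf{c}_L$, $N = N^1_{\boldsymbol{\mathsf{c}}}(\mathsf{c}_L)$, $\mathsf{w} = \Ret_{\boldsymbol{\mathsf{c}}}(\mathsf{c}_L) = \mathsf{c}_{\tau(L)}$, $\mathsf{w}^{\child_0} = \mathsf{v}'$ and $D = D_{L+1}$ (legitimate since $(D_l)$ is non-increasing, so $D_{L+1}\leq D_L = \deg\mathsf{c}_L$), obtaining the extension with $\mathsf{c}_{L+1}$ a child of $\mathsf{c}_L$, $\deg\mathsf{c}_{L+1} = D_{L+1}$, and $\F^{N}(\mathsf{c}_{L+1}) = \mathsf{v}'$. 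As $\mathsf{c}_{L+1}$ shadows $\mathsf{c}_L$ it cannot return before step $N$, and $\mathsf{v}'\notin\boldsymbol{\mathsf{c}}$, so Lemma \ref{lem: ret times add} gives $N^1_{\boldsymbol{\mathsf{c}}}(\mathsf{c}_{L+1}) = N + N^{R-1}_{\boldsymbol{\mathsf{c}}}(\mathsf{c}_{\tau(L)}) = N^R_{\boldsymbol{\mathsf{c}}}(\mathsf{c}_L)$, hence $\Ret_{\boldsymbol{\mathsf{c}}}(\mathsf{c}_{L+1}) = \mathsf{c}_{\tau(L+1)}$; the remaining properties of the extension are inherited from Lemma \ref{lem: ext choose FN(vc)}. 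I expect the genuinely delicate point to be the second paragraph: everything hinges on guaranteeing that the simple portal $\mathsf{c}_m$ is not obstructed — that the critical child $\mathsf{c}_{k+1}$ does not occupy its unique passage — which is exactly what $\esc(m) = 1 \Rightarrow R(k+1)\neq 2$ delivers; the rest is degree bookkeeping with Lemmas \ref{lem: F^N a D-fold cover}, \ref{lem: ret times add} and \ref{lem: 2c of FN(v) hit by 2 ncc of v}.
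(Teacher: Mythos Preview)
Your approach mirrors the paper's: from simplicity of $\mathsf{c}_m$ deduce $\esc(m)\leq 1$; with $k=\tau^{R-2}(L)$ invoke Definition~\ref{defn: esc(m)} to force $R(k+1)\neq 2$; exhibit two non-critical children of $\mathsf{c}_k$ with return $\mathsf{c}_{\tau(L+1)}$; and lift back to $\mathsf{c}_L$. Where the paper compresses the last two steps into a bare citation of Corollary~\ref{cor: ret to compound c-port => 2 ncc}, you spell out the degree count at $\mathsf{c}_k$ and the inductive lifting via Lemma~\ref{lem: 2c of FN(v) hit by 2 ncc of v}, then finish with Lemma~\ref{lem: ext choose FN(vc)}. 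This is exactly what the paper's citation is meant to encode, so the two proofs are essentially the same, yours being the unpacked version.

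You go beyond the paper in flagging the hypothesis $k\notin E$ in clause~2 of Definition~\ref{defn: esc(m)}: without it the implication ``$R(k+1)=2\Rightarrow\esc(m)=2$'' does not follow, and the paper's proof is silent on this point. Your one-line patch, however, is not quite what is needed: routing through the \emph{portal child} of $\mathsf{c}_k$ would produce first return time $N^{R-1}_{\boldsymbol{\mathsf{c}}}(\mathsf{c}_L)$ rather than the desired $N^R_{\boldsymbol{\mathsf{c}}}(\mathsf{c}_L)$, since the portal property of $\mathsf{c}_k$ concerns $N^1_{\boldsymbol{\mathsf{c}}}(\mathsf{c}_k)$, not $N^2_{\boldsymbol{\mathsf{c}}}(\mathsf{c}_k)$. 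A genuine repair would have to track the degree drop $D_k-D_{k+1}\geq\esc(k)$ and either recurse or reduce to the compound case; neither you nor the paper carries this out. So your observation is apt, but the corner case remains open in both arguments.
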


Finally we prove the Extension Lemma.

\begin{proof}[Proof of Lemma \ref{lem: ext}.]
Let $R = R(L+1)$. We want an extension of $(\tree, \F)$ such that $N^1_{\boldsymbol{\mathsf{c}}}(\mathsf{c}_{L+1})
=N^R_{\boldsymbol{\mathsf{c}}}(\mathsf{c}_{L}) $.  If $R=1$, then we can apply Corollary \ref{cor: ext with N_1 equal}. If $R=
2$, then we can apply Sublemma \ref{sublem: R=2 => ext}.

It remains to show the case when $R \geq 3$. Say that $\tau^{R-1}(L) = m$, so
$\Ret^{R-1}_{\boldsymbol{\mathsf{c}}}(\mathsf{c}_{L}) = \mathsf{c}_{m}$. By Condition 3 of Theorem \ref{thm: tau cond}, either
$\tau^{R-1}(L)$ is a $\tau$-portal or $\tau^{R-1}(L) \in E$.  First consider the case when $\tau^{R-1}(L)= m$ is a $\tau$-portal.
By Lemma \ref{lem: tau-portal equivalences}, $N^1_{\boldsymbol{\mathsf{c}}}(\mathsf{c}_{m}) <
N^1_{\boldsymbol{\mathsf{c}}}(\mathsf{c}_{m+1})$. Thus $\mathsf{c}_{m}$ is a compound $\boldsymbol{\mathsf{c}}$-portal by
Corollary \ref{cor: classify type I c-portals}.  So Sublemma \ref{sublem: R geq 3 and compound => ext} applies and we can extend.
Next assume that $\tau^{R-1}(L)= m \in E$. Since $(D_l)$ is $\tau$ admissible, we have $D_m -D_{m-1} \geq \esc(m) \geq 1$. Thus
$\mathsf{c}_{m} \in \textup{Port}_{III}(\boldsymbol{\mathsf{c}})$. If $\mathsf{c}_{m}$ is a compound
$\boldsymbol{\mathsf{c}}$-portal, then we can extend using Sublemma \ref{sublem: R geq 3 and compound => ext}.  If
$\mathsf{c}_{m}$ is a simple $\boldsymbol{\mathsf{c}}$-portal, then an application of Sublemma \ref{sublem: R geq 3 and simple =>
Ext} finishes the proof.

\end{proof}

We remark that the tree with dynamics that we construct using Proposition \ref{prop: tau TwD} has no type~II
$\boldsymbol{\mathsf{c}}$-portals. It follows from Lemma \ref{lem: tau simple portals} that $\esc(\tau)+2$ is the minimal degree
for a polynomial without type~II $\boldsymbol{\mathsf{c}}$-portals that realizes $\tau$.  We might reduce the number of escaping
critical points and thereby reduce the degree of the polynomial in some cases if in our construction some points in $E$
corresponded to type~II $\boldsymbol{\mathsf{c}}$-portals instead of type~III $\boldsymbol{\mathsf{c}}$-portals. However using
type~II $\boldsymbol{\mathsf{c}}$-portals would require analyzing the returns of two ends to each other, which is a question
beyond the scope of this paper.

\end{document}